\newtheorem{theorem}{Theorem}[section]
\newtheorem{assumption}[theorem]{Assumption}
\newtheorem{corollary}[theorem]{Corollary}
\newtheorem{lemma}[theorem]{Lemma}
\newtheorem{proposition}[theorem]{Proposition}
\newtheorem{definition}[theorem]{Definition}
\newtheorem{remark}[theorem]{Remark}
\newtheorem{example}[theorem]{Example}
\newtheorem{thmintro}{Theorem}
\newcommand{\lpi}{\langle}
\newcommand{\rpi}{\rangle}
\newcommand{\fa}{\mathfrak{a}}
\newcommand{\fb}{\mathfrak{b}}
\newcommand{\fg}{\mathfrak{g}}
\newcommand{\fh}{\mathfrak{h}}
\newcommand{\fl}{\mathfrak{l}}
\newcommand{\fm}{\mathfrak{m}}
\newcommand{\fn}{\mathfrak{n}}
\newcommand{\fo}{\mathfrak{o}}
\newcommand{\fp}{\mathfrak{p}}
\newcommand{\fs}{\mathfrak{s}}
\newcommand{\ft}{\mathfrak{t}}
\newcommand{\fu}{\mathfrak{u}}
\newcommand{\bba}{\mathbb{A}}
\newcommand{\bbc}{\mathbb{C}}
\newcommand{\bbe}{\mathbb{E}}
\newcommand{\bbn}{\mathbb{N}}
\newcommand{\bbr}{\mathbb{R}}
\newcommand{\bbs}{\mathbb{S}}
\newcommand{\bbt}{\mathbb{T}}
\newcommand{\bbu}{\mathbb{U}}
\newcommand{\bbv}{\mathbb{V}}
\newcommand{\bbz}{\mathbb{Z}}
\newcommand{\Ca}{\mathcal{A}}
\newcommand{\Cb}{\mathcal{B}}
\newcommand{\Cc}{\mathcal{C}}
\newcommand{\Ce}{\mathcal{E}}
\newcommand{\Cf}{\mathcal{F}}
\newcommand{\Cg}{\mathcal{G}}
\newcommand{\Ch}{\mathcal{H}}
\newcommand{\Cm}{\mathcal{M}}
\newcommand{\Co}{\mathcal{O}}
\newcommand{\Cp}{\mathcal{P}}
\newcommand{\Cr}{\mathcal{R}}
\newcommand{\Cu}{\mathcal{U}}
\newcommand{\Cv}{\mathcal{V}}
\newcommand{\Cw}{\mathcal{W}}
\newcommand{\Cx}{\mathcal{X}}
\newcommand{\Cy}{\mathcal{Y}}
\newcommand{\bt}{\mathbf{t}}
\newcommand{\SM}{\mathscr{M}}
\newcommand{\SR}{\mathscr{R}}
\newcommand{\lcont}{\;\llcorner\;}
\newcommand{\rcont}{\;\lrcorner\;}
\newcommand{\cpv}{\begin{center}\begin{minipage}[t]{14cm}\small{\it Proof.} }
\newcommand{\fpv}{\fim\end{minipage}\end{center}}
\newcommand{\fim}{\hfill $\Box$}
\newcommand{\cha}{\mathsf{H}_{c}}
\DeclareMathOperator{\sgn}{sgn}
\DeclareMathOperator{\im}{im}
\DeclareMathOperator{\Hom}{Hom}
\DeclareMathOperator{\Cent}{Cent}
\DeclareMathOperator{\ad}{ad}
\DeclareMathOperator{\End}{End}
\DeclareMathOperator{\triv}{triv}
\edef\epi{\expandafter[twoheadrightarrow]}
\edef\mono{\expandafter[hookrightarrow]}
\def\xar{\expandafter\arrow}
\numberwithin{equation}{section}
\patchcmd{\subsection}{\bfseries}{\itshape}{}{}
\newglossaryentry{EucStruc}{ name={\ensuremath{(\cdot,\cdot)}},sort={bracket euclidean},description={} } %
\newglossaryentry{BilPair}{ name={\ensuremath{\lpi\cdot,\cdot\rpi}},sort={bracket pairing},description={} } %
\newglossaryentry{Rts}{ name={\ensuremath{R}},sort={roots},description={} } %
\newglossaryentry{Rp}{ name={\ensuremath{R_+}},sort={roots positive},description={} } %
\newglossaryentry{Weyl}{ name={\ensuremath{W}},sort={Weyl group},description={} } %
\newglossaryentry{RCA}{ name={\ensuremath{\cha}},sort={H Cherednik algebra},description={} } %
\newglossaryentry{IrrW}{ name={\ensuremath{\widehat{W}}},sort={Weyl group irreps},description={} } %
\newglossaryentry{Std}{ name={\ensuremath{M_c(\tau)}},sort={M-standard},description={} } %
\newglossaryentry{Nc}{ name={\ensuremath{N_c(\tau)}},sort={N-scalar},description={} } %
\newglossaryentry{Star}{ name={\ensuremath{\ast}},sort={star},description={} } %
\newglossaryentry{Cspace}{ name={\ensuremath{\mathscr{C}}},sort={c},description={} } %
\newglossaryentry{UnitNgh}{ name={\ensuremath{U(\tau)}},sort={Unitary locus},description={} } %
\newglossaryentry{Cliff}{ name={\ensuremath{\Cc}},sort={clifford},description={} } %
\newglossaryentry{Spin}{ name={\ensuremath{\bbs}},sort={spin module},description={} } %
\newglossaryentry{UNilp}{ name={\ensuremath{\fu^-}},sort={u-minus-subalgebra},description={} } %
\newglossaryentry{NNilp}{ name={\ensuremath{\fn^-}},sort={n-minus-subalgebra},description={} } %
\newglossaryentry{Lightning}{ name={\ensuremath{L(+\infty,\delta_2)}},sort={lightning},description={} } %
\newglossaryentry{Tau}{ name={\ensuremath{\tau_\alpha}},sort={tau},description={} } %
\newglossaryentry{Omega}{ name={\ensuremath{\Omega_c}},sort={omega},description={} } %
\newglossaryentry{Basy}{ name={\ensuremath{\{y_j\}}},sort={y-basis},description={} } %
\newglossaryentry{Basx}{ name={\ensuremath{\{x_i\}}},sort={x-Basis 2},description={} } %
\newglossaryentry{Diag}{ name={\ensuremath{\rho}},sort={rho-diagonal embedding},description={} } %
\newglossaryentry{SPO}{ name={\ensuremath{\fg}},sort={g-spo},description={} }  %
\newglossaryentry{Cliffn}{ name={\ensuremath{\Cc_n}},sort={Clifford algebra degree n},description={} } %
\newglossaryentry{Kmod}{ name={\ensuremath{K_c(\tau)}},sort={K-standard-spinor},description={} } %
\newglossaryentry{HermStruc}{ name={\ensuremath{\lpi\cdot|\cdot\rpi}},sort={bracket hermitian},description={} } %
\newglossaryentry{gKmod}{ name={\ensuremath{K^l_m}},sort={K-standard-spinor 2},description={} } %
\newglossaryentry{gKmodIso}{ name={\ensuremath{K^l_m(\sigma)}},sort={K-standard-spinor 3},description={} } %
\newglossaryentry{Lambda}{ name={\ensuremath{\lambda_{m,l,\sigma}}},sort={lambda},description={} } %
\newglossaryentry{Monog}{ name={\ensuremath{\Cm_c(\tau)}},sort={monogenics},description={} } %
\newglossaryentry{Harm}{ name={\ensuremath{\Ch_c(\tau)}},sort={harmonics},description={} } %
\newglossaryentry{AHarm}{ name={\ensuremath{\Ca_c(\tau)}},sort={A-harmonics},description={} } %
\newglossaryentry{GL}{ name={\ensuremath{\fl}},sort={l},description={} } %
\newglossaryentry{AHarmHom}{ name={\ensuremath{\Ca(n)}},sort={A-harmonics2},description={} } %
\newglossaryentry{AHarmHomIso}{ name={\ensuremath{\Ca(n,\sigma)}},sort={A-harmonics3},description={} } %
\newglossaryentry{Br}{ name={\ensuremath{B(r)}},sort={B},description={} } %
\newglossaryentry{P}{ name={\ensuremath{\Cp(1|1)}},sort={P},description={} } %
\newglossaryentry{Hr}{ name={\ensuremath{H(r)}},sort={H},description={} } %
\newglossaryentry{SL}{ name={\ensuremath{\fs}},sort={s},description={} }%
\newglossaryentry{Xgen}{ name={\ensuremath{\Cx_{ij}}},sort={Xij},description={} }%
\newglossaryentry{SO}{ name={\ensuremath{\fa}},sort={a},description={} } %
\newglossaryentry{CentSL}{ name={\ensuremath{\Cu_c}},sort={U},description={} } %
\newglossaryentry{WeylAlg}{ name={\ensuremath{\Cw}},sort={Weyl algebra},description={} } %
\newglossaryentry{ClassCentSO}{ name={\ensuremath{A(\fa)}},sort={Aa},description={} } %
\newglossaryentry{WeylGen}{ name={\ensuremath{\Cw[\bt]}},sort={Weyl algebra generic},description={} } %
\newglossaryentry{RCAGen}{ name={\ensuremath{\cha[\bt]}},sort={H Cherednik algebra generic},description={} }%
\newglossaryentry{Xgen2}{ name={\ensuremath{\Cx_{xy}}},sort={Xij2},description={} } %
\newglossaryentry{CentSPO}{ name={\ensuremath{\Cr_c}},sort={R},description={} } %
\newglossaryentry{CentreDiag}{ name={\ensuremath{Z_W}},sort={Zentre weyl group},description={} } %
\newglossaryentry{diff}{ name={\ensuremath{d}},sort={Dirac differential},description={} } %
\newglossaryentry{diffW}{ name={\ensuremath{d_W}},sort={Dirac differential W-invariant},description={} } %
\newglossaryentry{Xdiag}{ name={\ensuremath{\Cx^\Delta_{ij}}},sort={Xij3},description={} } %
\newglossaryentry{SOdiag}{ name={\ensuremath{\fa^\Delta}},sort={a-diagonal},description={} } %
\newglossaryentry{XdiagF}{ name={\ensuremath{\Cf_{ij}}},sort={F-Xij},description={} } %
\newglossaryentry{Valg}{ name={\ensuremath{\Cv_c}},sort={V-algebra},description={} } %
\newglossaryentry{XdiagG}{ name={\ensuremath{\Cg_{ij}}},sort={G-Xij},description={} }  %
\newglossaryentry{Pairs}{ name={\ensuremath{\Pi}},sort={pairs},description={} } %
\newglossaryentry{orderM}{ name={\ensuremath{\ll}},sort={order1},description={} } %
\newglossaryentry{Multiindices}{ name={\ensuremath{\SM}},sort={multiindices},description={} } %
\newglossaryentry{Multiindicesp}{ name={\ensuremath{\SM_p}},sort={multiindices-p},description={} } %
\newglossaryentry{orderM2}{ name={\ensuremath{\prec}},sort={order2},description={} } %
\newglossaryentry{Matij}{ name={\ensuremath{\Cm_{ij}}},sort={matrix entry},description={} } %
\newglossaryentry{Matmu}{ name={\ensuremath{\Cm_{\mu}}},sort={matrix entry},description={} } %
\newglossaryentry{Basisij}{ name={\ensuremath{\Cb_{ij}}},sort={basis matrix entry},description={} } %
\newglossaryentry{Basismu}{ name={\ensuremath{\Cb_{\mu}}},sort={basis matrix entry},description={} } %
\newglossaryentry{orderR}{ name={\ensuremath{\lll}},sort={order3},description={} }
\newglossaryentry{MultiindicesCl}{ name={\ensuremath{\SR}},sort={r-multiindices},description={} }
\newglossaryentry{MultiindicesClp}{ name={\ensuremath{\SR_p}},sort={r-multiindices-p},description={} }
            \def\@gls@prevlevel{-1}%
\par\end{multicols}}%
\begin{document}

\title{The Dunkl-Cherednik deformation of a Howe duality}

\author
{Dan Ciubotaru}
\author
{Marcelo De Martino}
        \address{Mathematical Institute, University of Oxford, Oxford OX2 6GG, UK}
        \email{dan.ciubotaru@maths.ox.ac.uk, marcelo.demartino@maths.ox.ac.uk}

\begin{abstract}
We consider the deformed versions of the classical Howe dual pairs $(O(r,\bbc),\fs\fl(2,\bbc))$ and $(O(r,\bbc),\fs\fp\fo(2|2,\bbc))$ in the context of a rational Cherednik algebra $\cha=\cha(W,\fh)$ associated to a finite Coxeter group $W$
at the parameters $c$ and $t=1$. For the first pair, we compute the centraliser of the well-known copy of $\fs\cong\fs\fl(2,\bbc)$ inside $\cha$. For the second pair, we show that the classical copy of $\fg\cong\fs\fp\fo(2|2,\bbc)$
inside the Weyl-Clifford algebra $\Cw\otimes\Cc$ deforms to a Lie superalgebra inside $\cha\otimes\Cc$ and compute its centraliser algebra. 
For a generic parameter $c$ such that the standard $\cha$-module is unitary, we compute the joint $((\cha)^\fs,\fs)$- and $((\cha\otimes\Cc)^\fg,\fg)$-decompositions of the relevant modules.
\end{abstract}

\thanks{This work was supported in part by the Engineering and Physical Sciences Research Council grant [EP/N033922/1] (2016).}

\subjclass[2010]{16S37, 17B10, 20F55}

\maketitle

\section{Introduction}
\subsection{}In his influential paper \cite{Ho}, Howe described a uniform formulation of the First Fundamental Theory of Classical Invariants in terms of the Weyl algebra (or the Weyl-Clifford algebra). 
More precisely, let $G$ be a classical Lie group (over $\bbc$) and let $V$ be its standard module. Consider the 
complex vector spaces $U_0$ and $U_1$ defined as finite direct sums of copies of $V$ and $V^*$ and let $\Ca(U_0,U_1) = S(U_0)\otimes\wedge(U_1)$. Let $\Cw\Cc(U_0,U_1)$ denote 
the unital associative subalgebra of $\End(\Ca(U_0,U_1))$ generated by multiplication by elements in $U_0$ or $U_1$ and differentiation by elements in $U_0^*$ or $U_1^*$. This algebra 
is isomorphic to  $\Cw(U_0)\otimes\Cc(\tilde U_1)$, where $\Cw(U_0)$ is the Weyl algebra of partial differential operators on $U_0$ with polynomial coefficients and $\Cc(\tilde U_1)$ is the 
Clifford algebra associated to the complex vector space $\tilde U_1 = U_1\oplus U_1^*$ and the symmetric bilinear form given as the extension of the natural bilinear pairing $U_1\times U_1^* \to \bbc$. 
The results of \cite{Ho}, now commonly referred to as {\it Howe duality}, give the multiplicity-free joint decomposition of $\Ca(U_0,U_1)$ for the pair $(G,\Gamma')$, where $\Gamma'$ is a Lie algebra or superalgebra
that generates $\Cw\Cc(U_0,U_1)^G$, the algebra of $G$-invariants inside the Weyl-Clifford algebra. 

A first example occurs for $G = O(r,\bbc)$, $U_0 = V^*\cong \bbc^r$
and $U_1 = \{0\}$. In this case, $\Ca(U_0,U_1)$ is the algebra $P(V)\cong \bbc[z_1,\ldots,z_r]$ of polynomial functions on $V$. The Lie algebra $\fs$ spanned by the Laplacian $\Delta$, the norm-square operator
$|z|^2 = \sum_jz_j^2$ and $4h:=[\Delta,|z|^2] = 4\bbe +2r$, where $\bbe = \sum_j z_j\partial_{z_j}$ is the Euler's degree operator, generate a copy of $\fs\cong\fs\fl(2,\bbc)$ which is the appropriate Lie algebra
that generates the $O(r,\bbc)$-invariants inside the Weyl algebra. The Howe duality obtained is the multiplicity-free joint $(O(r,\bbc),\fs\fl(2,\bbc))$-decomposition
\begin{equation}\label{e:HoweDual1}
P(V)= \bigoplus_{m\in \bbz_{\geq 0}}\Ch_m\boxtimes L_{\fs\fl(2)}(m + r/2),
\end{equation}
where $\boxtimes$ denotes the external tensor product, $\Ch_m$ is the space of harmonic polynomials of degree $m$, which is a finite-dimensional irreducible $O(r,\bbc)$-module, and $L_{\fs\fl(2)}(m + r/2)$ is the irreducible 
infinite-dimensional lowest weight $\fs\fl(2,\bbc)$-module of lowest weight $m + r/2 \in \bbc$. 

Still for $G=O(r,\bbc)$, a second example, which will be very important to the present work and is explained in \cite{CW} together 
with many other explicit examples of Howe dualities for Lie superalgebras, is for the case $U_0 = V^* = U_1$. Here, $\Ca(U_0,U_1)$ reduces to the space $\Omega(V)=\oplus_{m,l\in\bbz_{\geq 0}}\Omega_m^l(V)$ of polynomial differential forms on $V$, where $\Omega_m^l(V) = P_m(V)\otimes \wedge^lV^*$, and the algebra $\fg$ that generates the invariants inside the appropriate Weyl-Clifford algebra is isomorphic to the Lie superalgebra $\fg \cong \fs\fp\fo(2|2,\bbc)$. Below, 
in Subsection \ref{s:spo}, we shall recall the basic structure theory of this algebra and in Section \ref{s:LWTheory} some of its theory of lowest weight representations.
For the purposes of this introduction, it suffices to mention that in the decomposition $\fg = \fg_{\bar 0}\oplus \fg_{\bar 1}$, the even subalgebra is isomorphic to $\fs\fp(2,\bbc)\oplus \fs\fo(2,\bbc)\cong\fg\fl(2,\bbc)$. It contains the copy
of $\fs\cong \fs\fl(2,\bbc)$ that appears in the decomposition (\ref{e:HoweDual1}).  The Cartan subalgebra of $\fg$ is two-dimensional, spanned by elements $h,z$, where $h$ is the semisimple element of $\fs\cong\fs\fl(2,\bbc)$
and $z$ spans the centre of $\fg_{\bar 0}\cong \fg\fl(2,\bbc)$. Thus, a weight for a $\fg$-representation will be represented by a pair $(\mu,\nu)\in\bbc^2$. We shall fix a choice of negative root-vectors (see (\ref{e:nilpot}) and (\ref{e:spodefs})) 
amongst which lies the Laplacian.
To describe the Howe-duality obtained in this case, let $B(r)$ denotes the set $\{(0,0),(1,r-1)\}\cup\{(m,l)\mid 1\leq m, 0\leq l \leq r-2\}\subseteq \bbz_{\geq 0}\times \bbz_{\geq 0}$.
The statement is the  multiplicity-free joint $(O(r,\bbc),\fs\fp\fo(2|2,\bbc))$-decomposition (see \cite[Theorem 5.39]{CW}; their decomposition is labelled by hook-partitions and we will reinterpret the set $B(r)$ in that setting in (\ref{e:hpartition}))
\begin{equation}\label{e:HoweDual2}
\Omega(V)= \bigoplus_{(m,l)\in B(r)}\Cm_{m}^{l}\boxtimes L_{\fs\fp\fo(2|2,\bbc)}(m+r/2,l-r/2),
\end{equation}
where $L_{\fs\fp\fo(2|2,\bbc)}(m+r/2,l-r/2)$ denotes an irreducible infinite-dimension lowest weight $\fg$-representation  and $\Cm_{m}^{l}\subseteq\Omega_m^l(V)$ denotes the space space of lowest weight vectors for the action of $\fg$, which is a 
finite-dimensional irreducible $O(r,\bbc)$-module.

\subsection{}In this paper, instead of directional derivatives and the Weyl algebra, we shall consider the differential-difference operators introduced by Dunkl \cite{Du} and the rational Cherednik algebra \cite[Section 4]{EG}. 
Following the customary notations, let $W$ denote a real reflection group acting on a Euclidean vector space of dimension $r$ and denote by $\fh$ its (complexified) reflection representation. 
We denote by $\cha = \cha(W,\fh)$ the rational Cherednik algebra at the parameter $c$ (and $t=1$, in \cite{EG}). 
For each irreducible representation $\tau$ of $W$, we let $M_c(\tau)$ be the corresponding standard module. 
In Section \ref{s:defs} we shall recall the definition of these objects. Our main goals are to
describe the decompositions analogous to (\ref{e:HoweDual1}) and (\ref{e:HoweDual2}) in the context of the rational Cherednik algebra $\cha$.

Starting with the pair that yields the decomposition in (\ref{e:HoweDual1}), it has been known \cite{He} that the Dunkl-version of the $\fs\cong\fs\fl(2,\bbc)$ copy inside the Weyl algebra survives the deformation 
and is also present in the rational Cherednik algebra $\cha$, for all values of the parameter $c$. One of the issues encountered when passing to the deformed version is that the full action of the orthogonal group is lost. 
However, we show that an algebra closely related to the orthogonal Lie algebra does exist for all parameters $c$. Fix an 
orthonormal basis of $\fh$ and let $\Cx_{ij}$, with $1\leq i<j\leq r$ denote the differential-difference version of the classical realisation of the basis of the orthogonal Lie algebra $\fs\fo(r,\bbc)$ inside the Weyl algebra
(see (\ref{e:o})). The first result we prove is the following: 

\begin{thmintro}
Let $\Cu_c$ denote the unital, associative subalgebra of $\cha$ generated by $\bbc W$ and the elements $\Cx_{ij}$ with $1 \leq i<j \leq r$. Then $\Cu_c$ is the centraliser algebra of $\fs\cong \fs\fl(2,\bbc)$ in $\cha$.
\end{thmintro}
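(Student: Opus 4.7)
The Lie algebra $\fs \cong \fs\fl(2,\bbc)$ is spanned by $|z|^2$, $\Delta = \sum_j T_j^2$ and $h = \tfrac14[\Delta,|z|^2]$. Since the parameter $c$ is $W$-invariant, $|z|^2$ and $\Delta$ are $W$-invariant elements of $\cha$, so $\bbc W$ automatically centralises $\fs$. For the generators $\Cx_{ij}$, the $W$-invariance of $|z|^2$ makes the reflection contributions in the Dunkl commutation relation $[T_j,z_k]$ vanish upon bracketing with $|z|^2$, so the classical identities $[|z|^2,T_j] = -2z_j$ and $[\Delta,z_j] = 2T_j$ hold verbatim in $\cha$ exactly as in the Weyl algebra. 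Combined with $[z_i,z_j]=0=[T_i,T_j]$, a short direct computation gives $[|z|^2,\Cx_{ij}] = [\Delta,\Cx_{ij}] = 0$, whence $\Cu_c \subseteq \cha^\fs$.

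For the reverse inclusion, equip $\cha$ with the PBW filtration $F^n\cha = \mathrm{span}\{z^\alpha w T^\beta : |\alpha|+|\beta| \leq n\}$. The defining commutator $[T_j,z_k]$ belongs to $\bbc W \subseteq F^0$, so commutators in $\cha$ drop total degree by $2$, and the adjoint action of $\fs$ therefore preserves each $F^n$. Passing to the associated graded $\mathrm{gr}(\cha) \cong P(\fh^* \oplus \fh) \rtimes W$, the induced $\fs$-action is trivial on $W$ and acts on $P(\fh^* \oplus \fh)$ by Poisson derivations with Hamiltonians equal to the symbols $|z|^2,\;-|T|^2$ and $\sum_j z_jT_j$. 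Viewing $\fh^* \oplus \fh \cong (\bbc^2)^{\oplus r}$ as a sum of $r$ copies of the standard $\fs\fl(2,\bbc)$-representation, with the $j$-th pair spanned by $(z_j,T_j)$, the classical First Fundamental Theorem for $SL_2$ yields
\[
P(\fh^* \oplus \fh)^{\fs} = \bbc[\, z_iT_j - z_jT_i : 1 \leq i < j \leq r \,],
\]
and therefore $\mathrm{gr}(\cha)^\fs = \bbc[\, z_iT_j-z_jT_i \,] \rtimes W$. Since the principal symbol of each $\Cx_{ij}$ is $z_iT_j - z_jT_i$, this coincides with $\mathrm{gr}(\Cu_c)$.

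The proof is then closed by a standard filtration-lifting induction. If $y \in \cha^\fs$ has filtration degree $n$, its leading symbol $\sigma_n(y)$ lies in $\mathrm{gr}^n(\cha)^\fs = \mathrm{gr}^n(\Cu_c)$, so one chooses $y' \in \Cu_c \cap F^n\cha$ with $\sigma_n(y')=\sigma_n(y)$. Since $y' \in \Cu_c \subseteq \cha^\fs$, the difference $y - y'$ sits in $\cha^\fs \cap F^{n-1}\cha$, and induction on $n$, with base case $\cha^\fs \cap F^0\cha \subseteq \bbc W \subseteq \Cu_c$, yields $y \in \Cu_c$. The principal conceptual obstacle is the FFT invocation: everything else is a routine PBW reduction, but correctly identifying the $\fs\fl(2,\bbc)$-invariants in $P(\fh^* \oplus \fh)$ with the polynomial algebra generated by the $2\times 2$ bracket minors is precisely what encodes the statement that the centraliser in $\cha$ is generated by the Dunkl deformations of the $\fs\fo(r,\bbc)$ elements.
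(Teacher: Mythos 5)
Your proof is correct, but it reaches the conclusion by a genuinely different route from the paper. Both arguments establish $\Cu_c\subseteq Z_{\cha}(\fs)$ directly (your computation of $[|z|^2,\Cx_{ij}]=[\Delta,\Cx_{ij}]=0$ is exactly Lemma~\ref{l:sl2comm}) and then finish with a filtration-lifting induction; the difference lies in the choice of filtration and in how the graded centraliser is identified. The paper (Theorem~\ref{t:centralisersl2}) introduces an auxiliary central parameter $\bt$ placed in filtration degree $2$ in $\cha[\bt]$, so the associated graded is the $\bt$-Weyl algebra $\Cw[\bt]\#W$; the needed centraliser of $\fs$ in $\Cw[\bt]$ (Propositions~\ref{p:gradedcent} and~\ref{p:centsl2}) is then proved from scratch by a combinatorial analysis of the $\ad(H)$-weight-zero space $\Cw_0$ via the explicit bases $\{\Cm_\mu\}$ and $\{\Cb_\mu\}$ indexed by double multi-indices. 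You instead use the plain PBW filtration of $\cha$, whose associated graded is $S(\fh\oplus\fh^*)\#W$, observe that the $W$-invariance of $\fs$ (via Lemma~\ref{l:Hamiltonian}) makes the induced $\ad(\fs)$-action the $c$-independent linear $\fs\fl(2,\bbc)$-action on $\fh\oplus\fh^*\cong(\bbc^2)^{\oplus r}$, and then invoke the First Fundamental Theorem for $SL_2$ to identify $S(\fh\oplus\fh^*)^{\fs}$ with the subalgebra generated by the $2\times 2$ minors $z_iT_j-z_jT_i$, i.e., the symbols of the $\Cx_{ij}$. Your route is shorter and more conceptual, at the cost of importing the FFT as a black box; the paper's is longer but self-contained, effectively reproving the relevant invariant-theoretic fact by PBW bookkeeping, and its $\bt$-parameter device is reused verbatim for the $\fs\fp\fo(2|2,\bbc)$ centraliser (Theorem~\ref{t:cent-g}). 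One minor wording caution in your second step: it is not literally true that every commutator in $\cha$ lowers PBW degree by two, since for $w\in W$ and $x\in\fh^*$ one has $[w,x]\in F^1$, not $F^{-1}$; what your argument actually needs, and what is true, is that $\ad(X)$ preserves each $F^n\cha$ for $X\in\fs$, which follows because $\fs$ consists of $W$-invariant quadratic elements and the identities $[E_1^-,x_k]=y_k$, $[E_1^+,y_k]=x_k$ hold exactly (not merely modulo lower order).
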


The algebra $\Cu_c$ has been studied in \cite{FeHa}, where it was called the Dunkl angular momenta algebra. It is a non-homogeneous quadratic algebra of PBW-type in the sense of \cite{BG}.  We briefly discuss the relations between its generators 
$\Cx_{ij}$ in Section \ref{s:6.2}.

The idea to consider deformed versions of Howe duality for certain dual pairs has, of course, been explored before. In \cite{BSO}, for example, the notion of generalised Fock space was introduced in order to study
the classical Segal-Bargmann transform in the context of Dunkl-operators. The Dunkl-version of the $\fs\fl(2)$-triple mentioned above acts on this Fock space as annihilation, creation and number operators.
Among other results \cite[Section 5]{BSO}, Ben-Sa\"{i}d and \O rsted exhibited a $(W,\fs)$-decomposition of the polynomial space analogous to (\ref{e:HoweDual1}), as well as precise formulas for the projection onto the spaces
of Dunkl-harmonic polynomials and a Hecke-type formula for the Dunkl-transform. Their decomposition theorem, thus, yields a deformed version of the $(O(r,\bbc),\fs\fl(2,\bbc))$-duality. 
With respect to the dual pair $(\Cu_c,\fs)$ we find the following refinement of their decomposition:

\begin{thmintro}
Let $\tau$ be an irreducible $W$ representation and fix a generic parameter $c$ (as in Assumption \ref{a:assumption}) for which the standard module $M_c(\tau)$ is unitary. Then, as a $(\Cu_c,\fs)$-module,
we have a multiplicity-free decomposition 
\[
M_c(\tau) = \bigoplus_{m\in\bbz_{\geq 0}} \Ch_c(\tau)_m  \boxtimes L_{\fs\fl(2)}(m + r/2 - N_c(\tau)),
\]
where $N_c(\tau)$ is the scalar by which the central element $\Omega_c=\sum_{\alpha>0}c_\alpha s_\alpha$ in $\bbc W$ acts on $\tau$ and $\Ch_c(\tau)_m$ is the space of the Dunkl-harmonic elements of degree $m$. 
In particular, each $\Ch_c(\tau)_m$ is a finite-dimensional unitary simple $\Cu_c$-module.
\end{thmintro}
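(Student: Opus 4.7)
The plan is to first decompose $M_c(\tau)$ as an $\fs$-module, obtaining a sum of lowest-weight irreducible modules parametrised by Dunkl-harmonic vectors, and then invoke Theorem~A (which identifies $\Cu_c$ with the centraliser of $\fs$ in $\cha$) to upgrade this into a $(\Cu_c, \fs)$-bimodule decomposition. Recall that the $\fs$-triple inside $\cha$ is spanned by $E = |z|^2/2$ (multiplication by the norm-square), $F = -\Delta_c/2$ (minus half the Dunkl Laplacian), and a Cartan element $H = [E,F]$; the key weight computation is that on the degree-$m$ piece of $M_c(\tau) \cong S(\fh^*)\otimes\tau$, the element $H$ acts by the scalar $m + r/2 - N_c(\tau)$, which one reads off from the explicit form of the deformed Euler operator together with the fact that $\Omega_c \in \bbc W$ acts on the generating subspace $\tau \subset M_c(\tau)$ by $N_c(\tau)$.

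Since $F$ strictly lowers polynomial degree by $2$, it is locally nilpotent on $M_c(\tau)$, so every $\fs$-submodule contains a lowest weight vector, which lies in $\Ch_c(\tau) := \ker F \cap M_c(\tau) = \bigoplus_m \Ch_c(\tau)_m$. For $v \in \Ch_c(\tau)_m$ one has $Fv = 0$ and $Hv = (m + r/2 - N_c(\tau)) v$, so $U(\fs)\cdot v$ is a quotient of the lowest-weight Verma module at weight $m + r/2 - N_c(\tau)$. Under the $\ast$-structure on $\cha$, $E$ and $F$ are (up to a sign) mutually adjoint, and under the unitarity hypothesis the Hermitian form on $M_c(\tau)$ is positive definite. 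This forces \emph{(i)} each cyclic module $U(\fs)\cdot v$ to be simple, hence isomorphic to $L_{\fs\fl(2)}(m + r/2 - N_c(\tau))$, since a lowest-weight Verma carrying a positive-definite contravariant form must be irreducible; and \emph{(ii)} the sum $\sum_m U(\fs)\cdot\Ch_c(\tau)_m$ to be direct and to exhaust $M_c(\tau)$, because its orthogonal complement would again contain a lowest weight vector by local nilpotency, hence belong to $\Ch_c(\tau)$, a contradiction. The lowest weights $m + r/2 - N_c(\tau)$ are distinct for distinct $m$, so the decomposition is automatically multiplicity-free at the $\fs$-level.

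It then remains to introduce the $\Cu_c$-action. Since $\Cu_c$ centralises $\fs$ by Theorem~A, it preserves $\ker F$, and because its generators $\Cx_{ij}$ preserve polynomial degree, $\Cu_c$ acts on each finite-dimensional space $\Ch_c(\tau)_m$. The unitary structure on $\Ch_c(\tau)_m$ is inherited from that on $M_c(\tau)$ via the embedding $\Cu_c \hookrightarrow \cha$ and the positive-definiteness of the restricted form. The main obstacle is the simplicity of each $\Ch_c(\tau)_m$ as a $\Cu_c$-module. The plan here is a double-commutant argument: unitarity and genericity of $c$ imply that $M_c(\tau)$ is a simple $\cha$-module, and since the $L_{\fs\fl(2)}(m + r/2 - N_c(\tau))$ are pairwise non-isomorphic, the $\fs$-isotypic components of $M_c(\tau)$ coincide with the summands above; Theorem~A together with a standard density argument in the graded setting then forces each $\Ch_c(\tau)_m$ to be a simple $\Cu_c$-module. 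Making this step rigorous — in particular avoiding completion issues coming from the infinite-dimensionality of the $\fs$-factors, and ensuring the $\cha$-simplicity statement for the generic unitary parameters covered by Assumption~\ref{a:assumption} — is the main technical hurdle.
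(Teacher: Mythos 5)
Your $\fs$-level analysis is sound and runs parallel to the paper's Proposition \ref{p:rowdecomps} (specialised to $l=0$): local nilpotency of $E_1^-$, positivity of the contravariant form, and the orthogonal-complement argument together give $M_c(\tau)=\bigoplus_m\bigoplus_{p\geq 0}(E_1^+)^p\Ch_c(\tau)_m$ as an orthogonal sum of irreducible lowest weight $\fs$-modules with pairwise distinct lowest weights. The weight computation $m+r/2-N_c(\tau)$ is also correct (this is the standard action of the deformed Euler element).

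Where the proposal stops short is exactly where you flag it: the simplicity of each $\Ch_c(\tau)_m$ as a $\Cu_c$-module and the joint multiplicity-freeness. You name ``a double-commutant argument'' and ``a standard density argument,'' but you do not supply the two inputs that make the density argument go through in this infinite-dimensional, non-semisimple-category setting, and it is precisely these inputs that the paper sets up in its Section on dual pairs. Concretely, the paper proves (Lemma \ref{l:A-inv}, Lemma \ref{l:finite}, Theorem \ref{t:Howe}, mirroring \cite[Theorem 4.2.1]{GW}) that if $\bba^{\fs}\to\Hom_{\fs}(X,M)$ is surjective for every $\fs$-invariant $X\subseteq M$ that is a \emph{finite} sum of simple lowest weight modules, then each multiplicity space is a simple $\bba^{\fs}$-module and non-isomorphic multiplicity spaces correspond to non-isomorphic $\fs$-constituents. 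Surjectivity is deduced from the Jacobson density theorem applied to the simple $\cha$-module $M_c(\tau)$ (simplicity comes from Lemma \ref{l:unitarity}(b), since $c\in U(\tau)$ under Assumption \ref{a:assumption}), \emph{combined with} the splitting $\cha=\cha^{\fs}\oplus\ad(\fs)(\cha)$. That splitting in turn requires that $\ad(\fs)$ act locally finitely on $\cha$, which holds because $\ad(\fs)$ preserves each finite-dimensional PBW-filtration piece $\cha^{(n)}$ (this is not obvious: a priori $E_1^{\pm}$ have degree $2$, but Lemma \ref{l:Hamiltonian} shows $\ad(E_1^{\pm})$ sends $\fh^*\oplus\fh$ to itself, whence the filtration is stable). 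Finally, to conclude $\bba^{\fs}=\Cu_c$ one invokes Theorem A, and the restriction to a finite sum $X$ is exactly what circumvents the ``completion issue'' you raise. None of this is filled in by your proposal, so there is a genuine gap in the second half; the first half is correct and essentially the same as the paper.
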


Before we describe our results obtained for the deformation of the second decomposition (\ref{e:HoweDual2}), we mention that the related pair $(\textup{Pin}(r),\fs\fp\fo(2|1))$ was studied before in, for example, \cite{DBOSS} and \cite{OSS},
where, explicit formulas for the projection onto the kernel of the Dunkl-Dirac operator are given and Fourier-type transforms associated to the Dunkl-Dirac operator are studied. Subsequently, \cite{DBOVJ} also studies the centraliser algebra of the relevant superalgebra that appears and its relations with the higher-rank Bannai-Ito algebra. (See Section 4 of {\it loc. cit.} and the references therein.)  

The case that we consider in this paper, $(O(r,\bbc),\fs\fp\fo(2|2,\bbc))$,
has two additional difficulties in comparison to the cases mentioned so far. The first one is that finite-dimensional $\fs\fp\fo(2|2,\bbc)$-representations are not necessarily completely reducible, like they are for $\fs\fl(2,\bbc)$ and $\fs\fp\fo(2|1,\bbc)$.
The second and main difference is that, when considering the deformed version of $\fg = \fs\fp\fo(2|2,\bbc)$, the central element of the even subalgebra $\fg_{\bar 0}$ contains the extra term $\Omega_c$ (see (\ref{e:omega})) which acts diagonally on both the
polynomial part and the exterior algebra part and depends, in an essential way, on the parameter function $c$. 
The element $\Omega_c$ lies in the centre of the $\bbc W$; it has already been present in the computations of the square of the Dirac operator \cite{Ci} in the context
of Drinfeld algebras. 

Using the Dirac operator, we define elements $\Cf_{ij}$ (see (\ref{e:sodeformed1}) and (\ref{e:sodeformed2})) which are closely
related to the classical diagonal realisation of the  orthogonal Lie algebra (see \cite[Section 5.3.4]{CW}) inside $\cha\otimes\Cc$ (here $\Cc$ is the Clifford algebra of $\fh\oplus\fh^*$). We prove

\begin{thmintro}
Let $\Cv_c$ denote the unital associative subalgebra of $\cha\otimes\Cc$ generated by the diagonal copy of $\bbc W$ and the elements $\Cf_{ij}$ with $1 \leq i<j \leq r$.  Let $Z_W$ denote the centre of $\bbc W$ and let $
\mathcal R_c$ denote the centraliser algebra
of $\fg\cong\fs\fp\fo(2|2,\bbc)$ in $\cha\otimes\Cc$.
Then, for all $c$, $\Cv_c^{Z_W}\subseteq \mathcal R_c$, and when $c$ is generic, $\Cv_c^{Z_W}= \mathcal R_c$.
\end{thmintro}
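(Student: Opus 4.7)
The plan is to prove the two assertions separately. The inclusion $\Cv_c^{Z_W}\subseteq\mathcal R_c$ holds for every $c$ and is established by a direct commutator computation on generators. The equality at generic $c$ is then deduced via a double-centraliser argument on a suitably chosen faithful module, using a Howe-type decomposition analogous to \eqref{e:HoweDual2}.

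For the first inclusion, note that every generator of $\fg$ is either diagonally $W$-invariant, namely the Dunkl Laplacian, the Euler operator, $|z|^2$, and the Dirac differential $d$ (the last because $W$ acts on the Clifford factor via the reflection representation and its dual), or else already lies in $Z_W$, as is the case for the element $\Omega_c$ appearing in the Cartan of $\fg_{\bar 0}$. Hence $Z_W$ centralises $\fg$ in $\cha\otimes\Cc$. The elements $\Cf_{ij}$ were constructed in \eqref{e:sodeformed1}--\eqref{e:sodeformed2} from the Dirac operator precisely so that the classical intertwining between the $O(r,\bbc)$- and $\fg$-sides of the Howe pair survives the deformation, modulo corrections in $Z_W$; a direct (anti-)commutator computation in the spirit of the Dirac-square identities of \cite{Ci} shows that these corrections vanish upon restriction to the $Z_W$-fixed part. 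Together these two observations yield $\Cv_c^{Z_W}\subseteq\mathcal R_c$.

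For the reverse inclusion at generic $c$, my strategy is to compute a $(\Cv_c^{Z_W},\fg)$-decomposition on the Dunkl-Cherednik deformation of $\Omega(V)$, namely $M_c(\mr{triv})\otimes\wedge(\fh^*)$, on which $\cha\otimes\Cc$ acts faithfully. At generic $c$ (and in particular under Assumption \ref{a:assumption}), I expect a multiplicity-free decomposition parallel to \eqref{e:HoweDual2} in which the $\fg$-lowest-weights pairwise separate the irreducible $\Cv_c^{Z_W}$-multiplicity spaces; the scalar $N_c(\mr{triv})$ appearing in the previous theorem supplies the shift in lowest weights needed for this separation. Granting the decomposition, the double-centraliser property immediately gives $\mathcal R_c=\Cv_c^{Z_W}$ inside $\End(M_c(\mr{triv})\otimes\wedge\fh^*)$, and faithfulness lifts the equality back to $\cha\otimes\Cc$. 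The main obstacle is producing the decomposition and verifying its multiplicity-freeness: at non-generic $c$, standard modules can decompose and additional lowest-weight components appear, which both enlarges $\mathcal R_c$ and breaks the separation of multiplicity spaces. Genericity is therefore used essentially, both to ensure simplicity of the standard module and to guarantee pairwise distinctness of the $\fg$-lowest-weights.
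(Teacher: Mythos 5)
Your route to the hard inclusion at generic $c$ (a double-centraliser argument on the faithful module $K_c(\triv)=M_c(\triv)\otimes\wedge\fh^*$) is genuinely different from the paper's, but as written it has a circular gap. To conclude $\Cv_c^{Z_W}=\Cr_c$ from a multiplicity-free $\fg$-isotypic decomposition you need the multiplicity spaces to be \emph{simple over $\Cv_c^{Z_W}$ itself}; the density-of-image argument only gives this for the full centraliser $\Cr_c$, which is exactly what you are trying to show equals $\Cv_c^{Z_W}$. Worse, the Howe double-centraliser machinery of the paper (Lemmas \ref{l:A-inv}, \ref{l:finite}, Theorem \ref{t:Howe}) hinges on local finiteness and complete reducibility of the adjoint action of $\fm$ on $\bba$, and the paper explicitly warns (remark after Lemma \ref{l:A-inv}) that this fails for $\fm=\fs\fp\fo(2|2,\bbc)$. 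For this reason the paper never claims that the multiplicity spaces $\Cm_m^l(\sigma)$ are simple in the superalgebra case — Theorem \ref{t:ospduality} asserts only that they are finite-dimensional and unitary, in contrast to the $\fs\fl(2)$ case where simplicity is claimed — so the decomposition you propose to produce would not deliver what the argument needs. The paper's actual proof bypasses module theory entirely: it passes to the generic algebra $\cha[\bt]\otimes\Cc$, uses the Dirac-cohomology structure theorem $\ker d_W=\im d_W\oplus\rho(\bbc W)$ (Theorem \ref{t:vogan}, Corollary \ref{c:c^W}) to describe $\Cr[\bt]$, identifies the associated graded objects of both $\Cr[\bt]$ and $\Cv[\bt]^{Z_W}$ with $(A(\fa^\Delta[\bt])\# W)^{Z_W}$ via the classical centraliser computation in $\Cw[\bt]\otimes\Cc$ (Proposition \ref{p:gradedcentspo}), and specialises $\bt=1$.

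Your treatment of the easy inclusion is also too quick. The individual generators $\Cf_{ij}$ do \emph{not} centralise $\fg$: one computes $[D,\Cf_{ij}]=\tfrac{1}{2}[\Omega_c,u_{ij}]$, where $u_{ij}$ is the element of $(\cha\otimes\Cc)_0$ whose image under $d$ defines $\Cf_{ij}$, and since $u_{ij}$ is not $W$-invariant this is nonzero in general. Passing to the $Z_W$-fixed part of the subalgebra they generate does not make these commutators vanish by a "direct computation"; the inclusion $\Cv_c^{Z_W}\subseteq\Cr_c$ in the paper is extracted from the structural decomposition of $\Cr_c$ supplied by the Dirac-cohomology theorem (Corollary \ref{c:c^W}) together with the fact, established in Lemma \ref{l:d-one}, that each $\Cf_{ij}$ lies in $d(\cha\otimes\Cc)\cap(\Cu_c\otimes\Cc_0)$.
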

We remark that when $c=0$, $\mathcal R_0=\Cv_0\cong A(\mathfrak{so}(\mathfrak h)^\Delta)\# W$, where $A(\mathfrak{so}(\mathfrak h)^\Delta)$ denotes the associative subalgebra of $\Cw\otimes \Cc$ generated by the Lie algebra $\mathfrak{so}(\mathfrak h)$ embedded diagonally in $\mathcal W\otimes \mathcal C$.

\smallskip

Just as for the deformed version of the decomposition (\ref{e:HoweDual1}), fix an irreducible representation $\tau$ of $W$ and a generic parameter $c$ (as in Assumption \ref{a:assumption}) for which the standard 
module $M_c(\tau)$ is unitary and consider the module $K_c(\tau) = M_c(\tau)\otimes\wedge\fh^*$.

\begin{thmintro}\label{t:ospduality}
As an $(\Cr_c,\fg)$-module, the space $K_c(\tau)$ decomposes as 
\[
K_c(\tau) =\bigoplus_{\sigma\in \widehat{W}} \bigoplus_{(m,l)\in B(r)} \Cm_c(\tau,\sigma)_{m}^{l}  \boxtimes L_{\fs\fp\fo(2|2,\bbc)}(m+r/2-N_c(\tau),l-r/2+N_c(\sigma)).
\]
Here, $\Cm_c(\tau,\sigma)_{m}^{l}$ is the $\sigma$-isotypic component of the space of lowest weight vectors at the bidegree $(m,l)$. 
Each $\Cm_c(\tau,\sigma)_{m}^{l}$ is a finite dimensional unitary $\Cr_c$-module.
\end{thmintro}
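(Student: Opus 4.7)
My plan is to follow the template of the classical Howe duality in \cite[Thm.~5.39]{CW}, deformed along the lines of Theorem B above, and combined with the identification of the centraliser provided by Theorem C. The main ingredients will be: (i) a positive-definite Hermitian form on $K_c(\tau)$; (ii) an explicit description of the lowest-weight space for $\fg$; (iii) the commutativity of the diagonal $W$-action with $\fg$ established in the proof of Theorem C; and (iv) the unitarity theory for lowest-weight $\fg$-modules developed in Section~\ref{s:LWTheory}.

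First I would equip $K_c(\tau)=M_c(\tau)\otimes\wedge\fh^*$ with a positive-definite Hermitian form by tensoring the unitary form on $M_c(\tau)$ (which exists by Assumption~\ref{a:assumption}) with the natural form on $\wedge\fh^*$. Using the description of the diagonal embedding $\fg\hookrightarrow\cha\otimes\Cc$ from Section~\ref{s:spo}, one then checks that the generators of the positive nilradical of $\fg$ are adjoint to those of the negative nilradical, so that $K_c(\tau)$ becomes a unitary $\fg$-module. Next, define the lowest-weight space $\Cm_c(\tau)^l_m=(M_c(\tau)_m\otimes\wedge^l\fh^*)\cap\ker\fn^-$. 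Since the diagonal copy of $\bbc W$ commutes with $\fg$ inside $\cha\otimes\Cc$, the $W$-action preserves $\Cm_c(\tau)^l_m$, producing the isotypic decomposition $\Cm_c(\tau)^l_m=\bigoplus_{\sigma\in\widehat W}\Cm_c(\tau,\sigma)^l_m$. The key computation, combining Theorem B with the structure of the central element $z\in\fg_{\bar 0}$, is that $h$ and $z$ act on $\Cm_c(\tau,\sigma)^l_m$ by the scalars $m+r/2-N_c(\tau)$ and $l-r/2+N_c(\sigma)$ respectively; the second scalar reflects the fact that the Dunkl-deformed $z$ acquires the central term $\Omega_c$, which acts as $N_c(\sigma)$ on the $\sigma$-isotypic component of the diagonal $W$-action.

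Applying the lowest-weight classification of Section~\ref{s:LWTheory} to the weights $(m+r/2-N_c(\tau),l-r/2+N_c(\sigma))$, I would isolate precisely those pairs $(m,l)\in B(r)$ for which the cyclic $\fg$-submodule generated by $\Cm_c(\tau,\sigma)^l_m$ is the irreducible unitary lowest-weight module $L_{\fs\fp\fo(2|2,\bbc)}(m+r/2-N_c(\tau),l-r/2+N_c(\sigma))$; the remaining pairs would yield only vectors already obtained by applying positive root vectors to other lowest-weight spaces and hence contribute no new summands. Positive-definiteness of the form makes the resulting sum direct, and a standard lowest-weight exhaustion argument---any unitary $\fg$-submodule with locally finite weights bounded below contains lowest-weight vectors, but all of these are accounted for---shows that it fills $K_c(\tau)$. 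Finally, by Theorem C the algebra $\Cr_c=\Cv_c^{Z_W}$ commutes with $\fg$, so each $\Cm_c(\tau,\sigma)^l_m$ is an $\Cr_c$-submodule, finite-dimensional by bihomogeneity of bidegree $(m,l)$, unitary by restriction of the form, and simple by the standard double-commutant argument on each $(\Cr_c,\fg)$-isotypic block.

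The main obstacle will be the third step: pinning down exactly the set $B(r)$ of admissible bidegrees rather than something strictly larger. Because finite-dimensional $\fs\fp\fo(2|2,\bbc)$-representations fail to be completely reducible, one must carefully analyse when putative lowest-weight vectors at boundary bidegrees coincide with images of other ones under the odd positive root vectors---a combinatorial verification made delicate in the Dunkl setting by the absence of the explicit polynomial bases available at $c=0$, so that one has to reason via lowest-weight characters and the unitarity bound rather than by direct inspection.
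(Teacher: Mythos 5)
Your outline---positive-definite contravariant form, identification of the lowest-weight bidegrees and their weights, orthogonal exhaustion---is the same skeleton as the paper's, so the question is whether the gaps you flag yourself are actually closed. Two of them are not.

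First, the simplicity claim is unfounded and is not part of the statement. You assert that each $\Cm_c(\tau,\sigma)_m^l$ is a \emph{simple} $\Cr_c$-module ``by the standard double-commutant argument.'' The paper records explicitly (in the remark following Lemma \ref{l:A-inv}) that the ingredient underlying that argument---a decomposition $\bba=\bba^\fm\oplus\ad(\fm)(\bba)$ of the ambient algebra under the adjoint $\fm$-action---fails when $\fm=\fg=\fs\fp\fo(2|2,\bbc)$, because finite-dimensional $\fg$-modules are not semisimple. The dual-pair machinery (Theorem \ref{t:Howe} and its corollary) is accordingly applied only to the even subalgebra $(\Cu_c,\fs\fl(2))$, where $\fs$ is reductive. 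The theorem you are proving asserts only that $\Cm_c(\tau,\sigma)_m^l$ is finite-dimensional and unitary; claiming simplicity, by precisely the argument the paper takes pains to disavow for this pair, is a real error.

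Second, the determination of the admissible bidegree set $B(r)$ is the technical heart, and your description of what happens there is wrong. You write that bidegrees outside $B(r)$ ``would yield only vectors already obtained by applying positive root vectors to other lowest-weight spaces.'' That is not the mechanism: for $(m,l)\notin B(r)$ one has $\Cm_m^l=0$ outright. The paper establishes this (Propositions \ref{p:exactcplxs} and \ref{p:LWdescription}) by passing to the diagonal complexes $\Ca(n,\sigma)$ on the $\fu^-$-killed vectors, proving that $(\Ca(n,\sigma),E_2^\pm)$ is exact for $n>0$ because $\{E_2^+,E_2^-\}=H+Z$ acts there by the nonzero scalar $n-(N_c(\tau)-N_c(\sigma))$ (this is exactly where Assumption \ref{a:assumption} enters), and then combining exactness with positivity of the form to exclude lowest-weight vectors outside $B(r)$. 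Your proposed substitute, ``lowest-weight characters and the unitarity bound,'' is not a proof. Relatedly, to identify the $\Gamma$-module type of each summand ($L(+\infty,0)$, $L(+\infty,1)$, or free/Verma, as recorded in Theorem \ref{t:main}) one needs the homology computations of the complexes $(K(n,\sigma),E_2^+)$ and $(K'(n,\sigma),E_3^+)$; this, too, is glossed over in your plan even though it is needed to make the $\boxtimes$-summands in the theorem's display precise.
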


 In the last Section, we discuss the centralizer algebras of the classical, non-deformed versions
of the realizations of $\fs\fl(2,\bbc)$ and $\fs\fp\fo(2|2,\bbc)$ in the Weyl and Weyl-Clifford setting. Although the computation of these centralizer algebras seems to be well-known, we provide proofs for convenience.

\subsection{Acknowledgements}
We thank H. De Bie, R. Oste, and K. McGerty for the fruitful discussions we had while preparing this work. We are also thankful to the anonymous referees for the reference \cite{FeHa} and for valuable feedback that improved the manuscript.

\section{Initial definitions and notations}\label{s:defs}

\subsection{The rational Cherednik algebra}
Let $E = \bbr^r$ be a Euclidean space and identify $E$ and $E^*$ by means of the Euclidean structure.
We shall denote by $\gls{EucStruc}$ the Euclidean structures on both spaces.

Let $\gls{Rts}\subseteq E^*$ be a root system, with $\gls{Rp}$ a fixed set of positive roots. We will typically use the notation $\alpha > 0$ instead of $\alpha\in R_+$.
Denote by $\gls{Weyl}$ the Weyl group of $R$ and fix a parameter function $c:R\to\bbc$, denoted $c(\alpha)=c_\alpha$ which satisfies $c_{w\alpha} = c_\alpha$, for all $w\in W$ and $\alpha\in R$. To each $\alpha\in R$, 
denote by $s_\alpha$ the reflection with respect to the hyperplane orthogonal to $\alpha$. We extend this to a linear operator
on $\bbc[E]$. Denote by $\Delta_\alpha$ the corresponding divided-difference operator
\[
\Delta_\alpha = \tfrac{1}{\alpha}(1 - s_\alpha)\in\End(\bbc[E]).
\]
 For each $\eta\in E$, we let $T_\eta\in\End(\bbc[E])$ denote the Dunkl operator, defined by
 \[
 T_\eta(p) = \partial_\eta(p) - \sum_{\alpha>0}c_\alpha ( \alpha,\eta ) \Delta_\alpha(p),
 \] 
for all $p\in\bbc[E]$. For every $\xi\in E^*$, let $M_\xi$ denote the multiplication operator of $\bbc[E]$. The  associative subalgebra of $\End(\bbc[E])$ generated $\{M_\xi,T_\eta,w\mid\xi\in E^*,\eta\in E,w\in W\}$ is a  realisation
of the so-called rational Cherednik algebra (see \cite[Section 4]{EG}). Formally, let $\fh = E_\bbc$, denote by $\bbt(\fh\oplus\fh^*)$ the tensor algebra of the complex vector space $\fh\oplus\fh^*$ and by $\gls{BilPair}$ the bilinear 
pairing $\fh^*\times\fh\to\bbc$. 
Whenever $A$ is a $\bbc$-algebra and $G$ is a finite group acting on $A$,  we shall denote by $A\#G$ the skew-group algebra, which satisfies the relations $gag^{-1} = g(a)$, for all $g\in G$ and $a\in A$.

\begin{definition}\label{d:RCA}
The \emph{rational Cherednik algebra} $\gls{RCA}=\cha(W,\fh)$ is the unital, associative algebra over $\bbc$ given as the quotient of $\bbt(\fh\oplus \fh^*)\# W$ modulo the relations
\begin{equation}\label{e:relations}
\begin{aligned}
&[y,y'] = 0 = [x,x'] ,& y,y'\in \fh,x,x'\in \fh^*,\\
&[y,x] = \lpi x,y \rpi - \sum_{\alpha>0} c_\alpha\lpi \alpha,y \rpi\lpi x,\alpha^\vee\rpi s_\alpha,& y\in \fh,x\in \fh^*.
\end{aligned}
\end{equation}
\end{definition}
In what follows, we shall denote the rational Cherednik algebra simply by $\cha$, leaving the data $(W,\fh)$ implicit. As a vector space, this algebra satisfies the PBW property $\cha = \bbc[\fh]\otimes\bbc W\otimes\bbc[\fh^*]$, and this triangular decomposition
leads to the notion of category $\Co_c$. We refer the reader to \cite{GGOR} for the details on this theory. Denote by
$\gls{IrrW}$ the set of equivalence classes of irreducible $\bbc W$-modules.
For any $\tau\in\widehat{W}$ with representation space $V(\tau)$, we shall denote by $\gls{Std}\in \Co_c$ the corresponding standard module $M_c(\tau)=\cha\otimes_{W\# \bbc[\fh^*]}V(\tau)$, where $\fh$ acts on $V(\tau)$ by zero. We shall denote by $\gls{Nc}\in\bbc$, the scalar by which the central element 
$\sum_{\alpha>0}c_\alpha s_\alpha\in\bbc W$ acts on the
irreducible module $V(\tau)$.

Extend the Euclidean structure on $E$ to a positive-definite Hermitian structure on $\fh=E_\bbc$ (and on $\fh^*=E^*_\bbc$), which we shall still denote by $(\cdot,\cdot)$ and make the convention that this pairing is linear in the first variable. This defines 
an anti-linear 
isomorphism $\gls{Star}:\fh\to\fh^*$ defined by $y_1^*(y_2) = (y_2,y_1)$ for all $y_1,y_2\in\fh$. We denote the inverse also by $\ast$ and extend it to an anti-linear anti-involution on $\cha$, by setting $s_{\alpha}^*=s_\alpha$, for all $\alpha\in R_+$. 

Fix a $W$-invariant unitary structure on $V(\tau)$. Denote by $\beta_{c,\tau}$ the unique $W$-invariant Hermitian form on $M_c(\tau)$ that coincides with the Hermitian structure of $V(\tau)$ in degree zero and satisfies the contravariance property
\[\beta_{c,\tau}( yz,z' ) = \beta_{c,\tau}( z,y^*z' ),\] 
for all $y\in\fh$ and $z,z'\in M_c(\tau)$ (see \cite[Proposition 2.2]{ES}, \cite[Section 2.4]{DO} and also \cite[Theorem 3.7]{BSO}). It follows that $(M_c(\tau),\beta_{c,\tau})$ is a $*$-Hermitian $\cha$-module. 

\begin{lemma}\label{l:unitarity}
Denote by $\gls{Cspace}$ the $\bbr$-vector space of real-valued parameter functions. Assume $c\in \mathscr{C}$ and fix
$\tau\in \widehat{W}$.
\begin{enumerate}
\item[(a)] For $c=0$, the standard module $M_0(\tau)$ is unitarisable and irreducible. 
\item[(b)] There exists a neighbourhood $\gls{UnitNgh}$ of $0\in\mathscr{C}$ such that $M_c(\tau)$ is unitarisable and irreducible for all $c\in U(\tau)$. 
\item[(c)] Let $\tau = \triv$ and for each $\sigma\in \widehat{W}$, let $m(\sigma)$ denote the minimal homogeneous degree in 
which $\sigma$ occurs in $\bbc[\fh]$. Then,  $M_c(\triv)$ is unitarisable and irreducible if and only if $c$ is in the convex region of $\mathscr{C}$
determined by the inequalities $(m(\sigma) - (N_c(\triv) - N_c(\sigma))>0$, for $\sigma\in \widehat{W}\setminus\{\triv\}$.
\end{enumerate}
\end{lemma}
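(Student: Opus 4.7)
Parts (a)--(c) share a single strategy: (a) is the base case at $c=0$; (b) propagates irreducibility and positivity to a neighborhood by a continuity-of-signature argument applied to $\beta_{c,\tau}$; (c) sharpens (b) for $\tau=\triv$ by identifying precisely the reducibility walls of $M_c(\triv)$ and taking the chamber of $0$. For (a), specialising the relations in Definition \ref{d:RCA} to $c=0$ identifies $\cha$ with the Weyl-algebra crossed product $\Cw(\fh)\# W$, and $M_0(\tau)=\bbc[\fh]\otimes V(\tau)$ with $\fh^*$ acting by multiplication, $\fh$ by partial differentiation, and $W$ diagonally. Any non-zero homogeneous vector may be lowered to degree zero by iterated Dunkl (i.e.\ partial) differentiation and then expanded back by $\fh^*$, so the $W$-irreducibility of $V(\tau)$ forces irreducibility of $M_0(\tau)$. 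The unique contravariant Hermitian extension of the given inner product on $V(\tau)$ is the Fischer-type pairing, under which distinct graded monomials are orthogonal and each monomial has a strictly positive squared norm; it is therefore positive definite on each graded piece.

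For (b), the Gram matrix of $\beta_{c,\tau}|_{M_c(\tau)_n}$ in a $c$-independent PBW basis has polynomial entries in $c\in\mathscr{C}$, so its determinant $D_n(c)$ is a polynomial with $D_n(0)>0$ by (a). The Dunkl--Opdam analysis of singular vectors for $\cha$ (see \cite{DO}) identifies the vanishing loci $\{D_n=0\}$ with the reducibility hyperplanes of standard modules and shows that $\bigcup_n\{D_n=0\}$ is a locally finite affine hyperplane arrangement in $\mathscr{C}$. Let $U(\tau)\subseteq\mathscr{C}$ be any open ball around $0$ disjoint from this arrangement. On $U(\tau)$ every $D_n$ is non-vanishing, so $\beta_{c,\tau}$ is non-degenerate on each graded piece; its radical is a $\cha$-submodule by contravariance, so non-degeneracy forces irreducibility. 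The signature of a non-degenerate Hermitian form is locally constant and is positive at $c=0$, so positivity persists throughout $U(\tau)$.

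For (c), restrict to $\tau=\triv$. The Dunkl--de Jeu--Opdam classification of singular vectors in $M_c(\triv)$ asserts that a singular vector of $W$-type $\sigma\neq\triv$ at polynomial degree $m$ exists if and only if $m=N_c(\triv)-N_c(\sigma)$ and $m\geq m(\sigma)$; the reducibility locus of $M_c(\triv)$ is therefore the union of the hyperplanes $\{c : N_c(\triv)-N_c(\sigma)=m\}$ over $\sigma\in\widehat{W}\setminus\{\triv\}$ and integers $m\geq m(\sigma)$. Each inequality $m(\sigma)-(N_c(\triv)-N_c(\sigma))>0$ holds at $c=0$ (where $N_c$ vanishes and $m(\sigma)\geq 1$ for $\sigma\neq\triv$), so the connected component of $0$ in the complement of this arrangement is exactly the open convex polytope $P$ defined by those inequalities. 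Running the argument of (b) on the connected region $P$ in place of a small ball yields positive definiteness of $\beta_{c,\triv}$ throughout $P$. Conversely, on any bounding hyperplane of $P$ a singular vector appears and generates a proper submodule whose $\beta_{c,\triv}$-orthogonal complement would then produce a non-trivial direct-sum decomposition of the cyclic module $M_c(\triv)$, which is impossible; so positive definiteness fails on the walls, and by local constancy of signature it still fails throughout the region past each wall. This identifies $P$ as the exact locus of unitarity.

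The principal obstacle is importing the classical Dunkl--Opdam input used in (b) and (c): local finiteness of the reducibility arrangement via the Shapovalov-determinant factorization, and the complete classification of singular vectors in $M_c(\triv)$. Granting these, each statement reduces to a continuity-of-signature argument combined with the identification of the unitarity region as the chamber of $0$ in the relevant hyperplane arrangement.
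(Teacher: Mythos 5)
Your arguments for (a) and (b) are roughly in the spirit of \cite[Proposition~3.1]{ES}, which the paper cites, though for (b) you should be aware that the ``local finiteness'' of the reducibility locus is not automatic (the loci could in principle accumulate at $0$ as the degree grows) and is precisely the content of showing the eigenvalue $m-(N_c(\triv)-N_c(\sigma))$ of $E(c)=\sum_j x_jy_j$ stays away from $0$ for small $c$; stating it as an unexplained ``classical input'' is a gap, albeit one that \cite{ES} fills. The more serious problem is in (c), where your route is genuinely different from the paper's and relies on a claim that is not a theorem: you assert that a singular vector of type $\sigma$ at degree $m$ exists \emph{if and only if} $m=N_c(\triv)-N_c(\sigma)$ and $m\geq m(\sigma)$. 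The ``only if'' is the easy weight constraint, but the ``if'' direction is false in general — the existence of singular polynomials is much subtler than the weight condition matching, and \cite{DJO} does not establish such a clean iff for arbitrary $W$. This makes your identification of the reducibility arrangement, and hence of the chamber $P$ as its $0$-component, unfounded. In addition, the wall-crossing step (``by local constancy of signature it still fails past each wall'') is incomplete: it requires the Gram determinants to change sign at each wall rather than vanish to even order, which you do not verify.

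By contrast, the paper's proof of (c), following \cite[Proposition~2.24]{DO}, avoids any classification of singular vectors entirely. It runs a direct induction on polynomial degree using the contravariance identity
\[
\bigl(m-(N_c(\triv)-N_c(\sigma))\bigr)\,\beta_c(p,p)=\beta_c(E(c)p,p)=\sum_j\beta_c(y_jp,y_jp),
\]
for homogeneous $p$ of degree $m\geq m(\sigma)$ in the $\sigma$-isotypic component, with $E(c)=\sum_j x_jy_j$. The forward direction is then the inductive positivity of the right-hand side (noting that if every $y_jp=0$ then $E(c)p=0$ forces $p=0$ by the nonvanishing scalar), and the converse is the same identity applied at $m=m(\sigma)$, reading off the sign of the scalar from the positivity of both sides. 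This elementary argument both proves the iff in one stroke and sidesteps the signature and classification issues that your version leaves open; you should replace the DJO-based argument with it.
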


\begin{proof}
Items (a) and (b) were discussed in \cite[Proposition 3.1]{ES}. Item (c) is \cite[Proposition 2.24]{DO}. Because we use different 
notations, for convenience, we sketch the argument. Denote only by $\beta_c$ the contravariant form for $\tau=\triv.$
We shall show that $\beta_c$ is positive-definite if and only if the condition in item (c) is fulfilled.
 
Fix an orthonormal basis $\{y_j\}\subseteq E\subseteq \fh$ and let $\{x_j\}\subseteq \fh^*$ be a dual basis. Note that
this implies that $x_j^*=y_j$, for all $1\leq j\leq \dim E$.  If $p\in M_c(\triv)$ is a homogeneous polynomial of degree $m$ contained in the $\sigma$-isotypic component 
then it is standard that the element $E(c)=\sum_jx_jy_j\in\cha$ acts on $p$ by the scalar 
$m - (N_c(\triv)-N_c(\sigma))$. Note that if $p$ is $W$-invariant, this reduces to the polynomial degree.
Further, assume by induction on the polynomial degree $m$ that $\beta_c(q,q)>0$, whenever $0\neq q$ and $q$ is homogeneous of degree $l<m$. If $0\neq p$ is homogeneous of degree $m\geq m(\sigma)$ and is in the 
$\sigma$-isotypic component, then
\begin{equation}\label{e:posdefeq}
(m - (N_c(\triv)-N_c(\sigma)))\beta_c(p,p) = \beta_c(E(c)p,p) = \sum_j\beta_c(y_jp,y_jp) > 0,
\end{equation}
which yields the claim, since any polynomial can be uniquely decomposed in terms of its degree and isotypic component.  For the other implication, assume $\beta_c$ is positive definite and let $0\neq p$ be of degree $m(\sigma)$
and in the $\sigma$-isotypic component. Then, (\ref{e:posdefeq}) implies that 
$(m(\sigma) - (N_c(\triv) - N_c(\sigma))>0$. This finishes the proof.
\end{proof}

\subsection{The Clifford algebra}\label{s:cliff}
Set $V = \fh\oplus\fh^*$ and extend the natural bilinear pairing $\fh^*\times\fh\to\bbc$ to a symmetric bilinear pairing $B:V\times V\to \bbc$, by declaring $B( y,y') = 0 = B( x,x')$, for all $y,y'\in\fh$ and $x,x'\in\fh^*$. Let $\gls{Cliff} = \Cc(V,B)$ be the 
Clifford algebra over $\bbc$ associated to $V$ and the bilinear pairing indicated, with Clifford relation
\begin{equation}\label{e:Cliff-relation}
\{v,v'\} = vv' + v'v = B(v,v'),
\end{equation}
for all $v,v'$ in $V$. Since $V = \fh\oplus\fh^*$ is even dimensional, $\Cc$ has $\gls{Spin} = \wedge\fh^*$ as the irreducible spin module with Clifford action $\sigma:\Cc\to \End_\bbc(\bbs)$ given by
\begin{equation}\label{e:S-Cliff}
\sigma(x)(\omega) = \mu_x(\omega) = x\wedge \omega,\qquad \sigma(y)(\omega) = \partial_y(\omega),
\end{equation}
for all $y\in\fh,x\in\fh^*$ and $\omega\in\bbs$. Here, $\partial_y$ is the contraction characterised by 
$\partial_y(x) = \lpi x, y\rpi,$ and extended to $\wedge \fh^*$ as an odd derivation. Note that:

\begin{proposition}\label{p:Cliffstd}
Fix an orthonormal basis $\{y_j\}\subseteq E$ of $\fh = E_\bbc$ and a dual basis $\{x_j\}\subseteq \fh^*$. Then, the
element $\sum_j x_jy_j\in\Cc$ acts as multiplication by $l\in\bbz_{\geq 0}$ on $\bbs^l = \wedge^l\fh^*$.
\end{proposition}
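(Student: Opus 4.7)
The plan is to give a direct computational proof on basis elements of $\bbs = \wedge \fh^*$. Since the action of $\sum_j x_j y_j$ on $\bbs$ is $\bbc$-linear and $\bbs^l$ has a basis of wedge monomials, it suffices to verify the claim on an arbitrary such monomial.

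First, I would fix indices $i_1 < \cdots < i_l$ and set $\omega = x_{i_1}\wedge\cdots\wedge x_{i_l} \in \bbs^l$. Because $\{x_j\}$ is dual to $\{y_j\}$, we have $\lpi x_k, y_j\rpi = \delta_{jk}$. Applying the formulas (\ref{e:S-Cliff}) and the odd-derivation property of the contraction $\partial_{y_j}$, I would compute
\[
\partial_{y_j}(\omega) = \sum_{k=1}^{l} (-1)^{k-1}\,\delta_{i_k,j}\, x_{i_1}\wedge\cdots\widehat{x_{i_k}}\cdots\wedge x_{i_l}.
\]

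Next I would apply $\mu_{x_j}$ (wedge by $x_j$ from the left). In the sum above, the Kronecker $\delta_{i_k,j}$ forces $x_j = x_{i_k}$, so reinserting $x_{i_k}$ at the front of the reduced monomial and moving it back to the $k$-th slot contributes an additional sign $(-1)^{k-1}$. Therefore,
\[
\mu_{x_j}\partial_{y_j}(\omega) = \sum_{k=1}^{l}\delta_{i_k,j}\,\omega.
\]
Summing over $j$ collapses the Kronecker symbols to give $\sum_{j}\mu_{x_j}\partial_{y_j}(\omega)=l\,\omega$, as required.

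The argument is essentially bookkeeping of the signs in the contraction–wedge composition, and the only mild subtlety is the double sign cancellation in step two (one sign from $\partial_{y_j}$, one from reinserting $x_j$); the orthonormality hypothesis is used precisely to guarantee $\lpi x_k,y_j\rpi=\delta_{jk}$, ensuring both that the wrong-index terms vanish and that each surviving term contributes $+1$. Since every $\omega\in\bbs^l$ is a linear combination of such monomials, extending by linearity completes the proof.
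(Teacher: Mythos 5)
Your proof is correct: the sign bookkeeping in the contraction--wedge composition is exactly right, and the computation cleanly yields multiplication by $l$ on a general wedge monomial. The paper's own proof is considerably terser --- it only checks that $\sum_j \sigma(x_j)\sigma(y_j)(x)=x$ on $\fh^*=\bbs^1$ and says the general case ``follows'', implicitly relying on the fact that $N=\sum_j \mu_{x_j}\partial_{y_j}$ is an (even, degree-zero) derivation of $\wedge\fh^*$: since $\partial_{y_j}$ is an odd derivation, one verifies $N(\omega_1\wedge\omega_2)=N(\omega_1)\wedge\omega_2+\omega_1\wedge N(\omega_2)$, so an operator that is the identity in degree $1$ must be multiplication by $l$ in degree $l$. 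Your route substitutes the derivation lemma with explicit bookkeeping on a monomial basis; it costs a few more lines but is fully self-contained, whereas the paper's one-liner is slicker but leaves the degree-$1$-to-degree-$l$ step to the reader.
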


\begin{proof}
Follows from $\sum_j \sigma(x_j)\sigma(y_j)(x) = x$, for all $x\in \fh^*$.
\end{proof}

Similarly to what was defined for $\cha$, we extend the anti-linear isomorphism $\gls{Star}:\fh\to\fh^*$ to an anti-linear anti-involution on $\Cc=\wedge V$. The Hermitian product $(\cdot,\cdot)$ on $\fh^* = E^*_\bbc$ extends to a positive-definite Hermitian structure $( \cdot,\cdot )_\bbs$ on $\bbs$, which, on decomposable vectors of degree $l$, takes the form
$(x'_1\wedge\cdots\wedge x'_l, x''_1\wedge\cdots\wedge x''_l)_\bbs = \det((x'_j,x''_k)),$
with $x'_1,\ldots x'_l,x''_1,\ldots,x''_l\in\fh^*$. One checks that $(\cdot,\cdot)_\bbs$ satisfies
\[(\sigma(y)(\omega),\omega')_\bbs = (\omega,\sigma(y^*)\omega')_\bbs,\]
for all $y\in\fh$ and $\omega,\omega'\in\bbs$ and thus $(\bbs,(\cdot,\cdot)_\bbs)$ is a $*$-unitary $\Cc$-module.

\subsection{The $\fs\fp\fo(2|2,\bbc)$ Lie superalgebra}\label{s:spo}
We will follow the conventions from \cite{CW}. The Lie superalgebra $\fs\fp\fo(2|2,\bbc)$ is explicitly realised as the subspace of $\textup{Mat}_4(\bbc)$ given by
\[
\fs\fp\fo(2|2,\bbc)=\left\{
\begin{pmatrix}
	a&b&y&u\\c&-a&-v&-x\\x&u&d&0\\v&y&0&-d 
\end{pmatrix}
; a,b,c,d,x,y,u,v\in\bbc
\right\}\subseteq\textup{Mat}_4(\bbc).\]
We will fix the following basis for $\fg:=\fs\fp\fo(2|2,\bbc)$, where $E_{ij}\in \textup{Mat}_4(\bbc)$ stands for the elementary matrix with $1$ in the
 entry $(i,j)$ and zero everywhere else:
\begin{equation}\label{e:spobasis}
\begin{array}{rclcrclcrclcrcl}
h     &:=& E_{11}-E_{22},&& e_1^+ &:=& E_{12},&&
e_2^+ &:=& E_{31}-E_{24},       && e_3^+ &:=& E_{14}+E_{32},\\
z     &:=& E_{33}-E_{44},&& e_1^- &:=& E_{21},&&
e_2^- &:=& E_{13}+E_{42},&& e_3^- &:=& E_{41}-E_{23}.
\end{array}
\end{equation}
The even part $\fg_{\bar 0}$ is spanned by $h,z,e_1^\pm$ and is isomorphic to $\fs\fp(2)\oplus\fo(2)$. They satisfy the
relations
\begin{equation}\label{e:gl2}
[h,e_1^\pm]=\pm 2e_1^\pm,\qquad[e_1^+,e_1^-]=h,\qquad [h,z]=0=[e_1^\pm,z].
\end{equation}
The odd part $\fg_{\bar 1}$ is spanned by $e_2^\pm, e_3^\pm$ and the adjoint action of $\fg_{\bar 0}$ on $\fg_{\bar 1}$
is expressed by the relations
\begin{equation}\label{e:evenaction}
\begin{array}{rclcrclcrclcrcl}
\;\! [h,e_2^\pm] &=&\mp  e_2^\pm,&&[z,e_2^\pm] &=&\pm e_2^\pm,&&
\;\! [h,e_3^\pm] &=&\pm  e_3^\pm,&&[z ,e_3^\pm] &=&\pm e_3^\pm,\\
\;\! [e_1^\pm,e_2^\mp] &=& 0,&&[e_1^\pm,e_2^\pm] &=& -  e_3^\pm ,&&
\;\! [e_1^\pm,e_3^\pm] &=& 0,&& [e_1^\pm,e_3^\mp] &=& -  e_2^\mp.
\end{array}
\end{equation}
Moreover, the odd elements satisfy the anti-commutation relations
\begin{equation}\label{e:anti-commutation}
\begin{array}{rclcrclcrcl}
\{e_2^+,e_2^-\}&=& h+z,&&\{e_3^+,e_3^-\}&=& h-z,&&
\{e_2^\mp,e_3^\pm\}&=&\pm2e_1^\pm ,\\
\{e_2^\pm,e_2^\pm\}&=& 0,&&\{e_3^\pm,e_3^\pm\}&=& 0,&&
\{e_2^\pm,e_3^\pm\}&=& 0.              
\end{array}
\end{equation}
The equations above provide a complete set of relations for $\fs\fp\fo(2|2,\bbc)$. They
can be better understood in terms of a root space decomposition of $\fg$. Let $\ft=\bbc h\oplus \bbc z$ be a Cartan
subalgebra, realised as the subspace of diagonal matrices, and let $\{\delta,\epsilon\}\subseteq \ft^*$ be the basis dual to $\{h,z\}$. The root system of $\fg$ with respect to $\ft$ is $\Phi = \{\pm 2\delta\}\cup\{\pm\delta\pm\epsilon\}$ and
we will declare the roots $\alpha_1:=2\delta,\alpha_2:=\epsilon-\delta$ and $\alpha_3:=\epsilon+\delta$ to be the positive ones. The root space decomposition is $\fg=\ft\oplus\big(\oplus_\alpha\fg_\alpha\big)$ with $\fg_{\pm\alpha_i}$ spanned
by $e_i^\pm$. The (super) commutation relations $[\fg_{\alpha},\fg_{\beta}]\subseteq\fg_{\alpha+\beta}$ are contained
in the equations (\ref{e:gl2}) to (\ref{e:anti-commutation}). With respect to the linear basis fixed in (\ref{e:spobasis}), we let 
\begin{equation}\label{e:nilpot}
\gls{UNilp}:=\bbc e_1^-\oplus\bbc e_3^-\qquad\textup{ and }\qquad
\gls{NNilp}:= \fu^-\oplus\bbc e_2^-. 
\end{equation}
Similarly, we define $\fn^+$ and $\fu^+$.

\section{Lowest weight modules for $\mathfrak{spo}(2|2)$}\label{s:LWTheory}

\subsection{Standard and lowest weight modules}
Let $\fa = \fa_{\bar 0}\oplus \fa_{\bar 1}$ be a finite-dimensional super Lie algebra (over $\bbc$). We shall denote by $\Cu(\fa)$
its universal enveloping algebra. If $\{e_1,\ldots,e_t\}\subseteq \fa_{\bar 0}$ and $\{o_1,\ldots,o_s\}\subseteq \fa_{\bar 1}$
are linear bases, with $s,t\in\bbz_{\geq 0}$, then $\Cu(\fa)$ satisfies the PBW Theorem (see \cite[Theorem 1.36]{CW}):
\begin{theorem}\label{t:PBW}
The set $\{o_1^{q_1}\cdots o_s^{q_s}e_1^{p_1}\cdots e_t^{p_t}\mid q_1,\ldots,q_s\in\{0,1\},
 p_1,\ldots,p_t\in \bbz_{\geq 0}\}$ is a basis for $\Cu(\fa)$.
\end{theorem}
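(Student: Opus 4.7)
The plan is to follow the classical two-step strategy used for the PBW theorem of an ordinary Lie algebra, now adapted to the super setting.

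\emph{Spanning.} First I would show that the proposed monomials span $\Cu(\fa)$. Any element of $\Cu(\fa)$ is a linear combination of products $x_{i_1}\cdots x_{i_N}$ where each factor is either some $e_i$ or some $o_j$. Using the defining super-commutation relations to swap two adjacent factors produces a strictly shorter correction, which lets one rearrange the word: first move all $o_j$'s to the left of all $e_i$'s, then sort the $e$-block and the $o$-block. The restriction $q_i\in\{0,1\}$ follows from the identity $o_i^2 = \tfrac{1}{2}\{o_i,o_i\}$, which rewrites $o_i^2$ in terms of an element of $\fa_{\bar 0}$; thus a straightforward induction on word length, together with a secondary induction on the number of out-of-order adjacencies, gives spanning.

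\emph{Linear independence.} For the hard direction, the classical route is to construct a faithful action on a model space. Let $M = S(\fa_{\bar 0})\otimes\wedge\fa_{\bar 1}$, with its evident basis of tensors $e_1^{p_1}\cdots e_t^{p_t}\otimes o_{j_1}\wedge\cdots\wedge o_{j_k}$, where $j_1<\cdots<j_k$. I would define a $\bbz_{\geq 0}$-filtered representation $\rho:\fa\to\End(M)$ on generators, inductively in a way that mimics the standard Verma-type construction but with signs dictated by the super convention, and then check that $\rho$ respects the defining super-commutators of $\fa$. Once $\rho$ extends to $\Cu(\fa)$, the ordered monomial $o_1^{q_1}\cdots o_s^{q_s}e_1^{p_1}\cdots e_t^{p_t}$ applied to $1\in M$ should produce, up to sign, the corresponding basis vector. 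Since those basis vectors are manifestly linearly independent in $M$, independence of the ordered monomials in $\Cu(\fa)$ follows.

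\emph{Main obstacle.} The principal difficulty is constructing $\rho$ and verifying that it descends from $\bbt(\fa)$ to $\Cu(\fa)$: the super-Jacobi identity must hold on $M$, and this splits into cases according to the parities of three factors (even-even-even, even-even-odd, even-odd-odd, odd-odd-odd). The odd-odd-odd case, in particular, amounts to the vanishing of a certain associator, whose computation involves a delicate sign analysis inherited from the $\wedge\fa_{\bar 1}$ factor. This is essentially a Diamond Lemma check in the super setting, and it is the step where the bookkeeping with signs has to be carried out scrupulously. Once $\rho$ is established as a well-defined representation, both the spanning and the independence claims close immediately.
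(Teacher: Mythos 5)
The paper does not prove this statement at all: Theorem~\ref{t:PBW} is cited directly from \cite[Theorem~1.36]{CW}, so there is no in-house argument to compare against. That said, your outline is a sound summary of one of the standard routes (spanning by straightening the word using supercommutators together with $o_i^2=\tfrac12\{o_i,o_i\}\in\fa_{\bar0}$, then independence via a faithful model on $S(\fa_{\bar0})\otimes\wedge\fa_{\bar1}$, verified by a super Diamond Lemma / Jacobi check in all parity cases). Two small points worth sharpening if you were to carry it out. First, defining $\rho$ ``so that the ordered monomial applied to $1$ gives the basis vector'' is circular as stated; the usual fix is to define the action of each generator on the (putative) basis vectors of $M$ by a filtration recursion, then check consistency, and only afterwards observe that the leading term of an ordered monomial acting on $1$ is the expected tensor. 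Second, note the ordering mismatch between your model space $S(\fa_{\bar0})\otimes\wedge\fa_{\bar1}$ and the monomials $o_1^{q_1}\cdots o_s^{q_s}e_1^{p_1}\cdots e_t^{p_t}$ (odd factors on the left): since the $e_i$ are even, the odd factors slide past the even block without signs, so there is in fact no sign correction in the leading term — but this should be said explicitly. As you correctly flag, the real work is the case-by-case super-Jacobi verification (especially odd--odd--odd), and your proposal leaves that unexecuted; since the paper itself defers to \cite{CW}, that is not a defect of fidelity to the source, but it does mean the argument as written is a plan rather than a proof.
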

The algebra $\fg$ has a triangular decomposition
$\fg = \fn^+\oplus\ft\oplus\fn^-$,
where $\ft$ is the Cartan subalgebra of $\fg$. We let $\fb^- = \ft\oplus\fn^-$, so that 
$\Cu(\fg) = \Cu(\fn^+)\otimes\Cu(\fb^-)$. For each 
$\lambda=a\delta+b\epsilon \in \ft^*$, we let $\bbc_\lambda$ denote the simple $\ft$-module at
$\lambda$. The {\it standard module} at $\lambda$ is defined as
\[\bbv(\lambda) := \Cu(\fg)\otimes_{\Cu(\fb^-)}\bbc_\lambda.\]
Let $v_\lambda = 1\otimes 1\in \bbv(\lambda)$. From the PBW Theorem \ref{t:PBW}, it follows that the set
\begin{equation}\label{e:Vermabasis}
 \{(e_2^+)^{q_2}(e_3^+)^{q_3}(e_1^+)^{p}v_\lambda\mid q_2,q_3\in\{0,1\},p\in \bbz_{\geq 0}\}
 \end{equation}
 is a linear basis for $\bbv(\lambda)$. Throughout, by a {\it lowest weight module} for $\fg = \fs\fp\fo(2|2,\bbc)$, we shall mean a
 $\fg$-module generated by a single nonzero vector that is annihilated by $\fn^-$. Such modules are, necessarily, a quotient 
 of $\bbv(\lambda)$, for some $\lambda\in \ft^*$.

\subsection{Modules for the Grassmann algebra}\label{s:grassmann}
In this section, we shall follow \cite[Chapter 18, \textsection 2]{Ma}, although our discussion is slightly different since we consider a bigraded theory instead of just graded. Let $\Gamma=\wedge[e_2,e_3]$ be the Grassmann 
(or exterior) algebra in two generators. They satisfy the relations
\[e_2^2=0=e_3^2,\qquad e_2e_3 = - e_3e_2.\]
We shall view $\Gamma$ as a bigraded algebra with 
$\Gamma = \Gamma_{(0,0)}\oplus\Gamma_{(-1,1)}\oplus\Gamma_{(1,1)}\oplus\Gamma_{(0,2)}$ and
\[\Gamma_{(0,0)} = \bbc,\quad\Gamma_{(-1,1)}=\bbc e_2,\quad \Gamma_{(1,1)}=\bbc e_3,\quad 
\Gamma_{(0,2)}=\bbc e_2e_3 = \bbc e_3e_2.\]
The indecomposable modules of $\Gamma$ are well-understood. Up to isomorphism and degree shifts, 
an indecomposable $\Gamma$-module is either the free module on one 
generator or a {\it lightning-flash module}, which we recall the definition next:
\begin{definition}
Let $J\subseteq \bbz$ and $\delta_i\in\{0,1\}$ for $i=2,3$. Let also $\{x_j\mid j\in J\}$ be a set of
generators with $x_0$ in degree $(0,0)$, $x_j$ in degree $(2j,0)$ and put 
$F = \oplus_{j\in J}\Gamma x_j$. Define: 
\begin{enumerate}
\item $L(k,\delta_2,\delta_3)$, with $J = \{0,1,\ldots,k\}$, as the quotient of $F$ modulo 
the relations \[e_3x_j = e_2x_{j+1},\quad (1-\delta_2)e_2x_0=0,\quad (1-\delta_3)e_3x_k=0.\]
\item $\gls{Lightning}$, with $J = \{0,1,\ldots\}$, as the quotient of $F$ modulo 
the relations \[e_3x_j = e_2x_{j+1},\quad (1-\delta_2)e_2x_0=0.\]
\item $L(-\infty,\delta_3)$, with $J = \{\ldots,-1,0\}$, as the quotient of $F$ modulo 
the relations \[e_3x_j = e_2x_{j+1},\quad (1-\delta_3)e_3x_0=0.\]
\item $L(\infty)$, with $J = \{\ldots,-1,0,1,\ldots\}$, as the quotient of $F$ modulo 
the relation \[e_3x_j = e_2x_{j+1}.\]
\end{enumerate}
\end{definition}

\begin{theorem}[\cite{Ma}, Theorem 5, Chapter 18, \textsection 2]
Every $\Gamma$-module has a decomposition, unique up to isomorphism, as the coproduct of a free module
and the coproduct of lightning flash modules. 
\end{theorem}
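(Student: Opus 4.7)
My strategy is two-fold: first peel off the free summands using the Frobenius property of $\Gamma$, then classify the remainder (on which $e_2 e_3$ acts by zero) as a direct sum of lightning flash modules.

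For the first part, one observes that $\Gamma$ is a four-dimensional local Frobenius algebra --- the linear functional $\Gamma\to\bbc$ reading off the coefficient of $e_2 e_3$ is a symmetric nondegenerate trace. Consequently $\Gamma$ is self-injective, so any submodule isomorphic to a direct sum of copies of $\Gamma$ splits off as a direct summand. Using Zorn's lemma, choose a maximal free submodule $F\subseteq M$ and write $M=F\oplus M'$; by maximality $M'$ contains no free submodule. I then claim that $e_2e_3\cdot M'=0$. If some $m\in M'$ had $e_2e_3\,m\neq 0$, a direct computation using $e_2^2=e_3^2=0$ and $e_3e_2=-e_2e_3$ shows that $m,\,e_2m,\,e_3m,\,e_2e_3\,m$ are linearly independent (applying $e_2e_3$, then $e_3$, then $e_2$ to any putative relation peels off the coefficients one at a time), so $\Gamma m\cong\Gamma$ would split as a summand of $M'$ by the Frobenius property, contradicting the maximality of $F$.

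Consequently $M'$ is a module over the quotient algebra $\Lambda:=\Gamma/(e_2e_3)$, which is a special biserial (string) algebra: its Ext quiver has a single vertex with two loops $e_2,e_3$ modulo monomial relations of length two. Indecomposable $\Lambda$-modules are classified by reduced walks in this quiver that alternate between $e_2$ and $e_3$; enumerating the possibilities --- finite, one-sided infinite in either direction, or doubly infinite, each optionally closed at an endpoint by a relation $e_2x_0=0$ or $e_3x_k=0$ (encoded by the parameters $\delta_2,\delta_3$) --- yields precisely the four families $L(k,\delta_2,\delta_3)$, $L(\pm\infty,\delta)$, $L(\infty)$ of the statement. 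The $(\bbz\times\bbz)$-bigrading of $\Gamma$ transfers to each string module up to an overall bidegree shift, recovering the degrees declared in the definitions. Uniqueness of the decomposition then follows from Krull--Schmidt in the finite length case and from the uniqueness clause in the string-algebra classification in general.

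The main obstacle, I expect, is precisely this last step in full generality: the infinite-dimensional indecomposables $L(\pm\infty,\delta)$ and $L(\infty)$ require the nontrivial classification of modules over a string algebra, or else a delicate direct construction using limits and colimits of finite string modules. For finitely generated $M$ a more elementary inductive argument is available --- induct on $\dim M$ starting from a generator realising the deepest position in the $(e_2,e_3)$-radical filtration and split off the corresponding finite lightning flash --- but extending this cleanly to arbitrary (not necessarily finitely generated) modules seems to genuinely require either Zorn-type arguments or the full string-algebra machinery.
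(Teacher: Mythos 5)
The paper offers no proof for this statement; it is quoted directly from Margolis, so there is no internal argument to compare against, and your sketch must be judged on its own. Your macro-strategy (split off free summands using that $\Gamma$ is a local Frobenius algebra, then classify the remainder on which $e_2 e_3$ acts by zero) is the right shape, and the linear-independence computation forcing $e_2e_3 M' = 0$ is correct. The genuine gap is in the classification step. You pass to $\Lambda := \Gamma/(e_2 e_3)$, observe it is a string algebra, and assert that its indecomposables are "classified by reduced walks," enumerating only the string modules. You have omitted the \emph{band modules}. The word $e_2 e_3^{-1}$ is a band for $\Lambda$, and already in rank one the resulting module $\bbc v_0 \oplus \bbc v_1$ with $e_2 v_0 = v_1$, $e_3 v_0 = \lambda v_1$, $\lambda \in \bbc^{\times}$, is an indecomposable $\Lambda$-module that is neither free over $\Gamma$ nor isomorphic to any lightning flash; in the Kronecker-quiver picture your reduction produces, these are the regular representations at parameters $\lambda \in \bbp^1\setminus\{0,\infty\}$, whereas the lightning flashes realize only the preprojectives, preinjectives, and the regulars at $0$ and $\infty$. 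So the theorem as you have argued it is actually false for ungraded modules: $\Gamma$ has tame, not finite, representation type. The result is a statement about \emph{bigraded} $\Gamma$-modules (the standing convention of the section, inherited from Margolis), and the bigrading does essential work in the classification, not merely cosmetic bookkeeping at the end: since $e_2$ has bidegree $(-1,1)$ and $e_3$ has bidegree $(1,1)$, the band $e_2 e_3^{-1}$ has net degree shift $(-2,0)\neq 0$ and therefore cannot close up on any bigraded module, which is precisely why bands do not occur. Your remark that the bigrading "transfers to each string module up to an overall bidegree shift" treats the grading as a label on the answer; it must instead be invoked inside the string/band dichotomy to kill the bands.

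A lesser point: the Zorn argument for a maximal free direct summand needs the union of a chain of free direct summands to remain a free direct summand, which is not automatic. It does hold here — $\Gamma$ is Artinian local self-injective, so free modules of any rank are injective, directed unions of injectives are injective over a Noetherian ring by Baer's criterion, and Matlis's theorem then forces such an injective to be a coproduct of copies of $\Gamma$, hence free — but as written the step is a gap, and in the bigraded category one must additionally check that this machinery applies degreewise.
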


\begin{definition}
Let $M$ be a $\Gamma$-module.  For $j\in\{2,3\}$ define the vector space $H(M,e_j) := \ker(e_j)/\im(e_j)$. 
We shall call them the {\it $e_j$-homology of $M$}. If $M$ is bigraded, the homologies inherit the grading.
\end{definition}

\subsection{Lowest weight modules and the Grassmann algebra}
From (\ref{e:anti-commutation}), the elements $e_2^+,e_3^+$ generate a copy of $\Gamma$ inside $\Cu(\fn^+)$. 
The purpose of this section is to classify, in terms of indecomposable modules for the Grassmann algebra 
$\Gamma \cong \wedge[e_2^+,e_3^+]$, the types of lowest weight modules that will be of interest to us. 
Throughout, we shall view  $\Cu(\fn^+)$ 
as a bigraded algebra by declaring $e_1^+$ to be of degree $(2,0)$ and $e_2^+,e_3^+$ to be of degree $(-1,1)$ and
$(1,1)$, respectively.

\begin{proposition}\label{p:VermaG}
Let $\lambda\in \ft^*$. The standard module $\bbv = \bbv(\lambda)$ is a free $\Gamma$-module on the generators 
$\{(e_1^+)^pv_\lambda\mid p\in\bbz_{\geq 0}\}$. If we declare $v_\lambda$ to
be of degree $(0,0)$, then 
$\bbv = \oplus_{p\in\bbz_{\geq 0}}(\bbv_{(2p,0)}\oplus\bbv_{(2p-1,1)}\oplus\bbv_{(2p,2)})$ with
\[\dim\bbv_{(2p,0)}=1=\dim\bbv_{(2p,2)},\quad\dim\bbv_{(-1,1)}=1,\qquad\dim\bbv_{(2p+1,1)}=2, \]
for all $p\in\bbz_{\geq 0}$.
\end{proposition}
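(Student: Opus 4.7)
The plan is to reduce the statement to a direct application of the PBW theorem (\ref{e:Vermabasis}) and then unpack the bigrading by enumeration. First, I would observe that Theorem \ref{t:PBW}, applied to $\fn^+ = \bbc e_1^+ \oplus \bbc e_2^+ \oplus \bbc e_3^+$ (with $e_1^+$ even and $e_2^+, e_3^+$ odd), produces the basis
\[
\{(e_2^+)^{q_2}(e_3^+)^{q_3}(e_1^+)^p v_\lambda \mid q_2,q_3 \in \{0,1\},\ p \in \bbz_{\geq 0}\}
\]
for $\bbv(\lambda)$, as stated in (\ref{e:Vermabasis}). Because $\{1, e_2^+, e_3^+, e_2^+ e_3^+\}$ is a $\bbc$-basis of $\Gamma = \wedge[e_2^+, e_3^+]$, this PBW basis is precisely the disjoint union, over $p \in \bbz_{\geq 0}$, of the sets $\Gamma \cdot (e_1^+)^p v_\lambda$. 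Uniqueness of the PBW expansion then yields freeness of $\bbv(\lambda)$ over $\Gamma$ with generators $(e_1^+)^p v_\lambda$.

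For the bigraded part, I would check that assigning $v_\lambda$ bidegree $(0,0)$ forces the PBW basis vector $(e_2^+)^{q_2}(e_3^+)^{q_3}(e_1^+)^p v_\lambda$ to sit in bidegree $(2p - q_2 + q_3,\ q_2 + q_3)$, given the bidegrees $(2,0)$, $(-1,1)$, $(1,1)$ of $e_1^+, e_2^+, e_3^+$. A case analysis on $(q_2, q_3)$ then identifies which PBW vectors contribute to a given bidegree: the choice $(0,0)$ supplies one vector in each bidegree $(2p,0)$; the choice $(1,1)$ supplies one vector in each bidegree $(2p,2)$; the choice $(1,0)$ contributes one vector in each bidegree $(2p-1,1)$ with $p\geq 0$; and the choice $(0,1)$ contributes one vector in each bidegree $(2p+1,1)$ with $p\geq 0$.

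The only bookkeeping that requires attention, and the only place where something could go wrong, is the odd second-coordinate layer, where the contributions from $(q_2,q_3)=(1,0)$ and $(q_2,q_3)=(0,1)$ can overlap. Comparing the two families, the bidegree $(-1,1)$ is hit only by $(q_2,q_3)=(1,0)$ at $p=0$, namely by $e_2^+ v_\lambda$, giving $\dim \bbv_{(-1,1)} = 1$; every bidegree $(2p+1,1)$ with $p \geq 0$ is hit once by $e_3^+ (e_1^+)^{p} v_\lambda$ and once by $e_2^+(e_1^+)^{p+1} v_\lambda$, giving $\dim \bbv_{(2p+1,1)} = 2$. Assembled with the previous paragraph, this accounts for every PBW basis vector exactly once and yields the asserted dimension formulas, at the same time exhibiting the direct sum decomposition indexed by $p \in \bbz_{\geq 0}$.
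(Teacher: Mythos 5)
Your argument is correct and follows essentially the same route as the paper's: both proofs rest on the PBW basis \eqref{e:Vermabasis}, deduce freeness from the observation that the PBW monomials $(e_2^+)^{q_2}(e_3^+)^{q_3}(e_1^+)^p v_\lambda$ are exactly $\Gamma\cdot(e_1^+)^p v_\lambda$, compute the bidegree $(-q_2+q_3+2p,\,q_2+q_3)$ of each basis vector, and then count. Your explicit attention to the possible overlap in the $q_2+q_3=1$ layer (resolved by matching $e_3^+(e_1^+)^p v_\lambda$ with $e_2^+(e_1^+)^{p+1}v_\lambda$) mirrors the paper's display \eqref{e:vermadegrees}.
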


\begin{proof}
From the description (\ref{e:Vermabasis}) of the linear basis of $\bbv$, each $v_{2p}:=(e_1^+)^pv_\lambda$ generates a $\Gamma$-module isomorphic to $\Gamma$. Moreover, $(e_2^+)^{q_2}(e_3^+)^{q_3}(e_1^+)^{p}v_\lambda$ is of degree 
$(-q_2+q_3+2p,q_2+q_3)$, with $p\in\bbz_{\geq 0}$ and $q_2,q_3\in \{0,1\}$. Counting the occurrences of a bidegree $(m,n)$ we 
get
\begin{equation}\label{e:vermadegrees}
\begin{array}{rclcrcl}
\bbv_{(2p,0)} &=& \bbc((e_1^+)^pv_0), && \bbv_{(2p,2)} &=& \bbc (e_2^+e_3^+(e_1^+)^pv_0),\\
\bbv_{(-1,1)} &=& \bbc (e_2^+v_0), && \bbv_{(2p+1,1)} &=& \bbc (e_3^+(e_1^+)^{p}v_0)\oplus(e_2^+(e_1^+)^{p+1}v_0), \\
\end{array}
\end{equation}
which yields the claim on the dimensions. 
\end{proof}

\begin{remark}
The $(p,q)$-degree space of $\bbv(\lambda)$ corresponds to the $(\lambda + p\delta + q\epsilon)$-weight space.
\end{remark}

\begin{corollary}
Let $L$ be a lowest weight module for $\fg$ generated by $v_0$. If we declare $v_0$ to be of degree 
$(0,0)$, then, as a graded $\Gamma$-module, we have $L = \oplus_{p\in\bbz_{\geq 0}}(L_{(2p,0)}\oplus 
L_{(2p-1,1)}\oplus L_{(2p,2)})$ and 
\[\dim L_{(2p,0)}\leq 1,\quad \dim L_{(2p,2)}\leq 1,\quad\dim L_{(-1,1)}\leq 1,
\qquad\dim L_{(2p+1,1)}\leq2, \]
for all $p\in\{0,1,\ldots\}$.
\end{corollary}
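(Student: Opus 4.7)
The plan is to argue that every lowest weight module $L$ with lowest weight vector $v_0$ is a quotient of a standard module, then transport the dimension count from Proposition \ref{p:VermaG} through this surjection.

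First I would recall that, by the universal property of induction, any lowest weight $\fg$-module $L$ generated by a nonzero vector $v_0$ annihilated by $\fn^-$ and on which $\ft$ acts by some character $\lambda\in\ft^*$ is a quotient of the standard module $\bbv(\lambda)$ via the map sending $v_\lambda\mapsto v_0$. By declaring $v_\lambda$ and $v_0$ both to have bidegree $(0,0)$ and using that the generators $e_1^+, e_2^+, e_3^+$ of $\Cu(\fn^+)$ have fixed bidegrees $(2,0), (-1,1), (1,1)$ respectively, this surjection is a morphism of bigraded $\Cu(\fn^+)$-modules (hence of bigraded $\Gamma$-modules).

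Next I would note that surjectivity in each bidegree gives $\dim L_{(m,n)}\leq \dim\bbv(\lambda)_{(m,n)}$. The shape of the decomposition $L=\bigoplus_{p\in\bbz_{\geq 0}}(L_{(2p,0)}\oplus L_{(2p-1,1)}\oplus L_{(2p,2)})$ (that is, the fact that only these bidegrees can occur) is inherited from the analogous decomposition of $\bbv(\lambda)$ established in (\ref{e:vermadegrees}), since any bidegree not occurring upstairs cannot occur downstairs. Plugging in the dimensions $\dim\bbv(\lambda)_{(2p,0)}=\dim\bbv(\lambda)_{(2p,2)}=1$, $\dim\bbv(\lambda)_{(-1,1)}=1$, and $\dim\bbv(\lambda)_{(2p+1,1)}=2$ from Proposition \ref{p:VermaG} immediately gives the four inequalities in the statement.

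There is no real obstacle here: the argument is essentially a transport of structure along the canonical surjection $\bbv(\lambda)\twoheadrightarrow L$. The only subtle point, which I would mention explicitly, is that the bigrading is well-defined on $L$ because the kernel of the surjection is a $\fg$-submodule of $\bbv(\lambda)$ generated by elements of definite bidegree, hence is itself bigraded; this is automatic since $\fg$ acts by bihomogeneous operators (the Cartan part is in bidegree $(0,0)$, and $\fn^\pm$ act by operators of fixed bidegrees dictated by the weights).
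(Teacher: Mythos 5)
Your proof is correct and matches the argument the paper intends: the paper has already noted (just before Proposition \ref{p:VermaG}) that any lowest weight module is a quotient of some $\bbv(\lambda)$, and the corollary is then simply the transport of the bigraded dimension count of Proposition \ref{p:VermaG} along that surjection, which respects the bigrading for the reasons you give. Your extra remark on why the bigrading descends to $L$ is a useful elaboration of a point the paper treats as self-evident.
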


Let $\fp^-\subseteq \fg_{\bar 0}$ denote the subalgebra of the even part spanned by $h,z,e_1^-$. If 
$\mu\in\ft^*=\bbc\delta\oplus\bbc\epsilon$, we shall denote by 
$\bbu(\mu) = \Cu(\fg_{\bar 0})\otimes_{\Cu(\fp^-)}\bbc_\mu$ the corresponding standard module of $\fg_{\bar 0}$ at $\mu$.

\begin{proposition}\label{p:VermaDec}
Let $\lambda\in\ft^*$. The restriction of $\bbv(\lambda)$ to its even part $\fg_{\bar 0} \cong \fg\fl(2)$ decomposes as
\[\bbv(\lambda) = \bbu(\lambda)\oplus \bbu(\lambda+\epsilon-\delta)\oplus \bbu(\lambda+2\epsilon)\oplus 
\bbu(\lambda+\epsilon+\delta).\]
\end{proposition}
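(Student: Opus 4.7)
The plan is to set $a = \lambda(h)$ and to exhibit four explicit $\fg_{\bar 0}$-lowest weight vectors $w_1, w_2, w_3, w_4$ in $\bbv(\lambda)$, one at each of the prescribed weights $\lambda$, $\lambda + \epsilon - \delta$, $\lambda + 2\epsilon$, $\lambda + \epsilon + \delta$. By the universal property of the standard $\fg_{\bar 0}$-module, each $w_i$ will define a $\fg_{\bar 0}$-module map $\phi_i \colon \bbu(\mu_i) \to \bbv(\lambda)$ sending the canonical generator to $w_i$; it will then remain to check that each $\phi_i$ is injective and that the four images together fill out $\bbv(\lambda)$.

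The vectors $w_1 = v_\lambda$, $w_2 = e_2^+ v_\lambda$ and $w_3 = e_2^+ e_3^+ v_\lambda$ are immediately seen to be lowest weight vectors for $\fg_{\bar 0}$: one uses that $\fn^-$ annihilates $v_\lambda$, that $[e_1^-, e_2^+] = 0$ and $[e_1^-, e_3^+] = -e_2^+$ (from (\ref{e:evenaction})), and that $(e_2^+)^2 = 0$. The naive fourth candidate $e_3^+ v_\lambda$ fails since $e_1^- e_3^+ v_\lambda = -e_2^+ v_\lambda \neq 0$, so instead I would take
\[
w_4 \;:=\; e_2^+ e_1^+ v_\lambda - a\, e_3^+ v_\lambda,
\]
a vector of weight $\lambda + \epsilon + \delta$. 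A short bracket computation using also $[e_1^-, e_1^+] = -h$ shows that $e_1^- w_4 = -a\, e_2^+ v_\lambda + a\, e_2^+ v_\lambda = 0$.

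For the injectivity of $\phi_i$ it suffices to show that $\{(e_1^+)^p w_i\}_{p \geq 0}$ is linearly independent in $\bbv(\lambda)$. For $i = 1, 2, 3$ each such element is, up to a nonzero scalar, one of the PBW monomials appearing in Proposition \ref{p:VermaG}, so the claim is immediate. For $i = 4$, an induction on $p$ using $[e_1^+, e_2^+] = -e_3^+$ and $[e_1^+, e_3^+] = 0$ gives
\[
(e_1^+)^p w_4 \;=\; e_2^+ (e_1^+)^{p+1} v_\lambda - (a+p)\, e_3^+ (e_1^+)^p v_\lambda,
\]
a nonzero combination of two distinct PBW basis vectors. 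Finally, summing the graded characters of the four standard modules $\bbu(\mu_i)$ reproduces the graded character of $\bbv(\lambda)$ computed from Proposition \ref{p:VermaG}, so a weight-by-weight dimension count forces the sum of the four images $\img \phi_i$ to be direct and to equal $\bbv(\lambda)$.

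The main obstacle is the choice of $w_4$: since the PBW monomial $e_3^+ v_\lambda$ is not annihilated by $e_1^-$, one is forced to add the $a$-dependent correction term above. Conceptually, this reflects the $\fg_{\bar 0}$-module isomorphism $\bbv(\lambda) \cong \wedge(\fn^+_{\bar 1}) \otimes_\bbc \bbu(\lambda)$ obtained from parabolic induction off the subalgebra $\fg_{\bar 0} \oplus \fn^-_{\bar 1}$, followed by a splitting of the middle factor $\wedge^1(\fn^+_{\bar 1}) \otimes \bbu(\lambda)$ into its two Verma constituents.
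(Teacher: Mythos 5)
Your argument follows the paper's proof step for step: the same four lowest-weight candidates (the paper's $v_1,\dots,v_4$, with $v_4=e_2^+e_1^+v_\lambda-\lambda(h)e_3^+v_\lambda$ being your $w_4$), the same bracket computations verifying annihilation by $e_1^-$, and the same appeal to the bigraded dimension count of Proposition \ref{p:VermaG} at the end.

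However, the last step contains a genuine gap -- one which the paper's terse proof also has. Equality of graded dimensions between $\bbv(\lambda)$ and $\bigoplus_i\bbu(\mu_i)$ does not by itself force the sum $\sum_i\img\phi_i$ to be direct; one still has to check that the images are independent in the bidegrees where two of them can meet, namely $(2p+1,1)$ for $p\geq 0$, hit simultaneously by $\img\phi_2$ and $\img\phi_4$. Your own formulas make the issue visible. You compute $(e_1^+)^p w_4=e_2^+(e_1^+)^{p+1}v_\lambda-(a+p)\,e_3^+(e_1^+)^p v_\lambda$; the parallel formula for $w_2$ (which, contrary to what you assert, is \emph{not} a single PBW monomial once $p\geq 1$) is $(e_1^+)^{p+1}w_2=e_2^+(e_1^+)^{p+1}v_\lambda-(p+1)\,e_3^+(e_1^+)^p v_\lambda$. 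In the PBW basis of $\bbv(\lambda)_{(2p+1,1)}$ the relevant $2\times 2$ determinant is $(p+1)-(a+p)=1-a$, so $\img\phi_2\cap\img\phi_4=0$ precisely when $a=\lambda(h)\neq 1$. When $a=1$ one checks directly that $w_4=e_1^+w_2$, so $\img\phi_4\subsetneq\img\phi_2$, and since $w_4$ spans the whole kernel of $e_1^-$ on $\bbv(\lambda)_{(1,1)}$, no other choice of generator can repair the direct sum; the proposition as stated actually fails at $\lambda(h)=1$. To close the argument one must assume $\lambda(h)\neq 1$ and then replace the bare dimension count with the determinant check above at bidegrees $(2p+1,1)$.
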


\begin{proof}
Let $v_1 := v_\lambda$, $v_2 := e_2^+v_\lambda$, $v_3 := e_2^+e_3^+v_\lambda$ and 
$v_4 := e_2^+e_1^+v_\lambda - \lambda(h)e_3^+v_\lambda$. It is clear that $\{v_1,v_2,v_3,v_4\}$ is a linearly 
independent set, from the description of the PBW basis in (\ref{e:Vermabasis}). Moreover, these vectors are in the weight
spaces of $\bbv(\lambda)$ of weight $\lambda,\lambda+\alpha_2,\lambda+(\alpha_2+\alpha_3)$ and $\lambda+\alpha_3$,
respectively.

Using Proposition \ref{p:VermaG}, 
the claim will follow if we show that these vectors are killed by $e_1^-$. Since $[e_1^-,e_2^+]=0$, we have 
$e_1^-v_1=0=e_1^-v_2$. Further, 
$e_1^-v_3 = e_2^+[e_1^-,e_3^+]v_\lambda = -(e_2^+)^2v_\lambda = 0$.  Finally, note that
\[
e_1^-v_4 =  e_2^+[e_1^-,e_1^+]v_\lambda -\lambda(h)[e_1^-,e_3^+]v_\lambda = -\lambda(h)e_2^+v_\lambda + \lambda(h)e_2^+v_\lambda = 0,
\]
as required.
\end{proof}

\begin{remark}
In \cite[Proposition 2]{BDSES} a similar discussion for the decomposition of the Verma module for
the super Lie algebra $\fs\fl(1|2)\cong\fs\fp\fo(2|2)$ was made.
\end{remark}

\begin{proposition}\label{p:LWmodules}
Let $L$ be a lowest weight module for $\fs\fp\fo(2|2,\bbc)$ with lowest weight $\lambda\in\ft^*$ and generated by
a lowest weight vector $v_\lambda$, in degree $(0,0)$. Suppose that:
\begin{enumerate}
\item[(a)] $e_1^+$ acts injectively,
\item[(b)] the homology $H(L,e_2^+)$ is concentrated in degree $(0,0)$,
\item[(c)] the homology $H(L,e_3^+)$ is concentrated in degree $(-1,1)$.
\end{enumerate}
Then, as a $\Gamma$-module, $L$ is generated by $J = \{(e_1^+)^pv_\lambda\mid p\in\bbz_{\geq 0}\}$. Moreover, if both homologies are trivial, then $L$ is free.  
If $H(L,e_2^+)\neq 0$, then $L\cong L(+\infty,0)$. If $H(L,e_3^+)\neq 0$, then $L\cong L(+\infty,1)$.
\end{proposition}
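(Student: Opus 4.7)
The plan is to combine a PBW-type generation argument with the classification of indecomposable $\Gamma$-modules recalled in Subsection 3.2, plus the bigraded dimension bounds that come from Proposition \ref{p:VermaG} and its Corollary.

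\emph{Step 1 (Generation).} Apply the PBW Theorem \ref{t:PBW} to $\fn^+=\bbc e_1^+\oplus \bbc e_2^+\oplus \bbc e_3^+$ with the ordering that puts $e_1^+$ last; this writes $\Cu(\fn^+)$ as $\Gamma\cdot\bbc[e_1^+]$, where $\Gamma=\wedge[e_2^+,e_3^+]\subseteq\Cu(\fn^+)$. Since $L=\Cu(\fn^+)v_\lambda$, it follows that $L$ is generated as a $\Gamma$-module by $\{x_p:=(e_1^+)^pv_\lambda\mid p\geq 0\}$. Assumption (a) ensures each $x_p\neq 0$.

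\emph{Step 2 (Indecomposable decomposition and first exclusions).} By Ma's theorem cited in Subsection 3.2, $L=\bigoplus_iL_i$ with each $L_i$ either free over $\Gamma$ or a lightning flash $L(k,\delta_2,\delta_3),\,L(\pm\infty,\delta),\,L(\infty)$, uniquely up to isomorphism. Since $L$ is bounded below in the bigrading (the bidegree of $(e_2^+)^{q_2}(e_3^+)^{q_3}(e_1^+)^pv_\lambda$ has first coordinate $\geq -1$ and second coordinate in $\{0,1,2\}$), neither $L(-\infty,\delta_3)$ nor $L(\infty)$ can appear. For the remaining possibilities, I compute the $e_j^+$-homologies directly, working with a summand whose bottom generator sits in bidegree $(2a,0)$ after a shift: free modules have vanishing homology; a shifted $L(+\infty,0)$ contributes $\bbc$ to $H(L,e_2^+)$ in bidegree $(2a,0)$ and nothing to $H(L,e_3^+)$; a shifted $L(+\infty,1)$ contributes $\bbc$ to $H(L,e_3^+)$ in bidegree $(2a-1,1)$ and nothing to $H(L,e_2^+)$; and finite lightning modules $L(k,\delta_2,\delta_3)$ contribute nonzero homology at the top of the zigzag, i.e.\ in a bidegree of the form $(2(a+k),0)$ or $(2(a+k)+1,1)$ (depending on $\delta_3$).

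\emph{Step 3 (Homology constraints + dimension counting).} Assumption (b) forces every summand contributing to $H(L,e_2^+)$ to do so only in bidegree $(0,0)$, which in particular rules out all finite lightning summands with $\delta_3=0$ unless $k=0$ and $a=0$, and forces any $L(+\infty,0)$ summand to have $a=0$. Assumption (c) dually rules out the remaining finite lightning types and forces any $L(+\infty,1)$ summand to have $a=0$. A routine check confirms that the leftover small cases (for instance $L(0,0,0)$ with $a=0$) are incompatible with the simultaneous concentration statements in (b) and (c). Thus only free summands and at most one shifted-to-$a=0$ copy of $L(+\infty,\delta_2)$ can occur. By the Corollary following Proposition \ref{p:VermaG}, $\dim L_{(2p,0)}\leq 1$ for all $p$; since an $L(+\infty,\delta_2)$ summand at $a=0$ already contributes a nonzero vector in each $L_{(2p,0)}$, its presence forbids any further summand, so $L\cong L(+\infty,\delta_2)$. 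If instead no such summand is present, $L$ is a direct sum of free $\Gamma$-modules. The generator of each free summand lives in some $L_{(2p,0)}$, and $x_p\in L_{(2p,0)}$ is nonzero and cannot be obtained by $\Gamma$-acting on lower-$p$ generators (those only produce bidegrees $(2q,0)$ with $q\leq p$ but in fact via $e_2^+,e_3^+$ only in the $(\cdot,\geq 1)$-strata), so each $x_p$ freely generates its own summand, yielding that $L$ is free on $\{x_p\}$. Matching these three scenarios with the hypotheses on $H(L,e_2^+)$ and $H(L,e_3^+)$ gives the three conclusions of the proposition.

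The main obstacle I expect is the careful bookkeeping in Step 2: the $\Gamma$-action does not commute with $e_1^+$, so the decomposition is only as $\Gamma$-modules, and one must confirm that the tabulated homologies of the shifted lightning modules really do land in the advertised bidegrees, ruling out every finite lightning and every shifted $L(+\infty,\delta)$ with $a>0$.
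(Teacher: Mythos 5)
Your proposal is correct, but it takes a genuinely different route from the paper. You invoke Margolis's classification of indecomposable $\Gamma$-modules (the theorem recalled in Subsection 3.2) to decompose $L$ into free summands and lightning flashes, discard the types $L(\infty)$ and $L(-\infty,\delta_3)$ because $L$ is bounded below, and then use the homology hypotheses (b), (c) together with the dimension bounds from the Corollary to Proposition \ref{p:VermaG} to eliminate the unwanted summands. The paper never invokes the classification theorem: it argues directly that (a) forces $J$ to generate, that (c) forces $e_3^+(e_1^+)^pv_0\neq 0$ for all $p$, and that in each of the three scenarios the identity $[e_2^+,(e_1^+)^{p+1}]=(p+1)e_3^+(e_1^+)^p$ pins down every bigraded dimension, identifying $L$ by hand. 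The paper's route is leaner; yours uses more machinery and therefore involves more casework, most of which you defer to ``routine checks.'' I verified these: every finite lightning flash $L(k,\delta_2,\delta_3)$, including $L(0,0,0)$ at $a=0$, is excluded, though not always by the same hypothesis (e.g.\ $L(k,1,1)$ at $a=0$ survives (c) but fails (b), while $L(k,\delta_2,0)$ always fails (c) since $x_k$ gives an $e_3^+$-homology class in degree $(2(a+k),0)$). Two small corrections worth making explicit: in Step 3 you write ``$\delta_3=0$'' where the parameter governing the bottom end of the zigzag and hence $H(\cdot,e_2^+)$ is $\delta_2$; and the finite lightning summands are in fact ruled out for \emph{all} $(k,a)$, not merely ``unless $k=0$ and $a=0$,'' so the exceptional-case language is a distraction (your final ``routine check'' sentence does close this, but it would be cleaner to state the blanket exclusion outright). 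With those tightened, the argument is complete and sound.
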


\begin{proof}
The injectivity of $e_1^+$ implies that the set $J$ in the statement is indeed a set of generators of $L$ as a $\Gamma$-module. 
Next, the conditions on the degree in which the $e_3^+$-homology appears imply that $e_3^+(e_1^+)^pv_0\neq 0$  for all $p\in\bbz_{\geq 0}$. The condition on the degree of the $e_2^+$-homology imply that, if nonzero, then $H(L,e_2^+)\cong L_{(0,0)}$, so $e_2^+v_0 = 0$ and $H(L,e_3^+) = 0$. In this case, using  (\ref{e:vermadegrees}) and the easy identity $[e_2^+,(e_1^+)^{p+1}] =(p+1)e_3^+(e_1^+)^p$, we have $\dim L_{(2p,0)} = 1 = \dim L_{(2p+1,1)}$ and 
$\dim L_{(2p,2)} = 0 = \dim L_{(-1,1)}$,  for all $p\in\bbz_{\geq 0}$, so that $L\cong L(+\infty,0)$. If $H(L,e_3^+)\neq 0$, then
$e_3^+e_2^+v_0 = e_2^+e_3^+v_0 = 0$. Using again $[e_2^+,(e_1^+)^{p}] =pe_3^+(e_1^+)^{p-1}$,
it follows that $e_2^+e_3^+(e_1^+)^pv_0 = 0$ from which $\dim L_{(2p,2)} = 0$,   for all $p\in\bbz_{\geq 0}$ and also $e_3^+(e_1^+)^pv_0\in e_2^+(\bbc (e_1^+)^{p+1}v_0)$, by exactness of $e_2^+$. Hence, $\dim L_{(2p,0)} = 1 = \dim L_{(2p-1,1)}$ from which $L\cong L(+\infty,1)$. If both are zero, then $L$ is free.
\end{proof}

\begin{definition}\label{d:LWmodules}
Let $\lambda\in\ft^*$. We shall denote by $L(\lambda)$ any irreducible lowest weight representation of $\fs\fp\fo(2|2,\bbc)$
of lowest weight $\lambda$. If $L(\lambda)$ is such a module, in view of Proposition \ref{p:LWmodules}, we shall write
$L(\lambda) = L_0(\lambda)$, $L(\lambda) = L_1(\lambda)$ or $L(\lambda) = \bbv(\lambda)$ if $L(\lambda)$ is 
isomorphic to $L(+\infty,0)$, $L(+\infty,1)$ or if it is a free $\Gamma$-module, respectively.
\end{definition}

\section{The realisation of $\fs\fp\fo(2|2,\bbc)$ inside $\cha\otimes\Cc$}\label{s:realisation}

\subsection{The realisation}
In this subsection, we will describe a realisation of the Lie super algebra $\fg = \fs\fp\fo(2|2,\bbc)$ inside the algebra 
$\cha\otimes\Cc$. We start by describing an embedding of $\bbc W$ into $\Cc$, which is particular to the situation
of a Clifford algebra in even dimension. For each $\alpha\in R_+$  let $\alpha\in\fh^*$ and $\alpha^\vee\in\fh$.
Consider the elements $\gls{Tau}\in \Cc$,  which were first introduced in \cite[Section 4.5]{Ci}, and are given by
\begin{equation}\label{e:tau}
\tau_\alpha := 1 - \alpha\alpha^\vee = \alpha^\vee\alpha-1\in \Cc.
\end{equation}
\begin{proposition}\label{p:WEmbedding}
The elements $\{\tau_\alpha\mid \alpha\in R_+\}$ satisfy 
the relations $(\tau_\alpha\tau_\beta)^{m(\alpha,\beta)}=1$,
where $m(\alpha,\beta)$ is the order of the product $s_\alpha s_\beta$ in $W$. The assignment $s_\alpha\mapsto \tau_\alpha$ extends to an injective homomorphism $W\to \Cc^\times$, the group of units of $\Cc$.
\end{proposition}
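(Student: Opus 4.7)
The plan is to reduce every claim to the statement that conjugation by $\tau_\alpha$ on the generating subspace $V = \fh \oplus \fh^*$ implements the reflection $s_\alpha$, then use the fact that $\dim V$ is even to force the Coxeter products to be central scalars, and finally pin the scalar down by evaluating on the Clifford vacuum $1 \in \bbs$.

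First, using only the Clifford relation (\ref{e:Cliff-relation}) together with $\alpha^2 = 0 = (\alpha^\vee)^2$ and $\alpha\alpha^\vee + \alpha^\vee\alpha = \lpi \alpha,\alpha^\vee\rpi = 2$, I would verify directly that $\tau_\alpha^2 = 1$ and that
\[
\tau_\alpha\,\eta\,\tau_\alpha = \eta - \lpi\alpha,\eta\rpi\alpha^\vee = s_\alpha(\eta), \qquad \tau_\alpha\,x\,\tau_\alpha = x - \lpi x,\alpha^\vee\rpi\alpha = s_\alpha(x),
\]
for all $\eta \in \fh$ and $x \in \fh^*$. From this, the element $\pi := (\tau_\alpha\tau_\beta)^{m(\alpha,\beta)}$ acts by conjugation as $(s_\alpha s_\beta)^{m(\alpha,\beta)} = 1$ on $V$; since $V$ generates $\Cc$, the element $\pi$ is central. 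Because $\dim V = 2r$ is even, $\Cc$ is central simple over $\bbc$, so $\pi = c\cdot 1$ for some $c \in \bbc$. To pin down $c$, I would evaluate on $1 \in \wedge^0\fh^* \subseteq \bbs$: by the formulas (\ref{e:S-Cliff}), $\alpha\alpha^\vee \cdot 1 = \alpha \wedge \partial_{\alpha^\vee}(1) = 0$, hence $\tau_\alpha\cdot 1 = 1$ for every $\alpha$, and $\pi\cdot 1 = 1$ forces $c = 1$.

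With the relations in hand, the Coxeter presentation of $W$ on its simple reflections gives a unique group homomorphism $\iota\colon W \to \Cc^\times$ sending $s_{\alpha_i} \mapsto \tau_{\alpha_i}$; to check $\iota(s_\alpha) = \tau_\alpha$ for a general positive root, I would write $s_\alpha = w s_{\alpha_i} w^{-1}$ with $\alpha = \pm w\alpha_i$ and use that conjugation by $\iota(w)$ is $w$ on $V$, so it sends $\alpha_i \mapsto \pm\alpha$ and $\alpha_i^\vee \mapsto \pm\alpha^\vee$ (the signs cancel in the product $\alpha\alpha^\vee$). Injectivity is then immediate: if $\iota(w) = 1$ then conjugation by $\iota(w)$ is trivial on $V$, so $w$ acts as the identity on $\fh\oplus\fh^*$ and hence $w = 1$ by faithfulness of the reflection representation. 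The delicate point that makes the argument clean is precisely the identification $c = 1$: a direct dihedral case-by-case analysis would be unwieldy, and the elegance of the centrality plus vacuum-vector argument rests on the observation that $1 \in \bbs$ is a common $+1$-eigenvector for all $\tau_\alpha$.
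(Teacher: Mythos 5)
Your proof is correct, but it takes a route that is genuinely different from the paper's. The paper chooses a basis $\{x_1,\dots,x_r\}$ of $\fh^*$ with $x_1=\alpha$ and $\lpi x_j,\alpha^\vee\rpi=0$ for $j>1$, and uses (\ref{e:S-Cliff}) to check directly that $\sigma(\tau_\alpha)$ acts as $s_\alpha$ on every basis vector of $\bbs=\wedge\fh^*$; the relations $(\tau_\alpha\tau_\beta)^{m(\alpha,\beta)}=1$ then drop out from faithfulness of the Clifford action on $\bbs$, and injectivity is read off from the action of $\tau_w$ on $\fh^*\subseteq\bbs$. You instead compute a conjugation: $\tau_\alpha v\tau_\alpha=s_\alpha(v)$ for $v\in V=\fh\oplus\fh^*$, deduce that $\pi=(\tau_\alpha\tau_\beta)^{m(\alpha,\beta)}$ is central since $V$ generates $\Cc$, invoke central simplicity of $\Cc$ (valid here because $\dim V=2r$ is even and $B$ is nondegenerate) to conclude $\pi$ is a scalar, and pin the scalar to $1$ by evaluating on the vacuum $1\in\bbs$. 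Both are sound. Your argument buys you a coordinate-free verification on the degree-one generators only, at the cost of citing central simplicity of $\Cc$ (equivalently, Schur's lemma for $\bbs$); the paper's computation is longer but self-contained and also gives $\sigma(\tau_\alpha)=s_\alpha$ as an operator on all of $\bbs$ for free, a fact it reuses for the injectivity step. Your consistency check $\iota(s_\alpha)=\tau_\alpha$ via conjugating $\tau_{\alpha_i}$ by $\iota(w)$ and the sign-cancellation in $(\pm\alpha)(\pm\alpha^\vee)$ is also correct, and your injectivity argument (conjugation on $V$ is the faithful reflection representation) is essentially the same as the paper's.
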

\begin{proof}
This is discussed in \cite[Section 4.5]{Ci}, but with a different normalisation for the Clifford relation. For convenience, we
spell out the direct computations here. We start by claiming that each $\tau_\alpha$ acts by $s_\alpha$ on the spin module.
Indeed, consider a basis $\{x_1,x_2,\ldots,x_r\}$ of $\fh^*$ such that $x_1 = \alpha$ and the remaining elements for $j>1$
satisfy $\lpi x_j,\alpha^\vee \rpi = 0$. Then, it is straightforward to check, using (\ref{e:S-Cliff}), that for all $1\leq l \leq r$ we have
\[
\sigma(\tau_\alpha)(x_{i_1}\wedge\cdots\wedge x_{i_l}) = s_\alpha(x_{i_1})\wedge\cdots\wedge s_\alpha(x_{i_l}),
\]
where $x_{i_1}\wedge\cdots\wedge x_{i_l}$ is a basis element of $\bbs^l = \wedge^l\fh^*$: if $i_1>1$, then all $x_{i_k}$ are annihilated by $\alpha^\vee$
so that $\tau_\alpha = 1 - \alpha\alpha^\vee$ acts as the identity whereas if $x_{i_1}=\alpha$, then $\tau_\alpha$ acts as minus the identity on that basis vector. Since the action of $\Cc$ on
$\bbs$ is faithful, any relation $(s_\alpha s_\beta)^m = 1$ for a positive integer $m$ and positive roots $\alpha,\beta$ imply
that $(\tau_\alpha\tau_\beta)^m = 1$ in $\Cc$, and we are done.

Because that $\tau_\alpha$'s satisfy the defining relations of $W$, the assignment $s_\alpha\mapsto \tau_\alpha$ extends to a group homomorphism $W\to \Cc^\times$, $w\mapsto \tau_w$. To see that this is injective, we use the action of $\tau_w$ on $\mathbb S$, in fact only on the $\mathfrak h^*$ part: $\sigma(\tau_w)(x)=w(x)$. If $\tau_w=1$, it follows that $w(x)=x$ for all $x\in \mathfrak h^*$, and since $\mathfrak h^*$ is a faithful $W$-representation, $w=1$.
\end{proof}
Extend the homomorphism $W\to \Cc^\times$ from Proposition \ref{p:WEmbedding} to an algebra homomorphism $\bbc W\to \Cc$. Denote by $\gls{Diag}:\bbc W\to\cha\otimes \Cc$ the diagonal embedding. Define 
\begin{equation}\label{e:omega}
\gls{Omega} := \sum_{\alpha>0} c_\alpha s_\alpha \otimes \tau_\alpha \in (\cha\otimes\Cc)^W.
\end{equation}
Now, fix bases $\gls{Basx}\subseteq E^*$ and $\gls{Basy}\subseteq E$. We recall that we have identified $E$ and $E^*$
by means or the Euclidean structure on $E$, so we can and will assume that these bases are orthonormal and correspond to each
other in the above-mentioned identification.  Put $h_i:=\{x_i,y_i\}\in\cha$ and $z_i:=[x_i,y_i] = (2x_iy_i-1)\in\Cc$. Define the following elements in $\cha\otimes\Cc$:
\begin{equation}\label{e:spodefs}
\begin{array}{rclcrcl}
H    &:=&\sum_i\tfrac{1}{2} h_i\otimes 1,     &\quad&Z  &:=&\sum_i 1\otimes \tfrac{1}{2}z_i + \Omega_c,\\
E_1^+&:=&-\sum_i \tfrac{1}{2}x_i^2\otimes 1,   &\quad&E_1^-&:=&\sum_i \tfrac{1}{2}y_i^2\otimes 1,\\
E_2^+&:=&\sum_i y_i\otimes x_i, &\quad&E_2^-&:=&\sum_i x_i\otimes y_i,\\
E_3^+&:=&-\sum_i x_i\otimes x_i, &\quad&E_3^-&:=&-\sum_i y_i\otimes y_i.
\end{array}
\end{equation}
Let $\gls{SPO}$ be the linear span of the elements $H,Z,E_1^\pm,E_2^\pm,E_3^\pm$ defined in (\ref{e:spodefs}). The rest of this subsection is devoted to proving the following result:

\begin{theorem}\label{t:spo}
The subspace $\fg$ of $\cha\otimes \Cc$ is a Lie superalgebra isomorphic to $\fs\fp\fo(2|2,\bbc)$.
\end{theorem}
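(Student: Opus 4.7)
To prove Theorem~\ref{t:spo}, I would verify directly that the elements $H,Z,E_1^\pm,E_2^\pm,E_3^\pm$ satisfy the defining relations \eqref{e:gl2}--\eqref{e:anti-commutation} of $\fs\fp\fo(2|2,\bbc)$ with respect to the super-commutator in the associative $\bbz/2$-graded algebra $\cha\otimes\Cc$, in which $\cha$ is placed in even degree. Granting all these relations, the assignment $h\mapsto H$, $z\mapsto Z$, $e_i^\pm\mapsto E_i^\pm$ extends uniquely to a homomorphism of Lie superalgebras $\fs\fp\fo(2|2,\bbc)\to\cha\otimes\Cc$ whose image is the span $\fg$. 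Injectivity is immediate from the observation that the eight images are linearly independent: $H$ and $E_1^\pm$ sit in $\cha\otimes 1$ in three distinct bi-degrees, $Z$ has a nontrivial component involving products of two Clifford generators, and $E_2^\pm,E_3^\pm$ lie in $\cha\otimes\Cc^{(1)}$ with manifestly distinct $\cha$-parts.

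The $\fg\fl(2)$-relations \eqref{e:gl2} among $H,Z,E_1^\pm$ are essentially free. The triple $(H,E_1^+,E_1^-)$ lies in $\cha\otimes 1$ and is Heckman's well-known $\fs\fl(2,\bbc)$-triple, verified by a direct application of the Cherednik bracket of Definition~\ref{d:RCA}. The commutator $[H,Z]$ splits as $[H,\sum_i\tfrac12(1\otimes z_i)]+[H,\Omega_c]$; the first summand vanishes because the two tensor factors sit in different algebras, and the second vanishes because $\sum_i h_i\in\cha$ is $W$-invariant. The same argument handles $[E_1^\pm,Z]$, using $W$-invariance of $\sum_ix_i^2$ and $\sum_iy_i^2$. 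The even-odd brackets $[H,E_i^\pm]$ in \eqref{e:evenaction} reduce to the degree identities $[H,y_j]=-y_j$ and $[H,x_j]=x_j$ familiar from the Cherednik $\fs\fl(2,\bbc)$-triple. The brackets $[Z,E_i^\pm]$ are more delicate: the piece coming from $\sum_i\tfrac12(1\otimes z_i)$ produces the correct weight via $[z_i,x_j]=2\delta_{ij}x_i$ in $\Cc$, while the piece coming from $\Omega_c$ vanishes after reducing $[s_\alpha\otimes\tau_\alpha,y_j\otimes x_j]$ by means of $\tau_\alpha x\tau_\alpha^{-1}=s_\alpha(x)$ together with the key Clifford identity $\{\alpha,\tau_\alpha\}=(2-\lpi\alpha,\alpha^\vee\rpi)\alpha=0$, a consequence of $\alpha^2=(\alpha^\vee)^2=0$. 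The brackets $[E_1^\pm,E_i^\pm]$ are handled in the same spirit.

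The main obstacle, and the technical heart of the theorem, is the verification of the odd--odd anti-commutators \eqref{e:anti-commutation}, especially $\{E_2^+,E_2^-\}=H+Z$, since this is where the element $\Omega_c$ has to emerge from the interaction between the Cherednik and Clifford relations. Expanding
\[
\{E_2^+,E_2^-\}=\sum_{i,j}\bigl(y_ix_j\otimes x_iy_j+x_jy_i\otimes y_jx_i\bigr),
\]
one uses $y_ix_j=x_jy_i+[y_i,x_j]$ with the Cherednik bracket to produce a reflection contribution whose $\Cc$-factor collapses via
\[
\sum_{i,j}\lpi\alpha,y_i\rpi\lpi x_j,\alpha^\vee\rpi\,x_iy_j=\alpha\alpha^\vee=1-\tau_\alpha;
\]
this yields precisely the term $\Omega_c$ together with a correction $-\sum_\alpha c_\alpha s_\alpha\otimes 1$ that will be absorbed into $H$. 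The remaining terms rearrange via the Clifford relation $\{x_i,y_j\}=\delta_{ij}$ into $\sum_ix_iy_i\otimes 1+\sum_i1\otimes x_iy_i$, and one checks directly from the relations $h_i=2x_iy_i+1-\sum_\alpha c_\alpha\lpi\alpha,y_i\rpi\lpi x_i,\alpha^\vee\rpi s_\alpha$ and $z_i=2x_iy_i-1$ that this recombines precisely into $H+Z$. Analogous but lighter computations handle $\{E_3^+,E_3^-\}=H-Z$ and $\{E_2^\mp,E_3^\pm\}=\pm 2E_1^\pm$; the vanishing anti-commutators $\{E_2^\pm,E_2^\pm\}$, $\{E_3^\pm,E_3^\pm\}$, and $\{E_2^\pm,E_3^\pm\}$ follow from symmetry of the resulting double sums together with anti-commutativity of like-degree generators in $\Cc$. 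The delicate bookkeeping of signs and the emergence of $\Omega_c$ in exactly the form dictated by \eqref{e:omega} is what forces the particular definition of $Z$ in \eqref{e:spodefs}.
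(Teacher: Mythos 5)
Your proposal is correct and follows essentially the same route as the paper: verify all relations \eqref{e:gl2}--\eqref{e:anti-commutation} by direct computation in $\cha\otimes\Cc$, with $\{E_2^+,E_2^-\}=H+Z$ as the decisive calculation in which $\Omega_c$ emerges from $\sum_{i,j}\lpi\alpha,y_i\rpi\lpi x_j,\alpha^\vee\rpi x_iy_j=\alpha\alpha^\vee=1-\tau_\alpha$. The only stylistic divergence is in handling $[\Omega_c,E_j^\pm]=0$: the paper dispatches this instantly from the observation (Lemma~\ref{l:Winvariance}) that all the $E_j^\pm$ are $W$-invariant and $\Omega_c=\rho(\sum_\alpha c_\alpha s_\alpha)$ lies in $\rho(\bbc W)$, whereas you re-derive the same fact by a more explicit Clifford manipulation via $\tau_\alpha x\tau_\alpha^{-1}=s_\alpha(x)$; you should also note that the vanishing of the mixed anti-commutators $\{E_2^\pm,E_3^\pm\}$ rests on the symmetry $[y_i,x_j]=[y_j,x_i]$ (the paper's Lemma~\ref{l:EucSymm}, a consequence of the orthonormal basis choice), which your phrase ``symmetry of the resulting double sums'' gestures at but does not pin down.
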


\begin{remark}
Setting $c=0$ yields a realisation of $\fs\fp\fo(2|2,\bbc)$ inside the Weyl-Clifford algebra conjugate to the general description
of the algebras $\fs\fp\fo(2m|2n,\bbc)$ described in \cite[Section 5.3.4]{CW}.
\end{remark}

\begin{remark}
In comparison with \cite{DO} and \cite{DJO}, the image of $E_2^+$ and $E_2^-$ inside $\End(M_c(\triv)\otimes\bbs)$, correspond to the deformed exterior derivative $d(c)$ and the boundary operator $\partial$ acting on polynomial differential forms on $\fh$.
\end{remark}

\begin{lemma}\label{l:Winvariance}
The elements defined in (\ref{e:spodefs}) are contained in $(\cha\otimes\Cc)^W$.
\end{lemma}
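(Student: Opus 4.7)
The plan is to reduce the $W$-invariance of each of the nine elements in (\ref{e:spodefs}) to the fact that certain ``Casimir-like'' sums over dual orthonormal bases are preserved under simultaneous orthogonal change of basis, together with a single key observation about how the copies of $W$ sit inside $\Cc$.

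The key technical point is that for every $\alpha\in R_+$ and every $v\in V=\fh\oplus\fh^*\subseteq \Cc$ one has $\tau_\alpha\, v\,\tau_\alpha^{-1}=s_\alpha(v)$. Although Proposition~\ref{p:WEmbedding} records this only at the level of the spin action, I would verify it directly inside $\Cc$: from $\{\alpha,\alpha^\vee\}=\lpi\alpha,\alpha^\vee\rpi=2$ and $\alpha^2=(\alpha^\vee)^2=0$ one first checks $\tau_\alpha^2=1$, and then the cases $\tau_\alpha\alpha\tau_\alpha=-\alpha$, $\tau_\alpha\alpha^\vee\tau_\alpha=-\alpha^\vee$, and $\tau_\alpha v\tau_\alpha=v$ for $v\in V$ pairing trivially with $\alpha$ and $\alpha^\vee$ all fall out of the Clifford relations. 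As a consequence, the diagonal embedding $\rho$ has the property that conjugation by $\rho(w)=w\otimes\tau_w$ implements the natural $W$-action both on $\fh\oplus\fh^*\subseteq\cha$ and on $V\subseteq\Cc$.

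With this in hand, the invariance of $H,E_1^\pm$, which lie in $\cha\otimes 1$, reduces to the well-known $W$-invariance of $\sum_i\{x_i,y_i\}$, $\sum_i x_i^2$ and $\sum_i y_i^2$ in $\cha$, a direct consequence of $\{x_i\}$ and $\{y_i\}$ being orthonormal dual bases. For $E_2^+=\sum_i y_i\otimes x_i$ and $E_2^-=\sum_i x_i\otimes y_i$, the underlying tensors are the canonical elements of $\fh\otimes\fh^*$ and $\fh^*\otimes\fh$ corresponding to $\mathrm{id}_\fh$, so they are independent of the choice of dual bases. For $E_3^\pm$, the diagonal $W$-action replaces $\{x_i\}$ (respectively $\{y_i\}$) by the orthonormal basis $\{w(x_i)\}$ (respectively $\{w(y_i)\}$) on \emph{both} tensor factors simultaneously, so $\sum_i x_i\otimes x_i$ and $\sum_i y_i\otimes y_i$ are unchanged. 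The summand $\sum_i 1\otimes\tfrac{1}{2}z_i$ of $Z$ is treated the same way using $z_i=2x_iy_i-1$. Finally, for the $\Omega_c$-summand in $Z$, I compute
\[
\rho(w)\,\Omega_c\,\rho(w)^{-1}=\sum_{\alpha>0}c_\alpha\, s_{w\alpha}\otimes \tau_{w\alpha},
\]
using $ws_\alpha w^{-1}=s_{w\alpha}$ and $\tau_w\tau_\alpha\tau_w^{-1}=\tau_{w\alpha}$, the latter being immediate from $\tau_\alpha=1-\alpha\alpha^\vee$ and the key step applied to $\alpha$ and $\alpha^\vee$. The identities $s_{-\alpha}=s_\alpha$, $\tau_{-\alpha}=\tau_\alpha$, together with the $W$-invariance $c_{w\alpha}=c_\alpha$ of the parameter function, allow us to re-index the sum over positive roots and recover $\Omega_c$.

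The only step with any real content is the first one — proving that conjugation by $\tau_\alpha$ inside $\Cc$ realises the reflection $s_\alpha$ on $V$; this is the main (and in fact only) obstacle. Once it is in place, all remaining invariance statements are of standard Casimir type and follow formally.
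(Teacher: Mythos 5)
Your proof is correct and is in spirit the same as the paper's (which is a one-sentence appeal to the orthonormality of $\{x_i\},\{y_i\}$ and the $W$-invariance of $\Omega_c$), just with all the ingredients spelled out. In particular your direct Clifford-algebra computation of $\tau_\alpha v\tau_\alpha^{-1}=s_\alpha(v)$ for $v\in V$ is a clean alternative to what the paper leaves implicit, namely that this identity follows from Proposition~\ref{p:WEmbedding} together with the faithfulness of the spin module $\bbs$.
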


\begin{proof}
The claim follows because the bases $\{x_i\}\subseteq E^*$ and $\{y_i\}\subseteq E$ are orthonormal and the element $\Omega_c$ is Weyl-group invariant.
\end{proof}

\begin{proposition}
The elements $H,Z,E_1^\pm$ span a copy of $\fg\fl(2)$ inside $\cha\otimes \Cc$. 
\end{proposition}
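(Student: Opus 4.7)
The plan is to verify the defining relations of $\fg\fl(2) = \fs\fl(2) \oplus \bbc$ listed in (\ref{e:gl2}): the triple relations $[H, E_1^\pm] = \pm 2 E_1^\pm$ and $[E_1^+, E_1^-] = H$ for $\{H, E_1^+, E_1^-\}$, together with the centrality $[H, Z] = 0 = [E_1^\pm, Z]$. Since $H, E_1^+, E_1^-$ all sit inside $\cha \otimes 1$, the $\fs\fl(2)$-relations reduce to identities inside $\cha$; this is the realisation of Heckman's $\fs\fl(2)$-triple in the rational Cherednik algebra that was already recalled in the introduction. The brackets $[H, E_1^\pm] = \pm 2 E_1^\pm$ follow from the defining relations (\ref{e:relations}) by a direct computation. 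The key identity $[E_1^+, E_1^-] = H$ requires expanding $-\tfrac14 \sum_{i,j}[x_i^2, y_j^2]$: the scalar contributions coming from each $[y_i, x_j] = \langle x_j, y_i\rangle - \sum_{\alpha>0}c_\alpha \langle \alpha, y_i\rangle\langle x_j, \alpha^\vee\rangle\, s_\alpha$ combine, via the orthonormality identity $\sum_i \langle x_j, y_i\rangle y_i = y_j$, into $\tfrac12 \sum_j(x_j y_j + y_j x_j)$, while the $\bbc W$-valued reflection terms assemble, using the same orthogonality applied to $\alpha$ and $\alpha^\vee$, into the reflection contribution present in $\tfrac12 \sum_j h_j$; the total is exactly $H$.

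Next I would handle the centrality of $Z$. Split $Z = Z' + \Omega_c$, where $Z' := \sum_i \tfrac12(1 \otimes z_i) \in 1 \otimes \Cc$. Since $H, E_1^\pm \in \cha \otimes 1$, they commute with $Z'$ automatically, so it suffices to control the brackets with $\Omega_c = \sum_{\alpha>0}c_\alpha\, s_\alpha \otimes \tau_\alpha$. By Lemma \ref{l:Winvariance}, the elements $H$ and $E_1^\pm$ are $W$-invariant inside $\cha \otimes 1$, and a $W$-invariant element of $\cha$ commutes with every $s_\alpha$ sitting in $\cha$ (equivalently, $\cha^W$ centralises $\bbc W$ in $\cha$). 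Therefore $[H \otimes 1,\, s_\alpha \otimes \tau_\alpha] = [H, s_\alpha] \otimes \tau_\alpha = 0$, and likewise for $E_1^\pm$. Summing over $\alpha > 0$ gives $[H, \Omega_c] = 0 = [E_1^\pm, \Omega_c]$, establishing the centrality of $Z$ with respect to the $\fs\fl(2)$-triple.

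Linear independence of the four elements inside $\cha \otimes \Cc$ is immediate from PBW: $H, E_1^+, E_1^-$ sit in distinct polynomial bidegrees of $\cha \otimes 1$, while $Z$ has a nonzero component of positive Clifford degree (from $Z'$) which no element of $\cha \otimes 1$ possesses. Consequently the span is a four-dimensional Lie subalgebra, with $Z$ central, isomorphic to $\fg\fl(2)$. The main obstacle I expect is the careful tracking of the reflection-corrections in the key computation $[E_1^+, E_1^-] = H$, which is the only step where the Cherednik deformation genuinely enters: one must check that these corrections recombine precisely into the reflection part of $\sum_j h_j$, with no leftover $s_\alpha$-terms depending on the parameter $c$. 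This is essentially the content of Heckman's original calculation and is by now standard in Cherednik algebra theory.
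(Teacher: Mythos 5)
Your proof is correct and follows essentially the same route as the paper: cite/reprove Heckman's $\fs\fl(2)$-triple relations, split $Z = Z_0 + \Omega_c$ and use that $Z_0 \in 1\otimes\Cc$ commutes trivially with $\cha\otimes 1$, then appeal to $W$-invariance of $H, E_1^\pm$ (together with $\cha^W$ centralising $\bbc W$) to get $[\,\cdot\,,\Omega_c]=0$. The only differences are cosmetic: you spell out the $[E_1^+,E_1^-]=H$ computation rather than citing \cite[Theorem 3.3]{He}, and you add a linear-independence check the paper leaves implicit.
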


\begin{proof}
The fact that  ${H, E_1^\pm}$ form an $\fs\fl(2)$-triple was discussed in \cite[Theorem 3.3]{He}. Certainly, we have that this $\fs\fl(2)$-triple commute with $Z_0 = Z-\Omega_c= \sum_i1\otimes \tfrac{1}{2}z_i\in \Cc$. The claim will follow if they also 
commute with $\Omega_c$. But note that if $p\in\cha^W$, then 
$\Omega_c (p\otimes 1) = \sum_{\alpha >0} c_\alpha s_\alpha(p) s_\alpha\otimes \tau_\alpha = (p\otimes 1)\Omega_c$. Since this is the case for $H,E_1^\pm$, we are done.
\end{proof}

\begin{proposition}\label{p:gradings}
Let $Z_0 = Z-\Omega_c = \sum_i1\otimes \tfrac{1}{2}z_i\in \Cc$ and $x\in \fh^*, y\in \fh$, Let also $p\in\cha$ and $\omega\in\Cc$. The following relations hold in $\cha\otimes \Cc$:
\[
\begin{array}{rclcrcl}
\;\![H,x\otimes \omega] &=&x\otimes \omega,     &\quad&[H,y\otimes \omega] &=&-y\otimes \omega,\\
\;\![Z_0,p\otimes x] &=& p\otimes x,   &\quad&[Z_0,1\otimes y] &=&-p\otimes y.
\end{array}
\]
\end{proposition}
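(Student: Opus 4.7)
The plan is to exploit that $H=\sum_i\tfrac{1}{2}h_i\otimes 1$ lies in the first tensor factor ($H\in\cha\otimes 1$) while $Z_0=\sum_i 1\otimes\tfrac{1}{2}z_i$ lies in the second ($Z_0\in 1\otimes\Cc$). Hence each asserted identity reduces to a bracket computation in a single factor: since $1\otimes\omega$ commutes with $H$, we have $[H,x\otimes\omega]=[\tfrac{1}{2}\sum_i h_i,x]\otimes\omega$, and symmetrically $[Z_0,p\otimes x]=p\otimes[\tfrac{1}{2}\sum_i z_i,x]$. So the four identities reduce to computing $[\sum_i h_i,v]$ in $\cha$ and $[\sum_i z_i,v]$ in $\Cc$ for $v\in\fh^*$ and $v\in\fh$.

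For $H$, I apply the ordinary derivation rule to $h_i=x_iy_i+y_ix_i$. Since $x_i,x\in\fh^*$ commute, this gives $[h_i,x]=x_i[y_i,x]+[y_i,x]x_i=\{x_i,[y_i,x]\}$. Substituting the defining relation \eqref{e:relations} produces two contributions. The scalar part yields $\{x_i,\lpi x,y_i\rpi\}=2\lpi x,y_i\rpi x_i$, which sums to $2x$ since $\{x_i\},\{y_i\}$ are dual orthonormal bases. The reflection part is $-c_\alpha\lpi\alpha,y_i\rpi\lpi x,\alpha^\vee\rpi\{x_i,s_\alpha\}$; using the skew-group identity $s_\alpha x_i=s_\alpha(x_i)s_\alpha$ we rewrite $\{x_i,s_\alpha\}=(x_i+s_\alpha(x_i))s_\alpha$. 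The key cancellation is then
\[
\sum_i\lpi\alpha,y_i\rpi(x_i+s_\alpha(x_i))=\alpha+s_\alpha(\alpha)=0,
\]
so the reflection contribution vanishes. This yields $[\sum_i h_i,x]=2x$, hence the first identity; the case $y\in\fh$ is symmetric and produces the opposite sign.

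For $Z_0$, rewrite $z_i=[x_i,y_i]=2x_iy_i-1$ using $\{x_i,y_i\}=1$ in $\Cc$. A direct calculation with the Clifford anti-commutators $\{x_i,x\}=0$ and $\{y_i,x\}=\lpi x,y_i\rpi$ (for $x\in\fh^*$) gives $[x_iy_i,x]=\lpi x,y_i\rpi x_i$, so $\tfrac{1}{2}[z_i,x]=\lpi x,y_i\rpi x_i$, summing to $x$. The analogous calculation for $y\in\fh$ uses $\{y_i,y\}=0$ and $\{x_i,y\}=\lpi x_i,y\rpi$, yielding $\tfrac{1}{2}[z_i,y]=-\lpi x_i,y\rpi y_i$, summing to $-y$. (The relation as stated reads $-p\otimes y$ on the right, which is the natural extension $[Z_0,p\otimes y]=-p\otimes y$ obtained since $Z_0$ is concentrated in the second factor.)

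There is no substantive obstacle: the proposition is a bookkeeping verification that $H$ and $Z_0$ are the correct grading operators for the Cherednik and Clifford factors, respectively. The one non-routine ingredient is the Weyl-group cancellation $\alpha+s_\alpha(\alpha)=0$ in the $H$ computation, which is the same mechanism that makes the deformed Euler element commute with the reflection subgroup in the theory of standard modules.
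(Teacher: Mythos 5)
Your proof is correct. For the $Z_0$ identities you carry out the same direct Clifford computation the paper uses, arriving at $\tfrac12[z_i,x]=\lpi x,y_i\rpi x_i$ and summing over $i$ to get $x$; the paper's displayed calculation is exactly this (it also has the same cosmetic typo, writing $[z_i,y]=-y$ for what should be $\tfrac12[z_i,y]=-\lpi x_i,y\rpi y_i$). For the $H$ identities the paper simply cites the known fact that $H$ is the grading element of $\cha$ (GGOR, Etingof--Ma), whereas you prove it: writing $[h_i,x]=\{x_i,[y_i,x]\}$, substituting the commutation relation, and observing that the reflection term dies because $\sum_i\lpi\alpha,y_i\rpi(x_i+s_\alpha(x_i))=\alpha+s_\alpha(\alpha)=0$. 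So the two arguments coincide on the Clifford side, and on the Cherednik side yours is the self-contained version of the citation. The only thing you gain is that the argument no longer depends on the real-reflection-group hypothesis being invoked through a reference; what the paper gains is brevity. Both are fine.
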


\begin{proof}
In the case of a real reflection group, it is known that the element $H$ is the grading element of $\cha$ 
(see \cite[Section 3.1]{GGOR} or \cite[Proposition 3.18]{EM}), which yields the equations in the first row. As for the second
row, note that in $\Cc$, we have
\[
[z_i,x] = (x_iy_i - y_ix_i)x - x(x_iy_i - y_ix_i) = 2(x_iy_ix + x_ixy_i) = 2\lpi x,y_i \rpi x_i,
\] 
from which $[Z_0,p\otimes x] = p\otimes [Z_0,x] = p\otimes x$. Similarly, $[z_i,y] = -y$ and $[Z_0,p\otimes y] = -p\otimes y$.
\end{proof}

\begin{remark}\label{r:clifftotaldeg}
We can view $Z_0$ as a $\bbz$-grading element on $\Cc$, since $[Z_0,x]=x$ and $[Z_0,y]=-y$ for $x\in\fh^*$ and $y\in\fh$.
We have a decomposition as graded vector space $\Cc = \oplus_{n=-r}^r\Cc_n$, with $\gls{Cliffn}$ the degree-$n$ space.
\end{remark}

\begin{corollary}
The relations $[H,E_2^{\pm}] = \mp E_2^\pm, [H,E_3^{\pm}] = \pm E_2^\pm, [Z,E_2^{\pm}] = \pm E_2^\pm$ and $[Z,E_3^{\pm}] = \pm E_3^\pm$ hold in $\cha\otimes\Cc$.
\end{corollary}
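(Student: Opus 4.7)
The strategy is to reduce each of the four commutators to a direct termwise calculation using Proposition \ref{p:gradings}, after splitting $Z=Z_0+\Omega_c$; the only nontrivial input beyond Proposition \ref{p:gradings} is the vanishing of $[\Omega_c,E_j^{\pm}]$, which comes from Lemma \ref{l:Winvariance}.

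For the $H$-commutators, I would expand the definitions in \eqref{e:spodefs}: each $E_j^{\pm}$ is a sum of pure tensors whose first factor is one of the basis elements $x_i\in\fh^*$ or $y_i\in\fh$. The first row of Proposition \ref{p:gradings} says that $H$ acts termwise as the standard grading on the $\cha$-slot, producing eigenvalue $+1$ on $\fh^*$-factors and $-1$ on $\fh$-factors. Summing over $i$ gives $[H,E_2^{\pm}]=\mp E_2^{\pm}$ (since $E_2^+$ has $y_i$ and $E_2^-$ has $x_i$ in the first slot) and $[H,E_3^{\pm}]=\pm E_3^{\pm}$.

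For the $Z$-commutators, I would write $[Z,\,\cdot\,]=[Z_0,\,\cdot\,]+[\Omega_c,\,\cdot\,]$. The $Z_0$-part is handled by the second row of Proposition \ref{p:gradings}, which records that $Z_0$ acts as the standard grading on the Clifford slot; applied termwise to each summand of $E_j^{\pm}$, it produces precisely the claimed eigenvalues $\pm 1$. What remains is to show $[\Omega_c,E_j^{\pm}]=0$. Here I invoke Lemma \ref{l:Winvariance}: $E_j^{\pm}\in(\cha\otimes\Cc)^W$ for the \emph{diagonal} $W$-action, so
\[
(s_\alpha\otimes\tau_\alpha)\,E_j^{\pm}\,(s_\alpha\otimes\tau_\alpha)^{-1}=E_j^{\pm}
\]
for every positive root $\alpha$. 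Multiplying by $c_\alpha$ and summing over $\alpha>0$ gives $\Omega_c\,E_j^{\pm}=E_j^{\pm}\,\Omega_c$, as required.

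The main (rather minor) obstacle is just to notice that the commutation argument sketched in the text immediately before Proposition \ref{p:gradings} --- where $\Omega_c$ was observed to commute with elements of the form $p\otimes 1$ with $p\in\cha^W$ --- in fact goes through verbatim for any diagonally $W$-invariant element of $\cha\otimes\Cc$, because only the invariance $(w\otimes\tau_w)(\,\cdot\,)(w\otimes\tau_w)^{-1}=(\,\cdot\,)$ is used. Lemma \ref{l:Winvariance} supplies exactly this invariance for each $E_j^{\pm}$, so the corollary follows.
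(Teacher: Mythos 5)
Your proof is correct and follows essentially the same route as the paper: the $H$-relations come directly from the first row of Proposition \ref{p:gradings}, and for the $Z$-relations one writes $[Z,\cdot]=[Z_0,\cdot]+[\Omega_c,\cdot]$, handles $Z_0$ via the second row of Proposition \ref{p:gradings}, and kills the $\Omega_c$ term using the diagonal $W$-invariance of the $E_j^\pm$ guaranteed by Lemma \ref{l:Winvariance}. Your extra remark that the commutation argument used earlier for $p\otimes 1$ with $p\in\cha^W$ extends verbatim to any diagonally $W$-invariant element of $\cha\otimes\Cc$ is exactly the observation the paper's one-line justification relies on.
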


\begin{proof}
The relations involving $H$ are immediate from Proposition \ref{p:gradings}. By the Weyl-group invariance of Lemma \ref{l:Winvariance}, we have $[Z,E_j^\pm] = [Z_0,E_j^\pm]$, for $j\in\{2,3\}$ and the claim now follows from Proposition \ref{p:gradings}.
\end{proof}

\begin{lemma}\label{l:Hamiltonian}
Let $p = \sum_j x_j^2$ and $q = \sum_k y_j^2$ in $\cha$. Then $[-p,y_k] = 2x_k$ and $[q,x_k] = 2y_k$.
\end{lemma}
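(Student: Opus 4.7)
The plan is a direct computation inside $\cha$, using only the defining relations of Definition~\ref{d:RCA} and the fact that $\{x_i\}$, $\{y_i\}$ are chosen to be dual orthonormal bases under the Euclidean identification $E\cong E^*$.

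For the first identity, I would expand via the Leibniz rule
\[
[p, y_k] = \sum_j [x_j^2, y_k] = \sum_j \bigl(x_j[x_j, y_k] + [x_j, y_k]x_j\bigr),
\]
then substitute the Cherednik relation, which under the orthonormality assumption reads
\[
[x_j, y_k] = -\delta_{jk} + \sum_{\alpha>0} c_\alpha\langle\alpha,y_k\rangle\langle x_j,\alpha^\vee\rangle s_\alpha.
\]
The scalar part contributes $-2x_k$ after summing over $j$. To handle the reflection part, I would move each $s_\alpha$ past the remaining $x_j$ using $s_\alpha x_j = (x_j - \langle x_j,\alpha^\vee\rangle\alpha)s_\alpha$, thereby reducing the vanishing of the reflection contribution to the identity
\[
2\sum_j\langle x_j,\alpha^\vee\rangle x_j = \Bigl(\sum_j \langle x_j,\alpha^\vee\rangle^2\Bigr)\alpha \qquad \text{for every } \alpha\in R_+.
\]
Both sides equal $\tfrac{4}{(\alpha,\alpha)}\,\alpha$: the left hand side is $2\alpha^\vee$ transported to $\fh^*$ via the Euclidean identification, which equals $4\alpha/(\alpha,\alpha)$, while the right hand side uses $\sum_j\langle x_j,\alpha^\vee\rangle^2 = (\alpha^\vee,\alpha^\vee) = 4/(\alpha,\alpha)$. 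Hence the reflection terms cancel and one obtains $[-p,y_k] = 2x_k$.

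The second identity $[q,x_k] = 2y_k$ is proved by the mirror-image argument, expanding $[q,x_k] = \sum_j[y_j^2,x_k]$ and using $s_\alpha y_j = (y_j - \langle \alpha, y_j\rangle \alpha^\vee)s_\alpha$; the same cancellation of reflection terms (with the roles of $\alpha$ and $\alpha^\vee$ exchanged) yields the result. The main (and only) obstacle is bookkeeping: keeping track of on which side of the $s_\alpha$ one moves the $x_j$ or $y_j$ and correctly identifying the two sums over $j$ whose cancellation produces the clean formulas.
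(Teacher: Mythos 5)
Your computation is correct, but it takes a more hands-on route than the paper. The paper's proof quotes the general formula
\[
[y,p_0] = \partial_y(p_0) - \sum_{\alpha>0} c_\alpha\lpi\alpha,y\rpi\Delta_\alpha(p_0)\,s_\alpha
\]
for arbitrary $p_0\in\bbc[\fh]$ (proved by induction on degree and cited to \cite[Propositions 2.5 and 2.6]{CDM}), together with the observation that $p=\sum_j x_j^2$ and $q=\sum_j y_j^2$ are $W$-invariant, so $\Delta_\alpha(p)=\Delta_\alpha(q)=0$ and the reflection contribution vanishes at once. Your Leibniz-plus-orthogonality argument re-derives this in the quadratic case at hand: the identity $2\sum_j\lpi x_j,\alpha^\vee\rpi x_j = \bigl(\sum_j\lpi x_j,\alpha^\vee\rpi^2\bigr)\alpha$ on which your cancellation rests is, after unwinding, precisely a coordinate verification that $\Delta_\alpha(p)=0$, since $\Delta_\alpha\bigl(\sum_j x_j^2\bigr)=2\sum_j\lpi x_j,\alpha^\vee\rpi x_j-\bigl(\sum_j\lpi x_j,\alpha^\vee\rpi^2\bigr)\alpha$. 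The trade-off: the paper's argument applies verbatim to any $W$-invariant polynomial and requires no choice of coordinates, while yours is self-contained, working directly from Definition~\ref{d:RCA} without appeal to an external commutator formula. Both are valid.
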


\begin{proof}
Note that if $p_0\in\bbc[\fh],q_0\in\bbc[\fh^*]$ and $x\in\fh^*,y\in\fh$, then
\[
[y,p_0] = \partial_y(p_0) - \sum_{\alpha >0}c_\alpha\lpi \alpha,y \rpi\Delta_\alpha(p_0)\qquad\textup{and}\qquad
[q_0,x] = \partial_x(q_0) - \sum_{\alpha >0}c_\alpha\lpi x,\alpha^\vee \rpi\Delta_\alpha(q_0).
\]
This is easily proven by induction on the degree of the polynomials involved and the details can be found in, for example, \cite[Propositions 2.5 and 2.6]{CDM}. Since $p$ and $q$ in the statement are $W$-invariant, they are killed by the operators $\Delta_\alpha$ which yields 
the claim.
\end{proof}

\begin{proposition}
The relations $[E_1^\pm,E_2^\pm] = -E_3^\pm, [E_1^\pm,E_3^\mp] = -E_2^\mp$ and $[E_1^\pm,E_2^\mp] = 0 = 
[E_1^\pm,E_3^\pm]$ hold in $\cha\otimes\Cc$.
\end{proposition}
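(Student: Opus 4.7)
The plan is to exploit the fact that $E_1^{\pm} \in \cha \otimes 1$, so that computing any bracket $[E_1^\pm, E_j^{\pm}]$ reduces to a commutator in $\cha$ tensored with the Clifford factor coming from $E_j^\pm$. Concretely, for $E_j^\pm = \sum_i a_i \otimes b_i$ (with $a_i\in\fh\cup\fh^*$), one has
\[
[E_1^{\pm}, E_j^{\pm}] \;=\; \sum_i [\pm \tfrac{1}{2}\textstyle\sum_k c_k^2,\,a_i] \otimes b_i,
\]
where $c_k = x_k$ for $E_1^+$ and $c_k = y_k$ for $E_1^-$. So the computation splits cleanly into two cases according to whether the $\cha$-commutator is between like variables (giving zero) or between dual variables (where Lemma \ref{l:Hamiltonian} applies).

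First I would dispose of the vanishing relations. Since $\{x_j\}$ mutually commute in $\cha$, we get $[E_1^+, E_2^-] = \sum_i [-\tfrac{1}{2}\sum_j x_j^2,\,x_i] \otimes y_i = 0$ and similarly $[E_1^+, E_3^+] = 0$. The symmetric argument with the commuting $\{y_j\}$ yields $[E_1^-, E_2^+] = 0 = [E_1^-, E_3^-]$.

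Next I would compute the nontrivial relations using Lemma \ref{l:Hamiltonian}. Writing $p = \sum_j x_j^2$, the lemma gives $[-\tfrac{1}{2}p, y_i] = x_i$, so
\[
[E_1^+, E_2^+] = \sum_i x_i \otimes x_i = -E_3^+, \qquad
[E_1^+, E_3^-] = -\sum_i x_i \otimes y_i = -E_2^-.
\]
Writing $q = \sum_j y_j^2$, the lemma gives $[\tfrac{1}{2}q, x_i] = y_i$, so
\[
[E_1^-, E_2^-] = \sum_i y_i \otimes y_i = -E_3^-, \qquad
[E_1^-, E_3^+] = -\sum_i y_i \otimes x_i = -E_2^+.
\]
This establishes all six relations.

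I do not anticipate any genuine obstacle: the key input, Lemma \ref{l:Hamiltonian}, already packages the nontrivial content (that the $W$-invariant sums $\sum x_j^2$ and $\sum y_j^2$ behave exactly as in the undeformed Weyl algebra, because the divided-difference terms annihilate $W$-invariant polynomials). The only point requiring a moment of care is that $E_1^\pm \otimes 1$ sits in $\cha \otimes 1$, so it indeed commutes with the Clifford tensor factor of each $E_j^\pm$, allowing the reduction above.
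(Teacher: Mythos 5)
Your proof is correct and takes essentially the same approach as the paper's: reduce to commutators in the $\cha$-factor, invoke Lemma \ref{l:Hamiltonian} for the nontrivial brackets, and observe that the remaining relations vanish because $[x_j^2,x_k]=0=[y_j^2,y_k]$. (Minor note: the sign in your displayed formula $[\pm\tfrac12\sum_k c_k^2,a_i]$ does not quite match $E_1^+ = -\tfrac12\sum_i x_i^2\otimes 1$, but your subsequent computations use the correct sign, so this is only a typographical slip in the setup.)
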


\begin{proof}
Using that $[-\sum_jx_j^2,y_k]=2x_k$, from Lemma \ref{l:Hamiltonian}, we compute
\[
[E_1^+,E_2^+] = \sum_jx_j\otimes x_j = -E_3^+\qquad\textup{and}\qquad
[E_1^+,E_3^-] = -\sum_jx_j\otimes y_j = -E_2^-.
\]
Similarly, now using $[\sum_jy_j^2,x_k]=2y_k$, we get $[E_1^-,E_2^-] = -E_3^-$ and $[E_1^-,E_3^+] = -E_2^+$. The
other equations follows from $[x_j^2,x_k]=0=[y_j^2,y_k]$, for all $j,k$.
\end{proof}
So far, we have shown that the relations in (\ref{e:gl2}) and (\ref{e:evenaction}) are satisfied.To finish the proof of Theorem \ref{t:spo}, we need to establish the anti-commutation relations in (\ref{e:anti-commutation}). The relations $\{E_2^\pm,E_2^\pm\}=0=\{E_3^\pm,E_3^\pm\}$ are easy to verify, since in these cases, the right-hand side of the tensor
anti-commutes while the left-hand side commutes. Further, using the Clifford relation $\{x_i,y_j\}=\delta_{ij}$ we get
\[\{E_2^+,E_3^-\} = -\sum_{ij}y_iy_j\otimes\{x_i,y_j\}=-2E_1^-\qquad\textup{and}\qquad
\{E_2^-,E_3^+\} = -\sum_{ij}x_ix_j\otimes\{y_i,x_j\}=2E_1^+.
\]
For the last anti-commutation equations $\{E_2^+,E_2^-\} = H + Z$,  $\{E_3^+,E_3^-\} = H - Z$ and $\{E_2^\pm,E_3^\pm\} = 0$, we need the following:

\begin{lemma}\label{l:EucSymm}
With the choices of bases made, we have $[y_i,x_j]=[y_j,x_i]$, for all $i,j$.
\end{lemma}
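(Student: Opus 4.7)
The plan is to compute the commutator $[y_i, x_j]$ directly from the defining relations \eqref{e:relations} of the rational Cherednik algebra and check that every ingredient is symmetric in $i$ and $j$ as a consequence of the orthonormality choice. Writing out the definition,
\[
[y_i, x_j] = \lpi x_j, y_i \rpi - \sum_{\alpha > 0} c_\alpha \lpi \alpha, y_i \rpi \lpi x_j, \alpha^\vee \rpi s_\alpha,
\]
so the task reduces to showing that the scalar $\lpi x_j, y_i \rpi$ and, for each positive root $\alpha$, the scalar $\lpi \alpha, y_i \rpi \lpi x_j, \alpha^\vee \rpi$ are invariant under the swap $i \leftrightarrow j$.

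First I would handle the constant term. Because $\{y_i\} \subseteq E$ and $\{x_i\} \subseteq E^*$ are chosen to be orthonormal and to correspond to one another under the Euclidean identification $E \cong E^*$, the dual pairing is just the Kronecker delta: $\lpi x_j, y_i \rpi = \delta_{ij}$, which is visibly symmetric.

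For the reflection contributions, the key point is that under the identification of $E$ with $E^*$ the coroot $\alpha^\vee \in \fh$ corresponds to $2\alpha/(\alpha,\alpha) \in \fh^*$, and the vector $y_i$ corresponds to $x_i$. Thus
\[
\lpi \alpha, y_i \rpi = (\alpha, x_i), \qquad \lpi x_j, \alpha^\vee \rpi = \frac{2(\alpha, x_j)}{(\alpha, \alpha)},
\]
so that
\[
\lpi \alpha, y_i \rpi \lpi x_j, \alpha^\vee \rpi = \frac{2(\alpha, x_i)(\alpha, x_j)}{(\alpha, \alpha)}.
\]
This expression is manifestly symmetric in $i$ and $j$. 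Combining the two terms gives $[y_i, x_j] = [y_j, x_i]$.

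There is no real obstacle here; the lemma is essentially a bookkeeping statement about identifying $E$ with $E^*$ via the Euclidean structure. The only point requiring minor care is to write the coroot pairing consistently, but once everything is expressed in terms of the single inner product $(\cdot,\cdot)$ the symmetry is immediate.
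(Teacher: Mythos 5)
Your proof is correct and follows essentially the same route as the paper: both rely on orthonormality giving $\lpi x_j,y_i\rpi=\delta_{ij}$, the identification $\lpi \alpha,y_i\rpi=(\alpha,x_i)$, and $\alpha^\vee=2\alpha/(\alpha,\alpha)$, making the reflection coefficient $\tfrac{2(\alpha,x_i)(\alpha,x_j)}{(\alpha,\alpha)}$ manifestly symmetric in $i,j$. The only cosmetic difference is that you verify symmetry term by term while the paper computes $[y_i,x_j]-[y_j,x_i]$ directly and observes the cancellation.
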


\begin{proof}
The point here is that the bases of $\fh$ and $\fh^*$ were chosen to be orthonormal in $E$ and $E^*$ and they relate to each other under the identification of $E$ and $E^*$. Hence, we can write 
$\lpi x,y_i \rpi = (x,x_i)$, for all $i$ and $x\in E^*$. Also, we can view $\alpha^\vee = 2\alpha/(\alpha,\alpha)\in\fh^*$.
Then, 
\[
[y_i,x_j]-[y_j,x_i] = -\sum_{\alpha>0}\frac{2c_\alpha}{(\alpha,\alpha)}((\alpha,x_i)(x_j,\alpha) - ( \alpha,x_j)(x_i,\alpha))s_\alpha = 0,
\]
as required.
\end{proof}

As in the proof of the previous lemma, we may write $\lpi x,y_i \rpi = (x,x_i)$ for any $x\in E^*$.
Using the identities $\sum_i (\alpha,x_i )x_i =\alpha\in E^*\subseteq\fh^*$ and $\sum_j ( x_j,\alpha^\vee ) y_j =\alpha^\vee\in E\subseteq\fh$, we get
\begin{align*}
\{E_2^+,E_2^-\} &= \sum_{ij}\big(y_ix_j\otimes x_iy_j + x_jy_i\otimes y_jx_i + (x_jy_i\otimes x_iy_j - x_jy_i\otimes x_iy_j)\big)\\
&=\sum_{ij}[y_i,x_j]\otimes x_iy_j + x_jy_i\otimes \{y_j,x_i\}\\
&=\sum_{ij}(\delta_{ij} - \sum_{\alpha>0}c_\alpha ( \alpha,x_i )( x_j,\alpha^\vee ) s_\alpha)\otimes x_iy_j + \sum_jx_jy_j\otimes 1\\
&=\sum_{j}1\otimes x_jy_j - \sum_{\alpha>0}c_\alpha s_\alpha\otimes \alpha\alpha^\vee + \sum_jx_jy_j\otimes 1.
\end{align*}
Adding and subtracting $(\sum_{\alpha>0} c_\alpha s_\alpha\otimes 1 + \tfrac{r}{2}\otimes 1)$, where $r=\dim\fh$, yields 
\[
\{E_2^+,E_2^-\} = \sum_j1\otimes(x_jy_j-\tfrac{1}{2}) - \sum_{\alpha>0}c_\alpha s_\alpha\otimes(\alpha\alpha^\vee - 1) + \sum_{j}(x_jy_j + \tfrac{1}{2})\otimes 1 - \sum_{\alpha>0}c_\alpha s_\alpha\otimes 1 = Z_0+\Omega_c+H.
\]
Similarly, but now using Lemma \ref{l:EucSymm}
\begin{align*}
\{E_3^+,E_3^-\} &=\sum_{ij}[y_j,x_i]\otimes y_jx_i + x_iy_j\otimes \{x_i,y_j\}\\
&=\sum_{ij}[y_i,x_j]\otimes y_jx_i + x_iy_j\otimes \{x_i,y_j\}\\
&=\sum_{ij}(\delta_{ij} - \sum_{\alpha>0}c_\alpha (\alpha,x_i )( x_j,\alpha^\vee ) s_\alpha)\otimes y_jx_i + \sum_jx_jy_j\otimes 1\\
&=\sum_{j}1\otimes (y_jx_j-\tfrac{1}{2}) - \sum_{\alpha>0}c_\alpha s_\alpha\otimes (\alpha^\vee\alpha - 1) + \sum_j(x_jy_j +\tfrac{1}{2})\otimes 1 - \sum_{\alpha>0} c_\alpha s_\alpha)\otimes 1\\
&=-(Z_0+\Omega_c)+H.
\end{align*}
For the last relation $\{E_2^\pm,E_3^\pm\} = 0$, using Lemma \ref{l:EucSymm} again, we compute
\[\{E_2^+,E_3^+\}=\sum_{ij}y_ix_j\otimes x_ix_j = \sum_{i<j}([y_i,x_j] - [y_j,x_i])\otimes x_ix_j = 0.\]
Similarly, $\{E_2^-,E_3^-\} = 0$, and this finishes the proof of Theorem \ref{t:spo}.

\begin{remark}
In \cite{DBOVJ}, it is studied many realisations of the Lie super algebra $\fs\fp\fo(2|1)$ inside an algebra $\Ca\otimes\Cc$, where $\Cc$ is a suitable Clifford algebra and 
$\Ca$ is a general algebra possessing only the commutativity of the variables in $\fh$ and $\fh^*$ and the condition of Lemma \ref{l:Hamiltonian} (the rational Cherednik algebra is an example of such $\Ca$). They
also study the symmetry algebra (centraliser) of these realisations of $\fs\fp\fo(2|1)$ inside $\Ca\otimes\Cc$. It is natural to ask if similar considerations for $\fs\fp\fo(2|2)$ and their symmetry algebras can be made in that context.
\end{remark}

\subsection{A $\ast$-structure on $\fs\fp\fo(2|2,\bbc)$}
The $\gls{Star}$-structures on $\cha$ and on $\Cc$ described in Section \ref{s:defs} naturally induce a $\ast$-structure on the tensor product $\cha\otimes \Cc$. It is straightforward to check that, with respect to the choice of orthonormal bases $\{x_i\}\subseteq E^*$ and $\{y_j\}\subseteq E$ made, we have
\begin{equation}\label{e:star}
(y_j\otimes 1)^* = x_j\otimes 1 \qquad\textup{and}\qquad (1 \otimes y_j )^* = 1 \otimes x_j.
\end{equation}
We endow the space $\gls{Kmod} := M_c(\tau)\otimes\bbs$ with the Hermitian form $\gls{HermStruc} = \lpi\cdot|\cdot\rpi_{c,\tau}$ given by the product between the corresponding
forms, discussed in Section \ref{s:defs}.

\begin{proposition}
The $\ast$-structure on $\cha\otimes\Cc$ restricts to a $\ast$-structure on $\fg = \fs\fp\fo(2|2,\bbc)$, defined in (\ref{e:spodefs}).
\end{proposition}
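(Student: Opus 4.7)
The plan is to verify directly that each of the eight generators of $\fg$ listed in (\ref{e:spodefs}) is mapped under $\ast$ into $\fg$. Since $\ast$ is an anti-linear anti-involution on $\cha\otimes \Cc$, $\fg$ is the linear span of these generators, and the parameter $c$ is real-valued, checking the generators suffices. Using $(AB)^* = B^*A^*$ combined with (\ref{e:star}) (i.e., $y_j^* = x_j$ and $x_j^* = y_j$ in both tensor factors) should reduce everything to bookkeeping except for one conceptual point involving $\Omega_c$.

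First I would handle the six generators that do not involve $\Omega_c$. A direct computation using $(AB)^* = B^*A^*$ and (\ref{e:star}) gives $h_i^* = (x_iy_i + y_ix_i)^* = h_i$, so $H^* = H$; $(x_i^2)^* = y_i^2$, yielding $(E_1^+)^* = -E_1^-$ and analogously $(E_1^-)^* = -E_1^+$; $(y_i\otimes x_i)^* = x_i\otimes y_i$, giving $(E_2^+)^* = E_2^-$ and $(E_2^-)^* = E_2^+$; and finally $(x_i\otimes x_i)^* = y_i\otimes y_i$, so $(E_3^+)^* = E_3^-$ and $(E_3^-)^* = E_3^+$.

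It remains to check $Z^* = Z$. The Clifford part $\sum_i \tfrac{1}{2}(1\otimes z_i)$ is self-adjoint by the same calculation as for $h_i$, since $z_i = x_iy_i - y_ix_i$ gives $z_i^* = z_i$. The task therefore reduces to showing $\Omega_c^* = \Omega_c$. Using $\bar{c}_\alpha = c_\alpha$ and the convention $s_\alpha^* = s_\alpha$ already fixed on $\cha$, this in turn comes down to the identity $\tau_\alpha^* = \tau_\alpha$ in $\Cc$.

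The main (though computationally light) obstacle is verifying this last identity, since $\alpha\in\fh^*$ and $\alpha^\vee\in\fh$ are not simply swapped under $\ast$ up to a scalar matching. Under the identification $E\cong E^*$ furnished by the Euclidean form, one has $\alpha^* = \tfrac{(\alpha,\alpha)}{2}\alpha^\vee\in\fh$ and $(\alpha^\vee)^* = \tfrac{2}{(\alpha,\alpha)}\alpha\in\fh^*$. The two scalar factors are reciprocal, so they cancel in the product, yielding $(\alpha\alpha^\vee)^* = (\alpha^\vee)^*\alpha^* = \alpha\alpha^\vee$ in $\Cc$. Hence $\tau_\alpha^* = (1-\alpha\alpha^\vee)^* = 1 - \alpha\alpha^\vee = \tau_\alpha$, which gives $\Omega_c^* = \Omega_c$ and completes the verification.
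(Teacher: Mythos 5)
Your proof is correct and follows essentially the same route as the paper: check the six generators not involving $\Omega_c$ by direct computation with $(\ref{e:star})$, then reduce $Z^*=Z$ to $\tau_\alpha^*=\tau_\alpha$, i.e., $(\alpha\alpha^\vee)^*=\alpha\alpha^\vee$. The only cosmetic difference is in the last step, where the paper expands $\alpha$ and $\alpha^\vee$ in the orthonormal basis and cancels the scalar $2/(\alpha,\alpha)$ inside the double sum, whereas you observe the cancellation abstractly via $\alpha^*=\tfrac{(\alpha,\alpha)}{2}\alpha^\vee$ and $(\alpha^\vee)^*=\tfrac{2}{(\alpha,\alpha)}\alpha$; these are the same computation.
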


\begin{proof}
Using (\ref{e:star}), it is immediate that $(E_1^\pm)^* = -E_1^\mp, (E_2^\pm)^*= E_2^\mp, (E_3^\pm)^*= E_3^\mp$ and $H^* = H$. It is also straightforward that $Z_0^* = Z_0$, since in $\Cc$, we have $[x_j,y_j]^* = [x_j,y_j]$, for all
$1\leq j \leq r$. Now, from
\[
(\alpha\alpha^\vee)^* = \sum_{jk}(\alpha,y_j)(\alpha^\vee,x_k)(x_jy_k)^* = \sum_{jk}\tfrac{2}{(\alpha,\alpha)}(\alpha,y_j)(\alpha,x_k)x_ky_j = \alpha\alpha^\vee,
\]
we obtain $\tau_\alpha^*=\tau_\alpha$ and hence $\Omega_c^* = \Omega_c$, which together with $Z_0^*=Z_0$ implies $Z^* = Z$.
It is straightforward to check that $\ast$ preserves the relations (\ref{e:gl2}) up to (\ref{e:anti-commutation})
\end{proof}

\begin{corollary}
For every irreducible $W$ representation $\tau$, the space $(K_c(\tau),\lpi\cdot|\cdot\rpi)$ is a $\ast$-Hermitian $\fs\fp\fo(2|2,\bbc)$-module. In the notation of Lemma \ref{l:unitarity}, if $c\in U(\tau)$, then $K_c(\tau)$ is a unitarisable 
$\fs\fp\fo(2|2,\bbc)$-module.
\end{corollary}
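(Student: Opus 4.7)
The plan is to reduce the corollary to two general principles: that Hermitian contravariance transfers through tensor products of $\ast$-algebras, and that the restriction of a contravariant form to a $\ast$-subalgebra stays contravariant.

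First, I would verify that $\lpi\cdot|\cdot\rpi := \beta_{c,\tau}\otimes(\cdot,\cdot)_\bbs$ is a $\ast$-contravariant Hermitian form for the full algebra $\cha\otimes\Cc$ equipped with the tensor $\ast$-structure $(a\otimes b)^\ast=a^\ast\otimes b^\ast$. Concretely, on decomposable vectors $u_1\otimes\omega_1,u_2\otimes\omega_2 \in K_c(\tau)$ and a decomposable element $a\otimes b\in\cha\otimes\Cc$, one has
\[
\lpi (a\otimes b)(u_1\otimes\omega_1)\mid u_2\otimes\omega_2\rpi = \beta_{c,\tau}(a u_1,u_2)\,(b\omega_1,\omega_2)_\bbs = \beta_{c,\tau}(u_1,a^\ast u_2)\,(\omega_1,b^\ast\omega_2)_\bbs,
\]
which equals $\lpi u_1\otimes\omega_1\mid (a^\ast\otimes b^\ast)(u_2\otimes\omega_2)\rpi$. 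The first contravariance identity here is the defining property of $\beta_{c,\tau}$ recalled before Lemma \ref{l:unitarity}, and the second is the contravariance of $(\cdot,\cdot)_\bbs$ discussed in Section \ref{s:cliff}. Extending by bilinearity gives $\ast$-contravariance of $\lpi\cdot|\cdot\rpi$ on all of $\cha\otimes\Cc$.

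Second, since the previous proposition shows that the $\ast$-structure on $\cha\otimes\Cc$ preserves $\fg$, every element $X\in\fg$ also satisfies $\lpi X\cdot u|v\rpi = \lpi u|X^\ast\cdot v\rpi$ for all $u,v\in K_c(\tau)$. Thus $(K_c(\tau),\lpi\cdot|\cdot\rpi)$ is a $\ast$-Hermitian $\fg$-module, which gives the first assertion.

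For the second assertion, assume $c\in U(\tau)$. By Lemma \ref{l:unitarity}(b), the form $\beta_{c,\tau}$ is then positive definite on $M_c(\tau)$. The Hermitian form $(\cdot,\cdot)_\bbs$ is positive definite on the spin module $\bbs$ by construction. The tensor product of two positive-definite Hermitian forms on complex vector spaces is positive definite on the tensor product, so $\lpi\cdot|\cdot\rpi$ is positive definite on $K_c(\tau)$. Combined with the contravariance established above, this exhibits $K_c(\tau)$ as a unitarisable $\fs\fp\fo(2|2,\bbc)$-module.

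There is no real obstacle here: the statement is a packaging of three ingredients already in place (contravariance of $\beta_{c,\tau}$, contravariance of $(\cdot,\cdot)_\bbs$, and the preservation of $\fg$ under $\ast$). The only thing one must take care of is that the tensor $\ast$-structure on $\cha\otimes\Cc$ used to define $\fg^\ast$ is the same one under which the product form is contravariant, which is immediate from the definitions.
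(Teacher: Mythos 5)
Your proof is correct and matches exactly what the paper leaves implicit: the corollary is stated without proof because it is meant to follow immediately from (i) the contravariance of $\beta_{c,\tau}$ and $(\cdot,\cdot)_\bbs$ asserted in Section \ref{s:defs} and Section \ref{s:cliff}, (ii) the preceding proposition that $\ast$ preserves $\fg$, and (iii) Lemma \ref{l:unitarity}(b) together with positivity of the spin form. You have simply spelled out the routine verification that the tensor-product form is contravariant for the tensor $\ast$-structure and that the product of positive-definite forms is positive definite, which is precisely the intended argument.
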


\section{The decomposition of $K_c(\tau)$}

In this section, we shall prove our main theorem. Fix, once and for all, $\tau\in\widehat{W}$ and $c\in \mathscr{C}$, so that
$(K_c(\tau),\lpi\cdot|\cdot\rpi)$ is a $\ast$-Hermitian $\fg =\fs\fp\fo(2|2,\bbc)$-module. For convenience, if necessary, we shall omit $c$ and $\tau$ from the notations.

\subsection{A coarse decomposition} 
For each pair of integers $(m,l)$ we let
$\gls{gKmod}:=\bbc[\fh]_m\otimes V(\tau)\otimes \bbs^l$, where $\bbs^l$ denotes the $l$-th exterior power of 
$\fh^*$. For $\sigma\in\widehat{W}$ we let $\gls{gKmodIso}$ denote the $\sigma$-isotypic component of $K^l_m$. We have 
\begin{equation}\label{e:weightDecomp}
K_c(\tau) = \bigoplus_{m,l\in\bbz_{\geq 0},\sigma\in\widehat{W}} K^l_m(\sigma).
\end{equation}
Note that (\ref{e:weightDecomp}) is in fact the weight space decomposition of $K$ with weights
 defined by the following (recall that $r = \dim \fh$):

\begin{proposition}
The generators $H,Z$ of the Cartan subalgebra of $\fg$ act on $\theta\in K^l_m(\sigma)$ via
$\gls{Lambda}\in\ft^*$ characterised by
\begin{equation}\label{e:weights}
H\theta = (m + \tfrac{r}{2} - N_c(\tau))\theta\qquad\textup{and}\qquad
Z\theta = (l - \tfrac{r}{2} + N_c(\sigma))\theta.
\end{equation}
\end{proposition}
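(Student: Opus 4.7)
The action of $H$ and $Z$ only involves multiplication operators and the Cherednik commutation relations, so the plan is to reduce both computations to previously used identities (most notably, the action of the Dunkl Euler element on $M_c(\tau)$ from the proof of Lemma \ref{l:unitarity}, and Proposition \ref{p:Cliffstd}).

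First I would rewrite $H$ in a more convenient form. Since $\{x_i\}$ and $\{y_j\}$ are chosen to be dual orthonormal bases, the defining relation $[y_i,x_i] = 1 - \sum_{\alpha>0}c_\alpha\lpi\alpha,y_i\rpi\lpi x_i,\alpha^\vee\rpi s_\alpha$ yields $h_i = \{x_i,y_i\} = 2x_iy_i + 1 - \sum_\alpha c_\alpha\lpi\alpha,y_i\rpi\lpi x_i,\alpha^\vee\rpi s_\alpha$. Summing over $i$ and using $\sum_i\lpi\alpha,y_i\rpi\lpi x_i,\alpha^\vee\rpi = \lpi\alpha,\alpha^\vee\rpi = 2$, one obtains
\[
\textstyle H = \bigl(\sum_i x_iy_i\bigr)\otimes 1 + \tfrac{r}{2} - \Omega_c^{\mr{alg}}\otimes 1,
\]
where $\Omega_c^{\mr{alg}} := \sum_{\alpha>0} c_\alpha s_\alpha \in \bbc W$. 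The Dunkl--Euler element $E(c)=\sum_i x_iy_i$ acts on $M_c(\tau)$ by the scalar $m - N_c(\tau) + N_c(\rho)$ on the $\rho$-isotypic, degree-$m$ piece, by exactly the same inductive argument as in the proof of Lemma \ref{l:unitarity}(c) (the base case $m=0$ follows from $y_i\cdot(1\otimes V(\tau)) = 0$ plus the computation of $[y_i,x_i]$; the induction step is immediate from $[E(c),x]=x$ for $x\in\fh^*$).

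The key cancellation is now visible: on a $W$-isotypic vector of type $\rho$ and polynomial degree $m$ in $M_c(\tau)$, the operator $E(c)+\tfrac{r}{2}-\Omega_c^{\mr{alg}}$ acts by $(m-N_c(\tau)+N_c(\rho)) + \tfrac{r}{2} - N_c(\rho) = m + \tfrac{r}{2} - N_c(\tau)$, independent of $\rho$. Consequently $H$ acts by this single scalar on the entire degree-$m$ piece $\bbc[\fh]_m\otimes V(\tau)$, and since $H$ acts trivially on the spin factor, the same scalar governs any $K^l_m(\sigma)$. (This is the mildly surprising point and is the reason the $H$-eigenvalue depends only on $m$ and $\tau$, not on $l$ or $\sigma$.)

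For $Z$ the plan is to split $Z = Z_0 + \Omega_c$ and treat the pieces separately. By the Clifford relation $\{x_i,y_i\}=1$, one has $z_i = [x_i,y_i]=2x_iy_i-1$, so $Z_0 = \sum_i\tfrac{1}{2}z_i = \sum_i x_iy_i - \tfrac{r}{2}$ as an element of $\Cc$, and Proposition \ref{p:Cliffstd} gives that $Z_0$ acts on $\bbs^l$ by $l-\tfrac{r}{2}$. For $\Omega_c = \sum_\alpha c_\alpha s_\alpha\otimes\tau_\alpha$, the content of Proposition \ref{p:WEmbedding} is precisely that $\tau_\alpha$ acts on $\bbs$ as the linear operator $s_\alpha$; consequently each summand $s_\alpha\otimes\tau_\alpha$ implements the diagonal action of $s_\alpha$ on $\bbc[\fh]_m\otimes V(\tau)\otimes\bbs^l$. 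Hence $\Omega_c$ acts on the diagonal $\sigma$-isotypic $K^l_m(\sigma)$ as $\Omega_c^{\mr{alg}}$ does on $\sigma$, i.e.\ by the scalar $N_c(\sigma)$. Adding the two contributions gives $Z\theta = (l - \tfrac{r}{2} + N_c(\sigma))\theta$, as required. I do not anticipate any real obstacle here; the only place where care is needed is distinguishing the three different actions of $W$ (on the polynomial factor, on $V(\tau)$, and on $\bbs$ via $\tau_\alpha$), all of which are combined diagonally to define the isotypic decomposition \eqref{e:weightDecomp}.
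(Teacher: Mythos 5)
Your plan for $Z$ matches the paper exactly, and your overall decomposition $H = E(c) + \tfrac{r}{2} - \Omega_c^{\mr{alg}}$, with the cancellation you observe, is the same computation the paper calls ``standard''. However, the parenthetical justification you offer for the $E(c)$-eigenvalue contains an error that does matter for the argument: the identity $[E(c),x]=x$ for $x\in\fh^*$ is false when $c\neq 0$. A direct computation with the defining relation gives
\begin{equation*}
\bigl[\textstyle\sum_i x_iy_i,\, x_k\bigr] \;=\; x_k \;-\;\sum_{\alpha>0} c_\alpha \lpi x_k,\alpha^\vee\rpi\,\alpha\, s_\alpha,
\end{equation*}
so $\ad E(c)$ does not raise degree by $1$ on arbitrary vectors. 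The inductive argument you sketch therefore does not establish that $E(c)$ acts on the degree-$m$, $\rho$-isotypic piece by $m - N_c(\tau) + N_c(\rho)$; that fact is actually usually derived \emph{from} the grading property of $H$, not the other way around. The clean route, and the one the paper implicitly takes, is to note that $[H,x]=x$ does hold: in the symmetrised sum $\sum_i\{x_i,y_i\}$ the two correction terms $\mp\sum_\alpha c_\alpha\lpi x_k,\alpha^\vee\rpi\,\alpha\, s_\alpha$ cancel, so $H$ is the grading element. On the degree-$0$ piece $V(\tau)$, where $y_i$ acts by zero, $H = \tfrac{r}{2} - \Omega_c^{\mr{alg}}$ gives eigenvalue $\tfrac{r}{2}-N_c(\tau)$, and then $[H,x]=x$ gives $m+\tfrac{r}{2}-N_c(\tau)$ on degree $m$, uniformly across the $W$-isotypic decomposition. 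Your detour through the $\rho$-dependent $E(c)$-eigenvalue and the cancellation against $\Omega_c^{\mr{alg}}$ is equivalent at the level of statements, but circular as a proof strategy unless you first establish the $E(c)$-eigenvalue by some independent means; the broken $[E(c),x]=x$ does not provide that. Fixing this amounts to replacing $E(c)$ by $H$ in your induction, which collapses the argument to the paper's one-line computation.
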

\begin{proof}
It is a standard computation that 
$\sum_j x_jy_j + \tfrac{r}{2} - \sum_{\alpha>0}c_\alpha s_\alpha$ act by the scalar 
$m + \tfrac{r}{2} - N_c(\tau)$ on the degree $m$-part of $M_c(\tau)$. Moreover, since $\Omega_c$ is the image
of the central element $\sum_{\alpha >0}c_\alpha s_\alpha \in \bbc W$ under the diagonal embedding 
$\rho:\bbc W\to \cha\otimes \Cc$, it acts by $N_c(\sigma)$ on the $\sigma$-isotypic component. We are left to show that 
$\sum_j (x_jy_j - \tfrac{1}{2})$ acts on $\bbs^l$ by the scalar $l-\tfrac{r}{2}$. But this is indeed the case, 
from Proposition \ref{p:Cliffstd}.
\end{proof}

Note also that (\ref{e:weightDecomp}) is in fact an orthogonal decomposition:

\begin{proposition}\label{p:orthogonality}
We have $\lpi K_m^l(\sigma) | K_{m'}^{l'}(\sigma')\rpi = 0$, if $(m',l')\neq (m,l)$ or $\sigma'\neq \sigma$.
\end{proposition}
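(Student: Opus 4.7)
The plan is to split the desired orthogonality into two independent parts: (i) orthogonality across distinct bidegrees $(m,l)\neq (m',l')$, and (ii) orthogonality of $\sigma$-isotypic components at a fixed bidegree.

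For (i), the key is that the Hermitian form is by construction a tensor product $\lpi\cdot|\cdot\rpi=\beta_{c,\tau}\otimes(\cdot,\cdot)_\bbs$. The operator $\sum_j x_j y_j+\tfrac{r}{2}-\sum_{\alpha>0}c_\alpha s_\alpha\in\cha$ is self-adjoint with respect to $\beta_{c,\tau}$ (using $x_j=y_j^*$, $s_\alpha^*=s_\alpha$ and contravariance) and acts on the degree-$m$ part of $M_c(\tau)$ by the scalar $m+\tfrac{r}{2}-N_c(\tau)$, as recalled in the proof of the preceding proposition. Distinct eigenvalues of a self-adjoint operator yield orthogonal eigenspaces, so $M_c(\tau)_m$ is $\beta_{c,\tau}$-orthogonal to $M_c(\tau)_{m'}$ whenever $m\neq m'$. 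Analogously, $\sum_j x_j y_j\in\Cc$ is self-adjoint for $(\cdot,\cdot)_\bbs$ and acts on $\bbs^l$ by the scalar $l$ by Proposition \ref{p:Cliffstd}, so $\bbs^l$ is $(\cdot,\cdot)_\bbs$-orthogonal to $\bbs^{l'}$ for $l\neq l'$. Tensoring these two facts, $\lpi K_m^l|K_{m'}^{l'}\rpi=0$ whenever $(m,l)\neq (m',l')$.

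For (ii), I would show that each $K_m^l$ is a $\ast$-Hermitian $W$-module under the diagonal action $\rho$. The $W$-invariance of $\beta_{c,\tau}$ is built into its definition. The $W$-invariance of $(\cdot,\cdot)_\bbs$ under $w\mapsto \tau_w$ reduces on generators to the identity $\tau_\alpha^*=\tau_\alpha$ in $\Cc$, which is precisely the identity $(\alpha\alpha^\vee)^*=\alpha\alpha^\vee$ already verified in the proof that $\ast$ preserves $\fg$. Since $K_m^l$ is stable under $\rho(W)$ and the restricted form is $W$-invariant and Hermitian, a standard Schur's-lemma argument shows that isotypic components for non-isomorphic simple $W$-modules are orthogonal inside $K_m^l$, so $\lpi K_m^l(\sigma)|K_m^l(\sigma')\rpi=0$ for $\sigma\neq\sigma'$.

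Combining (i) and (ii) gives the proposition. The only mildly subtle check is the self-adjointness of the embedded reflections $\tau_\alpha$, but this is already contained in the text; the rest is routine bookkeeping with standard Hermitian-form techniques, and in particular the argument does not require $c\in U(\tau)$ or any positivity of $\lpi\cdot|\cdot\rpi$.
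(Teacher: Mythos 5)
Your proof is correct, and it takes the same basic route as the paper (tensor factorization of $\lpi\cdot|\cdot\rpi$, followed by separating bidegree and isotypic component), but it spells out the argument more explicitly. The paper simply notes that the $l$-orthogonality is immediate from the formula for $(\cdot,\cdot)_\bbs$ (since that form is defined degree by degree) and cites a proposition of Dunkl--Opdam for the $m$- and $\sigma$-orthogonality. You instead produce a self-adjoint element ($H=\sum_j x_jy_j+\tfrac r2-\Omega_c$ in $\cha$, resp.\ $\sum_j x_jy_j$ in $\Cc$) with distinct real eigenvalues on distinct graded pieces, and for the isotypic part you observe that the tensor form is invariant for the diagonal $W$-action because the Clifford factor is invariant under $w\mapsto\tau_w$ (via $\tau_\alpha^*=\tau_\alpha$). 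That last point is genuinely worth making: since $K_m^l(\sigma)$ is the isotypic component for the diagonal action $\rho$, one does need $W$-invariance of both tensor factors, not just of $\beta_{c,\tau}$; your argument fills in this step, whereas the paper's one-line citation leaves it implicit. You also correctly note that none of this uses positivity or genericity of $c$ --- only reality of $c$ (so that the eigenvalues of $H$ are real) and the $*$-Hermitian structure, which is a useful remark.
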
 

\begin{proof}
The form $\lpi\cdot | \cdot\rpi$ is given by the product of the Hermitian structures on $M_c(\tau)$ and $\bbs$. The 
orthogonality with respect to $l$ is immediate from the definition of $(\cdot,\cdot)_{\bbs}$, while with respect to the polynomial degree $m$ and the isotypic component $\sigma$, it follows from \cite[Proposition 2.17]{DO}.
\end{proof}

\begin{definition}\label{d:harmonics}
Define
\begin{align*}
\Cm = \gls{Monog} &:= \{\theta\in K_c(\tau)\mid X\theta = 0, \textup{ for all } X\in\fn^-\subseteq \fg\}\\
\Ca = \gls{AHarm} &:= \{\theta\in K_c(\tau)\mid X\theta = 0, \textup{ for all } X\in\fu^-\subseteq \fg\}\\ 
\Ch = \gls{Harm}  &:= \{\theta\in K_c(\tau)\mid E_1^-\theta = 0 \}. 
\end{align*}
We shall call the nonzero elements of $\Cm_c(\tau)$ as the lowest weight vectors of $K_c(\tau)$.
\end{definition}

\begin{remark}
Since the generators of $\fn^-$ are homogeneous, we can decompose the spaces in Definition \ref{d:harmonics} with respect to bidegrees $(m,l)$. By $W$-invariance of these operators, we can decompose further with
respect to the isotypic components. We shall write $\Cm_m^l := \Cm\cap K_m^l$ and  
$\Cm_m^l(\sigma) := \Cm\cap K_m^l(\sigma)$. Similar for $\Ca$ and $\Ch$.
\end{remark}

From now onward, we shall assume that the parameter $c\in\mathscr{C}$ is chosen to be in the open neighbourhood $U(\tau)$
of $0\in\mathscr{C}$, in the notations of Lemma \ref{l:unitarity}, so that $K_c(\tau)$ is unitarisable. We also impose the
following generic condition on $c$:

\begin{assumption}\label{a:assumption}
Assume the parameter $c\in U(\tau)\subseteq \mathscr{C}$ is such that $\tfrac{r}{2} - N_c(\tau) \notin \bbz_{\leq 0}$ and also that $N_c(\tau)-N_c(\sigma)\notin\bbz_{\geq 0}$, for all $\sigma\in\widehat{W}\setminus\{\tau\}$.
\end{assumption} 

\begin{proposition}\label{p:rowdecomps}
The space $K^l_m$, of homogeneous elements of bidegree $(m,l)$  decomposes as
\[K^l_m = \bigoplus_{p=0}^{[m/2]} (E_1^+)^p(\Ch_{m-2p}^l),\]
where $[\cdot]$ denotes the integer part of a real number.
\end{proposition}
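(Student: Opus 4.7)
My plan is a Fischer-type decomposition relative to the $\fs\fl(2,\bbc)$-triple $(E_1^+, E_1^-, H)$ from (\ref{e:spodefs}). The idea is to factor out the Clifford direction, reducing to the analogous statement for $M_c(\tau)$, and then run an induction on $m$ that exploits standard $\fs\fl(2,\bbc)$-commutation identities together with the genericity in Assumption \ref{a:assumption}.

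Since $E_1^{\pm}$ and $H$ act only on the $\cha$-factor of $\cha\otimes\Cc$, the harmonic subspace factors as $\Ch^l_m = \Ch^0_m\otimes\bbs^l$, where $\Ch^0_m = \ker(E_1^-)\cap M_c(\tau)_m$. Thus it suffices to prove the Fischer-type decomposition
\[
M_c(\tau)_m \;=\; \bigoplus_{p=0}^{[m/2]}(E_1^+)^p\,\Ch^0_{m-2p}
\]
and tensor with $\bbs^l$. I would induct on $m$; the base cases $m=0,1$ are immediate because $E_1^- = \tfrac{1}{2}\sum_j y_j^2$ lowers polynomial degree by $2$. The key computation for the inductive step is the standard $\fs\fl(2,\bbc)$-identity
\[
E_1^-\,(E_1^+)^p h \;=\; -\,p\bigl(n + \tfrac{r}{2} - N_c(\tau) + p - 1\bigr)\,(E_1^+)^{p-1} h,\qquad h\in\Ch^0_n,\ p\geq 1,
\]
which follows by iterating $[E_1^+,E_1^-]=H$ and using (\ref{e:weights}) to evaluate $H$ on each $(E_1^+)^{k}h$. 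Assumption \ref{a:assumption}, in the form $N_c(\tau)-\tfrac{r}{2}\notin\bbz_{\geq 0}$, is precisely what ensures the scalar above does not vanish for any $n\geq 0$ and $p\geq 1$, since otherwise $n+p-1\in\bbz_{\geq 0}$ would coincide with $N_c(\tau) - r/2$.

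The inductive step splits into existence and uniqueness. For existence, given $v\in M_c(\tau)_m$, I expand $E_1^- v = \sum_p (E_1^+)^p h'_p$ via the inductive hypothesis and use the displayed identity to choose scalars $a_p$ with $E_1^-\bigl(\sum_p a_p(E_1^+)^{p+1}h'_p\bigr) = E_1^- v$; the difference then lies in $\Ch^0_m$. For uniqueness, if $\sum_{p=0}^{[m/2]} (E_1^+)^p h_p = 0$ with $h_p\in\Ch^0_{m-2p}$, applying $E_1^-$ yields a relation in $M_c(\tau)_{m-2}$ of the same form but with nonzero scalar multiples of each $h_p$ for $p\geq 1$, so inductive uniqueness kills these, and then $h_0=0$ as well. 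Tensoring with $\bbs^l$ completes the argument.

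The only genuine obstacle is the nonvanishing of the scalar $n + r/2 - N_c(\tau) + p - 1$, and that is exactly what Assumption \ref{a:assumption} is designed to provide. Conceptually, the proposition reflects that $\bigoplus_m M_c(\tau)_m$ is a semisimple $\fs\fl(2,\bbc)$-module, decomposing into irreducible lowest weight modules $L_{\fs\fl(2)}(m + r/2 - N_c(\tau))$, each generated by the Dunkl-harmonics of some minimal degree.
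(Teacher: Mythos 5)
Your proof is correct but takes a genuinely different route from the paper's. The paper argues by orthogonality: using that the contravariant Hermitian form $\lpi\cdot|\cdot\rpi$ on $K_c(\tau)$ is positive definite (the standing unitarity assumption $c\in U(\tau)$), it shows that $K^l_m = \Ch^l_m \oplus E_1^+(K^l_{m-2})$ by verifying that a nonzero $\theta$ orthogonal to both summands would force $0 = \lpi \theta | E_1^+E_1^-\theta\rpi = -\lpi E_1^-\theta | E_1^-\theta\rpi < 0$, and then iterates this splitting to obtain the full decomposition. You instead factor out the Clifford direction $\bbs^l$, reduce to a Fischer decomposition of $M_c(\tau)$, and prove it algebraically via the $\fs\fl(2,\bbc)$-identity $E_1^-(E_1^+)^p h = -p\,(n + r/2 - N_c(\tau) + p - 1)(E_1^+)^{p-1} h$ together with the genericity in Assumption \ref{a:assumption} to guarantee the scalars are nonzero. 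Both are valid here since the paper has both standing hypotheses in force; the trade-off is that the paper's proof is shorter and relies on unitarity alone, while yours replaces unitarity by the weaker genericity condition $r/2 - N_c(\tau)\notin\bbz_{\leq 0}$ and makes the $\fs\fl(2,\bbc)$-module structure (and the explicit lowest-weight scalars) transparent. Your computations of the scalar and the nonvanishing range check out, and the existence/uniqueness induction is sound.
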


\begin{proof}
This result is the first part of \cite[Theorem 5.1]{BSO}. We present an argument, for convenience. 
Our argument is by induction on the polynomial degree. Suppose first that $m = 2k$ is even. For $k=0$ the result
is trivially true, so assume it for all $k'<k$, and  note that, because $\lpi\cdot|\cdot\rpi$ is positive definite, we have
\[K^l_{2k} = \Ch^l_{2k}\oplus (\Ch^l_{2k})^\perp.\]
The claim will follow by the inductive hypothesis, if we show that $(\Ch^l_{2k})^\perp = E_1^+(K^l_{2k-2})$. Certainly, 
$\lpi \Ch^l_{2k}|E_1^+(K^l_{2k-2})\rpi =0$, since $(E_1^+)^* = -E_1^-$. Now, let 
$U := \Ch^l_{2k}\oplus E_1^+(K^l_{2k-2})$. We shall argue that $U^\perp = 0$. Suppose there exists
$0\neq \theta\in U^\perp \subseteq K^l_{2k}$. If that was so, we would have $0 \neq  E_1^-\theta \in K^l_{2k-2}$, since
otherwise $\theta \in U\cap U^\perp$. But then, $E_1^+E_1^-\theta \in U$ and hence
\[ 
0 = \lpi \theta | E_1^+E_1^-\theta\rpi = -\lpi E_1^-\theta | E_1^-\theta\rpi<0, 
\]
a contradiction that shows $U^\perp = 0$. The argument for $m$ odd is entirely analogous.
\end{proof}

\begin{proposition}\label{p:e1harm}
The space $\Ch^l_m$ decomposes as an orthogonal sum
\[\Ch^l_m = \Cm_m^l \oplus E_2^+(\Cm_{m+1}^{l-1}) \oplus F_{m-1}(\Cm_{m-1}^{l-1}) \oplus E_2^+E_3^+(\Cm_m^{l-2}),\]
where the element $F_m\in\fn^+$, for $m\in\bbz_{\geq 0}$, is defined by
\[F_m:= E_2^+E_1^+ -\lambda_mE_3^+,\]
and $\lambda_m =  (m + \tfrac{r}{2} - N_c(\tau)) = \lambda_{m,l,\sigma}(H)$, which is nonzero, from the Assumption \ref{a:assumption}.
\end{proposition}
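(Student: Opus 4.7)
The plan is to verify three things in turn: (i) each of the four subspaces is contained in $\Ch^l_m$, (ii) they are pairwise orthogonal for $\lpi\cdot|\cdot\rpi$, and (iii) their sum exhausts $\Ch^l_m$. Since Assumption \ref{a:assumption} places us in the unitary locus $U(\tau)$, the form $\lpi\cdot|\cdot\rpi$ is positive-definite on $K_c(\tau)$, so orthogonality in (ii) automatically makes the sum direct.

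For (i), I would carry out direct commutator computations using the relations of Section \ref{s:spo}. The inclusion $\Cm^l_m\subset\Ch^l_m$ is immediate; for $E_2^+(\Cm^{l-1}_{m+1})$ use $[E_1^-,E_2^+]=0$; and $E_1^- E_2^+E_3^+=E_2^+(E_3^+E_1^- - E_2^+)$ (via $[E_1^-,E_3^+]=-E_2^+$) vanishes on $\Cm^{l-2}_m$ because $(E_2^+)^2=0$. The combination $F_{m-1}=E_2^+E_1^+-\lambda_{m-1}E_3^+$ is tailored precisely so that $E_1^-F_{m-1}=F_{m-1}E_1^-+E_2^+(\lambda_{m-1}-H)$ using $[E_1^-,E_1^+]=-H$; this kills $\Cm^{l-1}_{m-1}$ because $H$ acts there by the scalar $\lambda_{m-1}$. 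For (ii), transfer pairings across the operators using $(E_2^\pm)^\ast=E_2^\mp$, $(E_3^\pm)^\ast=E_3^\mp$, $(E_1^\pm)^\ast=-E_1^\mp$ and $F_{m-1}^\ast=-E_1^-E_2^--\lambda_{m-1}E_3^-$. Each of the six cross-pairings becomes an inner product of an $\Cm$-vector with an expression of the form $X\theta'$ where $X$ is a word in $\fn^-$, and the super-commutation relations of $\fg$ collapse it to zero modulo $\fn^-\cdot\Cm=0$. The mildly nontrivial case $\lpi F_{m-1}\psi_2|E_2^+E_3^+\psi_3\rpi$ reduces after unpacking to a scalar multiple of $\lpi(E_1^+E_3^-+E_2^-)\psi_2|\psi_3\rpi=0$.

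The main obstacle is (iii), and my plan is induction on $l$. The base case $l=0$ is immediate: all summands involving $\Cm^{-1}_{\bullet}$ vanish, and any $\theta\in K^0_m=\Ch^0_m$ lies in $\Cm^0_m$ since $E_2^-$ and $E_3^-$ land in $K^{-1}_{\bullet}=0$. For the inductive step, given $\theta\in\Ch^l_m$, first apply $E_3^-$: because $[E_1^-,E_3^-]=0$ the image lies in $\Ch^{l-1}_{m-1}$, where the inductive hypothesis yields a four-term decomposition. Explicit computations $E_3^-F_{m-2}\psi=\lambda_{m-2}(2-\lambda_{m-2}+\lambda_Z)\psi$ and $E_3^-(E_2^+E_3^+)\psi=(2-\lambda_\bullet+\lambda_Z)E_2^+\psi$ combined with $(E_3^-)^2\theta=0$ force the $F_{m-2}$- and $E_2^+E_3^+$-components of $E_3^-\theta$ to vanish, so $E_3^-\theta=\eta_0+E_2^+\zeta_1$ with $\eta_0\in\Cm^{l-1}_{m-1}$, $\zeta_1\in\Cm^{l-2}_m$. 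Scalar inversion then determines $\psi_2,\psi_3\in\Cm$ such that $\tilde\theta:=\theta-F_{m-1}\psi_2-E_2^+E_3^+\psi_3$ satisfies $E_3^-\tilde\theta=0$, hence $\tilde\theta\in\Ca^l_m$. Applying $E_2^-$ to $\tilde\theta$ gives an element of $\Cm^{l-1}_{m+1}$ (one checks $E_1^-$, $E_3^-$, and $(E_2^-)^2$ all vanish on it), and the final scalar inversion via $E_2^-E_2^+=H+Z$ extracts $\psi_1$ so that $\theta_0:=\tilde\theta-E_2^+\psi_1\in\Cm^l_m$. The core technical hurdle is confirming that the inversion scalars $\lambda_{m-1}(2-\lambda_{m-1}+\lambda_Z)$, $(2-\lambda_m+\lambda_Z)$, and the $(H+Z)$-eigenvalue on each relevant $\Cm$-isotype are nonzero under Assumption \ref{a:assumption}, possibly requiring one to restrict the genericity condition further.
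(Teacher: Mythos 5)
Your overall plan — verify (i) containment, (ii) pairwise orthogonality, (iii) completeness — matches the structure of the paper's proof, and your treatment of (i)–(ii) is essentially the paper's (direct supercommutator manipulations and transfer of the Hermitian pairing via $\ast$, with the only mildly nontrivial cross-pairings collapsing to $\lpi X\psi\mid\psi'\rpi$ for $X$ a word in $\fn^-$).

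The substantive part is (iii), and there you take a genuinely different route from the paper. The paper argues by contradiction: write $\Ch^l_m=U\oplus U^\perp$ with $U$ the proposed sum, assume $0\neq\theta\in U^\perp$, and work through the cases according to which of $E_2^-\theta$, $E_3^-\theta$, $E_2^-E_3^-\theta$ vanish, deriving in each case a positivity contradiction of the shape $0<\lpi\cdot\mid\cdot\rpi=0$ from unitarity of $K_c(\tau)$. The only scalar nonvanishing used is that $\lambda_m\neq 0$, which Assumption~\ref{a:assumption} supplies directly. Your proposal is instead a constructive induction on the Clifford degree $l$, peeling off the component in $\Cm^l_m$ by applying $E_3^-$, invoking the inductive decomposition at degree $l-1$, and then inverting $E_3^-F_{m-1}$ and $E_3^-E_2^+E_3^+$ on the $\Cm$-spaces. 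This is where the gap lies, and it is exactly the one you flag at the end: the inversion requires that on each $\sigma$-isotypic piece the scalars $\lambda_\bullet\bigl(2-(H-Z)\bigr)$ be nonzero, but $(H-Z)$ acts on $K^{l'}_{m'}(\sigma)$ by $m'-l'+r-N_c(\tau)-N_c(\sigma)$, which involves the \emph{sum} $N_c(\tau)+N_c(\sigma)$; Assumption~\ref{a:assumption} controls only $\tfrac{r}{2}-N_c(\tau)$ and the \emph{difference} $N_c(\tau)-N_c(\sigma)$, so these scalars can vanish under the stated hypothesis. Your suggested remedy (strengthening the genericity condition) would therefore weaken the proposition rather than prove it as stated. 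The paper's contradiction argument is designed to sidestep exactly this: it never needs $E_3^-F_{m-1}$ or $E_3^-E_2^+E_3^+$ to act invertibly on $\Cm$-spaces; it only needs a certain inner product to be strictly positive (unitarity) together with $\lambda_m\neq 0$. If you want a constructive proof, you would need to use positivity at the inversion steps rather than raw scalar nonvanishing, which in effect recovers the paper's argument.
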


\begin{proof}
That each summand is in $\Ch_m^l$, follows from (the proof of) Proposition \ref{p:VermaDec}. Next, we show that the summands are orthogonal. If $\theta\in\Cm_m^l$ and $\psi\in K$, then, for $j\in \{2,3\}$, we have $\lpi \theta| E_j^+\psi\rpi = \lpi  E_j^-\theta|\psi\rpi = 0$, 
which implies that $\Cm_m^l$ is orthogonal to the other summands. Now, let $\theta\in\Cm_{m+1}^{l-1}$ and $\psi\in K$. Decompose $\theta = \sum_\sigma \theta_\sigma$, with respect to the isotypic components and
let $\lambda_\sigma$ be the scalar such that $(H+Z)\theta_\sigma = \lambda_\sigma\theta_\sigma$. Then
\[
\lpi E_2^+\theta | E_2^+E_1^+\psi\rpi = \lpi E_2^-E_2^+\theta | E_1^+\psi\rpi = -\lpi E_1^-(H+Z)\theta | \psi\rpi = -\sum_\sigma  \lambda_\sigma\lpi E_1^-\theta_\sigma | \psi\rpi =0,
\]
where we used that $E_2^-E_2^+\theta = (H+Z)\theta$ and $E_1^-\theta_\sigma =0$, because of $W$-invariance. Now using $E_3^-E_2^+\theta = -2E_1^-\theta = 0$, we have
$\lpi E_2^+\theta | E_3^+\psi\rpi  = 0$. Similarly, 
$\lpi E_2^+\theta | E_2^+E_3^+\psi\rpi = \sum_\sigma \lambda_\sigma\lpi E_3^-\theta_\sigma | \psi\rpi = 0,$ from which $ E_2^+(\Cm_{m+1}^{l-1})$ is orthogonal to the other summands.  Finally, if $\theta$ is in $\Cm^{l-1}_{m-1}$ and $\psi\in K$, we compute
\[
\lpi E_3^+\theta | E_2^+E_3^+\psi\rpi = \lpi E_2^-E_3^+\theta | E_3^+\psi\rpi = \lpi \{E_2^-,E_3^+\}\theta | E_3^+\psi\rpi = 0
\]
and 
\[
\lpi E_2^+E_1^+\theta | E_2^+E_3^+\psi\rpi = \lpi (H+Z)E_1^+\theta | E_3^+\psi\rpi - \lpi E_2^+E_2^-E_1^+\theta | E_3^+\psi\rpi = \sum_\sigma \lambda_\sigma\lpi E_3^-E_1^+\theta_\sigma | \psi\rpi = 0,
\]
since $[E_1^+,E_2^-]=0$ and $[E_3^-,E_1^+]=E_2^-$. This shows that the summands are indeed orthogonal.

Now, let $U\subseteq \Ch_m^l$ denote the sum in the right-hand side of the statement. We can write $\Ch_m^l = U\oplus U^\perp$. We claim that $U^\perp = 0$. Suppose there exists $0\neq \theta \in U^\perp\subseteq \Ch_m^l$. If it were the case that
$E_2^-\theta = 0$, then we would have $\theta\in\Cm_m^l$, so it must be that $E_2^-\theta\neq 0$. That said, if it were the case that $E_3^-\theta = 0$, then, $E_1^-E_2^-\theta = [E_1^-,E_2^-]\theta = 0$, from which we would conclude that 
$0\neq E_2^-\theta \in\Cm_{m+1}^{l-1}$. But,
\[0 < \lpi E_2^-\theta | E_2^-\theta\rpi = \lpi \theta | E_2^+(E_2^-\theta)\rpi = 0,\]
since $E_2^+(E_2^-\theta) \in E_2^+(\Cm_{m+1}^{l-1}) \subseteq U$ which is orthogonal to $U^\perp$. So it must be the case that both $E_2^-\theta$ and $E_3^-\theta$ are nonzero. Since $[E_1^-,E_3^-]=0$, if it were the case that $E_2^-(E_3^-\theta)=0$,
we would obtain that $0\neq E_3^-\theta \in \Cm_{m-1}^{l-1}$. But then, since $\theta\in U^\perp$ and $F_{m-1}E_3^-\theta\in U$, we would have that
\[ 
0 = \lpi\theta|F_{m-1}E_3^-\theta\rpi = \lpi E_1^-E_2^- \theta | E_3^-\theta\rpi - \lambda_{m-1} \lpi E_3^-\theta | E_3^-\theta\rpi = -\lambda_{m}\lpi E_3^-\theta | E_3^-\theta\rpi\neq 0,
\]
in view of Assumption \ref{a:assumption}. Hence, the existence of $0\neq \theta\in U^\perp$ implies that $E_2^-E_3^-\theta\neq 0$. But since $E_1^-(E_2^-E_3^-\theta) = [E_1^-,E_2^-]E_3^-\theta = 0$,  we would have that $0\neq E_2^-E_3^-\theta \in \Cm_{m}^{l-2}$, from which
\[0 < \lpi E_2^-E_3^-\theta | E_2^-E_3^-\theta\rpi = -\lpi \theta| E_2^+E_3^+(E_2^-E_3^-\theta)\rpi  = 0,\]
as $E_2^+E_3^+(E_2^-E_3^-\theta) \in U$. This contradiction finishes the proof.
\end{proof}

\begin{corollary}\label{c:LWdecomp}
The $\fg $-module $K_c(\tau)$ is generated by its lowest weight vectors. Moreover, it decomposes as a direct sum of irreducible lowest weight $\fg$-modules.
\end{corollary}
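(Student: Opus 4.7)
The plan is to establish the two claims in sequence, with unitarity driving the decomposition. The generation statement is essentially immediate from the two structural results already proved: combining Proposition~\ref{p:rowdecomps} with Proposition~\ref{p:e1harm} shows that every $K_m^l$ is a sum of subspaces of the form $(E_1^+)^p X(\Cm_{m'}^{l'})$ for various $X \in \{1, E_2^+, F_{m'}, E_2^+E_3^+\} \subseteq \Cu(\fn^+)$ and $v \in \Cm$. Hence $K_c(\tau) = \Cu(\fn^+)\cdot\Cm \subseteq \Cu(\fg)\cdot\Cm$, which gives the first assertion.

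For the decomposition into irreducibles, I would first verify that any nonzero $v \in \Cm$ generates an irreducible $\fg$-submodule $V_v = \Cu(\fg)v = \Cu(\fn^+)v$. The key point is that $V_v$ is a quotient of the standard module $\bbv(\lambda)$, where $\lambda = \lambda_{m',l',\sigma}$ is the weight of $v$, so its $\lambda$-weight space is exactly $\bbc v$. If $V_v = V' \oplus V''$ is a nontrivial orthogonal decomposition by $\fg$-submodules (which exists whenever $V'$ is a proper nonzero submodule, since $\fg$ is $\ast$-closed and $V_v$ is unitary), then writing $v = av + bv$ with respect to this splitting forces either $a=0$ or $b=0$, and in either case $v$ lies entirely in one summand, giving a contradiction.

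The key intermediate step is then to show that every nonzero $\fg$-submodule $U \subseteq K$ contains a nonzero lowest weight vector. Here one uses the $H$-eigenspace decomposition $U = \bigoplus_m (U \cap K_m)$ and picks the minimal $m_0$ with $U \cap K_{m_0} \neq 0$. Any $0 \neq u \in U \cap K_{m_0}$ is automatically killed by $E_1^-$ and $E_3^-$ (which strictly lower $m$), so the only obstacle to $u$ being a lowest weight vector is $E_2^-$, which raises $m$. The slick resolution is that the $\fn^-$-relations $(E_2^-)^2 = 0$, $\{E_2^-, E_3^-\} = 0$, and $[E_1^-, E_2^-] = -E_3^-$ imply that $E_2^- u$ is always killed by all of $\fn^-$; so if $E_2^- u \neq 0$ then it is the desired lowest weight vector, and otherwise $u$ itself is.

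With these pieces in hand, I conclude by a standard Zorn-type argument: take a maximal family $\{V_\alpha\}$ of pairwise orthogonal irreducible lowest weight submodules of $K$. Their sum is direct by orthogonality plus positive definiteness, and the orthogonal complement $(\bigoplus V_\alpha)^\perp$ is a $\fg$-submodule by $\ast$-closure of $\fg$. If this complement were nonzero, the intermediate step would produce a lowest weight vector in it, whose cyclic submodule is irreducible (by the earlier step) and orthogonal to all $V_\alpha$, contradicting maximality. I expect the main conceptual hurdle to be precisely the $E_2^-$ analysis in the third paragraph, since $E_2^-$ does not lower the weight with respect to $H$; the trick with the vanishing anticommutators is what makes the whole argument go through cleanly.
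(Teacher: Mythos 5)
Your proof is correct, and although it rests on the same two pillars as the paper's argument (unitarity of $K_c(\tau)$ and the orthogonality of Proposition~\ref{p:orthogonality}), it is organised quite differently. The paper first shows, via orthogonality of the weight spaces, that $K=\bigoplus_{m,l,\sigma} L_m^l(\sigma)$ where $L_m^l(\sigma)=\Cu(\fg)(\Cm_m^l(\sigma))$, and then decomposes each finite-multiplicity block $L_m^l(\sigma)$ by splitting the short exact sequence $0 \to \Cu(\fg)\theta \to L_m^l(\sigma)\to Q\to 0$ using unitarity; the irreducibility of the cyclic pieces is left implicit. You instead isolate two clean standalone facts: (i) for any nonzero $v\in\Cm$ of weight $\lambda$, the module $\Cu(\fg)v$ is irreducible because its $\lambda$-weight space is one-dimensional (being a quotient of $\bbv(\lambda)$ whose PBW basis (\ref{e:Vermabasis}) has a unique vector of weight $\lambda$) and because $\ast$-closure of $\fg$ plus positive-definiteness of the form gives an orthogonal complement to any proper submodule; and (ii) every nonzero $\fg$-submodule of $K$ contains a lowest weight vector, using the observation that the relations $(E_2^-)^2=0$, $\{E_2^-,E_3^-\}=0$ and $[E_1^-,E_2^-]=-E_3^-$ force $E_2^-u$ to be annihilated by all of $\fn^-$ whenever $E_1^-u=E_3^-u=0$ (which holds at the minimal $H$-degree of the submodule, since $E_1^-,E_3^-$ lower polynomial degree while $E_2^-$ raises it). A Zorn-type argument, again exploiting $\ast$-closure of $\fg$, then yields the orthogonal direct sum. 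Step (ii) is not needed in the paper's proof because the generation claim is already in hand, but it makes your decomposition argument more self-contained, and step (i) supplies the irreducibility that the paper's splitting argument tacitly uses. All of this works weight space by weight space because the weight spaces $K_m^l(\sigma)$ are finite-dimensional and pairwise orthogonal, which is what justifies taking orthogonal complements.
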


\begin{proof}
The claim that $K_c(\tau)$ is generated by $\Cm_c(\tau)$ is immediate from Propositions \ref{p:rowdecomps} and 
\ref{p:e1harm}. As for the second claim, from the orthogonality of the pairing, described in Proposition \ref{p:orthogonality},
we have that if $0\neq\theta\in\Cm^{l}_{m}(\sigma)$ and $0\neq\theta'\in\Cm^{l'}_{m'}(\sigma')$,
then $\Cu(\fg)(\theta)$ is orthogonal to  $\Cu(\fg)(\theta')$. Hence if $L_m^l(\sigma)$ denotes the $\fg$-module
generated by $\Cm^l_m(\sigma)$, we obtain an orthogonal direct sum decomposition $K = \oplus_{m,l,\sigma} L_m^l(\sigma)$. Finally, if $0\neq \theta \in \Cm^{l}_{m}(\sigma)$ and $L =\Cu(\fg)(\theta)$, then the exact sequence
\[ 0 \to L \to L_m^l(\sigma) \to Q \to 0, \]
where $Q$ is the quotient module, splits because of unitarity. This implies the desired result.
\end{proof}

\subsection{Refining the decomposition} 
The next task is to describe the lowest weight $\fs\fp\fo(2|2,\bbc)$-modules occurring in decomposition of Corollary 
\ref{c:LWdecomp}. We start by describing the possible bidegrees $(m,l)$ for which $\Cm_m^l\neq 0$. 
Recall that $\Ca$ denotes
the subspace of $K$ killed by $\fu^- = \bbc E_1^- \oplus \bbc E_3^-$. Let $\gls{GL}\subseteq\fg$ denote the subalgebra
generated by $H,Z,E_2^\pm$. This algebra shares the same Cartan subalgebra as $\fg$ and $\fl\cong \fg\fl(1|1)$. 
Note that:

\begin{proposition}\label{p:13harmonic}
The space $\Ca$ is an $\fl$-module. 
\end{proposition}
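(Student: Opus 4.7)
The plan is to verify directly that the defining condition of $\Ca$ is preserved by the generators of $\fl$, namely $H$, $Z$, $E_2^+$ and $E_2^-$. Concretely, given $\theta\in\Ca$ (so $E_1^-\theta=0=E_3^-\theta$) and $X\in\{H,Z,E_2^\pm\}$, I must show that $E_1^-(X\theta)=0$ and $E_3^-(X\theta)=0$. This is achieved by commuting $E_1^-$ and $E_3^-$ past $X$ using the structural identities (\ref{e:gl2})--(\ref{e:anti-commutation}) and the fact that every element of $\fu^-$ annihilates $\theta$.

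The Cartan generators are immediate: $H$ and $Z$ commute with each other and scale $E_1^-$ and $E_3^-$, so $E_1^-(H\theta),E_3^-(H\theta),E_1^-(Z\theta),E_3^-(Z\theta)$ all vanish by a one-line computation. The key super-commutators needed for the odd generators are, reading off (\ref{e:evenaction}) and (\ref{e:anti-commutation}): $[E_1^-,E_2^+]=0$ and $\{E_3^-,E_2^+\}=-2E_1^-$; $[E_1^-,E_2^-]=-E_3^-$ and $\{E_3^-,E_2^-\}=0$. Using these four identities, each of the four expressions $E_1^-E_2^\pm\theta$ and $E_3^-E_2^\pm\theta$ is rewritten as a linear combination of $E_2^\pm E_1^-\theta$, $E_2^\pm E_3^-\theta$, $E_1^-\theta$ and $E_3^-\theta$, all of which vanish by the hypothesis $\theta\in\Ca$.

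There is no real obstacle here: once it is understood that $\fu^-$ is an $\fl$-subsupermodule of $\fg$ under the adjoint action (equivalently that $\fp^-:=\fl\oplus\fu^-$ is a subalgebra of $\fg$, which is precisely the content of the bracket computations above), the stability of $\Ca=\{\theta\in K_c(\tau)\mid \fu^-\theta=0\}$ under $\fl$ is automatic. Hence the proof reduces to this one observation together with the routine verification of the four bracket identities listed.
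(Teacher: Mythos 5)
Your proof is correct and follows essentially the same route as the paper: both verify directly that $E_1^-$ and $E_3^-$ annihilate $E_2^\pm\theta$ by commuting past $E_2^\pm$, using the four (anti)commutator identities from (\ref{e:evenaction}) and (\ref{e:anti-commutation}). Your added remark that this is just the statement that $\fu^-$ is an $\fl$-subsupermodule under $\operatorname{ad}$ (equivalently, that $\fl\oplus\fu^-$ is a subalgebra of $\fg$) is a nice conceptual repackaging, but it is the same computation.
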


\begin{proof}
Certainly, $H$ and $Z$ preserve $\Ca$. If $\theta\in\Ca$, then $E_1^-(E_2^\pm \theta) = [E_1^-,E_2^\pm]\theta = 0$ 
and, similarly, $E_3^-(E_2^\pm \theta) =\{E_3^-,E_2^\pm\}\theta = 0$, from the relations in (\ref{e:evenaction}).
\end{proof}

Describing the spaces $\Cm_m^l$ is equivalent to describing the kernel of $E_2^-$ inside $\Ca_m^l$, for each bidegree
$(m,l)$. Now, for each $n\in\bbz$, define the diagonal complex
\[\gls{AHarmHom} := \bigoplus_{m+l = n} \Ca^l_m.\]
The restriction to the $\sigma$-isotypic components will be denoted by $\gls{AHarmHomIso}$, and we have 
$\Ca(n) = \oplus_\sigma \Ca(n,\sigma)$. Note that  $E_2^{\pm}(\Ca(n,\sigma))\subseteq \Ca(n,\sigma)$, from which we conclude that $(\Ca(n,\sigma),E_2^\pm)$ are chain complexes. Note also that
\[\Ca = \bigoplus_{0\leq n,\sigma\in \widehat{W}}\Ca(n,\sigma)\]
and that $\Ca(0) = K^0_0\cong V(\tau)$.

\begin{proposition}\label{p:exactcplxs}
For $n>0$, the chain complexes $(\Ca(n,\sigma),E_2^\pm)$ are exact, for all $\sigma\in\widehat{W}$. Moreover,
\[\Ca(n) = E_2^+(\Ca(n))\oplus E_2^-(\Ca(n)).\]
\end{proposition}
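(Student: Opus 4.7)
The plan is to exploit the anticommutation relation $\{E_2^+, E_2^-\} = H + Z$ from \eqref{e:anti-commutation} together with the nilpotency $(E_2^\pm)^2 = 0$ to run a standard Koszul-type argument, after checking that $H+Z$ acts by a nonzero scalar on each $\Ca(n,\sigma)$ for $n>0$.

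First I would verify that $E_2^\pm$ define endomorphisms of each $\Ca(n,\sigma)$. Proposition \ref{p:13harmonic} gives that $E_2^\pm$ preserve $\Ca$. Examining the defining formulas in \eqref{e:spodefs}, the operator $E_2^+ = \sum_i y_i \otimes x_i$ lowers the polynomial degree by $1$ and raises the exterior degree by $1$, while $E_2^- = \sum_i x_i \otimes y_i$ does the opposite; in both cases the total degree $n=m+l$ is preserved. Lemma \ref{l:Winvariance} shows $E_2^\pm$ commute with the diagonal action of $W$, hence they preserve each $\sigma$-isotypic component. The relations $\{E_2^\pm,E_2^\pm\}=0$ from \eqref{e:anti-commutation} then make $(\Ca(n,\sigma),E_2^\pm)$ into chain complexes.

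Next I would compute the scalar $\lambda(n,\sigma)$ by which $H+Z$ acts on $\Ca_m^l(\sigma) \subseteq \Ca(n,\sigma)$. From \eqref{e:weights},
\[
(H+Z)\theta = \bigl((m+\tfrac{r}{2}-N_c(\tau)) + (l-\tfrac{r}{2}+N_c(\sigma))\bigr)\theta = \bigl(n - (N_c(\tau)-N_c(\sigma))\bigr)\theta,
\]
which depends only on $n$ and $\sigma$, as required. For $\sigma=\tau$ this scalar is just $n$, which is nonzero for $n>0$; for $\sigma\neq\tau$, Assumption \ref{a:assumption} gives $N_c(\tau)-N_c(\sigma)\notin\bbz_{\geq 0}$, so the scalar is nonzero for every $n\geq 0$. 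Thus $H+Z$ acts invertibly on $\Ca(n,\sigma)$ whenever $n>0$.

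With these preparations, the exactness is immediate: if $\theta \in \ker(E_2^+)\cap\Ca(n,\sigma)$ with $n>0$, then using $\{E_2^+,E_2^-\}=H+Z$,
\[
\theta = \lambda(n,\sigma)^{-1}(H+Z)\theta = \lambda(n,\sigma)^{-1}\bigl(E_2^+E_2^- + E_2^-E_2^+\bigr)\theta = E_2^+\bigl(\lambda(n,\sigma)^{-1}E_2^-\theta\bigr)\in\im(E_2^+),
\]
and the analogous identity establishes exactness of the $E_2^-$-complex. The same identity shows that every $\theta\in\Ca(n,\sigma)$ lies in $\im(E_2^+)+\im(E_2^-)$; for the directness of the sum, any element in $\im(E_2^+)\cap\im(E_2^-)$ is annihilated by both $E_2^+$ and $E_2^-$ (by $(E_2^\pm)^2=0$), hence by $H+Z$, and therefore vanishes. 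Summing over $\sigma$ gives $\Ca(n) = E_2^+(\Ca(n))\oplus E_2^-(\Ca(n))$. The only nontrivial ingredient is the scalar computation combined with Assumption \ref{a:assumption}; everything else is formal.
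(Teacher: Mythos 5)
Your proof is correct, and the core of it — observing that $\{E_2^+,E_2^-\}=H+Z$ acts on $\Ca(n,\sigma)$ by the scalar $n-(N_c(\tau)-N_c(\sigma))$, nonzero under Assumption~\ref{a:assumption}, and deducing exactness — is exactly the paper's argument for the first claim. Where you diverge is in the direct-sum statement: the paper deduces it from exactness plus the unitarity structure, using $(E_2^+)^*=E_2^-$ so that $(\ker E_2^+)^\perp=\im E_2^-$ and $\ker E_2^+=\im E_2^+$, whereas you run a purely formal Koszul homotopy — write $\theta=E_2^+(\lambda^{-1}E_2^-\theta)+E_2^-(\lambda^{-1}E_2^+\theta)$ for surjectivity and note that anything in $\im E_2^+\cap\im E_2^-$ is killed by $H+Z$, hence zero. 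Your version is a touch more self-contained: it avoids appealing to the Hermitian form and to the finite-dimensionality needed for the orthogonal-complement identity, at the cost of spelling out the (easy) homotopy identity. Both are valid; the conclusion and all intermediate scalars match.
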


\begin{proof}
The first claim follows from the fact that $\{E_2^+,E_2^-\} = H+Z$ acts on $\Ca(n,\sigma)$ by the scalar
$n - (N_c(\tau)-N_c(\sigma))$, which is non-zero, because of Assumption \ref{a:assumption}. From exactness, 
the second claim follows because $\ker E_2^\pm = E_2^\pm(\Ca(n))$ from which its orthogonal complement is $E_2^\mp(\Ca(n))$.
\end{proof}

\begin{proposition}\label{p:LWdescription}
Recall that $r=\dim\fh$ and suppose that $0\neq \theta \in \Cm_m^l\subseteq \Ca_m^l$. Then,
\begin{itemize}
\item[(1)] $m=0$ implies $l=0$,
\item[(2)] $l\leq r -1$,
\item[(3)] $l = r-1$ implies $m \leq 1$.
\end{itemize}
\end{proposition}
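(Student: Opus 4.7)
My plan treats the three parts separately, in each case exploiting the explicit form of the lowering operators $E_1^-, E_2^-, E_3^-$ and the Clifford structure on $\bbs$.

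For (1), assume $m = 0$. Then $E_1^-\theta$ and $E_3^-\theta$ vanish automatically because Dunkl operators kill $V(\tau) \subseteq M_c(\tau)$. Writing $\theta = \sum_\alpha v_\alpha \otimes \omega_\alpha$ with $\{v_\alpha\} \subset V(\tau)$ linearly independent, the relation $E_2^-\theta = \sum_{i,\alpha}(x_i v_\alpha)\otimes\partial_{y_i}\omega_\alpha = 0$ forces $\partial_{y_i}\omega_\alpha = 0$ for every $i,\alpha$; by Proposition \ref{p:Cliffstd}, $l\,\omega_\alpha = \sum_i x_i\partial_{y_i}\omega_\alpha = 0$, so $l\geq 1$ gives $\theta = 0$. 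For (2), only $l = r$ is in question (as $\bbs^l = 0$ for $l > r$); writing $\theta = f\otimes\omega_0$ for $\omega_0 = x_1\wedge\cdots\wedge x_r$, the equation $E_2^-\theta = \sum_i(x_i f)\otimes\partial_{y_i}\omega_0 = 0$ forces each $x_i f = 0$ by linear independence of $\{\partial_{y_i}\omega_0\}$ in $\bbs^{r-1}$. Since $M_c(\tau) = \bbc[\fh]\otimes V(\tau)$ is free over the integral domain $\bbc[\fh]$, $f = 0$.

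For (3), assume $l = r-1$ and $m \geq 2$; I aim to derive $\theta = 0$. In the basis $\omega_j = \partial_{y_j}\omega_0$ of $\bbs^{r-1}$, write $\theta = \sum_j f_j\otimes\omega_j$. Expanding $E_2^-\theta = 0$ gives the Koszul-cycle system $x_i f_j = x_j f_i$; exactness of the Koszul complex associated to the regular sequence $(x_1,\ldots,x_r)$ in $\bbc[\fh]$ (and freeness of $M_c(\tau)$) yields a common $g \in M_c(\tau)_{m-1}$ with $f_j = x_j g$. Expanding $E_3^-\theta = 0$ as $y_i f_j = y_j f_i$, substituting $f_j = x_j g$, and invoking Lemma \ref{l:EucSymm} to cancel $[y_i,x_j]g - [y_j,x_i]g$, reduces to $x_j y_i g = x_i y_j g$; another Koszul argument plus degree bookkeeping produces $h \in M_c(\tau)_{m-3}$ with $y_j g = x_j h$ (and $h = 0$ forced when $m = 2$).

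I then contract: $\bigl(\sum_j x_j y_j\bigr)g = \|x\|^2 h$. The $W$-invariant element $\sum_j x_j y_j \in \cha^W$ acts on $g_{\sigma'} \in M_c(\tau)_{m-1}(\sigma')$ by the scalar $c_{\sigma'} = (m-1) - (N_c(\tau) - N_c(\sigma'))$. Assumption \ref{a:assumption} rules out $c_{\sigma'} = 0$ for $m \geq 2$ in every isotypic: $c_\tau = m-1 \geq 1$ is obviously nonzero, and for $\sigma' \neq \tau$ the equation $c_{\sigma'} = 0$ would force $N_c(\tau) - N_c(\sigma') = m-1 \in \bbz_{\geq 1}$, contradicting the hypothesis. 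For $m = 2$ the identity $c_{\sigma'} g_{\sigma'} = 0$ (since $h = 0$) immediately gives $g = 0$. For $m \geq 3$, $c_{\sigma'} g_{\sigma'} = \|x\|^2 h_{\sigma'}$ shows $g$ is divisible by $\|x\|^2$; re-substituting into $y_j g = x_j h$ and using the $W$-invariant identity $[y_j,\|x\|^2] = 2x_j$ (valid because the Dunkl reflection contributions vanish on $W$-invariants) propagates the Koszul relation to $h$ and initiates a descent $g = \|x\|^2 g^{(1)},\, g^{(1)} = \|x\|^2 g^{(2)},\ldots$ that terminates by finiteness of $\deg g$, forcing $g = 0$. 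The main obstacle is checking that no scalar multiplier along the descent vanishes: Assumption \ref{a:assumption} handles all but a single edge case ($m$ odd along the $\tau$-isotypic branch, where an eventual coefficient becomes $0$), which is closed by invoking the unused condition $E_1^-\theta = 0$, namely $\Delta(x_j g) = 0$, together with the Dunkl commutator $[\Delta,x_j]$ that introduces the missing factor to terminate the recursion.
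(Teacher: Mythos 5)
Your proofs of (1) and (2) are correct and take a more hands-on route than the paper. Where the paper deduces (1) from the exactness of the $E_2^{\pm}$-complex on $\Ca(n)$ (Proposition 5.11) by observing $\theta\in\operatorname{im}E_2^+\cap\operatorname{im}E_2^-=0$, and (2) from the fact that $\theta\in E_2^-(\Ca(m+l))$ together with $\bbs^{r+1}=0$, you unpack the Clifford action directly: for (1) the identity $\sum_i x_i\partial_{y_i}=l$ on $\bbs^l$, and for (2) the freeness of $M_c(\tau)$ over $\bbc[\fh]$. Both arguments work, and they are pleasantly elementary.

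For (3), however, your Koszul-descent sketch has two genuine gaps. First, it never engages the case $r=1$, where $l=r-1=0$, the basis $\omega_j=\partial_{y_j}\omega_0$ consists of a single element $\omega_1=1$, and the operator $E_2^-$ kills $K_m^0$ identically. Thus there is no Koszul cycle condition at all, and your entire mechanism produces no constraint; yet the claim $m\le 1$ is still substantive for $r=1$ (it is the statement that under Assumption 5.8 the Dunkl Laplacian has no kernel in degrees $\ge 2$). The paper's proof, which combines $E_2^+E_3^+\theta\in K_m^{r+1}=0$, exactness, and positivity of $\lpi\cdot|\cdot\rpi$, is uniform in $r$; a Koszul argument cannot be, since the cycle condition is vacuous in rank one. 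Second, for odd $m$ you yourself flag that the $E(c)$-eigenvalue descent terminates at a $\tau$-isotypic constant on which $E(c)$ has eigenvalue $0$, and that one must bring in $E_1^-\theta=0$ to close the gap. But you stop at the gesture: ``the Dunkl commutator $[\Delta,x_j]$ introduces the missing factor.'' Making this precise requires establishing that the Koszul lift satisfies $h=-\tfrac12\Delta_c g$, propagating the cycle condition along the full descent to conclude that $g$ is a scalar multiple of $\|x\|^{m-1}v$ with $v\in V(\tau)$, and then computing $E_1^-(\|x\|^{m-1}v)$ via the $\fs\fl(2)$ identity $[E_1^-,(E_1^+)^k]=-k(E_1^+)^{k-1}(H+k-1)$ to exhibit the nonvanishing scalar $r/2-N_c(\tau)+k$, whose nonvanishing is exactly the first clause of Assumption 5.8. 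None of this is in your write-up, and it is not a routine verification; the degree and isotypic bookkeeping (which components of $h$ survive, how the cycle condition passes from $g^{(j)}$ to $g^{(j+1)}$, and why only the $\tau$-isotypic branch survives past degree $2$) must all be checked. So while I believe the strategy can be pushed through for $r\ge 2$, as written the proof of (3) is incomplete and does not cover $r=1$. In contrast, the paper's three-line argument via $0<\lpi E_3^+\theta|E_3^+\theta\rpi=\lpi E_3^-E_2^+\theta_1|\theta\rpi=0$ avoids all parity and rank case analysis.
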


\begin{proof}
From Proposition \ref{p:exactcplxs}, if $n=m+l>0$, then $\theta\in\Cm_m^l$ implies $\theta \in E_2^-(\Ca(n))$.
Suppose first that $m = 0$. If it were the case that $l>0$. Then, 
$\theta \in E_2^-(\Ca(l))$. But, $E_2^+\theta\in\Ca_{-1}^{l+1} = 0$, which would imply that 
$\theta \in E_2^+(\Ca(l))\cap E_2^-(\Ca(l)) =0$. Thus, $l=0$, and we proved (1). 

Assume from now that $m>0$. Then, $\theta\in E_2^-(\Ca(m+l))$, from which $l\leq r-1$, settling (2). Finally, suppose that 
$l = r-1$. Note that $E_3^+\theta \neq 0$, otherwise $0= \{E_3^+,E_3^-\}\theta = (m+1-(N_c(\tau)+N_c(\sigma)))\theta$,
which is non-zero, from Assumption \ref{a:assumption}. Then, $E_2^+E_3^+\theta\in\Ca_m^{r+1} = 0$, which implies that $E_3^+\theta = E_2^+\theta_1$, for
some $0\neq\theta_1\in \Ca$, since $E_3^+\theta\in \Ca_{m+1}^r$ and the complex $(\Ca(m+r+1),E_2^+)$ is exact. 
But then,
\[
0 < \lpi E_3^+\theta | E_3^+\theta \rpi = \lpi E_2^+\theta_1 | E_3^+\theta \rpi = \lpi E_3^-E_2^+\theta_1 | \theta \rpi =0,
\]
as $E_3^-E_2^+ = -2E_1^- + E_2^+E_3^-$. Thus, $m\leq 1$.
\end{proof}

Denote by $B(r)$, the subset of $\bbz_{\geq 0}\times\bbz_{\geq 0}$ that satisfies the conditions (1) - (3) of Proposition \ref{p:LWdescription}. 
Note that, for $r>1$ we have
\begin{equation}\label{e:LwBidegrees}
\gls{Br} := \{(0,0),(1,r-1)\}\cup\{(m,l)\mid m\geq 1,0\leq l\leq r-2\}\subseteq \bbz_{\geq 0}\times\bbz_{\geq 0}
\end{equation}
and  $B(1) = \{(0,0),(1,0)\}$. We can interpret the set $B(r)$ in terms of hook partitions. Recall that a partition is a
sequence of  integers $\lambda = (\lambda_1,\ldots,\lambda_k)$ with $\lambda_1\geq\cdots\geq\lambda_k\geq 0$. Recall
also that a partition $\lambda$ is called a {\it hook partition} if $\lambda_j \leq 1$ for $j > 1$. We shall denote
by $\lambda'$ the transpose partition $\lambda' = (\lambda_1',\ldots,\lambda_t')$, where $t = \lambda_1$ and
$\lambda'_j$ is the number of indices $i$ for which $\lambda_i\geq j$. Denote by $\gls{P}$ the set of all hook partitions. 
We shall denote by $(0)$ the empty partition and $(1^k)$ the partition $(1,\ldots,1)$ with $k\in\bbz_{> 0}$ parts equal to $1$.

\begin{proposition}
The map $\Lambda:\bbz_{\geq 0}\times \bbz_{\geq 0}\to \Cp(1|1)$ defined by 
$\Lambda(m,l)=(m,1^l)$
induces a bijection between $B(r)$ and the set
\begin{equation}\label{e:hpartition}
\gls{Hr} := \{\lambda\in \Cp(1|1)\mid \lambda_1'+\lambda_2'\leq r\}.
\end{equation}
\end{proposition}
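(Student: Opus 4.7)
The plan is to check the claim by a direct case analysis on the conjugate parts $\lambda_1'$ and $\lambda_2'$ of the candidate hook partition $\lambda = (m,1^l)$. First I would make explicit that, under the convention $(m,1^l) = (m,1,\ldots,1)$ with $l$ ones, we have
\[
\lambda_1' = \#\{i \mid \lambda_i \geq 1\}, \qquad \lambda_2' = \#\{i \mid \lambda_i \geq 2\}.
\]
In particular, for $\lambda = \Lambda(m,l)$ one computes: if $m = 0$ and $l = 0$ then $\lambda = (0)$ and $\lambda_1' = \lambda_2' = 0$; if $m \geq 1$ then $\lambda_1' = 1 + l$, while $\lambda_2' = 1$ if $m \geq 2$ and $\lambda_2' = 0$ if $m = 1$. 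This reduces everything to verifying inequalities of the form $l \leq r-2$ or $l \leq r-1$.

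Next I would verify that $\Lambda$ is well-defined as a map $B(r) \to H(r)$ by running through the pieces of the decomposition of $B(r)$ in \eqref{e:LwBidegrees}. For $(0,0)$ the sum $\lambda_1' + \lambda_2' = 0$; for $(1,r-1)$ the partition is $(1^r)$ giving $\lambda_1' + \lambda_2' = r$; for $m \geq 2$ and $0 \leq l \leq r-2$ one has $\lambda_1' + \lambda_2' = l + 2 \leq r$; and for $m = 1$, $0 \leq l \leq r-2$ one has $\lambda_1' + \lambda_2' = l + 1 \leq r - 1$. In the special case $r = 1$, the set $B(1) = \{(0,0),(1,0)\}$ maps to $\{(0),(1)\}$, which is exactly $H(1)$.

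Injectivity is immediate: if $(m,1^l) = (m',1^{l'})$ as partitions, comparing the first part gives $m = m'$ and then comparing total length gives $l = l'$. For surjectivity I would argue by inverting the formulas above. Given $\lambda \in H(r)$, write $m := \lambda_1$ and $l := \lambda_1' - 1$ if $\lambda \neq (0)$, and $(m,l) = (0,0)$ otherwise; since $\lambda$ is a hook partition this forces $\lambda = (m,1^l)$. It remains to show $(m,l) \in B(r)$. If $m = 0$ then $\lambda_1' = 0$ forces $l = 0$. If $m \geq 2$ then $\lambda_2' = 1$ so $l + 2 = \lambda_1' + \lambda_2' \leq r$, i.e., $l \leq r - 2$. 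If $m = 1$ then $\lambda_1' + \lambda_2' = l + 1 \leq r$, giving either $l \leq r - 2$ (placing $(m,l)$ in the main part of $B(r)$) or $l = r-1$ (placing $(m,l) = (1, r-1)$ in the exceptional pair). In each case $(m,l) \in B(r)$, completing the bijection.

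I do not anticipate any serious obstacle: the argument is essentially bookkeeping on partitions. The only mildly delicate point is remembering to treat the empty partition and the ``pure column'' $(1^r)$ separately, since these are exactly the boundary cases distinguishing the two components of $B(r)$ in \eqref{e:LwBidegrees}.
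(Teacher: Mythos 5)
Your proof is correct and follows essentially the same route as the paper: both verify that $\Lambda$ lands in $H(r)$ by the case split on $m=1$ versus $m\geq 2$ (so $\lambda_2'=0$ or $\lambda_2'=1$), and both exhibit the inverse $\lambda\mapsto(\lambda_1,\lambda_1'-1)$ (with the empty partition handled separately), using $\lambda_1'+\lambda_2'\leq r$ together with $\lambda_2'\in\{0,1\}$ to show the inverse lands in $B(r)$. The only difference is cosmetic: the paper names the inverse $\Upsilon$ and checks the two compositions, whereas you phrase it as injectivity plus surjectivity.
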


\begin{proof}
Note that $\Lambda(0,0) = (0)$. If $(0,0)\neq(m,l)\in B(r)$, from the description (\ref{e:LwBidegrees}), we have
\[
\Lambda(m,l)'_1 + \Lambda(m,l)'_2 = \left\{
\begin{array}{ll}
l + 1 \leq r,& \textup{if } m =1\\
l + 2 \leq r,& \textup{if } m >1,
\end{array}
\right.
\]
from which $\Lambda(B(r))\subseteq H(r)$. Define a map $\Upsilon:\Cp(1|1)\to\bbz_{\geq 0}\times\bbz_{\geq 0}$ in the opposite direction
by $\Upsilon(0) =(0,0)$ and $\Upsilon(\lambda) = (\lambda_1,\lambda_1' - 1)$, if $\lambda\neq(0)$. For all $\lambda\in H(r)\setminus\{(0)\}$, from 
$\lambda_1'+\lambda_2'\leq r$ we get
\[\lambda_1' - 1\leq r-1-\lambda_2'\leq r-1.\]
Since $0\leq\lambda_2'\leq 1$, the extremal case $\lambda_1' - 1=r-1$ implies $\lambda_2'=0$ and hence $\lambda_1 = 1$. It follows
that $\Upsilon(H(r))\subseteq B(r)$. It is straightforward to check $\Upsilon\circ\Lambda|_{B(r)} = id_{B(r)}$ and 
$\Lambda\circ\Upsilon|_{H(r)} = id_{H(r)}$.
\end{proof}

\begin{theorem}\label{t:main}
Fix $c$ as in Assumption \ref{a:assumption} and let $H(r)$ be as in (\ref{e:hpartition}). For each $\lambda\in H(r)$, let
$m(\lambda) = \lambda_1$, $l(\lambda) = \lambda_1'-1$ and , for every irreducible $\bbc W$-module $\sigma$, let $L(\lambda^{\natural_\sigma})$ be an irreducible lowest weight $\fg$-module
with lowest weight $\lambda^{\natural_\sigma} = \lambda_{m(\lambda),l(\lambda),\sigma}$, defined by (\ref{e:weights}).
Then, the space $K_c(\tau)$ decomposes as a direct sum
\[
K_c(\tau) = \bigoplus_{\lambda\in H(r)} \bigoplus_{\sigma\in\widehat{W}} \dim(\Cm_{m(\lambda)}^{l(\lambda)}(\sigma))L(\lambda^{\natural_\sigma}).
\]
Moreover, in the terms of Definition \ref{d:LWmodules}, we have $L((0)^{\natural_\sigma}) = L_0(\lambda_{0,0,\sigma})$, $L((1^r)^{\natural_\sigma}) = L_1(\lambda_{1,r-1,\sigma})$ 
and $L(\lambda^{\natural_\sigma})=\bbv(\lambda^{\natural_\sigma})$,
otherwise.
\end{theorem}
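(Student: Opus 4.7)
The plan is to combine Corollary \ref{c:LWdecomp}, Proposition \ref{p:LWdescription}, and Proposition \ref{p:LWmodules}. By Corollary \ref{c:LWdecomp}, $K_c(\tau)$ is an orthogonal direct sum of irreducible lowest weight $\fg$-modules $L(\theta):=\Cu(\fg)\theta$, one for each basis vector of $\Cm_c(\tau)$. Proposition \ref{p:LWdescription} restricts the bidegree of any such $\theta$ to $B(r)\cong H(r)$, and formula (\ref{e:weights}) identifies the lowest weight of $L(\theta)$ with $\lambda^{\natural_\sigma}$. This already yields the stated direct sum decomposition; the remaining task is to identify the $\fg$-module type of each irreducible summand as either $\bbv$, $L_0$, or $L_1$.

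For this I apply Proposition \ref{p:LWmodules}, whose hypotheses I first verify for $L(\theta)$. Injectivity of $E_1^+$ follows from Assumption \ref{a:assumption}: the $H$-weight $m+r/2-N_c(\tau)$ of $\theta$ cannot be a non-positive integer, since that would force $r/2-N_c(\tau)\in\bbz_{\leq 0}$. The concentration of the $E_2^+$- and $E_3^+$-homologies in the appropriate relative degrees can be read off from the bidegree constraints in $K$ together with the commutation relations of $E_1^+,E_2^+,E_3^+$.

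The case analysis is then driven by two algebraic facts. Using $E_2^-\theta=0$ and $\{E_2^+,E_2^-\}=H+Z$ one gets $E_2^-E_2^+\theta=\lambda(H+Z)\theta$, so $E_2^+\theta=0$ would force $N_c(\tau)-N_c(\sigma)=m+l$, which is excluded by Assumption \ref{a:assumption} as soon as $m+l>0$. Similarly, from the $\fg$-commutation relations together with $E_1^-\theta=E_3^-\theta=0$ one derives the identity
\[
E_3^-E_2^+E_3^+\theta \;=\; \bigl(2-\lambda(H-Z)\bigr)\,E_2^+\theta.
\]
For $\lambda=(0)$ at bidegree $(0,0)$, the bidegree forces $E_2^+\theta\in K_{-1}^1=0$, producing a non-trivial $E_2^+$-homology class at relative $(0,0)$, and Proposition \ref{p:LWmodules} gives $L(\theta)\cong L_0(\lambda_{0,0,\sigma})$. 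For $\lambda=(1^r)$ at bidegree $(1,r-1)$, the bidegree forces $E_2^+E_3^+\theta\in K_1^{r+1}=0$; combined with $E_2^+\theta\neq 0$ (by the first fact, since $m+l=r\geq 1$), the anti-commutation $\{E_2^+,E_3^+\}=0$ yields $E_3^+E_2^+\theta=0$, so $E_2^+\theta$ provides a non-trivial $E_3^+$-homology class at relative $(-1,1)$ and $L(\theta)\cong L_1(\lambda_{1,r-1,\sigma})$. For all other $(m,l)\in B(r)$, the first fact again gives $E_2^+\theta\neq 0$, ruling out $L_0$; the displayed identity then gives $E_2^+E_3^+\theta\neq 0$ whenever $2-\lambda(H-Z)\neq 0$, which rules out $L_1$ and forces $L(\theta)\cong\bbv(\lambda_{m,l,\sigma})$.

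The main obstacle I anticipate is handling the atypicality $\lambda(H-Z)=2$, equivalently $N_c(\tau)+N_c(\sigma)=m-l+r-2$, inside the ``otherwise'' case. This is a codimension-one condition on $c$ not literally ruled out by Assumption \ref{a:assumption}. I expect to resolve it either by strengthening the genericity hypothesis on $c$ so that this hyperplane is avoided, or---in the spirit of the $\lambda=(1^r)$ argument, where the bidegree constraint already forces the algebraic identity $N_c(\tau)+N_c(\sigma)=0$---by showing that the atypicality $\lambda(H-Z)=2$ in combination with $\theta\in\Cm_m^l(\sigma)$ in fact forces $\Cm_m^l(\sigma)=0$, making the corresponding summand vacuous. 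This is where the technical heart of the argument will lie.
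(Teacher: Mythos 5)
Your proposal has the same skeleton as the paper's proof: both begin with Corollary \ref{c:LWdecomp}, restrict the bidegrees of the lowest weight vectors via Proposition \ref{p:LWdescription}, and appeal to Proposition \ref{p:LWmodules} to identify the isomorphism type of each irreducible summand. Where you diverge is in how hypotheses (a)--(c) of Proposition \ref{p:LWmodules} and the type distinctions are established: you compute directly with $\theta\in\Cm_m^l(\sigma)$, whereas the paper computes the $E_2^+$- and $E_3^+$-homologies of $K_c(\tau)$ globally, using the diagonal complexes $K(n)=\oplus_{m+l=n}K_m^l$ and the anti-diagonal complexes $K'(n)=\oplus_{m-l=n}K_m^l$ together with $\{E_2^+,E_2^-\}=H+Z$ and $\{E_3^+,E_3^-\}=H-Z$. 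Your identity $E_3^-E_2^+E_3^+\theta=(2-\lambda(H-Z))E_2^+\theta$ is a local version of the paper's exactness statement for $(K'(n,\sigma),E_3^+)$, but your sentence ``the concentration of the $E_2^+$- and $E_3^+$-homologies \dots can be read off'' is exactly the part that needs the global complex argument; that step cannot be waved through. Also, the injectivity of $E_1^+$ has nothing to do with Assumption \ref{a:assumption}; it holds for all $c$ because $E_1^+$ is multiplication by the polynomial $-\tfrac12\sum_i x_i^2$ and $K_c(\tau)$ is torsion-free over $\bbc[\fh]$.

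The atypicality $\lambda(H-Z)=2$ that you flag as the main obstruction is in fact already excluded by Assumption \ref{a:assumption}, so neither of your two proposed fixes is needed. The key observation, which the paper uses implicitly when it asserts exactness of $(K'(n,\sigma),E_3^+)$ for $n>-r$, is the elementary identity $N_c(\sigma\otimes\sgn)=-N_c(\sigma)$, which gives
\begin{equation*}
N_c(\tau)+N_c(\sigma)=N_c(\tau)-N_c(\sigma\otimes\sgn).
\end{equation*}
If $\sigma\otimes\sgn\neq\tau$, then this lies outside $\bbz_{\geq 0}$ by Assumption \ref{a:assumption} applied to $\sigma\otimes\sgn$; if $\sigma\otimes\sgn=\tau$, it equals $0$. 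Either way $N_c(\tau)+N_c(\sigma)\notin\bbz_{>0}$, while $\lambda(H-Z)=2$ would require $N_c(\tau)+N_c(\sigma)=m-l+r-2$, a positive integer throughout the ``otherwise'' range $m\geq 1$, $0\leq l\leq r-2$. So the atypical hyperplane never meets the relevant locus, and with this observation (plus a precise verification of hypotheses (b),(c)) your argument closes.
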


\begin{proof}
The decomposition itself is a rewriting of Corollary \ref{c:LWdecomp}. As for the second part of the statement, we start 
by noting that evidently, $E_1^+\in\fg$ acts injectively on $K_c(\tau)$. Moreover, we claim that the $E_2^+$-homology of $K_c(\tau)$ is concentrated
in bidegree $(0,0)$, whereas the $E_3^+$-homology is in degree $(0,r)$. Indeed, for each $n\in\bbz$, consider the diagonal complexes 
\[K(n) := \bigoplus_{m+l = n} K^l_m\qquad\textup{and}\qquad K'(n) := \bigoplus_{m-l = n} K^l_m,\]
whose $\sigma$-isotypic components we denote by $K(n,\sigma)$ and $K'(n,\sigma)$, respectively. We note that
\[K = \bigoplus_{0\leq n,\sigma\in \widehat{W}}K(n,\sigma) =  \bigoplus_{-r\leq n,\sigma\in \widehat{W}}K'(n,\sigma).\]
Similar to the discussion for $(\Ca(n,\sigma),E_2^+)$ done in Proposition \ref{p:13harmonic},
we have that $(K(n,\sigma),E_2^+)$ and $(K'(n,\sigma),E_3^+)$ are chain complexes. For $n>0$, the complexes $(K(n,\sigma),E_2^+)$ are exact, 
since $\{E_2^+,E_2^-\} = H+Z$ acts by the nonzero scalar $n - (N_c(\tau) - N_c(\sigma))$ on $K(n,\sigma)$ and $H(K,E_2^+) = K^0_0\cong V(\tau)$. 
Similarly, since $\{E_3^+,E_3^-\} = H-Z$ acts by the scalar $r + n - (N_c(\tau) + N_c(\sigma))$ on $K'(n,\sigma)$, we conclude that 
$(K'(n,\sigma),E_3^+)$ is exact for $n >-r$ and $H(K,E_3^+) = K^r_0\cong V(\sgn\otimes\tau)$. The result now follows from Proposition \ref{p:LWmodules}.
\end{proof}

\begin{remark}
When $c=0$ and $\tau=\triv$, the decomposition described above reduces to the $(1|1)$-case in \cite[Theorem 5.39]{CW}. There,
the spaces $\Cm^l_m$ become a lowest weight module for the orthogonal group.
\end{remark}

\begin{example}
The assumptions on $c$ are rather important. Consider the case in which $\dim\fh = 1$ and $W = S_2$ is the cyclic group of
order $2$ and let $\tau=\triv$. Fix $x\in \fh^*$ and $y\in \fh$ with $\lpi x,y \rpi = 1$ and let 
$\alpha := x\in \fh^*,\alpha^\vee :=2y\in\fh$. In this case, we have that 
\[K_c(\triv) = (\bbc[x]\otimes\bbc_+)\oplus(\bbc[x]\otimes\bbc_-),\]
where $\bbc_\pm$ are the trivial and the sign representations of $S_2$. We will write $\bbc_+ = \bbc1$ and $\bbc_- = \bbc x$.
It is not hard to show that the only vectors annihilated by $E_1^-,E_2^-$ and $E_3^-$ in this case are 
$v_+:=1\otimes 1$ and $v_-:=x\otimes 1$. Whenever $c$ is as in the as in Assumption \ref{a:assumption}, we get, from Theorem \ref{t:main}
and from the fact that $\bbc S_2$ is the centraliser of $\fg=\fs\fp\fo(2|2,\bbc)$, an $(S_2,\fg)$ decomposition
\begin{equation}\label{e:rank1decomposition}
K_c(\triv) = (\bbc_+\boxtimes L_+) \oplus (\bbc_-\boxtimes L_-).
\end{equation}
Here, $L_\pm$ are lowest weight $\fs\fp\fo(2|2,\bbc)$-representation generated by $v_\pm$ and of lowest weight $\lambda_\pm$ that satisfy 
\[
(\lambda_+(H),\lambda_+(Z) )= (\tfrac{1}{2} - c,-\tfrac{1}{2} + c)\qquad\textup{and}\qquad 
(\lambda_-(H),\lambda_-(Z) ) =(\tfrac{3}{2} - c,-\tfrac{1}{2} - c).
\]
Moreover, in terms of Grassmann algebra modules (see Section \ref{s:grassmann} and Definition \ref{d:LWmodules}), we have that $L_+ \cong L(+\infty,0)$ and $L_- \cong L(+\infty,1)$. Now, note also that
\[E_2^+(v_-) = E_2^+(x\otimes 1) = (1-2c)(1\otimes x).\]
When $c=\tfrac{1}{2}$, which is outside the allowed set from Assumption \ref{a:assumption}, the module $L_-$ is also isomorphic to $L(+\infty,0)$ as a Grassmann algebra module
and the vector $1\otimes x$ will not be present the right-hand side of (\ref{e:rank1decomposition}); in this case,  
$K_c(\triv)$ does not admit a decomposition as a direct sum of lowest weight modules for $\fs\fp\fo(2|2,\bbc)$.
\end{example}

Given $\lambda\in H(r)$, or equivalently, a bidegree $(m,l)\in B(r)$, it is not {\it a priori} clear if $\Cm_m^l\neq 0$. However, in \cite[Section 5.3.5]{CW}, 
Cheng and Wang exhibited precise nonzero vectors in $\Cm^l_m$ when $\tau = \triv$ and $c=0$, for all $(m,l)\in B(r)$. Using them, it is easy to conclude that $\Cm^l_m\neq 0$ for $c=0$ and any $\tau\in\widehat{W}$. We can, on the other hand, compare the spaces of lowest weight vectors at the parameter $c$ with the ones at $0\in\mathscr{C}$. From Assumption 
 \ref{a:assumption}, following \cite[Theorem 2.9]{DO}, there is a unique $W$-equivariant linear map 
 $\vartheta:K_c(\tau)\to K_c(\tau)$ satisfying
\begin{itemize}
\item[(0)] $\vartheta(K_m^l)\subseteq K_m^l$ for all $m$ and $l$,
\item[(1)] $\vartheta$ is the identity on $K_0^0\cong V(\tau)$,
\item[(2)] $\vartheta(p\otimes \omega) = (\vartheta p)\otimes\omega$, for all $\omega\in\bbs$ and
\item[(3)] $E_2^+\vartheta = \vartheta E_2^+(0)$,
where $E_2^+(0)$ acts on $K(\tau)$ as the operator
\[E_2^+(0) = \sum_j \partial_{y_j}\otimes 1\otimes \mu_{x_j}: \bbc[\fh]\otimes V(\tau)\otimes\bbs \to\bbc[\fh]\otimes V(\tau)\otimes\bbs. \]
\end{itemize}

\begin{proposition}\label{p:comparison}
The complexes $\Ca_0(\tau,n,\sigma)$ and $\Ca_c(\tau,n,\sigma)$ are isomorphic, for all $n\in\bbz_{\geq 0}$ and $\sigma\in\widehat{W}$. In particular, $\Cm_0(\tau) \cong \Cm_c(\tau)$.
\end{proposition}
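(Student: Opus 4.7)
My plan is to upgrade the single intertwining identity satisfied by $\vartheta$ (property (3)) into intertwining identities for both $E_1^-$ and $E_3^-$, then to use the bijectivity of $\vartheta$ on each finite-dimensional bigraded piece to deduce the claimed isomorphism of complexes. The ``in particular'' clause will follow by a dimension count, which bypasses the fact that $\vartheta$ does not intertwine $E_2^-$.

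First I would extract a Dunkl intertwiner identity. Applying property (3) to $p\otimes v\otimes 1 \in \bbc[\fh]_m\otimes V(\tau)\otimes \bbs^0$ and comparing the coefficients along the basis $\{x_j\}$ of $\fh^* = \bbs^1$ would yield
\[
y_j\,\vartheta(p\otimes v) \;=\; \vartheta\bigl(\partial_{y_j}(p)\otimes v\bigr),
\]
for every $j$, where $y_j$ on the left acts as a Dunkl operator on $M_c(\tau)$. Together with property (2), this identifies $\vartheta$ as an extension of the classical Dunkl intertwiner acting on $\bbc[\fh]\otimes V(\tau)$, acting as the identity on the spinor factor $\bbs$.

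Iterating this identity and using property (2) again, I would deduce the further intertwining relations
\[
E_1^-\,\vartheta \;=\; \vartheta\,E_1^-(0), \qquad E_3^-\,\vartheta \;=\; \vartheta\,E_3^-(0),
\]
where $E_j^-(0)$ denotes the operator obtained from $E_j^-$ by replacing each Dunkl operator by the corresponding partial derivative. Because $\vartheta$ is $W$-equivariant and preserves bidegree (property (0)), the relations above imply
\[
\vartheta\bigl(\Ca_0(\tau,n,\sigma)\bigr) \;\subseteq\; \Ca_c(\tau,n,\sigma), \qquad n\in\bbz_{\geq 0},\ \sigma\in\widehat{W}.
\]
To promote the inclusion to a bijection on each bidegree, I would invoke the classical fact, valid under Assumption \ref{a:assumption}, that the Dunkl intertwiner is bijective on each polynomial homogeneous component (cf.\ \cite[Theorem 2.9]{DO}). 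Combined with property (3), this gives the asserted isomorphism of chain complexes
\[
\bigl(\Ca_0(\tau,n,\sigma),\,E_2^+\bigr) \;\cong\; \bigl(\Ca_c(\tau,n,\sigma),\,E_2^+\bigr).
\]

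For the final claim $\Cm_0(\tau)\cong\Cm_c(\tau)$, the subtle point is that $\vartheta$ does \emph{not} intertwine $E_2^-$, because the Dunkl intertwiner does not commute with multiplication by elements of $\fh^*$. Instead, I would invoke the Hodge-type decomposition $\Ca(n)=\ker E_2^+\oplus\ker E_2^-$ of Proposition \ref{p:exactcplxs} to obtain, at each bidegree $(m,l)$ with $m+l=n$,
\[
\dim\Cm_c(\tau)_m^l(\sigma) \;=\; \dim\Ca_c(\tau)_m^l(\sigma) - \dim\ker\bigl(E_2^+|_{\Ca_c(\tau)_m^l(\sigma)}\bigr).
\]
Both quantities on the right-hand side are preserved by the chain isomorphism constructed above, so the bidegree-wise dimensions of $\Cm_0(\tau)$ and $\Cm_c(\tau)$ agree, giving the asserted isomorphism as a bigraded vector space. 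The main obstacle will be the opening move: pinning down the Dunkl intertwiner identity from the abstract characterisation of $\vartheta$ and then bootstrapping it to the Laplacian-type operator $E_1^-$ and the Dirac-type operator $E_3^-$. The rest is formal, modulo the classical bijectivity of the Dunkl intertwiner under generic parameters, which is precisely where Assumption \ref{a:assumption} enters.
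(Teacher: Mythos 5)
Your proof is correct and follows essentially the same path as the paper's: establish that $\vartheta$ intertwines $E_1^-$ and $E_3^-$ with their classical counterparts (via the Dunkl-intertwiner relation $T_y\vartheta=\vartheta\partial_y$, which the paper simply cites as \cite[Corollary 2.13]{DO} rather than extracting it from property (3)) and then transfer the chain-complex structure under $E_2^+$. The only noteworthy local variation is that you deduce $E_3^-\vartheta = \vartheta E_3^-(0)$ directly from $T_y\vartheta=\vartheta\partial_y$ together with property (2), whereas the paper conjugates property (3) by the Hodge star on $\bbs$ via $E_3^- = (-1)^p\star^{-1}E_2^+\star$; both work, and your explicit dimension count for the ``in particular'' clause correctly fills in a step the paper leaves implicit.
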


\begin{proof}
Let $E_1^-(0)$ and $E_3^-(0)$ denote the endomorphisms of $K_c(\tau)$ in which the action of $y\in \fh$ is by means of
directional derivatives in the Weyl algebra instead of their Dunkl version. Note that the intertwiner map $\vartheta$ also satisfies
$T_y(\vartheta p) = \vartheta (\partial_y p)$, for all $p\in\bbc[\fh]$ \cite[Corollary 2.13]{DO} so it follows that 
$E_1^-\vartheta = \vartheta E_1^-(0)$. Moreover, from
property (2) of $\vartheta$, we see that $\vartheta$ does not act on the spinor part. If $\star$ denotes the Hodge star operator
on $\bbs$, it is straightforward to check that $E_3^- = (-1)^p\star^{-1}E_2^+\star$ on $\bbs^p$, from which we conclude that
$E_3^-\vartheta = \vartheta E_3^-(0)$, and hence $\Ca_0(\tau,n,\sigma)\cong\Ca_c(\tau,n,\sigma)$, for all $n$.
\end{proof}

\section{The centraliser algebra of $\fs=\fs\fl(2,\bbc)$ in $\cha$}

\subsection{The centraliser algebra of $\fs\fl(2,\bbc)$}\label{s:sl2centralizer}
Since the algebra $\gls{SL}$ spanned by $H,E_1^\pm$ is isomorphic to $\fs\fl(2,\bbc)$ and is contained in $\fg$, we start by studying the
centraliser of $\fs$ inside $\cha = \cha\otimes 1\subseteq \cha\otimes\Cc$.  Recall that we have fixed an orthonormal basis 
$\{y_i\}$ of $\fh$ and a dual basis $\{x_j\}$ of $\fh^*$. Define the elements
\begin{equation}\label{e:o}
\gls{Xgen}:= x_iy_j - x_jy_i\in \cha,
\end{equation}
for $1\leq i < j \leq r$. We may extend this definition to arbitrary $i,j$, by setting $\Cx_{ji}=-\Cx_{ij}$. Denote by
$\gls{SO}$ the $\bbc$-linear span of all the elements $\Cx_{ij}$ with $1\leq i<j\leq r=\dim\fh$. 
\begin{lemma}\label{l:sl2comm}
The elements of $\fa=\textup{span}\{\Cx_{ij}\mid 1\leq i<j \leq r\}$ commute with $\fs=\fs\fl(2,\bbc)$. 
\end{lemma}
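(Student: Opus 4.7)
The plan is to verify directly that each $\Cx_{ij}$ commutes with the three generators $H$, $E_1^+$, and $E_1^-$ of $\fs$, since commutation with this set implies commutation with the Lie algebra they generate.

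First, for commutation with $H = \tfrac{1}{2}\sum_k h_k$: the element $\Cx_{ij} = x_iy_j - x_jy_i$ is homogeneous of total $\fh^*$-degree $1$ and total $\fh$-degree $1$ inside $\cha$, so by Proposition \ref{p:gradings} (viewing $H$ as the grading element of $\cha$), we have $[H,x_iy_j] = x_iy_j - x_iy_j = 0$ and likewise $[H,x_jy_i]=0$, hence $[H,\Cx_{ij}]=0$.

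Next, for commutation with $E_1^+ = -\tfrac12\sum_k x_k^2$: by Lemma \ref{l:Hamiltonian}, $[-\sum_k x_k^2,y_\ell]=2x_\ell$ for every $\ell$, and of course $[\sum_k x_k^2,x_\ell]=0$. Applying this inside
\[
[E_1^+,\Cx_{ij}] = \tfrac12\bigl(x_i[-\textstyle\sum_k x_k^2,y_j] - x_j[-\sum_k x_k^2,y_i]\bigr) = x_ix_j - x_jx_i = 0,
\]
using that $x_i,x_j$ commute in $\cha$. The computation for $E_1^-=\tfrac12\sum_k y_k^2$ is symmetric: using $[\sum_k y_k^2,x_\ell]=2y_\ell$ from Lemma \ref{l:Hamiltonian}, one gets $[E_1^-,\Cx_{ij}] = y_iy_j-y_jy_i = 0$.

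There is no real obstacle here; the only subtle point is that the reflection-contribution terms appearing in the Dunkl commutator $[y,x]$ (which contain the $s_\alpha$ factors from Definition \ref{d:RCA}) would in general spoil the cancellation, but they are already absent in Lemma \ref{l:Hamiltonian} precisely because $\sum_k x_k^2$ and $\sum_k y_k^2$ are $W$-invariant and hence annihilated by every divided-difference operator $\Delta_\alpha$. This is exactly what makes the classical Weyl-algebra calculation go through verbatim in the Cherednik setting.
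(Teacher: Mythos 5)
Your proof is correct and follows essentially the same line as the paper's: commutation with $H$ from the grading, then commutation with $E_1^\pm$ by invoking Lemma \ref{l:Hamiltonian} (the paper works with $p=\sum x_k^2$ and $q=\sum y_k^2$ instead of their rescalings $E_1^\pm$, but this is cosmetic). Your closing observation about the $W$-invariance of $p$ and $q$ killing the reflection terms is precisely the content of Lemma \ref{l:Hamiltonian}'s proof, so this is a faithful reproduction of the argument.
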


\begin{proof}
From $[H,x]=x$ and $[H,y]=-y$ for all
$x\in\fh^*$ and $y\in\fh$, it follows that each $\Cx_{ij}$ commutes with $H$. Further, let 
$p:=\sum_ux_u^2$ and $q:=\sum_uy_u^2$. From Lemma \ref{l:Hamiltonian}, we have
\begin{align*}
\!\;[p,\Cx_{ij}] &= ([p,x_iy_j]-[p,x_jy_i])\\
	&=(-2x_ix_j+2x_jx_i) =0
\end{align*}
and similarly $[q,\Cx_{ij}]=0$, finishing the proof.
\end{proof}

\begin{remark}
In the $c=0$ case, it is straightforward to check that $\fa$ is a Lie algebra isomorphic
to the orthogonal Lie algebra $\fs\fo(r,\bbc)=\fs\fo(\fh)$. For general $c$, we only have a linear isomorphism between $\fa$ and $\fs\fo(\fh)$, see Proposition \ref{p:deformedLieRelation} for the commutation relations in $\fa$.
\end{remark}

\begin{definition}
Define $\Cu=\gls{CentSL}$ as the associative subalgebra of $\cha$ generated by $\bbc W$ and $\fa$.
\end{definition}

Recall that when $c=0$ the rational Cherednik algebra becomes the smash product 
$\Cw\# W$, where $\gls{WeylAlg}$ denote the Weyl algebra acting on $\bbc[\fh]$. This algebra comes equipped with
a natural filtration in which the elements of $\fh\oplus\fh^*$ have degree one and the associated graded algebra
$\overline{\Cw}$ is isomorphic to $\bbc[\fh]\otimes\bbc[\fh^*]$. In Section \ref{s:appendix}, we show that the analogous 
subalgebras $\fs,\fa\subseteq \Cw$ are such that $\gls{ClassCentSO}$, the associative subalgebra generated by $\fa$, 
is the centralizer of $\fs$ in $\Cw$. Assuming this, we shall describe how the result in the Dunkl-Cherednik case
 can be obtained from the classical one. Let $\bt$ be an indeterminate. 
Define the generic Weyl algebra $\gls{WeylGen}$ to be the unital associative algebra over $\bbc[\bt]$ generated by 
$x\in\fh^*$ and $y\in\fh$ subject to the relations $[\bt,x]=0=[\bt,y]=[x,x']=[y,y']$ and 
\[
[y,x] = \bt\lpi x,y \rpi,
\]
for all $x,x'\in\fh^*$ and $y,y'\in\fh$. Then, the set 
$\{ \bt^nx^\alpha y^\beta\mid n\in\bbn,\alpha,\beta\in\bbn^r\}$ is a $\bbc$-linear basis for $\Cw[\bt]$. 
Here, $x^\alpha$ and $y^\beta$ is the usual multi-index notation. Define the degree $d\in\bbn$ space $\Cw[\bt]_d$
to be the linear span of the monomials  $\{ \bt^nx^\alpha y^\beta\mid 2n + |\alpha|+|\beta| = d\}$. 
It is straight-forward to check that the generic Weyl algebra $\Cw[\bt] = \oplus_{d\in\bbn} \Cw[\bt]_d$ 
is a graded $\bbc$-algebra. We note that the specialization $\Cw(1) = \Cw[\bt]\otimes_{\bbc[\bt]}\bbc_1$, in which
$\bbc_1$ is the $\bbc[\bt]$-module in which $\bt$ acts as $1$, is naturally isomorphic to $\Cw$.

The subspaces $\fs$ and $\fa$ of $\cha$ defined above have analogues in $\Cw$ and $\Cw[\bt]$ that we shall denote
in the same way. Note that the Lie subalgebra $\fs$, when seen as $\fs\subseteq \Cw[\bt]_2$ is no longer a Lie algebra 
because of the gradings involved. But we are still interested in determining the centralizer of $\fs$ in $\Cw[\bt]$. Let
 $A(\fa)[\bt]$ denote the associative subalgebra  
of $\Cw[\bt]$ generated by $\fa\subseteq\Cw[\bt]_2$. 

\begin{proposition}\label{p:gradedcent}
The subalgebra $A(\fa)[\bt]$ is the centralizer algebra of $\fs$ in $\Cw[\bt]$. As a $\bbc$-vector space, we have
\[
A(\fa)[\bt] = \bigoplus_{n\in\bbn} \bt^nA(\fa).
\]
\end{proposition}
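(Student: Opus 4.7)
The plan is to reduce to the classical case by exploiting that $\bt$ is central in $\Cw[\bt]$. For the inclusion $A(\fa)[\bt] \subseteq Z_{\Cw[\bt]}(\fs)$, I would observe that $\bt$ trivially centralises $\fs$, and the computation of Lemma~\ref{l:sl2comm} transcribes verbatim to $\Cw[\bt]$: the identity $[p,\Cx_{ij}] = -2\bt\, x_ix_j + 2\bt\, x_jx_i = 0$ still holds by commutativity of the $x_i$'s, and similarly for $q$ and $H$. Hence every generator of $A(\fa)[\bt]$ lies in the centraliser.

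For the reverse inclusion, the key device is to localise at the central element $\bt$, forming $\Cw[\bt]_\bt := \Cw[\bt]\otimes_{\bbc[\bt]}\bbc[\bt,\bt^{-1}]$. Setting $\tilde y_i := \bt^{-1}y_i$ one gets $[\tilde y_j,x_i] = \delta_{ij}$, so the subalgebra generated by $\{x_i,\tilde y_j\}$ is a copy of the classical Weyl algebra $\Cw$, and centrality of $\bt$ produces an algebra isomorphism $\Cw[\bt]_\bt \cong \Cw\otimes_\bbc\bbc[\bt,\bt^{-1}]$. Under this identification $\Cx_{ij} = \bt\,\tilde\Cx_{ij}$ and the generators of $\fs$ rescale as $p = \tilde p$, $H = \bt\,\tilde H$, $q = \bt^2\,\tilde q$, where the tilded symbols denote the corresponding classical elements in $\Cw$. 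Since $\bt$ is central and invertible in the localisation, commuting with $\fs$ is equivalent to commuting with $\tilde\fs$, so by the classical result (Section~\ref{s:appendix}), $Z_{\Cw[\bt]_\bt}(\fs) = A(\tilde\fa)\otimes_\bbc\bbc[\bt,\bt^{-1}]$. Any $w\in Z_{\Cw[\bt]}(\fs)$ thus lies inside $A(\tilde\fa)\otimes_\bbc\bbc[\bt,\bt^{-1}]$.

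To recover that $w$ belongs to $A(\fa)[\bt]$, I would use the grading $\deg\bt=2$, $\deg x_i=\deg y_j=1$ on $\Cw[\bt]$: both $Z_{\Cw[\bt]}(\fs)$ and $A(\fa)[\bt]$ are graded subspaces, reducing the problem to each homogeneous piece. For $w\in Z_{\Cw[\bt]}(\fs)_{2k}$ expanded as $w=\sum_m \bt^m g_m$ with $g_m\in A(\tilde\fa)\subset\Cw$, the $\tilde H$-invariance of $A(\tilde\fa)$ forces every monomial $x^\alpha\tilde y^\beta$ of $g_m$ to satisfy $|\alpha|=|\beta|$; combined with the $\Cw[\bt]$-degree constraint $2m+|\alpha|-|\beta|=2k$ and the positivity requirement $m-|\beta|\ge 0$ (i.e.\ that $w$ actually lies in $\Cw[\bt]$ rather than its localisation), this forces $m=k$ and $g_k$ to have Weyl-algebra PBW-filtration at most $2k$. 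Equivalently, $g_k$ is a polynomial in at most $k$ of the generators $\tilde\Cx_{ij}$, and substituting $\tilde\Cx_{ij}=\bt^{-1}\Cx_{ij}$ absorbs the factor $\bt^k$ into a $\bbc[\bt]$-polynomial in the $\Cx_{ij}$'s, placing $w\in A(\fa)[\bt]$.

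The vector-space decomposition follows from centrality of $\bt$: since $A(\fa)[\bt]$ is a $\bbc[\bt]$-submodule of the free $\bbc[\bt]$-module $\Cw[\bt]$ and $\bbc[\bt]$ is a principal ideal domain, $A(\fa)[\bt]$ is itself $\bbc[\bt]$-free; its rank is computed via the specialisation $\bt\mapsto 1$, which surjects $A(\fa)[\bt]$ onto the classical $A(\fa)\subset\Cw$, giving rank $\dim_\bbc A(\fa)$. Hence $A(\fa)[\bt]\cong\bbc[\bt]\otimes_\bbc A(\fa)=\bigoplus_{n\in\bbn}\bt^n A(\fa)$ as $\bbc[\bt]$-modules. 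I expect the main obstacle to be the filtration identity used in the third paragraph, namely that $F^{2k}A(\tilde\fa)$ coincides with the subspace of polynomials of degree $\le k$ in the generators $\tilde\Cx_{ij}$; this rests on the classical PBW-type structure of $A(\tilde\fa)\subset\Cw$, whose associated graded is a commutative subring of $\bbc[\fh\oplus\fh^*]$ generated by the quadratic polynomials $x_iY_j-x_jY_i$ subject to Plücker-type relations.
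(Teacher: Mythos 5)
Your localisation argument takes a genuinely different route from the paper's. The paper stays inside $\Cw[\bt]$ and exploits the $\bbz_{\geq 0}$-grading (with $\deg\bt=2$, $\deg x_i=\deg y_j=1$) together with the specialisation $\bt\mapsto 1$: on the degree-$d$ piece this specialisation is a linear bijection $\Cw[\bt]_d\isom\Cw^{(d)}$, a vanishing commutator in $\Cw[\bt]$ specialises to a vanishing commutator in $\Cw$, and one invokes Proposition~\ref{p:centsl2}. (The paper packages this reduction as a commutator identity ``$[X,q]_{\Cw[\bt]}=\bt\,[X,q]_\Cw$'' for $q\in\Cw$, which is not literally true beyond degree-one $q$; for example with $\lpi x,y\rpi=1$ one has $[y^2,x^2]_{\Cw[\bt]}=4\bt xy+2\bt^2$ whereas $\bt[y^2,x^2]_\Cw=4\bt xy+2\bt$. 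The conclusion survives because $\bt\mapsto 1$ is an algebra homomorphism, which is the fact actually being used.) Your rescaling $\tilde y_i=\bt^{-1}y_i$ inside $\Cw[\bt]_\bt\cong\Cw\otimes_\bbc\bbc[\bt,\bt^{-1}]$ accomplishes the same reduction more cleanly, and your subsequent bookkeeping with the grading is precisely the Rees-algebra picture that underlies the paper's argument. Both routes funnel into Proposition~\ref{p:centsl2}.

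You have also correctly isolated the genuine hinge that neither route can sidestep: the filtration statement that an element of $A(\fa)$ lying in $\Cw^{(2k)}$ is a linear combination of products of at most $k$ of the generators $\Cx_{ij}$. This is exactly what is needed to pass from ``$g_k\in A(\tilde\fa)$ with Weyl-filtration $\leq 2k$'' to ``$\bt^kg_k\in A(\fa)[\bt]$'', and it is also what the paper's proof uses tacitly in its last sentence when concluding that $\sum_n\bt^n q_n$ with $q_n\in A(\fa)$ lies in $A(\fa)[\bt]$. The good news is that this fact does fall out of the proof of Proposition~\ref{p:centsl2}, though it is not stated separately: the inductive step there shows that an $\fs$-commuting element of $\Cw^{(2s)}$ is, modulo $\Cw^{(2s-2)}$, a $\bbc$-linear combination of $\Cb_\mu$ with $\mu\in\SM_s'$, i.e.\ a product of exactly $s$ of the $\Cx_{ij}$ (the $\SM_s''$-coefficients are shown to vanish); iterating downward through the even filtration degrees gives precisely the inclusion $A(\fa)\cap\Cw^{(2s)}\subseteq\{\text{products of}\leq s\text{ generators}\}$ that you need. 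If you make that observation explicit, your argument closes. Finally, note that the direct-sum assertion $A(\fa)[\bt]=\bigoplus_n\bt^n A(\fa)$ is really this same Rees-algebra identification in disguise, rather than the weaker module-theoretic fact (freeness of a $\bbc[\bt]$-submodule of a free module) that your last paragraph supplies.
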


\begin{proof}
The description of $A(\fa)[\bt] $ as $\bbc$-vector space is rather clear. Let $[\cdot,\cdot]_\Cw$ denote the commutator in 
$\Cw$ and $[\cdot,\cdot]$ is the commutator in $\Cw[\bt]$. If $v\in\fh^*\oplus\fh$ and $q\in \Cw$, 
it is straight-forward to check that $[v,q] = \bt[v,q]_\Cw$, 
from which $[v_1v_2,q] = \bt[v_1v_2,q]_\Cw$,
for all $v_1,v_2\in \fh^*\oplus\fh$, because of the Leibniz rule. It follows that, for all $X\in\fs$
and $q\in \Cw$, we have $[X,q] = \bt[X,q]_\Cw$. Thus, if $q\in\Cw$ and $p = \bt^nq$ is such that 
$[X,p]=0$ for all $X\in\fs$, it follows from
Proposition \ref{p:centsl2} in Section \ref{s:appendix}, that $q\in A(\fa)$. Next, for each $n,d\in \bbn$
let $q_n\in\Cw$ be a homogenous element of degree $d$, that is, a lift of an element of degree $d$ in 
$\overline{\Cw}$ to $\Cw$. Given $p = \sum\bt^nq_n\in\Cw[\bt]$ with $q_n$ as above, if $p$ commutes with $\fs$ 
it follows that each $\bt^nq_n\in\Cw[\bt]_{2n+d}$ commutes with $\fs$, since they all lie
in different pieces of the graded algebra $\Cw[\bt]$ and thus $q_n\in A(\fa)$. 
Since a general $p\in\Cw[\bt]$ can be expressed as finite sum of element $\bt^nq_n$ 
with $q_n\in\Cw$ homogeneous, we are done.
\end{proof}

\begin{theorem}\label{t:centralisersl2}
The algebra $\Cu$ is the centraliser of $\fs=\fs\fl(2,\bbc)$ inside $\cha$.
\end{theorem}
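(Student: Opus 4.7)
The plan is to establish the two inclusions separately. The easy direction $\Cu \subseteq Z_\cha(\fs)$ follows from Lemma \ref{l:sl2comm} together with the observation that the generators $H = \tfrac{1}{2}\sum_i \{x_i,y_i\}$, $E_1^+ = -\tfrac{1}{2}\sum_i x_i^2$ and $E_1^- = \tfrac{1}{2}\sum_i y_i^2$ of $\fs$ are built from the orthonormal bases $\{x_i\}, \{y_j\}$ and are therefore $W$-invariant; hence both $\fa$ and $\bbc W$ centralise $\fs$.

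For the reverse inclusion $Z_\cha(\fs) \subseteq \Cu$, the plan is to reduce to Proposition \ref{p:gradedcent} via the Bernstein filtration on $\cha$ in which $\fh$ and $\fh^*$ sit in degree $1$ and $W$ in degree $0$. Relation (\ref{e:relations}) shows that $[y,x]$ has filtration degree $0$, so this is a well-defined algebra filtration, and the PBW theorem for $\cha$ identifies
\[
\mathrm{gr}\,\cha \;\cong\; \bigl(\bbc[\fh] \otimes \bbc[\fh^*]\bigr) \# W,
\]
which is the $\bt=0$ specialisation of $\Cw[\bt] \# W$. Under this filtration the generators of $\fs$ sit in degree $2$ with leading symbols $\mathrm{gr}(H) = \sum_i x_iy_i$, $\mathrm{gr}(E_1^+) = -\tfrac{1}{2}\sum_i x_i^2$, $\mathrm{gr}(E_1^-) = \tfrac{1}{2}\sum_i y_i^2$, which recover the classical $\fs\fl(2)$-triple in $\mathrm{gr}(\Cw)$, while $\mathrm{gr}(\Cx_{ij}) = x_iy_j - x_jy_i$ is the classical realisation of $\fa$. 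In particular $\mathrm{gr}(\Cu) \supseteq A(\fa) \# W$ as subalgebras of $\mathrm{gr}\,\cha$.

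I would then induct on the filtration degree $d$, the base case $d = 0$ being trivial since $F_0\cha = \bbc W \subseteq \Cu$. Given $0 \neq p \in Z_\cha(\fs) \cap F_d \cha$ with $p \notin F_{d-1} \cha$, the leading symbol $\bar p \in \mathrm{gr}_d \cha$ commutes with $\mathrm{gr}(\fs)$. Decomposing $\bar p = \sum_{w \in W} \bar p_w \cdot w$ with $\bar p_w \in \mathrm{gr}(\Cw)$ and using that $\mathrm{gr}(\fs) \subseteq \mathrm{gr}(\Cw)^W$ together with the freeness of $\{w\}_{w \in W}$ over $\mathrm{gr}(\Cw)$, each component $\bar p_w$ must commute with the classical $\fs\fl(2)$-triple in $\mathrm{gr}(\Cw)$. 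Proposition \ref{p:gradedcent} specialised at $\bt = 0$ then forces $\bar p_w \in A(\fa)$, so $\bar p \in A(\fa) \# W = \mathrm{gr}_d(\Cu)$. Choosing a lift $p' \in \Cu \cap F_d \cha$ with $\mathrm{gr}(p') = \bar p$, the element $p - p' \in Z_\cha(\fs) \cap F_{d-1} \cha$ belongs to $\Cu$ by the inductive hypothesis, hence $p = p' + (p - p') \in \Cu$.

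The step I expect to require the most care is the $W$-equivariant reduction to Proposition \ref{p:gradedcent}: one must verify that in $(\bbc[\fh \oplus \fh^*]) \# W$ the centraliser of the classical $\fs\fl(2)$-triple is precisely $A(\fa) \# W$. This follows because the triple lies in the $W$-invariant subalgebra, so that the centrality condition on $\bar p = \sum_w \bar p_w w$ splits along the smash-product decomposition and reduces to the classical statement, but the argument has to be written out carefully. Everything else — setting up the Bernstein filtration, verifying $\mathrm{gr}(\Cu) \supseteq A(\fa) \# W$ from the generators, and running the induction on filtration degree — is routine once this graded picture is in place.
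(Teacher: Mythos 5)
Your easy inclusion $\Cu\subseteq Z_{\cha}(\fs)$ is fine, but the reverse inclusion has a genuine gap. With the Bernstein filtration you propose (degree $1$ on $\fh\oplus\fh^*$, degree $0$ on $W$), the right-hand side of $[y,x]=\lpi x,y\rpi-\sum_{\alpha>0} c_\alpha\lpi\alpha,y\rpi\lpi x,\alpha^\vee\rpi s_\alpha$ lies in filtration degree $0$, so the associated graded algebra is the \emph{commutative} polynomial ring $S(\fh\oplus\fh^*)\# W$. In this algebra every polynomial commutes with $\mathrm{gr}(\fs)$, and every $w\in W$ commutes with $\mathrm{gr}(\fs)$ as well because $\mathrm{gr}(\fs)$ consists of $W$-invariants. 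The statement ``the leading symbol $\bar p$ commutes with $\mathrm{gr}(\fs)$'' is therefore vacuous and gives no constraint on $\bar p$ at all, so the induction never gets off the ground. For the same reason, ``Proposition~\ref{p:gradedcent} specialised at $\bt=0$'' cannot deliver what you want: at $\bt=0$ the algebra $\Cw[\bt]$ degenerates to the commutative ring $\bbc[\fh\oplus\fh^*]$, where $\fs$ is central and its centraliser is the whole ring, not $A(\fa)$. Proposition~\ref{p:gradedcent} is only nontrivial because $\bt$ stays a formal indeterminate, making $[X,q]=\bt\,[X,q]_{\Cw}$ for $X\in\fs$, so the vanishing of the commutator in $\Cw[\bt]$ is \emph{equivalent} to its vanishing in $\Cw$.

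This is exactly the problem the paper's proof is engineered to avoid. There one works in $\cha[\bt]$ and assigns $\bt$ filtration degree $2$, so that the leading term of $[y,x]$ is $\bt\lpi x,y\rpi$ in degree $2$ and the associated graded is the \emph{noncommutative} algebra $\Cw[\bt]\# W$. The commutation condition then descends to a nontrivial condition at the graded level, Proposition~\ref{p:gradedcent} applies as stated, and specialising $\bt=1$ at the end recovers the result for $\cha$. If you insist on filtering $\cha$ directly, the constraint that actually survives to the associated graded is that $\bar p$ \emph{Poisson-commutes} with $\mathrm{gr}(\fs)$, and you would then need the Poisson analogue of Proposition~\ref{p:gradedcent} (classical $(O(r),\mathrm{Sp}(2))$ invariant theory) together with a check that the $c_\alpha s_\alpha$ contributions to the Poisson bracket vanish against the $W$-invariant generators of $\fs$. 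The $\bt$-homogenisation argument in the paper is the clean way to package this; as written, your proposal conflates the associative commutator in the associated graded (trivial) with the Poisson bracket (the right object), and that conflation is fatal to the argument.
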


\begin{proof}
We know that $\Cu\subseteq \Cent_{\cha}(\fs)$. To prove the other inclusion, let $\bt$ be an 
indeterminate that commutes with $\fh^*\oplus\fh$ and $\bbc W$. Consider the generic algebra 
$\gls{RCAGen}$ defined over $\bbc[\bt]$ with  relations similar to (\ref{e:relations}), but with
\begin{equation}\label{e:genericRCArel}
[y,x] = \bt\lpi x,y \rpi - \sum_{\alpha>0} c_\alpha\lpi \alpha,y \rpi\lpi x,\alpha^\vee\rpi s_\alpha,
\end{equation}
for $y\in\fh$ and $x\in\fh^*$. Using (\ref{e:genericRCArel}), it is not hard to show (see for example\cite{CDM}, Propositions 2.5 and 2.6)
that if $p\in\bbc[\fh]$ and $q\in\bbc[\fh^*]$, then
\begin{equation}\label{e:genericpoly}
\begin{array}{rcl}
\;\![y,p] &=& \bt\partial_y(p) - \sum_{\alpha>0} c_\alpha\lpi\alpha,y\rpi\Delta_\alpha(p)s_\alpha\\
\;\![q,x] &=& \bt\partial_x(q) - \sum_{\alpha>0} c_\alpha\lpi x,\alpha^\vee\rpi\Delta_{\alpha^\vee}(q)s_\alpha,.
\end{array}
\end{equation}
where $\Delta_\alpha$ is the  operator $\alpha^{-1}(1-s_\alpha)$ acting on $\bbc[\fh]$ and $\Delta_{\alpha^\vee}$ is similarly
defined and acts on $\bbc[\fh^*]$.

Note that as a $\bbc$-vector space, we have 
$\cha[\bt] = \bbc[\bt]\otimes \bbc[\fh]\otimes\bbc[\fh^*]\otimes\bbc W$. Consider the filtration
of $\cha[\bt]$ in which $\bt$ is in degree $2$, $\fh^*\oplus\fh$ is in degree $1$ and $\bbc W$ is in 
degree $0$. Then the associated graded $\bbc$-algebra, $\textsf{Gr}(\cha[\bt])$,  is isomorphic to $\Cw[\bt]\# W$.
If we let $\Cu[\bt]$ denote the associative subalgebra of $\cha[\bt]$ generated by $\fa$ and $\bbc W$ and endow
$\Cu[\bt]$ with the filtration of $\cha[\bt]$ described above, then $\textsf{Gr}(\Cu[\bt])\cong A(\fa)[\bt]\#W$. Further, from 
(\ref{e:genericpoly}), if $p\in\bbc[\fh]\otimes\bbc[\fh^*]\subseteq\cha[\bt]$ and $X\in\fs$, then 
\begin{equation}\label{e:comm}
[X,p]_{\cha[\bt]} \equiv [X,p]_{\Cw[\bt]},
\end{equation}
modulo lower filtration degree. Thus,  it follows from (\ref{e:comm}), Proposition \ref{p:gradedcent} and 
$\textsf{Gr}(\Cu[\bt])\cong A(\fa)[\bt]\#W$, that 
$\Cent_{\cha[\bt]}(\fs) \subseteq \Cu[\bt]$. Specializing to $\bt=1$ gives the desired result.
\end{proof}

For each $0\leq l \leq r=\dim\fh$, the subspace $K_c(\tau)^l = \oplus_{m\geq 0} K^l_m$ is an $\cha$-module isomorphic to $M_c(\tau\otimes\wedge^l\fh)$. 
The $(W,\fs)$ decomposition these modules were essentially studied in \cite{BSO}. Since the algebra $\fs$ is $\ast$-invariant, then so is its centraliser $\Cu$. We have:

\begin{proposition}
For each $m,l\in\bbz_{\geq 0}$ the space $\Ch_m^l$ is a $\ast$-unitary $\Cu$-module. Moreover, when $l=0$, as an $(\Cu,\fs)$-module we have
\begin{equation}\label{e:sl2isotypic}
K_c(\tau)^0 = \bigoplus_{m\in\bbz_{\geq 0}}  \Ch_c(\tau)_m^0\boxtimes L_{\fs\fl(2)}(m + r/2 - N_c(\tau)) ,
\end{equation}
where $L_{\fs\fl(2)}(\mu)$ denotes the irreducible lowest weight $\fs\fl(2)$-module with lowest weight $\mu\in\bbc$.
\end{proposition}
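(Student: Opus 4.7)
My plan has two parts, corresponding to the two assertions. For the $*$-unitary $\Cu$-module structure on $\Ch^l_m$ in general bidegree $(m,l)$, first I would observe that each generator $\Cx_{ij}=x_iy_j-x_jy_i$ preserves the polynomial degree (being bilinear in $\fh$ and $\fh^*$) and does not act on the spinor factor, while $\bbc W$ preserves the bidegree, so $\Cu\subseteq\cha$ preserves each $K^l_m$. Since by Theorem~\ref{t:centralisersl2} we have $\Cu=\Cent_{\cha}(\fs)$ and $E_1^-\in\fs$, the algebra $\Cu$ preserves $\Ch=\ker E_1^-$; hence each $\Ch^l_m$ is a $\Cu$-submodule. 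For unitarity I would verify $*$-stability of $\Cu$: $s_\alpha^*=s_\alpha$ handles $\bbc W$, and from $x_i^*=y_i$ one computes $\Cx_{ij}^*=(x_iy_j-x_jy_i)^*=y_jx_i-y_ix_j=-\Cx_{ij}$, so $\fa$ is $*$-stable. Restricting the positive-definite form $\lpi\cdot|\cdot\rpi$ of Lemma~\ref{l:unitarity} to $\Ch^l_m$ then yields a positive-definite $\Cu$-contravariant Hermitian form.

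For the $l=0$ decomposition, I would re-sum Proposition~\ref{p:rowdecomps} with $l=0$ as
\[
K_c(\tau)^0 \;=\; \bigoplus_{m\geq 0}K^0_m \;=\; \bigoplus_{m\geq 0}\bigoplus_{p\geq 0}(E_1^+)^p\,\Ch^0_m.
\]
Set $\mu_m:=m+r/2-N_c(\tau)$; by (\ref{e:weights}) this is the scalar by which $H$ acts on $\Ch^0_m$, and Assumption~\ref{a:assumption} guarantees $\mu_m\notin\bbz_{\leq 0}$ for all $m\geq 0$. Using $[E_1^+,E_1^-]=H$ together with $E_1^-v=0$ for $v\in\Ch^0_m$, a routine induction on $p$ gives
\[
E_1^-(E_1^+)^p v \;=\; p\bigl(\mu_m+p-1\bigr)(E_1^+)^{p-1}v,
\]
from which I conclude that $E_1^+$ acts injectively on $\Ch^0_m$ and that the $\fs$-submodule generated by any nonzero $v\in\Ch^0_m$ is the irreducible lowest weight module $L_{\fs\fl(2)}(\mu_m)$.

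Finally, since $\Cu$ centralises $\fs$, for each $m$ the natural linear map
\[
\Phi_m\colon \Ch^0_m \otimes L_{\fs\fl(2)}(\mu_m)\;\longrightarrow\;\bigoplus_{p\geq 0}(E_1^+)^p\Ch^0_m,\qquad v\otimes (E_1^+)^pv_0 \;\longmapsto\;(E_1^+)^pv,
\]
is an isomorphism of $(\Cu,\fs)$-modules: surjectivity follows from Proposition~\ref{p:rowdecomps}, and injectivity from the previous step. Summing these isomorphisms over $m$ produces the claimed multiplicity-free decomposition. The main obstacle I foresee is the injectivity of $E_1^+$ on each $\Ch^0_m$---equivalently, the assertion that none of the lowest weights $\mu_m$ is a non-positive integer---but this reduces directly to Assumption~\ref{a:assumption}.
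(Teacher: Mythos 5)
Your proof is correct, and it takes a genuinely different route from the paper's. The paper proves the $l=0$ decomposition as a direct specialization of Theorem~\ref{t:main} (the full $\fs\fp\fo(2|2)$ decomposition of $K_c(\tau)$), using the key observation that $E_2^-$ and $E_3^-$ kill $K^0_m$ (since they lower the spinor degree), so $\Cm^0_m=\Ch^0_m$ and the $\fg$-lowest-weight modules appearing there collapse to the $\fs$-lowest-weight modules. You instead work entirely inside $\fs\cong\fs\fl(2,\bbc)$: you re-sum Proposition~\ref{p:rowdecomps} at $l=0$, verify directly that $E_1^+$ acts injectively on each $\Ch^0_m$ and that the $\fs$-module it generates is the irreducible lowest-weight Verma module, and then assemble the $(\Cu,\fs)$-isomorphism by hand. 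Your approach is more elementary and self-contained (one doesn't need to have established the full $\fs\fp\fo(2|2)$ machinery and Theorem~\ref{t:main}), while the paper's is shorter given that Theorem~\ref{t:main} is already available. Your $*$-stability verification of $\Cu$ on generators is also a direct substitute for the paper's one-line argument that a $*$-invariant subalgebra has $*$-invariant centralizer. One small slip: with the paper's convention $[E_1^+,E_1^-]=H$, the recursion should read $E_1^-(E_1^+)^p v=-p(\mu_m+p-1)(E_1^+)^{p-1}v$; the sign is immaterial for your conclusion since only non-vanishing of the coefficient is used, but as written the formula is off by a sign.
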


\begin{proof}
Since $\Cu$ is the centraliser of $\fs = \fs\fl(2,\bbc)$, each $\Ch_m^l$ is a $\Cu$-module, and the unitarity is because $K_c(\tau)$ is a unitary $\cha\otimes\Cc$-module. The decomposition follows from Theorem \ref{t:main}; note that when $l=0$, we have that
$E_2^-$ and $E_3^-$ acts as zero on $K_m^l$, from which $\Cm_m^0 = \Ch_m^0$.  
\end{proof}

\subsection{Commutation relations} \label{s:6.2}
We now discuss the commutators of the elements $\Cx_{ij}$. In fact, the description of $\Cu_c$ 
in terms of generators and relations has already been obtained in \cite{FeHa}. If $x\in\fh^*$ and $y\in \fh$, let
$\gls{Xgen2} = xy-y^*x^*$ so that $\fa$ is the linear span of $\{\Cx_{xy}\mid x\in\fh^*,y\in \fh\}$. Recall
that $\fh = E_\bbc$, the complexification of the real Euclidean space $E$.

\begin{proposition}
In $\cha$, we have $\Cx_{xy}^* = -\Cx_{xy}$ for all $x\in\fh^*$ and $y\in\fh$. It also holds, for $x,\xi\in E^*\subseteq \fh^*$ and $y,\eta\in E\subseteq \fh$ that
\begin{align}\label{e:comm-X}
[\Cx_{xy},\Cx_{\xi\eta}] &=\Cx_{x\eta}[y,\xi]-\Cx_{\xi y}[\eta,x] -\Cx_{y^*\eta}[x^*,\xi]+\Cx_{\xi x^*}[\eta,y^*]\\\label{e:quad2} 
\Cx_{xy}\Cx_{\xi\eta} &= \Cx_{\xi y}\Cx_{x\eta}  + \Cx_{x\xi^*}\Cx_{y^*\eta} + \Cx_{xy}[\eta,\xi] - \Cx_{\xi y}[\eta,x]
- \Cx_{x\xi^*}[\eta,y^*].
\end{align}
\end{proposition}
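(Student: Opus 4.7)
The first claim, $\Cx_{xy}^* = -\Cx_{xy}$, is an immediate consequence of the fact that $\ast$ is an anti-linear anti-involution with $x^{**}=x$: applying $\ast$ to $\Cx_{xy} = xy - y^*x^*$ yields
\[
(\Cx_{xy})^* = (xy)^* - (y^*x^*)^* = y^*x^* - (x^*)^*(y^*)^* = y^*x^* - xy = -\Cx_{xy}.
\]
A useful corollary, used below, is the symmetry $\Cx_{ab} = -\Cx_{b^*a^*}$.

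For the two commutation relations my plan is to prove the quadratic identity (\ref{e:quad2}) first by direct expansion, and then to deduce (\ref{e:comm-X}) from it by antisymmetrisation. Substituting $\Cx_{ab} = ab-b^*a^*$ gives
\[
\Cx_{xy}\Cx_{\xi\eta} = xy\xi\eta - xy\eta^*\xi^* - y^*x^*\xi\eta + y^*x^*\eta^*\xi^*,
\]
and in each of the four monomials there is exactly one place where a generator of $\fh$ must be commuted past one of $\fh^*$ via the defining relation (\ref{e:relations}). The ``swapped'' polynomial pieces reassemble, after using the $\ast$-symmetry noted above to match orderings, into $\Cx_{\xi y}\Cx_{x\eta} + \Cx_{x\xi^*}\Cx_{y^*\eta}$, while the $\bbc W$-valued Cherednik brackets, once transported to the right past the surviving generators through the skew-product relation $g\cdot a = g(a)\cdot g$, collect into the correction terms $\Cx_{xy}[\eta,\xi] - \Cx_{\xi y}[\eta,x] - \Cx_{x\xi^*}[\eta,y^*]$. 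Once (\ref{e:quad2}) is established, I would obtain (\ref{e:comm-X}) by applying (\ref{e:quad2}) also with the roles of $(x,y)$ and $(\xi,\eta)$ exchanged, using $\Cx_{\xi x^*}\Cx_{\eta^*y} = \Cx_{x\xi^*}\Cx_{y^*\eta}$ (from the $\ast$-symmetry) to identify the corresponding quadratic summand, and taking the difference.

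The main obstacle is the bookkeeping of the $\bbc W$-factors: the bracket $[y,\xi]$ is not central but an element of $\bbc W$, so each time such a factor is transported past a surviving generator one picks up a $W$-twist of the shape $s_\alpha(\bullet)$, and one must verify that the accumulated twists repackage precisely into the shape $\Cx_{\bullet\bullet}[\bullet,\bullet]$ claimed on the right-hand sides. The $\ast$-symmetry recorded in the opening step is what keeps the number of genuinely distinct contributions small enough for the final cancellations to be transparent.
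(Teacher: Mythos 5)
Your computation of $\Cx_{xy}^* = -\Cx_{xy}$ is correct and matches the paper.

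The step that breaks down is the deduction of \eqref{e:comm-X} from \eqref{e:quad2} by antisymmetrisation. When you apply \eqref{e:quad2} to $\Cx_{\xi\eta}\Cx_{xy}$ with $(x,y)$ and $(\xi,\eta)$ exchanged, the $\ast$-symmetry $\Cx_{\xi x^*}\Cx_{\eta^*y}=\Cx_{x\xi^*}\Cx_{y^*\eta}$ lets you cancel only one of the two quadratic summands; the other pair $\Cx_{\xi y}\Cx_{x\eta}$ and $\Cx_{x\eta}\Cx_{\xi y}$ do not cancel, and the difference becomes
\begin{equation*}
[\Cx_{xy},\Cx_{\xi\eta}] = [\Cx_{\xi y},\Cx_{x\eta}] + \Cx_{xy}[\eta,\xi] - \Cx_{\xi y}[\eta,x] - \Cx_{x\xi^*}[\eta,y^*] - \Cx_{\xi\eta}[y,x] + \Cx_{x\eta}[y,\xi] + \Cx_{\xi x^*}[y,\eta^*].
\end{equation*}
The residual commutator $[\Cx_{\xi y},\Cx_{x\eta}]$ is exactly what you are trying to compute (it is, in general, nonzero — at $c=0$ it is a structure constant of $\fs\fo(\fh)$, cf.\ Remark \ref{r:usual}), so the argument is circular: substituting the target formula \eqref{e:comm-X} for $[\Cx_{\xi y},\Cx_{x\eta}]$ gives only a consistency check, not a derivation. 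The paper's route is genuinely different: it proves \eqref{e:comm-X} directly from the identity $[\xi_1\eta_1,\xi_2\eta_2]=\xi_1[\eta_1,\xi_2]\eta_2-\xi_2[\eta_2,\xi_1]\eta_1$ applied to $\Cx_{xy}\Cx_{\xi\eta}=(xy-y^*x^*)(\xi\eta-\eta^*\xi^*)$, and establishes \eqref{e:quad2} separately from the two further rearrangement identities; it never tries to deduce one relation from the other.

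A second, smaller issue: your claim that the $\bbc W$-valued brackets ``collect into the correction terms'' after the swaps is where all the actual content lies, and it is not automatic. The brackets $[y,\xi]$ land to the \emph{left} of the surviving generators (e.g.\ one obtains $x[y,\xi]\eta$ rather than $x\eta[y,\xi]$), and moving them to the right is controlled precisely by the Jacobi-type identity $[[\eta_1,\xi_1],\eta_2]=[[\eta_2,\xi_1],\eta_1]$ together with the reality identity $[x^*,y^*]=[y,x]$ (valid only for $x\in E^*$, $y\in E$, which is why the proposition restricts to real vectors). Both are listed in the paper as inputs to the ``straight-forward computation'' and neither is dispensable; a write-up that does not make them explicit will not close.
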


\begin{remark}\label{r:usual}
Notice that the ``structure constants" of the $\Cx$'s in the commutation relation (\ref{e:comm-X})  are elements of $\mathbb C W$. When $c=0$ and we let $x=x_i,y=y_j,\xi = x_k, \eta = y_k$, (\ref{e:comm-X}) becomes the commutation relation in the Lie algebra $\mathfrak{so}(\fh)$: \[[\Cx_{ij},\Cx_{kl}]_0=\delta_{jk}\Cx_{il}-\delta_{li}\Cx_{kj}-\delta_{ik}\Cx_{jl}+\delta_{lj}\Cx_{ki}.\]
\end{remark}

\begin{proof}
The equation $\Cx_{xy}^* = -\Cx_{xy}$ is immediate from the definition. Claims (\ref{e:comm-X}) and (\ref{e:quad2}) were discussed in \cite{FeHa} and can be verified with a straight-forward computation using the identities
\begin{itemize}
\item[(a)] $[\xi_1\eta_1,\xi_2\eta_2] = \xi_1[\eta_1,\xi_2]\eta_2 - \xi_2[\eta_2,\xi_1]\eta_1$,
\item[(b)] $\xi_1\eta_1\xi_2\eta_2 = \xi_2\eta_1\xi_1\eta_2 + \xi_1[\eta_1,\xi_2]\eta_2 - \xi_2[\eta_1,\xi_1]\eta_2$,
\item[(c)] $\xi_1\eta_1\xi_2\eta_2 = \xi_1\eta_2\xi_2\eta_1 + \xi_1[\eta_1,\xi_2]\eta_2 - \xi_1[\eta_2,\xi_2]\eta_1$,
\item[(d)]  $ [[\eta_1,\xi_1],\eta_2] = [[\eta_2,\xi_1],\eta_1]$
\end{itemize}
for all $\xi_1,\xi_2\in \fh^*,\eta_1,\eta_2\in \fh$ and  
\begin{itemize}
\item[(e)]  $[x^*,y^*]=[y,x]$,
\end{itemize}
 whenever $x\in E^*,y\in E$.
\end{proof}

The relations in (\ref{e:comm-X}) can be written in a coordinate-free form. Consider $T:\fh\to\fh^*$, the linear isomorphism that sends $y_i\mapsto y_i^T=x_i$,
for all $i$. We thus have an identification $\fa = \textup{span}\{\Cx_{ij}\} \cong \wedge^2\fh^*$, under which, we have that $\Cx_{ij}$ corresponds to 
$x_i\wedge x_j$. We shall  still denote by $\lpi\cdot,\cdot\rpi$ the bilinear pairing $\fh^*\times\fh^*\to\bbc$  and extend  it to a non-degenerate bilinear 
pairing $\wedge^p\fh^*\times\wedge^p\fh^*\to\bbc$, for all $p$, via the determinant: for all $\psi_i ,u_j\in\fh^*$, let
\[\lpi \psi_1\wedge\cdots\wedge\psi_p, u_1\wedge\cdots\wedge u_p \rpi = \det(\lpi \psi_i,u_j \rpi).\]
If $\Cx,\Cy\in\fa$ and $u,\psi\in\fh^*$, define the contractions
$\Cx\lcont u\in\fh^*$ and $\psi\rcont \Cy\in\fh^*$ by requiring for all $v,\phi\in\fh^*$
\[
\lpi \Cx\lcont u,v \rpi = \lpi \Cx,u\wedge v \rpi \qquad\textup{and}\qquad \lpi \phi,\psi \rcont \Cy \rpi = \lpi \phi\wedge\psi, \Cy \rpi.
\]
We note that $x\rcont \Cy = -\Cy\lcont x$ for all $x\in\fh^*$ and $\Cy\in\fa$. Note also that $\fa$ is not closed under the bracket of $\cha$, unless $c=0$. Let $[\cdot,\cdot]_0:\fa\times\fa\to\fa$ be the Lie 
bracket on $\fa\cong\fs\fo(\fh)$.  For each $\alpha\in R\subseteq\fh^*$ define the skew-bilinear map $\kappa_\alpha:\fa\times\fa\to\fa$ via

\begin{equation}\label{e:kappa}
\kappa_\alpha(\Cx,\Cy) = (\Cx\lcont \alpha^\vee)\wedge(\alpha\rcont \Cy).
\end{equation}
Since this expression is quadratic, we have $\kappa_{\alpha} = \kappa_{-\alpha}$.

\begin{proposition}\label{p:deformedLieRelation}
For any $\Cx,\Cy\in\fa\subseteq \cha$, we have
\begin{equation}\label{e:Urel}
[\Cx,\Cy] = [\Cx,\Cy]_0 - \sum_{\alpha>0}c_\alpha\kappa_{\alpha}(\Cx,\Cy)s_\alpha.
\end{equation}
\end{proposition}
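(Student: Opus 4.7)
The plan is to check the identity by direct computation on a basis, exploiting that all three terms $[\Cx,\Cy]$, $[\Cx,\Cy]_0$ and $\kappa_\alpha(\Cx,\Cy)$ are bilinear in $(\Cx,\Cy)$. It therefore suffices to verify the formula when $\Cx=\Cx_{ij}$ and $\Cy=\Cx_{kl}$ for indices from the orthonormal bases. For this case I would apply identity (\ref{e:comm-X}) from the preceding proposition with the specialisation $x=x_i,\ y=y_j,\ \xi=x_k,\ \eta=y_l$; using $x_a^*=y_a$ and $y_a^*=x_a$ it reads
\[
[\Cx_{ij},\Cx_{kl}] = \Cx_{il}[y_j,x_k] - \Cx_{kj}[y_l,x_i] - \Cx_{jl}[y_i,x_k] + \Cx_{ki}[y_l,x_j],
\]
so everything reduces to analysing the four commutators on the right.

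Next I would expand each $[y_a,x_b]$ via the defining relation $[y_a,x_b] = \delta_{ab} - \sum_{\alpha>0}c_\alpha\lpi\alpha,y_a\rpi\lpi x_b,\alpha^\vee\rpi s_\alpha$ and split the outcome into a Kronecker-delta contribution and a contribution linear in the $s_\alpha$. The delta part collects precisely into $\delta_{jk}\Cx_{il} - \delta_{li}\Cx_{kj} - \delta_{ik}\Cx_{jl} + \delta_{lj}\Cx_{ki}$, which is the classical orthogonal bracket $[\Cx_{ij},\Cx_{kl}]_0$ of Remark \ref{r:usual}. The remaining $s_\alpha$-part equals
\[
-\sum_{\alpha>0}c_\alpha\bigl(\lpi\alpha,y_j\rpi\lpi x_k,\alpha^\vee\rpi\Cx_{il} - \lpi\alpha,y_l\rpi\lpi x_i,\alpha^\vee\rpi\Cx_{kj} - \lpi\alpha,y_i\rpi\lpi x_k,\alpha^\vee\rpi\Cx_{jl} + \lpi\alpha,y_l\rpi\lpi x_j,\alpha^\vee\rpi\Cx_{ki}\bigr) s_\alpha,
\]
and what remains is to identify the parenthesised expression with $\kappa_\alpha(\Cx_{ij},\Cx_{kl})$.

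To achieve this I would compute $\kappa_\alpha(\Cx_{ij},\Cx_{kl})$ directly. Under the identification $\Cx_{ab}\leftrightarrow x_a\wedge x_b$, the contraction formulas yield
\[
(x_i\wedge x_j)\lcont\alpha^\vee = \lpi x_i,\alpha^\vee\rpi x_j - \lpi x_j,\alpha^\vee\rpi x_i, \qquad \alpha\rcont(x_k\wedge x_l) = \lpi x_l,\alpha\rpi x_k - \lpi x_k,\alpha\rpi x_l;
\]
wedging these and re-expressing in the basis $\Cx_{ab}$ via the antisymmetry $\Cx_{ab}=-\Cx_{ba}$ produces four terms with coefficients of the form $\lpi x_a,\alpha^\vee\rpi\lpi x_b,\alpha\rpi$. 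The key observation needed to match them against the coefficients $\lpi\alpha,y_a\rpi\lpi x_b,\alpha^\vee\rpi$ from the previous step is the symmetry
\[
\lpi\alpha,y_a\rpi\lpi x_b,\alpha^\vee\rpi = \lpi x_a,\alpha^\vee\rpi\lpi\alpha,y_b\rpi,
\]
which holds for all $a,b$ because $\alpha^\vee=2\alpha/(\alpha,\alpha)$ is a scalar multiple of $\alpha$ under the Euclidean identification of $\fh$ with $\fh^*$. Applying this symmetry term by term yields the claimed equality. The main obstacle is purely combinatorial: keeping track of signs and indices across the sixteen scalar factors involved, and consistently handling the two interpretations of $\alpha^\vee$ (as an element of $\fh$ inside the Dunkl relation versus an element of $\fh^*$ inside the contraction $(x_i\wedge x_j)\lcont\alpha^\vee$).
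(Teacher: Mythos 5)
Your proposal follows the same strategy as the paper's own proof: reduce to the basis elements $\Cx_{ij},\Cx_{kl}$ by bilinearity, use relation (\ref{e:comm-X}), and compute $\kappa_\alpha(\Cx_{ij},\Cx_{kl})$ via the contraction formulas. One detail you add that the paper leaves implicit is worth flagging: the direct expansion of $(\Cx_{ij}\lcont\alpha^\vee)\wedge(\alpha\rcont\Cx_{kl})$ yields coefficients of the shape $\lpi x_a,\alpha^\vee\rpi\lpi\alpha,x_b\rpi$, while the $s_\alpha$-part coming from (\ref{e:comm-X}) yields coefficients of the shape $\lpi\alpha,y_a\rpi\lpi x_b,\alpha^\vee\rpi$; reconciling the two genuinely requires the observation that $\lpi\alpha,y_a\rpi\lpi x_b,\alpha^\vee\rpi = \lpi x_a,\alpha^\vee\rpi\lpi\alpha,y_b\rpi$, which holds because $\alpha^\vee$ is a scalar multiple of $\alpha$ under the Euclidean identification. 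The paper applies this silently in writing its displayed formula for $\kappa_\alpha(\Cx_{ij},\Cx_{kl})$, whereas you correctly isolate it as the key symmetry. Your argument is sound; it is essentially the paper's proof with that one step made explicit.
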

\begin{proof}
By bilinearity, it suffices show that the right-hand side agrees with (\ref{e:comm-X}), when we substitute $\Cx$ and $\Cy$ 
by $\Cx_{ij}$ and $\Cx_{kl}$, respectively. For a fixed $\alpha>0$, one computes, for all $u\in \fh^*$,  that
\[
\lpi \Cx_{ij}\lcont\alpha^\vee, u \rpi = \lpi \Cx_{ij},\alpha^\vee\wedge u\rpi = \lpi x_i,\alpha^\vee \rpi \lpi x_j,u\rpi - \lpi x_j,\alpha^\vee \rpi \lpi x_i, u\rpi,
\]
from which $\Cx_{ij}\lcont\alpha^\vee = \lpi x_i,\alpha^\vee \rpi  x_j - \lpi x_j,\alpha^\vee \rpi  x_i$. Similarly, $\alpha\rcont \Cx_{kl} = \lpi\alpha,x_l\rpi x_k - \lpi\alpha,x_k\rpi x_l.$
Then,
\begin{multline}\label{e:1steq}
(\Cx_{ij}\lcont\alpha^\vee)\wedge(\alpha\rcont \Cx_{kl}) = \\
\lpi \alpha,x_j \rpi \lpi x_k,\alpha^\vee \rpi \Cx_{il} - \lpi \alpha,x_l \rpi \lpi x_i,\alpha^\vee \rpi \Cx_{kj} - \lpi \alpha,x_i \rpi \lpi x_k,\alpha^\vee \rpi \Cx_{jl} + 
\lpi \alpha,x_l \rpi \lpi x_j,\alpha^\vee \rpi \Cx_{ki}.
\end{multline}
It follows from (\ref{e:1steq}) that 
\[[\Cx_{ij},\Cx_{kl}] = [\Cx_{ij},\Cx_{kl}]_0 - \sum_{\alpha > 0} c_\alpha\kappa_{\alpha}(\Cx_{ij},\Cx_{kl})s_\alpha, \]
and we are done.
\end{proof}

\begin{theorem}[\cite{FeHa}]\label{t:genrel}
The algebra $\Cu_c$ is the quotient of the smash product algebra $T(\wedge^2\fh^*)\# W$ subject to the relations
(\ref{e:comm-X}) and (\ref{e:quad2}), under the linear isomorphism $\fh\to \fh^*$ that sends $y_i\mapsto x_i$ for all $i$.
\end{theorem}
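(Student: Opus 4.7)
The plan is to verify the presentation by a PBW-style comparison with the classical case $c=0$. Define $\tilde\Cu$ as the quotient of $T(\wedge^2\fh^*)\#W$ by the ideal generated by the relations \eqref{e:comm-X} and \eqref{e:quad2}, with $x_i\wedge x_j$ identified with $\Cx_{ij}$. Since the preceding computations show that these relations hold in $\cha$, there is a natural algebra homomorphism $\pi:\tilde\Cu\to\Cu_c$, and $\pi$ is surjective because $\Cu_c$ is generated by $\bbc W$ and $\fa$ by definition. The task is thus to establish injectivity of $\pi$.

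First, equip $\cha$ with the usual PBW filtration in which elements of $\fh\oplus\fh^*$ sit in degree $1$, $\bbc W$ in degree $0$, and $\Cx_{ij}$ in degree $2$. Restricting this filtration to $\Cu_c$ and filtering $\tilde\Cu$ analogously (with the generators $x_i\wedge x_j$ in degree $2$) turns $\pi$ into a filtration-preserving surjection, and it suffices to show that the induced map $\mr{Gr}(\pi):\mr{Gr}\,\tilde\Cu\to\mr{Gr}\,\Cu_c$ is injective. Proposition~\ref{p:gradedcent} together with the argument of Theorem~\ref{t:centralisersl2} identifies $\mr{Gr}\,\Cu_c$ with the classical centraliser $A(\fa)\#W$ inside $\Cw\#W$. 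On the other hand, the top-degree parts of the ``structure constants'' $[y,\xi]$ appearing in \eqref{e:comm-X} are the scalars $\lpi \xi,y\rpi$ (the $\bbc W$-part being of lower filtration degree), so that the leading parts of \eqref{e:comm-X} reduce to the Lie bracket relations of $\fa\cong\fs\fo(\fh)\subseteq \Cw$ recorded in Remark~\ref{r:usual}; similarly, the top-degree parts of \eqref{e:quad2} yield homogeneous quadratic (Pl\"ucker-type) relations among the $\Cx_{ij}$'s inside $\Cw$. Consequently, injectivity of $\mr{Gr}(\pi)$ reduces to the homogeneous statement that $A(\fa)\#W$ is presented by the generators $\fa$ and $\bbc W$, the Lie algebra relations of $\fs\fo(\fh)$, and the additional homogeneous quadratic relations just mentioned.

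The reduction leaves two sub-tasks, of which the first is the main obstacle. This first sub-task is essentially the First Fundamental Theorem of classical invariant theory for the centraliser of the $\fs\fl(2,\bbc)$-triple in the Weyl algebra: one must verify that no further relations occur beyond the Lie bracket relations and those dictated by the top parts of \eqref{e:quad2}. I would expect to handle this by exhibiting a PBW-type monomial basis of $A(\fa)$ using a chosen total ordering on the $\Cx_{ij}$, then showing, via the homogeneous relations alone, that every monomial in the $\Cx_{ij}$'s rewrites to an element of this spanning set, while the classical computation in Section~\ref{s:appendix} supplies the opposite inclusion and the linear independence of the basis. The second, routine, sub-task is the non-homogeneous lifting: once the associated graded algebras match, the Braverman--Gaitsgory PBW theorem for non-homogeneous quadratic algebras applies, with the required Jacobi-type compatibility conditions between the leading and lower-degree parts of \eqref{e:comm-X}--\eqref{e:quad2} being automatic, since these relations already hold in the associative algebra $\cha$. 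Combining these two sub-tasks yields the injectivity of $\pi$ and hence the theorem.
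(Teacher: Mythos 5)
The paper's own ``proof'' of this statement is a citation to Feigin--Hakobyan, so any from-scratch argument is necessarily doing something different. Your overall strategy (build a surjection $\pi\colon\tilde\Cu\to\Cu_c$ and prove injectivity by passing to an associated graded object) is the natural one, but there are two problems with the execution.

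The first is a concrete error in the filtration step. With the PBW filtration on $\cha$ itself ($\fh\oplus\fh^*$ in degree $1$, $\bbc W$ in degree $0$), the associated graded algebra is the \emph{commutative} ring $S(\fh\oplus\fh^*)\#W$, not $\Cw\#W$. Consequently the leading term of the relation \eqref{e:comm-X} is not the $\fs\fo(\fh)$ bracket of Remark \ref{r:usual}: on the left, $[\Cx_{xy},\Cx_{\xi\eta}]$ lives in filtration degree $4$ and its symbol is zero (commutativity), while the right-hand side lives in filtration degree $2$ and is therefore a strictly lower-order term. So the intermediate picture you describe, with $\mr{Gr}\,\Cu_c=A(\fa)\#W$ sitting inside $\Cw\#W$, is not what this filtration produces. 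To recover the Weyl-algebra picture one must pass to the generic algebra $\cha[\bt]$ and put $\bt$ in filtration degree $2$, exactly as is done in the proof of Theorem \ref{t:centralisersl2} and in Proposition \ref{p:gradedcent}; you cite those results, but the argument as written never actually works over $\bbc[\bt]$ and thus cannot invoke them. In that generic setting, the leading part of \eqref{e:comm-X} becomes $\bt\,[\Cx,\Cy]_0$ and the algebra on the graded side is $A(\fa)[\bt]\#W$, so the identification you want has a chance of being true, but the whole argument must be rerun there and then specialised.

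The second, larger, issue is that the ``first sub-task'' is the entire content of the theorem and you do not prove it. Section \ref{s:appendix} identifies $A(\fa)$ as the centraliser of $\fs\fl(2,\bbc)$ in $\Cw$ and exhibits a linear basis for a certain subspace, but it gives \emph{no} generators-and-relations presentation of $A(\fa)$, so it cannot supply ``the opposite inclusion and the linear independence'' you need. Proving that the Lie relations plus the Pl\"ucker-type leading parts of \eqref{e:quad2} generate the full ideal of relations of $A(\fa)[\bt]$ (and then confirming termination and confluence of the proposed rewriting system) is exactly the classical statement that the authors outsource to \cite{FeHa}; a sketch of the shape ``I would expect to find a PBW-type basis'' leaves the proof with a genuine hole. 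Finally, the appeal to Braverman--Gaitsgory is not needed and is imprecise as stated: the Jacobi conditions are not ``automatic'' just because the relations hold in the target algebra $\cha$ -- that direction is in fact equivalent to the PBW property you are trying to establish. The correct way to close the loop, once the homogeneous presentation and the graded identification are in hand, is a direct diagram chase: the surjections $\bar{A}\twoheadrightarrow\mr{Gr}\,\tilde\Cu\twoheadrightarrow\mr{Gr}\,\Cu_c\cong\bar{A}$ compose to a graded surjective endomorphism of an algebra with finite-dimensional graded pieces, hence are all isomorphisms.
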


\begin{proof}
This is Theorem 4 in \cite{FeHa} in the case of the symmetric group. Their proof generalizes to any Coxeter group $W$,
as is noted in Section 8 of \cite{FeHa}.
\end{proof}

\begin{remark}
The upshot of Theorem \ref{t:genrel} is that $\Cu_c$ is a non-homogeneous quadratic algebra of PBW type, in the sense of \cite{BG}.
\end{remark}

\begin{remark}
The algebra $\Cu_c$ is not a Drinfeld orbifold algebra, in the sense of \cite{SW}.
\end{remark}

\section{The centraliser algebra of $\fg=\mathfrak{spo}(2|2)$ in $\cha\otimes\Cc$} 
Let ${ \Cr }=\gls{CentSPO}$ denote the (super)centraliser of $\fg$ inside $\cha\otimes\Cc$. From the $W$-invariance of the elements in $\fg$, it is clear that 
$\rho(\bbc W)\subseteq { \Cr }$. The spaces $\Cm_m^l(\sigma)$ of lowest vectors for $\fg$ are
${ \Cr }$-modules. In this section, we investigate the algebra ${ \Cr }$. 

\subsection{The centraliser algebra of $\fg$} Denote by 
\[\gls{CentreDiag}=\text{ the centre of }\bbc W.
\]
From the previous subsection, we know the centraliser of the $\mathfrak {sl}(2)$ in $\cha$ is $\mathcal U$. Define the Dirac element 
\begin{equation}
D=E_2^++E_2^-\in \mathfrak g.
\end{equation}

\begin{lemma}\label{l:D-cent}
${ \Cr }=Z_{\mathcal U\otimes \mathcal C}(D)$.
\end{lemma}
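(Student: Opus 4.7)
The plan is to prove the two inclusions separately, with the main work being the reverse inclusion $Z_{\mathcal{U}\otimes\mathcal C}(D)\subseteq\mathcal R$, which amounts to showing that the pair $\{\mathfrak s,D\}$ already generates (via iterated commutators in $\mathcal H\otimes\mathcal C$) enough of $\mathfrak g$ to detect the full centraliser.

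For the forward inclusion $\mathcal R\subseteq Z_{\mathcal U\otimes\mathcal C}(D)$, the key observation is that $\mathfrak s$ sits inside $\mathcal H\otimes 1$, so for any $y\in\mathfrak s$ and $a\otimes b\in\mathcal H\otimes\mathcal C$ one has $[y\otimes 1,a\otimes b]=[y,a]\otimes b$. Writing an arbitrary element of $\mathcal H\otimes\mathcal C$ in terms of a $\bbc$-basis of $\mathcal C$ in the right-hand tensor slot, the condition of commuting with $\mathfrak s$ translates into each $\mathcal H$-coefficient lying in $Z_{\mathcal H}(\mathfrak s)=\mathcal U$ (by Theorem \ref{t:centralisersl2}). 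Hence $Z_{\mathcal H\otimes\mathcal C}(\mathfrak s)=\mathcal U\otimes\mathcal C$, and since $\mathfrak s\subseteq\mathfrak g$ and $D\in\mathfrak g$, any element of $\mathcal R$ lies in $\mathcal U\otimes\mathcal C$ and commutes with $D$.

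For the reverse inclusion, let $x\in Z_{\mathcal U\otimes\mathcal C}(D)$. Then $x$ commutes with $H,E_1^\pm$ (being in $\mathcal U\otimes\mathcal C$) and with $D=E_2^++E_2^-$. Using the relations in (\ref{e:evenaction}) we have $[H,D]=-E_2^++E_2^-$ (since $[H,E_2^\pm]=\mp E_2^\pm$), so $x$ commutes with $[H,D]$ by the Jacobi identity in the associative algebra; combined with $xD=Dx$ this yields $[x,E_2^+]=0=[x,E_2^-]$. Next, the relations $[E_1^+,E_2^+]=-E_3^+$ and $[E_1^-,E_2^-]=-E_3^-$ show that $x$ also commutes with $E_3^\pm$. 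Finally, the anticommutation relation $\{E_2^+,E_2^-\}=H+Z$, viewed as an identity in the associative algebra $\mathcal H\otimes\mathcal C$, combined with $[x,H]=0$, gives $[x,Z]=0$. Thus $x$ commutes with the spanning set $\{H,Z,E_1^\pm,E_2^\pm,E_3^\pm\}$ of $\mathfrak g$, so $x\in\mathcal R$.

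There is essentially no hard step here; the only point requiring a moment's care is the tensor-product centraliser computation $Z_{\mathcal H\otimes\mathcal C}(\mathfrak s)=\mathcal U\otimes\mathcal C$, and the observation that, even though $\mathfrak g$ is a Lie superalgebra, the relevant commutators/anticommutators used above are genuine identities in the associative algebra $\mathcal H\otimes\mathcal C$, so no signs intervene when pushing them past an even element $x$ of the centraliser. The rest is routine bookkeeping with the structure constants listed in (\ref{e:gl2})--(\ref{e:anti-commutation}).
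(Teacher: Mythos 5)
Your proof is correct and takes essentially the same approach as the paper: the forward inclusion is immediate, and the reverse inclusion reduces to showing that $\{H,E_1^\pm,D\}$ already generate $\mathfrak g$ via $E_2^\pm=\tfrac12(D\mp[H,D])$, $E_3^\pm=-[E_1^\pm,E_2^\pm]$, and $Z=D^2-H$. The only difference is that you spell out the tensor-product centraliser computation $Z_{\mathcal H\otimes\mathcal C}(\mathfrak s)=\mathcal U\otimes\mathcal C$, which the paper treats as obvious.
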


\begin{proof}
Clearly ${ \Cr }\subseteq Z_{\mathcal U\otimes \mathcal C}(D)$. For the converse, it is sufficient to check that $H,E_1^+, E_1^-$ and $D$ generate $\mathfrak g$. Recall the relations (\ref{e:gl2}), (\ref{e:evenaction}) and (\ref{e:anti-commutation}) in $\mathfrak g=\mathfrak{spo}(2|2,\bbc)$ and their generators in $\cha\otimes\Cc$, given in (\ref{e:spodefs}). We have $E_2^\pm=\frac 12(D\mp [H,D])$, so $E_2^+$ and $E_2^-$ are obtained. Next since $D^2=\{E_2^+,E_2^-\}=H+Z$, $Z$ is generated. Finally, $E_3^\pm=-[E_1^\pm,E_2^\pm]$, so all of $\mathfrak g$ is generated.
\end{proof}

\begin{lemma}\label{Z-cent}
  Assume that the parameter $c$ is such that $N_c(\tau)-N_c(\sigma)\notin \mathbb Z$, for all $\tau\neq\sigma\in \widehat W$ (in particular, this is the case when $c$ is as in Assumption \ref{a:assumption}). Then:

  \begin{enumerate}
  \item[(i)] An element $h\in \cha\otimes \mathcal C$ commutes with $Z=Z_0+\Omega_c$ if and only if $[Z_0,h]=0=[\Omega_c,h]$. In particular, ${ \Cr }=Z_{\mathcal U\otimes \mathcal C_0}(D)$.
  \item[(ii)] $[\Omega_c,h]=0$  if and only if $h$ commutes with $\rho(Z_W)$. Therefore, \[{ \Cr }\subset Z_{\mathcal U\otimes \mathcal C_0}(\rho(Z_W))=\bigoplus_{\sigma\in\widehat W}e_\sigma (\mathcal U\otimes \mathcal C_0) e_\sigma,\]
  where $\{e_\sigma:\sigma\in\widehat W\}$ is the system of orthogonal idempotents in $Z_W$.
    \end{enumerate}
\end{lemma}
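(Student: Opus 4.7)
The strategy is to exploit two compatible structures: the $\bbz$-grading $\cha \otimes \Cc = \bigoplus_n \cha \otimes \Cc_n$ coming from $Z_0$ (Remark \ref{r:clifftotaldeg}, with $\cha$ concentrated in degree $0$), and the spectral decomposition of $\Omega_c$ across the central idempotents of $\rho(\bbc W)$. The key preliminary observations are: (a) $\tau_\alpha = 1 - \alpha\alpha^\vee$ has Clifford degree $0$, so $\Omega_c \in \cha \otimes \Cc_0$ commutes with $Z_0$; and (b) since $\sum_{\alpha > 0} c_\alpha s_\alpha = \sum_\sigma N_c(\sigma) e_\sigma$ holds in $Z_W$, applying $\rho$ gives
\[
\Omega_c = \sum_{\sigma \in \widehat W} N_c(\sigma)\, \rho(e_\sigma) \qquad \text{in } \cha \otimes \Cc.
\]
Because $\rho$ is an algebra homomorphism, the $\rho(e_\sigma)$ form an orthogonal complete system of idempotents in $\cha\otimes\Cc$.

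For (i), the nontrivial direction assumes $[Z, h] = 0$ and decomposes $h = \sum_n h_n$ into $Z_0$-homogeneous pieces. Since $\Omega_c$ has $Z_0$-degree $0$, the relation $[Z_0, h] + [\Omega_c, h] = 0$ separates by grading into $n h_n + [\Omega_c, h_n] = 0$ for each $n$. Refining via the two-sided idempotents, write $h_n = \sum_{\sigma, \tau} h_{n, \sigma, \tau}$ with $h_{n, \sigma, \tau} := \rho(e_\sigma) h_n \rho(e_\tau)$. Using (b), one obtains
\[
\bigl( N_c(\sigma) - N_c(\tau) + n \bigr)\, h_{n, \sigma, \tau} = 0 \quad \text{for every } (n, \sigma, \tau).
\]
The assumption $N_c(\tau) - N_c(\sigma) \notin \bbz$ for $\sigma \neq \tau$ forces $h_{n, \sigma, \tau} = 0$ unless $\sigma = \tau$ and $n = 0$, yielding both $[Z_0, h] = 0$ and $[\Omega_c, h] = 0$. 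The remaining clause of (i) then follows from Lemma \ref{l:D-cent}: any $h \in \Cr$ satisfies $[Z, h] = 0$, whence $h \in \Cu \otimes \Cc_0$ by what was just proved; conversely, if $h \in \Cu \otimes \Cc_0$ commutes with $D$, then it commutes with $E_2^\pm = \tfrac12(D \mp [H, D])$, with $E_3^\pm = -[E_1^\pm, E_2^\pm]$, and with $Z = \{E_2^+, E_2^-\} - H$, hence with all of $\fg$.

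For (ii), the backward direction is immediate since $\Omega_c \in \rho(Z_W)$. Conversely, the hypothesis forces the scalars $\{N_c(\sigma)\}_{\sigma \in \widehat W}$ to be pairwise distinct, so Lagrange interpolation combined with the identity in (b) expresses each $\rho(e_\sigma)$ as a polynomial in $\Omega_c$; commuting with $\Omega_c$ therefore implies commuting with every $\rho(e_\sigma)$, hence with $\rho(Z_W)$. Combined with (i), this gives $\Cr \subseteq Z_{\Cu \otimes \Cc_0}(\rho(Z_W))$. The final equality with $\bigoplus_\sigma \rho(e_\sigma)(\Cu \otimes \Cc_0)\rho(e_\sigma)$ is a standard consequence of having a complete system of orthogonal idempotents: any element commuting with every $\rho(e_\sigma)$ is the sum of its diagonal corners $\rho(e_\sigma) h \rho(e_\sigma)$. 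The main delicacy is the genericity hypothesis on $c$, used in two essential ways: non-integrality separates the $Z_0$-grading from the $\Omega_c$-spectrum in (i), while pairwise distinctness of the $N_c(\sigma)$ recovers the central projectors from $\Omega_c$ in (ii).
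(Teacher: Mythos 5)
Your argument is correct, and it takes a genuinely different route from the paper's. The paper proves both parts by passing to the action on the faithful $\cha\otimes\Cc$-module $K_c(\triv)$: for $h\in\cha\otimes\Cc_n$ commuting with $Z$, one computes that $h\cdot f$ lies in the $(N_c(\sigma)-n)$-eigenspace of $\Omega_c$ whenever $f$ is in the $\sigma$-isotypic component, which by the non-integrality hypothesis forces $n=0$ and shows $h$ preserves isotypic components; faithfulness of $K_c(\triv)$ then upgrades the module-level commutation with $\rho(e_\sigma)$ to commutation in $\cha\otimes\Cc$. You avoid the faithfulness appeal entirely by working directly in the algebra: the corner decomposition $h=\sum_{n,\sigma,\tau}\rho(e_\sigma)h_n\rho(e_\tau)$, combined with the explicit spectral identity $\Omega_c=\sum_\sigma N_c(\sigma)\rho(e_\sigma)$ (which the paper uses implicitly but never records), turns $[Z,h]=0$ into the family of scalar equations $(n+N_c(\sigma)-N_c(\tau))h_{n,\sigma,\tau}=0$, killing all off-diagonal and nonzero-degree corners simultaneously. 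Your use of Lagrange interpolation to recover each $\rho(e_\sigma)$ as a polynomial in $\Omega_c$ is likewise a clean algebraic substitute for the paper's module argument in part~(ii). The two approaches share the same core idea --- the non-integrality hypothesis separates the $Z_0$-grading from the $\Omega_c$-spectrum --- but yours is more self-contained and elementary, while the paper's is consistent with the module-theoretic viewpoint running through the rest of that section.
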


\begin{proof}
  Suppose $h\in \cha\otimes\mathcal C$ commutes with $Z$. Since $\operatorname{ad}(Z)$ preserves each subspace $\cha\otimes\mathcal C_n$, we may assume without loss of generality that $h\in \cha\otimes\mathcal C_n$. Recall that $\operatorname{ad}(Z_0)$ acts by $n\cdot \operatorname{Id}$ on $\cha\otimes\mathcal C_n$, hence we wish to prove that $n=0$. By the assumptions, we have $[\Omega_c,h]=-n h$. Consider the action of $h$ on the $\cha\otimes\mathcal C$-module $K_c(\triv)$. If $f$ is an element of $K_c(\triv)$ which lies in the isotypic component of $\sigma\in \widehat W$, and we denote $f'=h\cdot f$, then we find that
  \[\Omega_c\cdot f'=(N_c(\sigma)-n) f'.
  \]
  Hence $f'$ is an eigenvector for the action of $\Omega_c$, but then it should have an eigenvalue of $N_c(\tau)$ for some $\tau\in \widehat W$. From the hypothesis of the lemma, it follows that $n=0$.

  Moreover, this calculation shows that for every $\sigma\in \widehat W$, if $f$ is in the $\sigma$-isotypic component of $K_c(\triv)$, then so is $f'=h\cdot f$. But this means that the action of $h$ commutes with that of $\rho(e_\sigma)$, where  $e_\sigma\in Z_W$ is the idempotent corresponding to $\sigma$. Since $K_c(\triv)$ is a faithful $\cha\otimes \mathcal C$-module, it follows that $[h,e_\sigma]=0$ in $\cha\otimes \mathcal C$. But since $\{e_\sigma\}$ is a basis of $\bbc W^W$, the first inclusion follows. The rest is a general algebra fact: if $\{e_i\}$ is a finite system of orthogonal idempotents in an associative unital algebra $A$, such that $\sum e_i=1$, then the commutator of $\{e_i\}$ in $A$ is $\bigoplus_i e_i A e_i$, as it can be seen easily from the decomposition $A=\bigoplus_{i,j} e_i A e_j$.
\end{proof}

Since $\mathfrak g$ is closed under the anti-involution $*$, so is ${ \Cr }$. 
\begin{proposition}\label{c-unitary}
In the notation of Theorem \ref{t:main}, for each $\lambda\in H(r)$ and $\sigma\in \widehat W$, the isotypic space $\mathcal M_{m(\lambda)}^{l(\lambda)}(\sigma)$ is a $*$-unitary ${ \Cr }$-module. Moreover, we have an $(\Cr,\fg)$-decomposition
\begin{equation}\label{e:spoisotypic}
K_c(\tau,\sigma) = \bigoplus_{\lambda\in H(r)}  \Cm_{m(\lambda)}^{l(\lambda)}(\sigma) \boxtimes L(\lambda^{\natural_\sigma}).
\end{equation}
\end{proposition}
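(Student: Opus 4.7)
My strategy is to argue in three stages: first that $\Cr$ preserves each lowest weight isotypic space $\Cm_{m(\lambda)}^{l(\lambda)}(\sigma)$, next that the unitary structure on $K_c(\tau)$ restricts to make this a $\ast$-unitary $\Cr$-module, and finally that the joint $(\Cr,\fg)$-decomposition drops out by combining the $W$-invariance of $\fg$ with Theorem \ref{t:main}.

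For stability I would use three commutation properties of $\Cr$. Since $\Cr$ centralises $\fn^-\subseteq\fg$ it preserves $\Cm_c(\tau)=\ker(\fn^-)$; since it commutes with $H$ and $Z$ it preserves the joint $(H,Z)$-weight spaces; and Lemma \ref{Z-cent} places $\Cr$ inside the commutant of $\rho(Z_W)$, hence it preserves $W$-isotypic components. The formula (\ref{e:weights}) shows that the weight together with the $W$-type pins down the bidegree $(m,l)$, so $\Cm_{m(\lambda)}^{l(\lambda)}(\sigma)$ is indeed $\Cr$-stable. Unitarity then follows because $\fg$ is closed under $\ast$, hence so is $\Cr$; the positive-definite Hermitian form $\lpi\cdot|\cdot\rpi$ on $K_c(\tau)$ (available for $c\in U(\tau)$) restricts to a contravariant positive-definite form on this $\Cr$-submodule.

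For the decomposition itself, Lemma \ref{l:Winvariance} gives $\fg\subseteq(\cha\otimes\Cc)^W$, so the $\fg$-action preserves each $K_c(\tau,\sigma)$. Combining this with Theorem \ref{t:main} yields
\[
K_c(\tau,\sigma)=\bigoplus_{\lambda\in H(r)}\dim\bigl(\Cm_{m(\lambda)}^{l(\lambda)}(\sigma)\bigr)\,L(\lambda^{\natural_\sigma})
\]
as a $\fg$-module. To upgrade this to the claimed external tensor product decomposition, I would fix a model copy of each $L(\lambda^{\natural_\sigma})$ and define an $\Cr\otimes\Cu(\fg)$-equivariant map
\[
\bigoplus_{\lambda\in H(r)}\Cm_{m(\lambda)}^{l(\lambda)}(\sigma)\boxtimes L(\lambda^{\natural_\sigma})\longrightarrow K_c(\tau,\sigma),\qquad v\boxtimes\xi\longmapsto \xi\cdot v,
\]
using that every nonzero $v$ generates a copy of the irreducible $L(\lambda^{\natural_\sigma})$. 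Surjectivity is the previous display; injectivity reduces to the statement that linearly independent lowest weight vectors in $\Cm_{m(\lambda)}^{l(\lambda)}(\sigma)$ generate linearly independent copies of $L(\lambda^{\natural_\sigma})$, which is built into the orthogonal decomposition $K=\bigoplus_{m,l,\sigma}L_m^l(\sigma)$ extracted in the proof of Corollary \ref{c:LWdecomp} together with Proposition \ref{p:orthogonality}.

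The main obstacle is verifying that the abstract multiplicity space of $L(\lambda^{\natural_\sigma})$ inside $K_c(\tau,\sigma)$ carries precisely the $\Cr$-module structure coming from $\Cm_{m(\lambda)}^{l(\lambda)}(\sigma)$ as opposed to some twisted version. Since $L(\lambda^{\natural_\sigma})$ is irreducible and $\Cr$ acts on $\Cm_{m(\lambda)}^{l(\lambda)}(\sigma)$ by lowest-weight-preserving endomorphisms that commute with the generating action of $\fg$, a Schur-type identification finishes the argument; the rest is bookkeeping.
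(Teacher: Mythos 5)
Your proof is correct and follows the same route as the paper's own two-line argument, which simply records that unitarity is inherited from the unitary $\cha\otimes\Cc$-module $K_c(\tau)$ and that the $(\Cr,\fg)$-decomposition follows from Theorem \ref{t:main} because $\Cr$ is by definition the centraliser of $\fg$. Your version spells out the stability of $\Cm_{m(\lambda)}^{l(\lambda)}(\sigma)$ under $\Cr$ (via commutation with $\fn^-$, with $H,Z$, and with $\rho(Z_W)$ through Lemma \ref{Z-cent}), the restriction of the positive-definite contravariant form, and the identification of the multiplicity space with the lowest-weight space, all of which the paper leaves implicit.
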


\begin{proof} 
The unitarity follows from the fact that $K_c(\tau)$ is a unitary $\cha\otimes \mathcal C$-module. Since $\Cr$ is the centraliser
of $\fg$, the $ (\Cr,\fg) $-module  decomposition as in the statement follows from Theorem \ref{t:main}.
\end{proof}

We now proceed to determine $\Cr$. For that, 
we use a variation of the idea of Dirac cohomology in the setting of Cherednik algebras, as in \cite{Ci}. Define
\[(\cha\otimes\mathcal C)_0=\bigoplus_{n\in \mathbb Z} (\cha)_n\otimes \mathcal C_{-n},
\]
to be subspace of elements of total degree $0$. Here, just as for $\cha$, we give degree $+1$ to $x$ and degree $-1$ to $y$ in $\mathcal C$ and denote by $\mathcal C_l$ the subspace of total degree $l$ in $\mathcal C$ (see Remark \ref{r:clifftotaldeg}). 
Note that the defining relations in $\mathcal C$ preserve the grading, which makes $\mathcal C$ a $\mathbb Z$-graded algebra (not just filtered). 

It is clear from the relations that $(\cha\otimes \mathcal C)_0$ is the centraliser in $\cha\otimes \mathcal C$ of $H+Z_0$. Let $(\cha\otimes \mathcal C)_0^{Z_W}$ denote the centraliser of $\rho(Z_W)$ in  $(\cha\otimes \mathcal C)_0$. Define 
\begin{equation}
\begin{aligned}
&\gls{diff}: (\cha\otimes \mathcal C)_0\to (\cha\otimes \mathcal C)_0,\quad d(a)=Da-(-1)^{|a|}D, \\
&\gls{diffW}: (\cha\otimes \mathcal C)_0^{Z_W}\to (\cha\otimes \mathcal C)_0^{Z_W},\quad d_W=d\mid_{(\cha\otimes \mathcal C)_0^{Z_W}},
\end{aligned}
\end{equation}
where $|a|$ is the parity of $a$, i.e., $|a|=(n \text{ mod } 2)$, if $a\in \cha\otimes \mathcal C_n$; in other words, $d$ is the supercommutator of $D$. It is immediate that \[d^2=[D^2,-]=[H+Z,-]\]
and therefore, $d^2_W=0$ (while $d^2\neq 0$ because of the presence of $\Omega_c$). 

\begin{theorem}[cf. {\cite[Theorem 3.5]{Ci}}]\label{t:vogan}
$\ker d_W=\im d_W\oplus \rho(\mathbb C W).$
\end{theorem}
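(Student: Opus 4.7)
The plan is to mirror the strategy of \cite[Theorem~3.5]{Ci}, splitting the proof into three parts: verify the two easy containments, check that the sum is direct, and prove the reverse inclusion.

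First, the containments $\rho(\bbc W) + \im d_W \subseteq \ker d_W$ are formal. Since $D = E_2^+ + E_2^- \in \fg \subseteq (\cha\otimes\Cc)^W$ (under the diagonal $W$-action, by Lemma~\ref{l:Winvariance}), we have $[D,\rho(w)]=0$ for every $w\in W$, so $\rho(\bbc W)\subseteq \ker d_W$. For the second containment, $d_W^2 = [D^2,\cdot] = [H+Z,\cdot]$ collapses on $(\cha\otimes\Cc)_0$ to $[\Omega_c,\cdot]$, because $H+Z_0$ is the total-degree element (acting by $0$ in degree~$0$); this vanishes on $Z_W$-invariants since $\Omega_c = \rho(\Omega_W) \in \rho(Z_W)$.

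Second, I verify $\rho(\bbc W) \cap \im d_W = 0$ using the unitary modules of Proposition~\ref{c-unitary}. Assume $\rho(z) = d_W(a)$ and split $a = a_+ + a_-$ by parity; since $\rho(z)$ is even, this forces $\rho(z) = \{D, a_-\}$. Take $c$ satisfying Assumption~\ref{a:assumption} and fix any $\tau\in\widehat{W}$. Since $D^* = D$ (using $(y_j\otimes 1)^* = x_j\otimes 1$ and $(1\otimes y_j)^* = 1\otimes x_j$), in the unitary module $K_c(\tau)$ we have $\ker D = \ker D^2 = \ker(H+Z)$, and by Assumption~\ref{a:assumption} and the weight formulas (\ref{e:weights}) this equals $V(\tau)\otimes\mathbf 1 \subseteq K_0^0$. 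For $v$ in this subspace, $Dv=0$ gives $\rho(z)v = Da_- v$; since $\rho(z)\in\Cr$ preserves $\ker D$, we get $D^2 a_-v = 0$, hence $\|Da_-v\|^2 = \langle (H+Z)a_-v \mid a_-v\rangle = 0$, so $\rho(z)v = 0$. Thus $z$ acts as zero on every $V(\tau)$, and the Artin--Wedderburn isomorphism $\bbc W \cong \bigoplus_\tau \End V(\tau)$ forces $z=0$. For arbitrary $c$, extend by a specialization argument: the statement is algebraic in the parameter $c$ and holds on a Zariski-dense open subset.

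The main step is the reverse inclusion $\ker d_W \subseteq \im d_W + \rho(\bbc W)$. I would equip $\cha$ with its PBW filtration (elements of $\fh\oplus\fh^*$ in degree $1$, $\bbc W$ in degree $0$), extend it to $(\cha\otimes\Cc)_0^{Z_W}$, and pass to the associated graded $\bbc[\fh\oplus\fh^*]\#W\otimes\Cc$. The symbol of $D$ is the classical, Dunkl-free operator $\sum_i (y_i\otimes x_i + x_i\otimes y_i)$, and $\operatorname{gr}(d_W)$ becomes a purely Koszul-type differential on this bigraded complex. One then computes its cohomology by an explicit Hodge/Koszul argument (the harmonic representatives being supplied precisely by $\operatorname{gr}(\rho(\bbc W))$), and lifts back to the filtered level via a spectral sequence. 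Combined with the direct-sum step above and a dimension count in each filtration piece, this yields the desired equality.

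The main obstacle is this third step: the explicit cohomology computation at the graded level, together with the verification that the spectral sequence degenerates, so that no additional classes appear in positive filtration degree. The first two steps are essentially formal given the framework already established in Sections \ref{s:realisation}--\ref{s:sl2centralizer}; the entire technical weight of the argument is in the graded analysis.
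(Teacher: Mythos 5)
Your outline correctly matches the architecture the paper uses for the reverse inclusion: pass to the associated graded of the PBW filtration, identify $\overline d_W$ with a deformed Koszul differential, compute its cohomology, and lift back. But you explicitly leave that step undone, and it is precisely here that the paper's proof rests. The paper reduces Theorem \ref{t:vogan} to \cite[Theorem 3.5 and \S 3.3]{Ci}: it recalls that the three steps of that proof (graded identification, exactness of the $W$-deformed Koszul complex away from degree $0$, and an inductive lift exploiting that $d_W$ has total degree $0$) carry over unchanged with $Z_W$-invariants in place of $W$-invariants, and the one genuinely new point is that $\bbc W^{Z_W}=\bbc W$, so the degree-$0$ cohomology is the full group algebra $\bbc W$ rather than its centre as in \cite{Ci}. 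Your sketch does not isolate this observation, and without actually finishing the graded computation and degeneration argument the proposal establishes neither the reverse inclusion nor, for general $c$, the directness of the sum.

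Your separate step verifying $\rho(\bbc W)\cap\im d_W=0$ via unitarity is correct for parameters satisfying Assumption \ref{a:assumption}, but the closing ``specialization'' sentence does not repair the restriction on $c$: that the locus of bad parameters misses a Zariski-dense open set only shows it sits inside a proper closed subset of parameter space, not that it is empty, whereas Theorem \ref{t:vogan} is asserted for all $c$. In any case this step is redundant: once the graded/lift argument from \cite{Ci} is carried through, the direct-sum decomposition comes out all at once, which is how the paper avoids a separate injectivity check. So the real gap is exactly the part you yourself flag as the ``main obstacle.''
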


\begin{proof}
The corresponding statement for $d$ restricted to $(\cha\otimes \mathcal C)_0^{W}$ is proved in \cite{Ci} and it is based on three steps:
\begin{enumerate}
\item A passage to the associated graded $S(\fh+\fh^*)\#W\otimes \mathcal C$ (with respect to the PBW-filtration of $\cha$), where the corresponding differential $\overline d_W$ is identified with the differential in a certain $W$-deformed Koszul complex;
\item A proof that the deformed Koszul complex is exact except in degree $0$ where the cohomology equals $\bbc W$;
\item An inductive step to go from the associated graded $\overline d_W$ to $d_W$ (\cite[\S 3.3]{Ci}), where one needs to use, in the language of this paper, that $d_W$ has total degree $0$ in $\cha\otimes\mathcal C$, which allows one to lift the appropriate elements from $(S(\fh+\fh^*)\#W\otimes \mathcal C)_0$ to $(\cha\otimes \mathcal C)_0$.
\end{enumerate}
All the steps of the proof work just the same with $Z_W$-invariants in place of $W$-invariants. Notice that $\mathbb C W^{Z_W}=\bbc W$, hence the absence of any invariants in the $0$-cohomology of $d_W$.
\end{proof}

\begin{corollary}\label{c:c^W}
${ \Cr }=\rho(\bbc W)\oplus ~(d(\cha\otimes \mathcal C))^{Z_W}\cap (\mathcal U\otimes \mathcal C_0)^{Z_W}.$ 
\end{corollary}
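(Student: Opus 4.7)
The plan is to repackage $\Cr$ as a kernel of $d_W$ and then invoke Theorem \ref{t:vogan}. First, because $D\in\fg$ is odd, the condition that an element $a\in\cha\otimes\Cc$ of pure parity supercommute with $D$ is literally the condition $d(a)=0$. Combining this observation with Lemma \ref{l:D-cent}, which reduces centralising $\fg$ to centralising $D$ in $\Cu\otimes\Cc$, and with Lemma \ref{Z-cent}, which forces $\Cr\subseteq(\Cu\otimes\Cc_0)^{Z_W}$ (noting that $\Cu\subseteq\cha_0$ since $\Cu$ centralises the grading element $H$, so $\Cu\otimes\Cc_0\subseteq(\cha\otimes\Cc)_0$ lies in the domain of $d$), I can identify
\[\Cr \;=\; \ker d_W \cap (\Cu\otimes\Cc_0)^{Z_W}.\]

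Next, Theorem \ref{t:vogan} gives $\ker d_W=\im d_W\oplus\rho(\bbc W)$. Since $\bbc W\subseteq\Cu$ by definition and $\tau_w\in\Cc_0$ for all $w\in W$, while $\rho(\bbc W)$ tautologically commutes with $\rho(Z_W)$, the summand $\rho(\bbc W)$ lies in $(\Cu\otimes\Cc_0)^{Z_W}$. Intersecting a direct sum with a subspace that contains one summand splits, so I get
\[\Cr \;=\; \rho(\bbc W)\;\oplus\;\bigl(\im d_W \cap (\Cu\otimes\Cc_0)^{Z_W}\bigr).\]

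The remaining bookkeeping is to match $\im d_W$ with $(d(\cha\otimes\Cc))^{Z_W}$, which I expect to be the only place requiring a small argument. The inclusion $\im d_W\subseteq(d(\cha\otimes\Cc))^{Z_W}$ is automatic, because $D$ is $W$-invariant (so $[D,\rho(z)]=0$ for $z\in Z_W$) and hence $d$ preserves $\rho(Z_W)$-commutants. For the reverse inclusion I would use the orthogonal central idempotents $\{e_\sigma\}_{\sigma\in\widehat W}$ of $\rho(Z_W)$: given $z=d(a)$ with $z$ in the commutant of $\rho(Z_W)$, set $a':=\sum_\sigma e_\sigma a e_\sigma\in(\cha\otimes\Cc)_0^{Z_W}$ and check, using $De_\sigma=e_\sigma D$ and the fact that $z$ itself already commutes with every $e_\sigma$, that
\[d(a') \;=\; \sum_\sigma e_\sigma\, d(a)\, e_\sigma \;=\; \sum_\sigma e_\sigma z e_\sigma \;=\; z.\]
Substituting this identification into the previous display gives the statement. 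The main conceptual input is Theorem \ref{t:vogan}; the rest is averaging over the semisimple commutative algebra $\rho(Z_W)$ together with the grading bookkeeping needed to justify that every element of $\Cr$ actually sits in the domain of $d_W$.
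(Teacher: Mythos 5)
Your argument matches the paper's proof step for step: reduce $\Cr$ to $\ker d_W\cap(\Cu\otimes\Cc_0)^{Z_W}$ via Lemmas \ref{l:D-cent} and \ref{Z-cent}, apply Theorem \ref{t:vogan}, and then identify $\im d_W$ with $(d(\cha\otimes\Cc))^{Z_W}$. The paper dispatches the last identification with the remark that $d$ is $W$-invariant; your explicit averaging over the idempotents $e_\sigma$ is just the unpacking of that remark, so the approach is essentially identical.
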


\begin{proof}
By Lemmas \ref{l:D-cent} and \ref{Z-cent}, ${ \Cr }=Z_{(\mathcal U\otimes \mathcal C_0)^{Z_W}}(D)$.  But an element in $(\mathcal U\otimes \mathcal C_0)^{Z_W}$ which commutes with $D$ is the same as an element of $\ker d_W\cap (\mathcal U\otimes \mathcal C_0)^{Z_W}$. This means that ${ \Cr }=\ker d_W\cap (\mathcal U\otimes \mathcal C_0)^{Z_W}$, and the claim now follows from Theorem \ref{t:vogan}, using  that $\rho(\mathbb C W)\subset (\mathcal U\otimes \mathcal C_0)^{Z_W}$ and that $d((\cha\otimes \mathcal C)^{Z_W})=(d(\cha\otimes \mathcal C))^{Z_W}$ since $d$ is $W$-invariant.
\end{proof}

The expressions $\Cx_{ij} = x_iy_j - x_jy_i$ define elements in both in $\Cc$ and in $\cha$. Define
\begin{equation}\label{e:sodeformed1}
\gls{Xdiag} = \Cx_{ij}\otimes 1+1\otimes  \Cx_{ij}\in \cha\otimes \mathcal C.
\end{equation}

\begin{remark}
As it is well known, see \cite{CW} for example, when $c=0$, the span of $\Cx^\Delta_{ij} $ defines a Lie algebra $\gls{SOdiag}$ isomorphic to $\mathfrak{so}(r,\bbc)$ inside $\Cw\otimes\Cc$, the tensor product of the Weyl algebra and the Clifford algebra, which centralises $\mathfrak{spo}(2|2,\bbc)$.
\end{remark}

For every $i,j$, define the elements
\begin{equation}\label{e:sodeformed2}
\gls{XdiagF}=\frac 12 d(x_i\otimes y_j-x_j\otimes y_i+y_j\otimes x_i-y_i\otimes x_j).
\end{equation}

\begin{lemma}\label{l:d-one}
We have $\mathcal F_{ij}^*=-\mathcal F_{ij}$ and \[\mathcal F_{ij}=\Cx_{ij}^\Delta-\frac 12\sum_{\alpha>0}c_\alpha s_\alpha\otimes (1-s_\alpha)(\Cx_{ij}).\]
In particular, $ \mathcal F_{ij}\in \mathcal U\otimes \mathcal C_0$.
\end{lemma}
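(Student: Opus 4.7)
The plan is to split the statement into three parts: the $\ast$-antisymmetry, the explicit formula, and the conclusion about membership in $\mathcal{U}\otimes\Cc_0$.

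\medskip

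\noindent\textbf{Step 1: The $\ast$-antisymmetry.} Set
\[
A_{ij}=x_i\otimes y_j-x_j\otimes y_i+y_j\otimes x_i-y_i\otimes x_j.
\]
Using $x_k^\ast=y_k$ (both in $\cha$ and in $\Cc$) and the fact that $\ast$ is an anti-involution on $\cha\otimes\Cc$, one checks $A_{ij}^\ast=-A_{ij}$. Since $D=E_2^++E_2^-$ and $(E_2^\pm)^\ast=E_2^\mp$, we have $D^\ast=D$. As $A_{ij}$ is odd, $d(A_{ij})=\{D,A_{ij}\}$, and $(\{D,A_{ij}\})^\ast=\{D^\ast,A_{ij}^\ast\}=\{D,-A_{ij}\}=-d(A_{ij})$, which yields $\mathcal{F}_{ij}^\ast=-\mathcal{F}_{ij}$.

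\medskip

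\noindent\textbf{Step 2: The explicit formula.} The plan is a direct computation of $\{D,A_{ij}\}$. Expanding $D=\sum_k(y_k\otimes x_k+x_k\otimes y_k)$, the anticommutator with $x_i\otimes y_j$ (resp.\ $y_j\otimes x_i$) decomposes into four $k$-sums, which we regroup by Clifford-part. The Clifford relations $\{x_k,y_l\}_{\Cc}=\delta_{kl}$ and $\{x_k,x_l\}_{\Cc}=0=\{y_k,y_l\}_{\Cc}$ collapse each pair to a single term, after which the Cherednik relation
\[
[y_k,x_l]_{\cha}=\delta_{kl}-\sum_{\alpha>0}c_\alpha\lpi\alpha,y_k\rpi\lpi x_l,\alpha^\vee\rpi s_\alpha
\]
produces a scalar (non-$c$) piece and a $c$-dependent piece involving $s_\alpha$. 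The identities $\sum_k\lpi\alpha,y_k\rpi x_k=\alpha$ and $\sum_k\lpi x_k,\alpha^\vee\rpi y_k=\alpha^\vee$ are used to recognize the $c$-pieces as involving $y_j\alpha$ and $x_i\alpha^\vee$. Subtracting the $(j,i)$ contribution from the $(i,j)$ contribution, the pure-scalar parts of shape $\alpha_j\alpha_i^\vee-\alpha_i\alpha_j^\vee$ vanish because $\alpha^\vee$ is proportional to $\alpha$, and one arrives at
\[
\{D,A_{ij}\}=2\,\Cx_{ij}^\Delta+\sum_{\alpha>0}c_\alpha\, s_\alpha\otimes T_{ij,\alpha},
\]
with
\[
T_{ij,\alpha}=-\lpi x_i,\alpha^\vee\rpi\alpha y_j+\lpi x_j,\alpha^\vee\rpi\alpha y_i+\lpi\alpha,y_j\rpi\alpha^\vee x_i-\lpi\alpha,y_i\rpi\alpha^\vee x_j\in\Cc.
\]
On the other hand, using $s_\alpha(x_k)=x_k-\lpi x_k,\alpha^\vee\rpi\alpha$ and $s_\alpha(y_k)=y_k-\lpi\alpha,y_k\rpi\alpha^\vee$ (together with the above proportionality to kill the $\alpha\alpha^\vee$ term), one computes
\[
(1-s_\alpha)(\Cx_{ij})=\lpi\alpha,y_j\rpi x_i\alpha^\vee-\lpi\alpha,y_i\rpi x_j\alpha^\vee+\lpi x_i,\alpha^\vee\rpi\alpha y_j-\lpi x_j,\alpha^\vee\rpi\alpha y_i
\]
inside $\Cc$. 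The plan is then to verify $T_{ij,\alpha}=-(1-s_\alpha)(\Cx_{ij})$, which reduces by Clifford to the single identity $x_l\alpha^\vee+\alpha^\vee x_l=\lpi x_l,\alpha^\vee\rpi$ and the proportionality $\alpha_i\alpha_j^\vee=\alpha_j\alpha_i^\vee$. Dividing by $2$ gives the claimed formula.

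\medskip

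\noindent\textbf{Step 3: Membership in $\mathcal{U}\otimes\Cc_0$.} The term $\Cx_{ij}^\Delta=\Cx_{ij}\otimes 1+1\otimes\Cx_{ij}$ has its $\cha$-part in $\fa\subseteq\mathcal{U}$ and its $\Cc$-part of total Clifford degree $0$ (one $x$ and one $y$). The correction $s_\alpha\otimes(1-s_\alpha)(\Cx_{ij})$ has its $\cha$-part in $\bbc W\subseteq\mathcal{U}$, and, since $s_\alpha$ preserves the Clifford grading, its $\Cc$-part still lies in $\Cc_0$.

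\medskip

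The only real obstacle is the bookkeeping in Step~2: several sums must be manipulated with the correct tensor-factor placement (no super sign convention is in use here, per the computation of $\{E_2^+,E_2^-\}$ earlier), and the intended cancellation only becomes visible after symmetrizing in $(i,j)\leftrightarrow(j,i)$ and invoking that $\alpha^\vee\parallel\alpha$.
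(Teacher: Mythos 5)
Your proof is correct and follows essentially the same route as the paper: a direct expansion of the supercommutator $d(A_{ij})=\{D,A_{ij}\}$ using the Clifford and Cherednik commutation relations, the identities $\sum_k\lpi\alpha,y_k\rpi x_k=\alpha$, $\sum_k\lpi x_k,\alpha^\vee\rpi y_k=\alpha^\vee$, and the proportionality $\alpha^\vee\parallel\alpha$ to cancel the scalar terms. The only cosmetic difference is that the paper observes $A_{ij}=B_{ij}-B_{ij}^*$ with $B_{ij}=x_i\otimes y_j-x_j\otimes y_i$ and hence $\mathcal F_{ij}=\tfrac12\bigl(d(B_{ij})-d(B_{ij})^*\bigr)$, so it only carries out the expansion for the first half and recovers the rest by applying $*$; you expand the whole $A_{ij}$ at once. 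Your Step~1 ($D^*=D$, $A_{ij}^*=-A_{ij}$, hence $\mathcal F_{ij}^*=-\mathcal F_{ij}$) and Step~3 (membership in $\mathcal U\otimes\Cc_0$) match the paper's reasoning.
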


\begin{proof}
The first claim follows immediately from the definition of $*$. The formula follows from a direct and easy calculation whose complete details we skip. For example, one verifies that $d(x_i\otimes y_j-x_j\otimes y_i)=\Cx_{ij}^\Delta-\sum_{\alpha>0}c_\alpha s_\alpha\otimes \alpha (\langle x_i,\alpha^\vee\rangle y_j-\langle x_j,\alpha^\vee\rangle y_i)$, and the second half of $\mathcal F_{ij}$ follows by applying $*$. The final claim is then clear from the definition of $\mathcal U$.
\end{proof}

Motivated by Corollary \ref{c:c^W} and Lemma \ref{l:d-one}, we make the following definition.
\begin{definition}
Let $\mathcal V=\gls{Valg}$ be the subalgebra of $\cha\otimes \mathcal C$ generated by the elements $\mathcal F_{ij}$ and $\bbc W$.
\end{definition}

\begin{remark}
When $c=0$ the algebra $\mathcal V$ is isomorphic to $A(\fa^\Delta)\#W$, where $A(\fa^\Delta)$ is the associative subalgebra of $\Cw\otimes\Cc$ generated by $\fa^\Delta\cong\mathfrak{so}(r,\bbc)$,
the diagonal copy of $\fs\fo(r,\bbc)$ in $\Cw\otimes\Cc$. This algebra is the centraliser algebra of $\fs\fp\fo(2|2,\bbc)$
 in $\Cw\otimes\Cc$. In Section \ref{s:appendix}, we provide a proof of this statement. In general, 
$\mathcal V$ can be regarded as a deformation of the smash product 
$A(\fa^\Delta)\#W$.
\end{remark}

 It is clear from the discussion so far that, 
\begin{equation}
\mathcal V_{c}^{Z_W}\subseteq { \Cr }.
\end{equation}

In order to argue about the reverse  inclusion, let again $\bt$ be an indeterminate
and consider the generic algebra $\cha[\bt]$ defined over $\bbc[\bt]$ used in the proof
of Theorem \ref{t:centralisersl2}. With a similar computation to the one in Lemma \ref{l:d-one}
but now in $\cha[\bt]\otimes\Cc$, the element
$\Cf_{ij}$ defined in (\ref{e:sodeformed2}) satisfies
\begin{equation}\label{e:Fgeneric}
\Cf_{ij}=\Cx_{ij}\otimes 1 + \bt\otimes\Cx_{ij}
-\frac 12\sum_{\alpha>0}c_\alpha s_\alpha\otimes (1-s_\alpha)(\Cx_{ij})\in\cha[\bt]\otimes\bbc.
\end{equation}
Consider the filtration of $\cha[\bt]\otimes \mathcal C$ in which the elements of 
$\mathbb CW\otimes\mathcal C$ are in degree $0$, the elements of $V=\fh^*\oplus\fh$ are in degree one, and the 
indeterminate $\bt$ is in degree $2$. The associated graded object is 
$\textsf{Gr}(\cha[\bt]\otimes \mathcal C)=(\Cw[\bt]\# W)\otimes \mathcal C$. 
We remark that the degree $d$-space is of $(\Cw[\bt]\# W)\otimes \mathcal C$ is linearly isomorphic to 
$\Cw[\bt]_d\otimes\bbc W\otimes\Cc$.
Take the induced filtration on $\mathcal V[\bt]$, the subalgeabra of $\cha[\bt]\otimes\Cc$ analogous to $\Cv$, 
and let $\mathsf{Gr}(\mathcal V[\bt])$ be the associated graded object. Let us write
\[
\gls{XdiagG} = \Cx_{ij}\otimes 1 + \bt\otimes\Cx_{ij}
\]
and denote the linear span of $\{\Cg_{ij}\mid 1\leq i<j\leq r\}$ by $\fa^\Delta[\bt]\subseteq \Cw[\bt]_2\otimes\Cc$.
\begin{lemma}\label{V-ag}
There is an algebra isomorphism $\mathsf{Gr}(\mathcal V[\bt])\cong A(\fa^\Delta[\bt])\# W$, where
$A(\fa^\Delta[\bt])$ is the associative subalgebra of $\Cw[\bt]\otimes\Cc\subseteq (\Cw[\bt]\# W)\otimes \mathcal C$ 
generated by $\fa^\Delta[\bt]$.
\end{lemma}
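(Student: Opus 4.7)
The plan is to identify the principal symbols of the generators of $\mathcal V[\bt]$ with respect to the stated filtration and recognise that they coincide with the generators of $A(\fa^\Delta[\bt])\# W$. First, I would inspect the filtration degrees of the three summands of $\mathcal F_{ij}$ as given by the formula in equation (\ref{e:Fgeneric}): the summand $\mathcal X_{ij}\otimes 1$ lies entirely in the $\cha[\bt]$-factor and is of filtration degree $2$; the summand $\bt\otimes \mathcal X_{ij}$ is also of filtration degree $2$, since $\bt$ is in degree $2$ while $\mathcal X_{ij}\in\Cc$ is in degree $0$; and the correction term $-\tfrac{1}{2}\sum_{\alpha>0}c_\alpha\, s_\alpha\otimes(1-s_\alpha)(\mathcal X_{ij})$ lies entirely inside $\bbc W\otimes\Cc$ and is therefore of degree $0$. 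Consequently, the principal symbol of $\mathcal F_{ij}$ in $\mathsf{Gr}(\cha[\bt]\otimes\Cc)=(\Cw[\bt]\# W)\otimes\Cc$ is exactly $\mathcal G_{ij}\in\Cw[\bt]_2\otimes\Cc$.

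Next, I would invoke the general principle that, if $B$ is a subalgebra of a filtered algebra $A$ endowed with the induced filtration, and $B$ is generated as an algebra by a set $S$, then $\mathsf{Gr}(B)$ is generated, as a subalgebra of $\mathsf{Gr}(A)$, by the principal symbols of the elements of $S$; this is a routine consequence of the multiplicativity of the principal-symbol map (any $b\in B$ is a polynomial in $S$, and the top-filtration part of $b$ is the same polynomial in the principal symbols). Applied to $\mathcal V[\bt]\subseteq \cha[\bt]\otimes\Cc$, which is generated by $\bbc W$ together with $\{\mathcal F_{ij}\mid 1\leq i<j\leq r\}$, this yields that $\mathsf{Gr}(\mathcal V[\bt])$ is the subalgebra of $(\Cw[\bt]\# W)\otimes \Cc$ generated by $\bbc W\cup\{\mathcal G_{ij}\mid 1\leq i<j\leq r\}$, which is precisely $A(\fa^\Delta[\bt])\# W$ by definition.

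Since $A(\fa^\Delta[\bt])\# W$ and $\mathsf{Gr}(\mathcal V[\bt])$ have thereby been identified as the same subalgebra of $(\Cw[\bt]\# W)\otimes\Cc$, the asserted algebra isomorphism follows. I do not foresee any substantial obstacle here: the $c$-dependent deformation enters $\mathcal F_{ij}$ only through the degree-$0$ correction term in (\ref{e:Fgeneric}), and this correction is annihilated upon passage to the associated graded, reducing the proof to a direct identification of generators.
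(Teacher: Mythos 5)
Your identification of the principal symbols $\sigma(\Cf_{ij})=\Cg_{ij}$ and $\sigma(w)=w$ is correct and matches the paper's opening step, but the ``general principle'' you invoke in the second paragraph is false as stated. That principle --- that $\mathsf{Gr}(B)$ is generated by the principal symbols of a generating set $S$ of a filtered subalgebra $B$ --- gives only the easy inclusion $\langle\sigma(S)\rangle\subseteq\mathsf{Gr}(B)$; the reverse inclusion can fail because of cancellations at top filtration degree. For a minimal example, take $A=\bbc[x]$ with the degree filtration and $S=\{x^2,\ x^2-x\}$; then $B=\langle S\rangle=\bbc[x]$ and $\mathsf{Gr}(B)=\bbc[x]$, yet $\sigma(x^2)=\sigma(x^2-x)=x^2$, so $\langle\sigma(S)\rangle=\bbc[x^2]\subsetneq \mathsf{Gr}(B)$. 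The flaw in the proposed justification is that for a sum of monomials $b=p(s_1,\dots,s_n)$ the leading terms of the individual monomials can cancel, so $\sigma(b)$ is in general \emph{not} equal to $p(\sigma(s_1),\dots,\sigma(s_n))$.

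Consequently your argument establishes only $A(\fa^\Delta[\bt])\#W\subseteq\mathsf{Gr}(\Cv[\bt])$. To obtain the opposite containment one must rule out the cancellation phenomenon, and the paper does this by computing the commutators of the generators. Using Proposition \ref{p:deformedLieRelation} together with the classical commutator of the $\Cx_{ij}$ in $\Cc$, one finds that $[\Cf_{ij},\Cf_{kl}]$ agrees to top filtration order with $\bt\,[\Cx^\Delta_{ij},\Cx^\Delta_{kl}]_0$ (a $\bt$-multiple of a linear combination of the generators), with the correction $-\sum_{\alpha>0}c_\alpha\kappa_\alpha(\Cx_{ij},\Cx_{kl})s_\alpha\otimes 1$ landing in strictly lower filtration degree. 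This commutator computation is precisely the PBW-type control needed to rewrite an arbitrary word in the $\Cf_{ij}$'s and $\bbc W$ into a normal form while keeping track of where the leading term lives; it is what makes the naive ``replace generators by their symbols'' argument legitimate here, and it is the ingredient missing from your proposal.
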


\begin{proof}
From (\ref{e:Fgeneric}), we see that in 
$\mathsf{Gr}(\mathcal V[\bt])$,  $\Cf_{ij}\equiv \Cg_{ij}$. On the other hand, 
in $\cha\otimes \mathcal C$, equation (\ref{e:Urel}), together with an easy (and known) calculation of the commutator of the elements $\Cx_{ij}$ in $\mathcal C$, imply that:
\begin{equation}
[\mathcal G_{ij},\mathcal G_{kl}] = \bt[\mathcal X^\Delta_{ij},\mathcal X^\Delta_{kl}]_0-\sum_{\alpha>0}c_\alpha \kappa_a(\mathcal X_{ij},\mathcal X_{kl}) s_\alpha\otimes 1\in\cha[\bt]\otimes\Cc.
\end{equation}
Here, we denote by $[\mathcal X^\Delta_{ij},\mathcal X^\Delta_{kl}]_0$ the commutator in the Lie algebra 
$\fa^\Delta\cong\mathfrak{so}(r,\bbc)^\Delta$ inside $\Cw\otimes\Cc$. But this means that, in the associated graded algebra, $[\mathcal G_{ij},\mathcal G_{kl}]\equiv\bt[\mathcal X^\Delta_{ij},\mathcal X^\Delta_{kl}]_0$ 
and this is enough to conclude the claim of the lemma.
\end{proof}

\begin{proposition}\label{p:gradedcentspo}
The subalgebra $A(\fa^\Delta[\bt])$ is the centralizer algebra of $\fg$ in $\Cw[\bt]\otimes\Cc$. 
\end{proposition}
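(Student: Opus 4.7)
The plan is to establish both inclusions, adapting the strategy of Proposition~\ref{p:gradedcent} to the setting of $\fg=\fs\fp\fo(2|2,\bbc)$ inside $\Cw[\bt]\otimes\Cc$. For the forward containment $A(\fa^\Delta[\bt])\subseteq \text{Cent}_{\Cw[\bt]\otimes\Cc}(\fg)$, I would verify by direct calculation that each generator $\Cg_{ij}=\Cx_{ij}\otimes 1+\bt\otimes\Cx_{ij}$ commutes with the generators $H, Z_0, E_1^\pm, E_2^\pm, E_3^\pm$ of $\fg$. For $H, E_1^\pm\in\Cw[\bt]\otimes 1$, the commutator vanishes by Lemma~\ref{l:sl2comm} applied inside $\Cw[\bt]$ together with the centrality of $\bt$; for $Z_0\in 1\otimes\Cc$, it vanishes because $\Cx_{ij}\in\Cc$ has Clifford-weight zero. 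The key cancellation occurs for the mixed generators: for $E_2^+=\sum_k y_k\otimes x_k$, one computes $[\Cx_{ij}\otimes 1, E_2^+]=\bt(y_i\otimes x_j-y_j\otimes x_i)$ (with the $\bt$-factor introduced by the $\Cw[\bt]$-bracket $[\Cx_{ij},y_k]_{\Cw[\bt]}$), and this is exactly cancelled by $[\bt\otimes\Cx_{ij},E_2^+]=\bt(y_j\otimes x_i-y_i\otimes x_j)$ (whose $\bt$-factor comes from the coefficient of the second summand of $\Cg_{ij}$, the Clifford bracket $[\Cx_{ij},x_k]_\Cc$ being $\bt$-independent); entirely analogous cancellations handle $E_2^-$ and $E_3^\pm$.

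For the reverse containment, I would place the $\bbz$-grading on $\Cw[\bt]\otimes\Cc$ in which $\bt$ has degree $2$, $V\subset\Cw[\bt]$ has degree $1$, and $\Cc$ lies in degree $0$. All the generators of $\fg$ and all the $\Cg_{ij}$'s are homogeneous, so the centralizer is graded and it suffices to treat a homogeneous centralizing element $p$. Writing $p=\sum_n \bt^n p_n$ with $p_n\in\Cw\otimes\Cc$ of the appropriate $V$-degree, commutativity with the pure-$\Cw$ subalgebra $\fs=\{H, E_1^\pm\}$ allows exactly the $\bt$-power separation used in the proof of Proposition~\ref{p:gradedcent}, forcing each $p_n$ to centralize $\fs$ in $\Cw\otimes\Cc$ and hence to lie in $A(\fa)\otimes\Cc$. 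Commutativity with the remaining generators $Z_0$ and the mixed $E_2^\pm, E_3^\pm$ then yields a coupled linear system relating successive $\bt$-coefficients $p_{n-1}$ and $p_n$. Invoking the classical centralizer result of Section~\ref{s:appendix} at the $\bt=1$ specialisation, and matching the system to the recursive structure of products of $\Cg_{ij}$'s, identifies the solution space with $A(\fa^\Delta[\bt])$.

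The main obstacle is the asymmetric role played by $\bt$: it rescales the $\Cw[\bt]$-commutator but not the Clifford commutator, so the clean $\bt$-power separation of Proposition~\ref{p:gradedcent} does not immediately carry over to the mixed generators of $\fg$. The $\bt$-factor on the Clifford summand of $\Cg_{ij}$ is precisely what compensates for this asymmetry, and verifying that the resulting coupled system of equations admits no solutions outside of $A(\fa^\Delta[\bt])$ — equivalently, matching the bi-graded Hilbert series of the centralizer with that of $A(\fa^\Delta[\bt])$ — is the technical heart of the proof.
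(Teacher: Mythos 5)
Your forward inclusion is correct and, if anything, more explicit than the paper's: the cancellation you describe between $[\Cx_{ij}\otimes 1, E_2^+]$ (which picks up a $\bt$ from the $\Cw[\bt]$-bracket) and $[\bt\otimes\Cx_{ij},E_2^+]$ (whose $\bt$ is carried by the explicit coefficient) is exactly the mechanism that makes $\Cg_{ij}$ a genuine invariant. Your setup for the reverse inclusion is also sound as far as it goes: using $\fs$ and $Z_0$ to land in $A(\fa)[\bt]\otimes\Cc_0$, and then observing that commutation with $E_2^\pm$ couples successive $\bt$-coefficients (because the $\Cw$-part of the bracket acquires a $\bt$ while the $\Cc$-part does not) is the right diagnosis of the difficulty.

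However, the reverse inclusion is not actually finished. ``Matching the bi-graded Hilbert series of the centralizer with that of $A(\fa^\Delta[\bt])$'' and ``matching the system to the recursive structure of products of $\Cg_{ij}$'s'' are statements of what must be shown, not arguments; you have not produced a mechanism that rules out solutions of your coupled system outside $A(\fa^\Delta[\bt])$. The paper closes this gap differently, and concretely. Writing the identity (its equation~\eqref{e:commD})
\[
[D, u\otimes c] \;=\; [D,u]_{\Cw\otimes\Cc}\,(\bt\otimes c)\;+\;(u\otimes 1)\,[D,c],
\]
it isolates the observation that the Clifford summand $[D,c]$ carries \emph{no} $\bt$. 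This matters because in the proof of Proposition~\ref{p:centspo} the step that forces coefficients to vanish is precisely the analysis of the component $R_q$ of $[E_2^+,P]$ coming from the Clifford bracket. Since that part of the computation is $\bt$-independent, the whole argument of Proposition~\ref{p:centspo} carries over verbatim to $\Cw[\bt]\otimes\Cc$ once one has reduced, as you do, to $u_i\in A(\fa)[\bt]$ and $c_i\in\Cc_0$ homogeneous; the degree-zero Clifford terms are handled by first subtracting a suitable element of $A(\fa^\Delta[\bt])$, after which all $c_i$ have positive degree and the surviving element must vanish. You should replace the Hilbert-series handwave with this explicit subtraction-plus-degree argument; as written, the key step of your proof is missing.
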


\begin{proof}
Let $u\in\Cw$ and suppose that $p = u\otimes c\in\Cw\otimes\Cc$. Let $[\cdot,\cdot]_{\Cw\otimes\Cc}$ denote the commutator in $\Cw\otimes\Cc$. Then, note that
\begin{equation}\label{e:commD}
[D,p] = [D,u](1\otimes c) + (u\otimes1)[D,c] = [D,u]_{\Cw\otimes\Cc}(\bt\otimes c) + (u\otimes 1)[D,c].
\end{equation}
Now let $p=\sum_i u_i\otimes c_i\in \Cw[\bt]\otimes\Cc$ be an element that commutes with $\fg$.
Then, we can assume that $u_i\in A(\fa)[\bt]$, $c_i\in\Cc_0$ and $[D,p]=0$. We can decompose an
element $u_i\in A(\fa)[\bt]$ as $u_i = \sum\bt^nq_{i,n}$ with $q_{i,n}$ homogeneous,
 for each $i$.  Further, recall the filtration on $\Cc$ in which $V=\fh^*\oplus\fh$ 
has degree one. The associated graded object satisfies $\mathsf{Gr}(\Cc) = \wedge(\fh^*\oplus\fh)$,
so we can assume that 
 $p=\sum_i u_i\otimes c_i\in \Cw[\bt]\otimes\Cc$ with $u_i = \bt^nq_i$, $n$ fixed
 and $q_i\in\Cw$, $c_i$ homogeneous. But using (\ref{e:commD}) we obtain
\[
[D,p] = \bt^n\left(\sum_i [D,q_i]_{\Cw\otimes\Cc}(\bt \otimes c_i) + (q_i\otimes1)[D,c_i]\right)=0,
\]
so it suffices to prove the result for $p=\sum_i u_i\otimes c_i$ with 
$u_i \in\Cw$ and $c_i\in\Cc_0$ homogeneous. But, in this situation, we claim that if all $c_i$ are
of positive degree, then  $[D,p]=0$. Indeed, we can adapt the arguments in the proof of 
Proposition \ref{p:centspo} to conclude that. This works because the crucial part of that argument uses
the commutation of $D$ (or rather of $E_2^+$) with the Clifford algebra part and that does not have
any factor of $\bt$ involved. 
Granted this claim, if there are $c_i$ of degree zero, we can subtract a suitable
element of $\Cg^\Delta\in A(\fa^\Delta[\bt])$ so that $p-\Cg^\Delta$ is a linear combination of $u'_i\otimes c'_i$ with $c'_i$ all of positive degree. Since we also have $[D,p-\Cg^\Delta]=0$, 
the claim implies that $p=\Cg^\Delta\in A(\fa^\Delta[\bt])$. This finishes the proof.
\end{proof}

\begin{theorem}\label{t:cent-g}
Assume that the parameter $c$ is such that $N_c(\tau)-N_c(\sigma)\notin \mathbb Z$, for all $\tau\neq\sigma\in \widehat W$. Then the centraliser algebra of $\fg$ in $\cha\otimes \mathcal C$ is ${ \Cr }=\mathcal V^{Z_W}$.
\end{theorem}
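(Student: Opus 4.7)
The plan is to follow the template of the proof of Theorem \ref{t:centralisersl2}, replacing the centraliser target $\Cu$ by $\Cv^{Z_W}$ and using as generators of $\fg$ the elements $H,E_1^\pm,D$ supplied by Lemma \ref{l:D-cent}. Only the inclusion $\Cr\subseteq\Cv^{Z_W}$ remains; the reverse is already stated in the paper.

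First I would introduce the generic Cherednik algebra $\cha[\bt]$ as in the proof of Theorem \ref{t:centralisersl2}, and filter $\cha[\bt]\otimes\Cc$ by placing $\bt$ in degree $2$, the space $\fh\oplus\fh^*\subseteq\cha[\bt]$ in degree $1$, and both $\bbc W\subseteq\cha[\bt]$ and the Clifford factor $\Cc$ in degree $0$. The associated graded algebra is then $(\Cw[\bt]\#W)\otimes\Cc$. The essential feature of the generators $H,E_1^\pm,D$ is that they all lie in $\bbc[\fh]\otimes\bbc[\fh^*]\otimes\Cc$, carrying no $W$-component and no $\Omega_c$-contribution; consequently their top-degree symbols in $(\Cw[\bt]\#W)\otimes\Cc$ are $W$-invariant and land inside the non-smashed subalgebra $\Cw[\bt]\otimes\Cc$, where they coincide with the classical $\fs\fp\fo(2|2,\bbc)$-generators studied in Proposition \ref{p:gradedcentspo}.

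Next, let $p\in\cha[\bt]\otimes\Cc$ commute with $\{H,E_1^\pm,D\}$, which by Lemma \ref{l:D-cent} is equivalent to lying in $Z_{\cha[\bt]\otimes\Cc}(\fg)$. Choosing $p$ of filtration degree $d$ and passing to the top symbol $\bar p$, the commutation identities descend to $[\bar X,\bar p]=0$ in the associated graded for each $X\in\{H,E_1^\pm,D\}$. Decompose $\bar p=\sum_{w\in W}p_w\cdot(w\otimes 1)$ with $p_w\in\Cw[\bt]\otimes\Cc$; because each $\bar X$ is $W$-invariant, $\bar X$ commutes with $w\otimes 1$, and one deduces $[\bar X,p_w]=0$ in $\Cw[\bt]\otimes\Cc$ for every $w\in W$. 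Proposition \ref{p:gradedcentspo}, together with the classical analogue of Lemma \ref{l:D-cent}, then forces $p_w\in A(\fa^\Delta[\bt])$, whence $\bar p\in A(\fa^\Delta[\bt])\#W=\mathsf{Gr}(\Cv[\bt])$ by Lemma \ref{V-ag}.

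Finally, I would take $Z_W$-invariants. The generic analogue of Lemma \ref{Z-cent}(ii) places $Z_{\cha[\bt]\otimes\Cc}(\fg)$ inside $(\cha[\bt]\otimes\Cc)^{Z_W}$, and since taking $Z_W$-fixed points commutes with taking associated graded in characteristic zero, one obtains $\mathsf{Gr}(Z_{\cha[\bt]\otimes\Cc}(\fg))\subseteq\mathsf{Gr}(\Cv[\bt]^{Z_W})$. A standard filtered-graded induction then yields $Z_{\cha[\bt]\otimes\Cc}(\fg)\subseteq\Cv[\bt]^{Z_W}$: at each filtration degree the forward inclusion $\Cv[\bt]^{Z_W}\subseteq Z_{\cha[\bt]\otimes\Cc}(\fg)$ (the generic version of what is already stated) supplies a lift of the symbol while keeping the residue inside the centraliser. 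Specialising $\bt\mapsto 1$ delivers $\Cr\subseteq\Cv^{Z_W}$. The central technical obstacle is the associated-graded step: $Z$ itself does not have a clean symbol in $\Cw[\bt]\otimes\Cc$ because $\Omega_c$ carries a $W$-component, which is precisely the reason for recasting the problem via the $D$-generator supplied by Lemma \ref{l:D-cent}.
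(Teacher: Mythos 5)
Your plan — filter $\cha[\bt]\otimes\Cc$, pass to the associated graded, bound the symbol of an arbitrary centraliser element by Proposition~\ref{p:gradedcentspo}, intersect with $Z_W$-invariants, and lift back by a filtered-graded induction against the already-known inclusion $\Cv^{Z_W}\subseteq\Cr$ — is structurally close to the paper's proof but genuinely bypasses its main technical ingredient, the Dirac cohomology Theorem~\ref{t:vogan} (used there via Corollary~\ref{c:c^W} to write $\Cr=\rho(\bbc W)\oplus\bigl(\im d\bigr)^{Z_W}\cap(\Cu\otimes\Cc_0)^{Z_W}$ and then take $\mathsf{Gr}$). In principle such a bypass is attractive: one only needs an upper bound on $\mathsf{Gr}(\Cr[\bt])$, not the exact decomposition, because the lower bound $\Cv[\bt]^{Z_W}\subseteq\Cr[\bt]$ is already available.

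However, there is a genuine gap at the centre of the argument. You decompose $\bar p=\sum_{w\in W}p_w(w\otimes 1)$ in $(\Cw[\bt]\#W)\otimes\Cc$ and then assert that ``because each $\bar X$ is $W$-invariant, $\bar X$ commutes with $w\otimes 1$''. This is correct for $\bar H$ and $\bar E_1^\pm$, which live in $\Cw[\bt]\otimes 1$, but it is \emph{false} for the Dirac symbol $\bar D=\sum_i\bigl(y_i\otimes x_i+x_i\otimes y_i\bigr)$: conjugation by $w\otimes 1$ acts only on the $\Cw[\bt]$-tensor leg and sends $\bar D$ to $\sum_i\bigl(w(y_i)\otimes x_i+w(x_i)\otimes y_i\bigr)\neq\bar D$. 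The element $D$ is invariant only under the \emph{diagonal} $W$-action, i.e.\ under conjugation by $\rho(w)=w\otimes\tau_w$. Consequently $[\bar D,w\otimes 1]\neq 0$ and the claimed deduction ``$[\bar X,p_w]=0$ for every $w$'' does not follow. This is precisely the place where the paper's use of the diagonal $\rho(\bbc W)$ in Theorem~\ref{t:vogan} and Corollary~\ref{c:c^W} is doing real work, and also why the algebra $\Cv$ is defined using the \emph{diagonal} copy of $\bbc W$ rather than $\bbc W\otimes 1$.

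The gap is repairable: decompose instead $\bar p=\sum_w p_w\,\rho(w)$ with $p_w\in\Cw[\bt]\otimes\Cc$ (the $\rho(w)$ are units, so this is still a basis decomposition), exploit that each $\bar X$ supercommutes with $\rho(w)$, and apply Proposition~\ref{p:gradedcentspo} to each $p_w$; one then lands in $A(\fa^\Delta[\bt])\cdot\rho(W)$, which is what $\mathsf{Gr}(\Cv[\bt])$ in Lemma~\ref{V-ag} actually is once one reads the $\#W$ as built from the diagonal $\rho(W)$. With this correction (and the observation that the generic Lemma~\ref{l:D-cent} equivalence is only needed at $\bt=1$, since for $\bt\neq 1$ the formula $E_2^\pm=\tfrac12(D\mp[H,D])$ acquires spurious $\bt$-factors), your route does appear to deliver a valid alternative proof. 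As written, though, the argument breaks at the $W$-decomposition step.
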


\begin{proof}
We already know that $\mathcal V^{Z_W}\subseteq { \Cr }$. We shall work on $\cha[\bt]\otimes\Cc$. 
From Corollary \ref{c:c^W}, adapted to $\cha[\bt]\otimes\Cc$, we have that ${ \Cr[\bt] }=\ker d_W\cap (\mathcal U[\bt]\otimes \mathcal C_0)^{Z_W}$. With respect to the filtration on $\cha[\bt]\otimes\Cc$ considered above, the decomposition
of Corollary \ref{c:c^W} becomes
\begin{equation*}
\mathsf{Gr}({ \Cr }[\bt])=\bbc W\oplus \bar d(\Cw[\bt]\#W\otimes \mathcal C)^{Z_W}\cap 
(\mathsf{Gr}(\Cu[\bt])\otimes \mathcal C_0)^{Z_W}.
\end{equation*}
Here, $\bar d$ is the corresponding differential in $(\Cw[\bt]\#W)\otimes \mathcal C$ and we noted in the proof of 
Theorem \ref{t:centralisersl2} that $\mathsf{Gr}(\Cu[\bt])\cong A(\fa)[\bt]\#W$. However, from Proposition \ref{p:gradedcentspo} and the fact that $\fg = \fs\fp\fo(2|2,\bbc)$ is generated by $D$ and $\fs\fl(2,\bbc)$, 
we see that in the associated graded object, we have 
\begin{equation*}
\bar d(\Cw[\bt]\#W\otimes \mathcal C) \cap  (A(\fa)[\bt]\#W)\otimes \mathcal C_0= A(\fa^\Delta[\bt])\#W.
\end{equation*}
Hence $\mathsf{Gr}({ \Cr }[\bt])= (A(\fa^\Delta[\bt])\#W)^{Z_W}$. Then, comparing with Lemma \ref{V-ag}, we get 
$\mathsf{Gr}({ \Cr }[\bt])\cong \mathsf{Gr}(\mathcal V[\bt])^{Z_W}$. The claim in the theorem follows by specializing 
$\bt=1$.
\end{proof}

\begin{remark}
Computations of the commutators $[\Cf_{ij},\Cf_{kl}]$ in $\Cv$ indicate that, unlike $\Cu$, $\Cv$ is not a (non-homogeneous) quadratic algebra over $\bbc W$, or at least not with the generators $\Cf_{ij}$. However, one can easily see that $\Cv\otimes\Cc$  is a quadratic algebra over $\bbc W\otimes \Cc$.
\end{remark}

\subsection{Dual pairs} 

Here, we apply the theory of dual pairs in order to study some of the multiplicities of the decompositions obtained thus far. 
Let $\mathbb A$ be an associative algebra and $\mathfrak m$ be a Lie subalgebra of $\bba$. We may regard $\mathbb A$ as a module of $\mathfrak m$ via the adjoint action.

\begin{lemma}\label{l:A-inv}
Suppose that $\bba$ is locally finite under the adjoint action of $\fm$ and that $\fm$ is a reductive Lie algebra. Then
$\mathbb A=\mathbb A^{\mathfrak m}\oplus \ad(\fm)(\mathbb A)$.
\end{lemma}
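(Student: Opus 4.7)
The plan is to reduce the statement to finite-dimensional $\fm$-submodules, where complete reducibility lets me split off the invariants. First I would use local finiteness to observe that every element $a\in\bba$ lies in a finite-dimensional $\ad(\fm)$-stable subspace $V_a\subseteq\bba$, and more generally that any finite subset of $\bba$ is contained in such a $V_a$.

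Next, inside each such finite-dimensional $V_a$ I would decompose $V_a=V_a^{\fm}\oplus V_a'$ into the trivial and non-trivial isotypic components via complete reducibility; in the intended application ($\fm=\fs\cong\fs\fl(2,\bbc)$) this is Weyl's theorem. The key point is then to identify $\ad(\fm)(V_a)$ with $V_a'$: for each non-trivial finite-dimensional irreducible $\fm$-submodule $W\subseteq V_a'$, the subspace $\ad(\fm)(W)\subseteq W$ is an $\fm$-submodule which is non-zero (otherwise $W$ would be trivial), hence equals $W$ by irreducibility. Summing over the non-trivial isotypic summands of $V_a'$ yields $\ad(\fm)(V_a')=V_a'$, and combined with $\ad(\fm)(V_a^{\fm})=0$ this gives the local decomposition
\[
V_a=V_a^{\fm}\oplus \ad(\fm)(V_a).
\]

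To globalise, for any $a\in\bba$ I split $a$ according to the local decomposition of $V_a$ to exhibit $a\in \bba^{\fm}+\ad(\fm)(\bba)$, establishing the sum. For directness, given $a_0\in\bba^{\fm}\cap\ad(\fm)(\bba)$, I would write $a_0=\sum_i\ad(X_i)(b_i)$ with finitely many $X_i\in\fm$ and $b_i\in\bba$, choose (by local finiteness) a single finite-dimensional $\ad(\fm)$-stable subspace $V\subseteq\bba$ containing $a_0$ and all the $b_i$, and then invoke the local decomposition of $V$ to conclude $a_0\in V^{\fm}\cap\ad(\fm)(V)=0$. The main subtlety is to secure complete reducibility: for semisimple $\fm$ this is Weyl's theorem, but for a genuinely reductive $\fm$ one must verify that the centre of $\fm$ acts semisimply on each finite-dimensional $\fm$-submodule of $\bba$, which is automatic in the applications considered here.
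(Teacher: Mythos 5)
Your proof is correct and follows the same route as the paper, which simply cites the finite-dimensional decomposition $V=V^{\fm}\oplus\pi(\fm)(V)$ and lets local finiteness supply the rest; you have just written out the details of that reduction (passing to a finite-dimensional $\ad(\fm)$-stable subspace, splitting off the trivial isotypic piece, and checking $\ad(\fm)(W)=W$ for each nontrivial irreducible summand $W$). The caveat you flag at the end---that for a reductive but non-semisimple $\fm$ one must separately ensure the centre acts semisimply, since the stated decomposition can otherwise fail for finite-dimensional modules---is a genuine subtlety the paper elides, but it is harmless here because the only case invoked is $\fm=\fs\cong\fs\fl(2,\bbc)$, which is semisimple.
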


\begin{proof}
Follows from the well-known result that, for any finite dimensional $\fm$ representation $(V,\pi)$ we have $V = V^\fm \oplus \pi(\fm)(V)$.
\end{proof}

We remark that the proof of Lemma \ref{l:A-inv} does not hold for the pair $(\cha\otimes\Cc,\fg)$ since finite dimensional $\fg=\mathfrak{spo}(2|2)$-modules are not completely reducible in general.

\smallskip

Now let $M$ be a simple $\mathbb A$-module and $\eta: \mathbb A\to \End_\bbc(M)$ the corresponding representation. The following result is the analogue in our setting of \cite[Lemma 4.2.3]{GW} or \cite[proof of Theorem 8, Lemma 1]{Ho}.

\begin{lemma}\label{l:finite}
Suppose $X$ is an $\mathfrak m$-invariant subspace of $M$ such that $X$ is a finite sum of simple lowest weight $\mathfrak m$-modules. Then the map $\mathbb A^{\mathfrak m}\to \Hom_{\mathfrak m}(X,M)$, $a\mapsto \eta(a)\mid_{X}$ is surjective.
\end{lemma}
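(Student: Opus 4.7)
The argument follows the classical Howe-duality template from \cite[Lemma~1]{Ho} (see also \cite[Lemma~4.2.3]{GW}), adapted to the infinite-dimensional setting of simple lowest-weight $\fm$-modules.

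\emph{Reduction to finitely many conditions.} First I would decompose $X = L_1 \oplus \cdots \oplus L_k$ with each $L_i$ a simple lowest-weight $\fm$-module, and fix a lowest-weight generator $v_i \in L_i$ of weight $\mu_i$. Because $T$ is $\fm$-equivariant, it is determined by the values $T(v_i)$, each of which must be a lowest-weight vector of weight $\mu_i$ in $M$. Conversely, for any $a \in \bba^{\fm}$, the restriction $\eta(a)|_X$ is automatically $\fm$-equivariant (as $a$ commutes with $\fm$ and $X$ is $\fm$-invariant), so if $\eta(a) v_i = T(v_i)$ for each $i$, then $\eta(a)|_X$ and $T$ agree on the generators $v_i$ of the $L_i$'s and hence $\eta(a)|_X = T$. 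Thus it suffices to produce $a \in \bba^{\fm}$ matching $T$ on the finite set $\{v_1,\dots,v_k\}$.

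\emph{Density and averaging.} Since $M$ is a simple $\bba$-module and $v_1, \dots, v_k$ are linearly independent, Jacobson's density theorem yields $b \in \bba$ with $\eta(b) v_i = T(v_i)$ for all $i$. Then, using Lemma~\ref{l:A-inv} (applicable in our setting because $\bba$ is locally finite under $\ad(\fm)$ and $\fm$ is reductive), I would decompose $b = a + c$ with $a \in \bba^{\fm}$ and $c \in \ad(\fm)(\bba)$. The identity $Y \cdot \eta(a) v_i = \eta(a) \cdot (Y v_i)$, valid for every $Y \in \fm$ because $a$ commutes with $\fm$, shows that $\eta(a) v_i$ is killed by $\fn^{-}$ (as $\fn^{-} v_i = 0$) and has weight $\mu_i$ under $\ft$. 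Hence $\eta(a) v_i$ is a lowest-weight vector of weight $\mu_i$, and so is the error $e_i := \eta(c) v_i = T(v_i) - \eta(a) v_i$.

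\emph{The main obstacle: killing the error.} The technical heart of the proof is to force $e_i = 0$. The standard device is to sharpen the density step: by Jacobson density one can arrange that $\eta(b)$ agrees with $T$ on a much larger finite-dimensional subspace $V_n := \operatorname{span}\{Y \cdot v_i : Y \in \Cu(\fm)_{\le n},\; 1 \le i \le k\} \subseteq X$, for $n$ chosen as large as needed. The $\fm$-equivariance of $T$ on $V_n$ then forces $\eta\bigl([Y_1,[Y_2,\ldots,[Y_m, b]\ldots]]\bigr)\, v_i = 0$ for all iterated commutators of depth $m \le n$. On the other hand, local finiteness of the $\ad(\fm)$-action on $\bba$ makes $\Cu(\fm) \cdot b \subseteq \bba$ finite-dimensional, so that $c$, which lies in the non-trivial $\fm$-isotypic part of this submodule, is expressible as a finite linear combination of nested commutators of $b$ of some bounded depth $n_0$. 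Taking $n \ge n_0$ in the sharpened density step then forces $\eta(c) v_i = 0$, so $\eta(a) v_i = T(v_i)$ as required. The key subtlety, and the main place care is needed, is in coordinating the depth parameter $n$ with the bounded depth $n_0$ produced by the $\fm$-module structure of $\Cu(\fm) \cdot b$.
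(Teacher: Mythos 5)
Your proposal takes a genuinely different route from the paper, and the route has an unresolved gap which you flag in your final sentence but do not close.

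The paper's proof never singles out an individual $b\in\bba$ at all. Instead it passes immediately to the full image $B:=\eta(\bba)|_X\subseteq\Hom_\bbc(X,M)$, observes that $B$ is a quotient of $\bba$ and hence a locally finite $\ad(\fm)$-module, and applies the decomposition of Lemma~\ref{l:A-inv} to $B$ itself to get $B=B^\fm\oplus\ad(\fm)(B)$. Because $\bba\to B$ is an $\ad(\fm)$-equivariant surjection compatible with the two decompositions, the restriction $\bba^\fm\to B^\fm$ is surjective. Jacobson density (applied at the lowest weight vectors, just as in your first two paragraphs) then places $\Hom_\fm(X,M)$ inside $B$, and since elements of $\Hom_\fm(X,M)$ are precisely the $\fm$-invariant elements of $\Hom_\bbc(X,M)$, they land in $B^\fm=\eta(\bba^\fm)|_X$. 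No element-by-element decomposition of a chosen $b$ and no depth accounting occur.

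Your third paragraph replaces this packaging with a depth count, but the count is circular. You write $c$ (the $\ad(\fm)(\bba)$-component of $b$) as a linear combination of iterated commutators of $b$ of depth at most $n_0$, where $n_0$ is the stabilizing depth of the finite-dimensional module $\Cu(\fm)\cdot b$. That bound $n_0$ depends on $b$. But $b$ was itself produced by Jacobson density so that $\eta(b)$ agrees with $T$ on $V_n$, i.e.\ $b$ depends on $n$. The instruction ``take $n\ge n_0$'' therefore forces a new choice of $b$, whose module $\Cu(\fm)\cdot b$ has a possibly larger stabilizing depth, and you give no uniform bound on $n_0$ over all admissible $b$ matching $T$ on the various $V_n$ (and there is no reason such a bound should exist). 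This is exactly the issue the paper's formulation sidesteps: by transporting the decomposition of Lemma~\ref{l:A-inv} along the $\ad(\fm)$-equivariant map $\bba\to B$ rather than decomposing individual elements, one never has to express $c$ in iterated commutators of $b$, so the problematic coordination of $n$ with $n_0$ simply does not arise. To repair your writeup, replace the third paragraph by the argument on the image $B$; the first two paragraphs agree with the paper.
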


\begin{proof}
Suppose $X=\sum_{i=1}^n L(\lambda_i)$, where $L(\lambda_i)$ are simple lowest weight module of $\mathfrak m$. Let $v_i$, $1\le i\le n$ denote the corresponding lowest weight vectors. Then every $0\neq \phi\in \Hom_{\mathfrak m}(X,M)$ is uniquely determined by the images $\phi(v_i)$, $1\le i\le n$. By Jacobson's Density Theorem, there is $a\in \mathbb A$ such that $\phi(v_i)=\eta(a)v_i$. This means that the natural map $\mathbb A\to \Hom_{\mathfrak m}(X,M)$ is surjective. Now,
let $B := \eta(\bba)|_X\subseteq \Hom_\bbc(X,M)$. From what was just said, we have $\Hom_{\mathfrak m}(X,M)\subseteq B$.
Since $B$ is quotient of $\bba$, it is also a locally finite $\ad(\fm)$-module, so just like in Lemma \ref{l:A-inv}, we have
$B = B^\fm\oplus\ad(\fm)(B)$. It follows that $\Hom_{\mathfrak m}(X,M)\subseteq B^\fm$. 
On the other hand, since the map $\mathbb A\to B$ is an $\ad(\mathfrak m)$-homomorphism, and from  Lemma \ref{l:A-inv},
we have that $\mathbb A^{\mathfrak m}\to B^{\mathfrak m}$ is surjective, yielding claim.
\end{proof}

The next result is the analogue of  \cite[Theorem 4.2.1]{GW}.

\begin{theorem}\label{t:Howe} Suppose that $M$ has a decomposition into $\mathfrak m$-isotypic components, $M=\bigoplus_{L}E(L) L,$ where $L$ ranges over a set of (isomorphism classes of) simple lowest weight modules of $\mathfrak m$, and such that the multiplicity spaces $E(L)=\Hom_{\mathfrak m}(L,M)$ are finite dimensional. Then:
\begin{enumerate}
\item[(i)] Every $E(L)$ is a simple $\mathbb A^{\mathfrak m}$-module.
\item[(ii)] If $E(L)\cong E(L')$ then $L=L'$.
\end{enumerate}
\end{theorem}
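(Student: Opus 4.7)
The plan is to imitate the classical Howe-duality argument (as in Goodman--Wallach), where the heavy lifting is already done by Lemma~\ref{l:finite}: it plays exactly the role of a Jacobson-density statement restricted to $\mathfrak{m}$-invariants. Both parts reduce to cleverly choosing the $\mathfrak{m}$-stable subspace $X$ on which to apply that lemma.

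For part (i), I will pick a nonzero $\mathbb{A}^{\mathfrak{m}}$-submodule $N \subseteq E(L)$ and a nonzero $\phi_0 \in N$. Since $L$ is simple, $\ker \phi_0 = 0$, so $\phi_0 : L \to \phi_0(L)$ is an isomorphism of $\mathfrak{m}$-modules. For any $\psi \in E(L)$, the composition $\psi \circ \phi_0^{-1} : \phi_0(L) \to M$ is then an $\mathfrak{m}$-homomorphism, and taking $X = \phi_0(L)$ (a single simple lowest weight module) in Lemma~\ref{l:finite} produces an element $a \in \mathbb{A}^{\mathfrak{m}}$ with $\eta(a)|_X = \psi \circ \phi_0^{-1}$. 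Evaluating at $v \in L$ gives $(a \cdot \phi_0)(v) = \eta(a)(\phi_0(v)) = \psi(v)$, so $a \cdot \phi_0 = \psi$. Since $\psi$ was arbitrary, $N = E(L)$, and simplicity follows.

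For part (ii), I will argue by contradiction. Suppose $E(L) \cong E(L')$ via an $\mathbb{A}^{\mathfrak{m}}$-isomorphism $f$ but $L \not\cong L'$. Pick $0 \neq \phi \in E(L)$ and set $\psi := f(\phi) \in E(L')$, which is nonzero. As in (i), $\phi$ and $\psi$ are injective, and since $L \not\cong L'$ as simple $\mathfrak{m}$-modules, the submodules $\phi(L)$ and $\psi(L')$ of $M$ meet trivially. Hence $X := \phi(L) \oplus \psi(L')$ is a direct sum of two non-isomorphic simple lowest weight $\mathfrak{m}$-modules, and Lemma~\ref{l:finite} yields $a \in \mathbb{A}^{\mathfrak{m}}$ acting as the identity on $\phi(L)$ and as zero on $\psi(L')$. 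Then $a \cdot \phi = \phi$ while $a \cdot \psi = 0$, whence applying $f$ gives $\psi = f(\phi) = f(a \cdot \phi) = a \cdot \psi = 0$, a contradiction.

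Because Lemma~\ref{l:finite} is already established, there is no real obstacle; the only point requiring attention is the construction of the appropriate ``projector-type'' morphism on $X$ in part (ii), which is automatic from the stated surjectivity onto $\operatorname{Hom}_{\mathfrak{m}}(X, M)$. I also need to remember that the indexing of $L$ in the given decomposition implicitly runs only over those $L$ for which $E(L) \neq 0$, so that the hypothesis $E(L) \cong E(L')$ in (ii) is nonvacuous and both $\phi, \psi$ above can be chosen nonzero.
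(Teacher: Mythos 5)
Your proof is correct and takes essentially the same route as the paper, which simply cites the proof of Theorem~4.2.1 in Goodman--Wallach with Lemma~\ref{l:finite} substituting for their density lemma. Your reconstruction of both parts---applying Lemma~\ref{l:finite} with $X=\phi_0(L)$ for simplicity of $E(L)$, and with the two-summand space $X=\phi(L)\oplus\psi(L')$ for part~(ii)---is precisely the intended argument.
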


\begin{proof}
The proof is identical with the proof of \cite[Theorem 4.2.1]{GW} on page 196-197 of {\it loc.cit.}, using Lemma \ref{l:finite} in place of \cite[Lemma 4.2.3]{GW}.
\end{proof}

\begin{corollary}
The $(\Cu,\fs)$ decomposition in (\ref{e:sl2isotypic}) is multiplicity-free.
\end{corollary}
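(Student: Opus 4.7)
The plan is to apply Theorem \ref{t:Howe} with ambient associative algebra $\bba = \cha$, Lie algebra $\fm = \fs \cong \fs\fl(2,\bbc)$, and module $M = M_c(\tau) = K_c(\tau)^0$. By Theorem \ref{t:centralisersl2} the invariant subalgebra $\cha^{\fs}$ equals $\Cu$, so the $\bba^{\fm}$-modules produced by Theorem \ref{t:Howe} are precisely the left-hand tensor factors in (\ref{e:sl2isotypic}). The decomposition (\ref{e:sl2isotypic}) already realises $M_c(\tau)$ as a direct sum of simple lowest-weight $\fs$-modules $L_{\fs\fl(2)}(m + r/2 - N_c(\tau))$ with multiplicity spaces $\Ch_c(\tau)_m^0 \subseteq \bbc[\fh]_m \otimes V(\tau)$, and each such multiplicity space is therefore finite-dimensional. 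Because the lowest weights form a strictly increasing arithmetic progression as $m$ ranges over $\bbz_{\geq 0}$ (and because Assumption \ref{a:assumption} guarantees these are genuine lowest weights of irreducible modules), the corresponding $\fs$-modules are pairwise non-isomorphic, so (\ref{e:sl2isotypic}) really is the $\fs$-isotypic decomposition of $M_c(\tau)$.

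The one remaining hypothesis to check is the local finiteness of $\ad \fs$ on $\cha$, which feeds into Lemma \ref{l:A-inv}. I would establish this using the PBW filtration $F^\bullet \cha$ in which $\fh \oplus \fh^*$ sits in filtration degree one. The commutation relations (\ref{e:relations}), together with the $W$-invariance of the generators $H, E_1^\pm \in F^2$ of $\fs$, imply that $\ad\fs$ preserves each $F^k$, and each $F^k$ is finite-dimensional by PBW. This is the only step with any real content; I expect it to be routine rather than an obstacle, but it is the one place where one must actually verify something beyond invoking the earlier results of the paper.

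With both hypotheses verified, Theorem \ref{t:Howe}(i) immediately gives that each $\Ch_c(\tau)_m^0$ is a simple $\Cu$-module, while Theorem \ref{t:Howe}(ii), combined with the pairwise distinctness of the $\fs$-lowest weights noted above, forces the modules $\Ch_c(\tau)_m^0$ to be pairwise non-isomorphic as $\Cu$-modules. These two statements taken together are precisely multiplicity-freeness of the $(\Cu, \fs)$-decomposition (\ref{e:sl2isotypic}).
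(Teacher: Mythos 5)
Your proposal is correct and follows essentially the same route as the paper: both reduce the claim to Theorem \ref{t:Howe} applied to $(\cha, \fs, M_c(\tau))$, with the only genuine verification being that $\ad\fs$ is locally finite on $\cha$, which you and the paper both establish by observing that the quadratic, $W$-invariant generators $H, E_1^\pm$ preserve the PBW-filtration pieces $\cha^{(n)}$. The paper's proof is just a one-sentence version of yours, leaving implicit the identification $\cha^\fs = \Cu$ and the fact that the $\fs$-lowest weights $m + r/2 - N_c(\tau)$ are pairwise distinct.
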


\begin{proof}
The pair $(\cha,\fs)$ satisfies the assumptions of Lemma \ref{l:A-inv}, as the commutator with $\mathfrak s=\langle H,E_1^+,E_1^-\rangle$ preserves the filtration subspaces $\cha^{(n)}$ defined by giving $x$ and $y$ degree $1$ and $\bbc W$ degree $0$,
which are finite-dimensional subspaces of $\cha$.
\end{proof}

\section{On the centralizer algebras in the classical case}\label{s:appendix}

In this section, let $\Cw$ denote the Weyl algebra acting on the polynomial algebra $\bbc[\fh]$ and let $\Cc$ denote the Clifford algebra of the pair $(V,B)$ as considered in Section \ref{s:cliff}.  For completeness, we provide arguments to determine the subalgebras of $\Cw$ and of $\Cw\otimes\Cc$
that centralizes the non-deformed versions of $\fs\fl(2,\bbc)$ and $\fs\fp\fo(2|2,\bbc)$. We shall
make use of the following two easy results. First, let $A$ be a filtered, unital associative $\bbc$-algebra. Denote by $A^{(t)}$, $t\in \bbn$, the pieces of the ascending filtration of $A$. Suppose further that $\Gamma,\fa$ are Lie (super)algebras contained in $A^{(2)}$ such that
\begin{itemize}
\item[(a)] $[\Gamma,\fa]=0$,
\item[(b)] if $P\in A^{(2)}$ commutes with $\Gamma$, then $P\in \fa \oplus \bbc$.
\end{itemize}
Denote by $A(\fa)$ the associative subalgebra of $A$ that is generated by $\fa$.  From (a), we have that $A(\fa)\subseteq \Cent_A(\Gamma)$. To determine whether $A(\fa)=\Cent_A(\Gamma)$, under these assumptions, consider:
\begin{lemma}\label{l:ind}
Let $t\in\bbz_{\geq 2}$ and suppose that  whenever $P\in A^{(t)}$ commutes with $\Gamma$ there exists $U\in A(\fa)$ such that
\begin{equation}\label{eq:topdeg}
P\equiv U \mod A^{(t-1)}.
\end{equation}
Then, $P\in A(\fa)$. In particular, if (\ref{eq:topdeg}) holds for all $t\geq 2$, then $A(\fa) = \Cent_A(\Gamma)$.
\end{lemma}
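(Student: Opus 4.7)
The plan is to prove Lemma~\ref{l:ind} by strong induction on the filtration degree $t$. Assumption (a) already gives $A(\fa) \subseteq \Cent_A(\Gamma)$, so the only content is the reverse inclusion on each filtration piece: I need to show that every $P \in A^{(t)}$ commuting with $\Gamma$ lies in $A(\fa)$.

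For the base case $t = 2$, I would invoke hypothesis (b) directly: such a $P$ lies in $\fa \oplus \bbc$, and since both $\fa$ and $\bbc \cdot 1$ are contained in the unital associative algebra $A(\fa)$, it follows that $P \in A(\fa)$. For the inductive step, assuming the conclusion known for all $s < t$, I would take $P \in A^{(t)}$ commuting with $\Gamma$ and apply the standing hypothesis (\ref{eq:topdeg}) to produce $U \in A(\fa)$ with $P \equiv U \bmod A^{(t-1)}$. Since $A(\fa)$ centralises $\Gamma$ by (a), the element $U$ commutes with $\Gamma$, and hence so does the difference $P - U \in A^{(t-1)}$. The inductive hypothesis applied to $P - U$ then gives $P - U \in A(\fa)$, whence $P = U + (P-U) \in A(\fa)$, closing the induction.

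The ``in particular'' clause follows immediately: any $P \in \Cent_A(\Gamma)$ lies in $A^{(t)}$ for some $t$, so the first part of the lemma yields $P \in A(\fa)$. The argument itself is essentially bookkeeping once the lifting hypothesis (\ref{eq:topdeg}) is in force; the mild subtlety to flag is that the induction implicitly requires (\ref{eq:topdeg}) to hold at every intermediate degree $2 \le s \le t$, not only at the top one. The genuine obstacle in making use of this lemma lies outside of it, namely in verifying (\ref{eq:topdeg}) for the concrete pairs $(\fs, \fa)$ in $\Cw$ and $(\fg, \fa^\Delta)$ in $\Cw \otimes \Cc$, which is precisely what the remainder of Section~\ref{s:appendix} is set up to do.
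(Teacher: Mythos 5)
Your proof is correct and follows essentially the same approach as the paper's: both peel off one filtration degree at a time using the lifting hypothesis together with (a) and (b), the only cosmetic difference being that the paper phrases it as a descending chain ($P - U_1 - \cdots - U_{t-2} \in A^{(2)}$) whereas you run a clean strong induction upward. Your remark that the hypothesis \eqref{eq:topdeg} is tacitly needed at every intermediate degree $2 \le s \le t$ is a fair observation — the paper's proof uses this implicitly without comment — but it does not affect the validity of either argument, since in the intended applications the lifting is established uniformly in $t$.
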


\begin{proof}
Let $U = U_1$. From (\ref{eq:topdeg}), we get that $P-U_1 \in A^{(t-1)}$. Since $P$ and $U_1$ commutes with $\Gamma$, we find $U_2\in A(\fa)$ such that $P - U_1 - U_2 \in A^{(t-2)}$. Proceeding by induction, we find $U_1,\ldots U_{t-2}\in A(\fa)$ such that $P - \sum_{j=1}^{t-2} U_j \in A^{(2)}$ and commutes with $\Gamma$. By (b), this is in $A(\fa)$. Since each $U_j\in A(\fa)$, then so is $P$. This finishes the proof.
\end{proof}

The next result is an observation, whose proof is clear.

\begin{lemma}\label{l:dirtytrick}
Let $E$ be the quotient of a vector space $U$ and $\sigma:E\to U$ a section of the quotient map $q:U\to E$. Given any finite set $\{e_1,\ldots,e_N\}$ such that $\{\sigma(e_1),\ldots,\sigma(e_N)\}$ is linearly independent in $U$, then $\sum_ia_ie_i = 0$ in $V$ implies $a_1=\cdots=a_N = 0$.
\end{lemma}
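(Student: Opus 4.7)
The plan is very short since the lemma is essentially a bookkeeping observation. The key point is that although $\sigma$ is introduced as ``a section of the quotient map'', in the category of vector spaces one may (and I will) take $\sigma$ to be a linear section: choose any complement $C$ of $\ker(q)$ in $U$, let $\sigma$ be the inverse of the isomorphism $q|_C : C \isom E$, and extend to all of $E$ by linearity. With this choice the statement reduces to a one-line application of linearity.

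First, given a relation $\sum_i a_i e_i = 0$ holding in $E$ (reading the ``$V$'' in the statement as a typo for $E$), I apply the linear section $\sigma$ to both sides. This yields
\[
\sum_i a_i \,\sigma(e_i) = \sigma\!\left(\sum_i a_i e_i\right) = \sigma(0) = 0
\]
as an identity in $U$. Next, the hypothesis that $\{\sigma(e_1),\ldots,\sigma(e_N)\}$ is linearly independent in $U$ forces $a_1 = \cdots = a_N = 0$, which is the desired conclusion.

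The only conceivable obstacle is whether one is really allowed to assume $\sigma$ linear. If instead $\sigma$ were only a set-theoretic section, the conclusion could still be recovered by replacing $\sigma$ with its linearization, since linear independence of $\{\sigma(e_i)\}$ in $U$ together with $q \sigma(e_i)=e_i$ forces linear independence of $\{e_i\}$ in $E$: indeed, $\sum a_i e_i = 0$ in $E$ means $\sum a_i \sigma(e_i) \in \ker q$, and one notes that the span of the $\sigma(e_i)$ maps isomorphically onto the span of the $e_i$ under $q$ (because $\sigma(e_i)$ is a lift of $e_i$ and the lifts are independent). Either way the argument is immediate, and I expect no real obstacle; the lemma is stated solely to be invoked in the sequel to extract linear independence of abstract quotient-classes from linear independence of chosen representatives.
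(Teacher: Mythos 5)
Your main argument is correct and is surely the one the authors have in mind (the paper declines to spell out a proof, writing only ``whose proof is clear''): read ``section'' in the linear sense, read the stray ``$V$'' as ``$E$'', apply the linear map $\sigma$ to the relation $\sum_i a_i e_i = 0$, and invoke linear independence of $\{\sigma(e_i)\}$ in $U$. This is exactly what the paper needs in the proofs of Propositions \ref{p:centsl2} and \ref{p:centspo}, where the section from Lemma \ref{l:sectionsl2} (resp.\ Lemma \ref{l:sectionspo}) is defined on a monomial basis and extended by linearity, hence genuinely linear.

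However, the ``fallback'' paragraph at the end contains a false claim. You assert that even for a merely set-theoretic section $\sigma$, linear independence of $\{\sigma(e_i)\}$ forces linear independence of $\{e_i\}$, on the grounds that ``the span of the $\sigma(e_i)$ maps isomorphically onto the span of the $e_i$ under $q$.'' That is not correct: linear independence of the lifts does not prevent $\operatorname{span}\{\sigma(e_i)\}$ from meeting $\ker q$ nontrivially. A concrete counterexample: take $U=\bbc^2$, $E=\bbc$, $q(x,y)=x$, and a set-theoretic section with $\sigma(1)=(1,0)$ and $\sigma(2)=(2,1)$. The lifts $(1,0),(2,1)$ are linearly independent in $U$, yet $2\cdot 1 - 1\cdot 2 = 0$ in $E$, so $e_1=1,\ e_2=2$ are dependent, and indeed $2\sigma(e_1)-\sigma(e_2)=(0,-1)\in\ker q$ is a nonzero element of $\operatorname{span}\{\sigma(e_i)\}\cap\ker q$. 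So the lemma is false for arbitrary set-theoretic sections; linearity of $\sigma$ (equivalently, that $\operatorname{im}\sigma$ is a linear complement to $\ker q$) is essential. Your first paragraph already makes that assumption, so your proof stands; you should simply delete the incorrect robustness remark rather than rely on it.
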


\subsection{The $\fs\fl(2)$-case}
Let $H,E_1^\pm$  and $\Cx_{ij},$ for $1\leq i < j \leq r$, be the elements in $\Cw$ 
analogous to the ones of $\cha$ considered in section \ref{s:sl2centralizer}. Then we know
that the Lie algebra $\fs$ spanned by $\{H,E_1^\pm\}$ is isomorphic to $\fs\fl(2,\bbc)$ 
and the Lie algebra $\fa$ spanned by $\{\Cx_{ij}\mid 1\leq i<j\leq r\}$ is isomorphic to $\fs\fo(r,\bbc)$.
In this section we will prove that the associative subalgebra $A(\fa)$ of $\Cw$ generated by $\fa$
is the centralizer algebra of $\fs = \fs\fl(2,\bbc)$.

With respect to the natural filtration in which elements of $V=\fh^*\oplus\fh$ have degree one,
the associated graded object $\overline\Cw$ of the Weyl algebra is isomorphic 
to $S(\fh^*)\otimes S(\fh)$, where $S(\fh^*)$ 
denotes the symmetric algebra on $\fh^*$.  If $X\in S^m(\fh^*)\otimes S^n(\fh)$, then $[H,X] = (m-n)X$. Hence, the eigenvalues of $\ad(H)$ on $\Cw$ are the integers $n\in\bbz$. Denote by $\Cw_n$ the $n$-eigenspace of $\ad(H)$. As a linear space, we thus have 
$\Cw_0 \cong \oplus _{p\in \bbn} S^p(\fh^*)\otimes S^p(\fh)$.

\begin{definition}
Let $\gls{Pairs}\subseteq\bbn^2$ be the set of all ordered pairs $\pi=(i,j)$ with $1\leq i,j \leq r$. We order $\Pi$ via $(i,j)\leq (k,l)$ under the lexicographic ordering of $\bbn^2$. Let $\tilde{\SM}_p$ 
denote the 
set of all multisets of cardinality $p$, that is, $\delta = \{\delta_1,\ldots,\delta_p\}$, with $\delta_k\in\Pi$ and we
allow an element of $\delta$ to have multiplicity. Let $\tilde{\SM} = \cup_p\tilde{\SM}_p$. 
\end{definition}

\begin{definition}
We say that $(i_1,j_1) \gls{orderM} (i_2,j_2)$ if and only if $i_1\leq i_2$ and $j_1\leq j_2$. Given $\mu=\{\mu_1,\ldots,\mu_p\}\in\tilde{\SM}$, we say that $\mu$ is a {\it double multi-index} if 
$\mu_k\ll\mu_{k+1}$, 
for all $1\leq k \leq p$. We let $\gls{Multiindices}\subseteq\tilde{\SM}$ denote the set of all double multi-indices and $\gls{Multiindicesp}$ be the subset of the ones of cardinality $p$. We shall write 
$\mu = [\mu_1,\ldots,\mu_p]\in\SM$ to indicate that elements of $\mu$ are ordered with respect to the partial ordering $\ll$.
\end{definition}

\begin{definition}
We order the set $\tilde{\SM}$ by declaring 
$\delta \gls{orderM2} \epsilon$ if $|\delta| < |\epsilon|$ and if $|\delta|=|\epsilon|$, then $\delta \preceq \epsilon$ lexicographically according to the total ordering of $\Pi$. The subset $\SM$ of double multi-indices is
thus endowed with this ordering.
\end{definition}

We shall now relate the notion of double multi-indices with linear bases of $\Cw_0$. Note first that 
the usual notion of multi-indices as vectors in $\bbn^r$ is equivalent to that of ordered
multisets $\alpha=[i_1,\ldots,i_p]$ with $1\leq i_k\leq i_{k+1}\leq r$ for all $k$. We shall write
$x_\alpha = x_{i_1}\cdots x_{i_p}$. Note that there is a bijective correspondence between elements in $\SM_p$ and pairs of multi-indices of
cardinality $p$, given by
\begin{equation}\label{eq:bijec}
\mu=[(i_1,j_1),\ldots,(i_p,j_p)] \mapsto ([i_1,\ldots,i_p],[j_1,\ldots,j_p])=(\alpha(\mu),\beta(\mu)).
\end{equation}
The inverse map will be denoted $(\alpha,\beta)\mapsto\mu(\alpha,\beta)$, obtained by setting $\mu_k=(i_k,j_k)$, for all $k$.

\begin{definition}
For each $(i,j)\in \Pi$, let $\gls{Matij} := x_iy_j\in\Cw_0$ and if $\mu\in\SM_p$ is a double multi-index put $\gls{Matmu} =.
\Cm_{\mu_1}\cdots \Cm_{\mu_p}\in\Cw_0$. Define also 
\[
\gls{Basisij} := \left\{
\begin{array}{rl}
\Cm_{ij}, &\textup{if } i\leq j\\
\Cx_{ij} = \Cm_{ij} - \Cm_{ji}, &\textup{if } i > j,
\end{array}
\right.
\]
and let $\gls{Basismu} = \Cb_{\mu_1}\cdots \Cb_{\mu_p}\in\Cw_0$, whenever $\mu\in \SM_p$.
\end{definition}

\begin{proposition}\label{p:CanLinBasis}
The set $\{\Cm_\mu\mid \mu\in \SM\}$ is a linear basis for the space $\Cw_0$. 
\end{proposition}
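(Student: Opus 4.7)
The plan is to deduce this from the analogous statement in the associated graded algebra. Consider the natural filtration $\{\Cw^{(d)}\}_{d\geq 0}$ of $\Cw$ in which elements of $\fh^*\oplus\fh$ have degree one; the associated graded algebra is the polynomial algebra $\overline{\Cw} \cong S(\fh^*)\otimes S(\fh)$. The element $H$ acts on $\overline{\Cw}$ by the grading that gives $x\in\fh^*$ weight $+1$ and $y\in\fh$ weight $-1$, so its $0$-eigenspace in total degree $d$ is trivial unless $d = 2p$ is even, in which case it equals $S^p(\fh^*)\otimes S^p(\fh)$. Setting $V_p := \Cw_0 \cap \Cw^{(2p)}$, we therefore have $\Cw_0 = \bigcup_{p\geq 0} V_p$ with successive quotients $V_p/V_{p-1} \cong S^p(\fh^*)\otimes S^p(\fh)$.

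The first step is a leading-term computation. For $\mu = [(i_1,j_1),\ldots,(i_p,j_p)] \in \SM_p$, the product $\Cm_\mu = x_{i_1}y_{j_1}\cdots x_{i_p}y_{j_p}$ clearly lies in $V_p$; repeated application of the Weyl algebra relations $[y_j,x_i] = \delta_{ij}$ to move all of the $y$'s past the $x$'s to the right introduces only lower-degree terms, yielding
\[
\Cm_\mu \equiv x_{i_1}\cdots x_{i_p}\, y_{j_1}\cdots y_{j_p} = x_{\alpha(\mu)}\, y_{\beta(\mu)} \pmod{V_{p-1}},
\]
where $(\alpha(\mu),\beta(\mu))$ is the pair of multi-indices of cardinality $p$ determined by $\mu$ via \eqref{eq:bijec}.

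The second step is to combine this with the bijectivity of \eqref{eq:bijec}. Since $\{x_\alpha y_\beta : |\alpha|=|\beta|=p\}$ is the standard monomial basis of $S^p(\fh^*)\otimes S^p(\fh)$ and $\mu\mapsto(\alpha(\mu),\beta(\mu))$ is a bijection from $\SM_p$ onto the set of such pairs, it follows that the images of $\{\Cm_\mu : \mu \in \SM_p\}$ in $V_p/V_{p-1}$ form a basis. By induction on $p$ (with base case $V_0=\bbc$, basis $\{1\}$, corresponding to the empty multi-index), the set $\bigcup_{q\leq p}\{\Cm_\mu : \mu \in \SM_q\}$ is then a basis of $V_p$; passing to the union over all $p$ yields the proposition.

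The argument is essentially a bookkeeping exercise, and the only substantive point is verifying that the combinatorial bijection \eqref{eq:bijec} pairs the $\Cm_\mu$ with the standard PBW basis elements of the $0$-weight piece of $\overline{\Cw}$. Given this setup, the filtered-to-graded reduction is routine, so no genuine obstacle is expected.
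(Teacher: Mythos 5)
Your proof is correct and follows essentially the same route as the paper's. Both arguments use the PBW structure of $\Cw$, the bijection $\mu\leftrightarrow(\alpha(\mu),\beta(\mu))$ from \eqref{eq:bijec}, and the fact that $\Cm_\mu$ has leading term $x_{\alpha(\mu)}y_{\beta(\mu)}$ in the associated graded; you have simply made explicit the triangularity/induction over the filtration that the paper leaves implicit in its one-line conclusion.
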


\begin{proof}
From the PBW property for $\Cw$, the set $\{x_\alpha y_\beta\mid \alpha,\beta \textup{ multi-indices with } |\alpha|=|\beta|\}$ is a linear basis of $\Cw_0$. 
Using the bijective correspondence of (\ref{eq:bijec}), let $\mu = \mu(\alpha,\beta).$ Since in the associated graded we have $\Cm_\mu = x_\alpha y_\beta$, we are done.
\end{proof}

\begin{definition}
Let $\Pi = \Pi'\cup \Pi''$ where $\Pi' = \{(i,j)\mid i > j\}$ and $\Pi''$ is the complement. We shall also write $\SM_p = \SM'_p\cup \SM''_p$ where $\SM_p'=\{\mu \in \SM_p \mid \mu_k\in \Pi' \textup{ for all } k\}$ and $\SM''_p$ is the complement. 
\end{definition}

\begin{proposition}\label{p:LinearBasis}
The set $\{\Cb_\mu\mid \mu\in \SM\}$ is also a linear basis for the space $\Cw_0$. 
\end{proposition}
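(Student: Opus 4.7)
The plan is to show, for every $\mu\in\SM$, that $\Cb_\mu = \Cm_\mu + R_\mu$, where $R_\mu$ lies in the span of $\{\Cm_\sigma : \sigma\prec\mu\}$. Since $\{\Cm_\mu\}_{\mu\in\SM}$ is a linear basis of $\Cw_0$ by Proposition \ref{p:CanLinBasis} and $\preceq$ restricts to a finite total order on each filtration piece $\Cw_0^{(2p)} = \bigoplus_{|\sigma|\leq p}\bbc\Cm_\sigma$, such a unitriangular relation will immediately imply that $\{\Cb_\mu\}_{\mu\in\SM}$ is a basis of $\Cw_0$.

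To establish the expansion, I will fix $\mu = [\mu_1,\ldots,\mu_p]\in\SM_p$ with $\mu_k = (i_k,j_k)$, and set $T_\mu = \{k : i_k > j_k\}$. By definition, $\Cb_{\mu_k} = \Cm_{\mu_k}$ for $k\notin T_\mu$, while $\Cb_{\mu_k} = \Cm_{\mu_k} - \Cm_{\mu_k^{\mr{op}}}$ with $\mu_k^{\mr{op}} := (j_k,i_k)$ for $k\in T_\mu$. Expanding gives
\[
\Cb_\mu = \sum_{S\subseteq T_\mu}(-1)^{|S|}\,\Cm_{\nu_1^S}\cdots\Cm_{\nu_p^S},
\]
with $\nu_k^S = \mu_k^{\mr{op}}$ if $k\in S$ and $\nu_k^S = \mu_k$ otherwise. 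For each $S$, I will rewrite the product via the PBW normal form in $\Cw$: using only the commutator $[y_j,x_k]=\delta_{jk}$ to move every $x$ past every $y$, the leading term in the PBW filtration is $x_{\alpha^S}y_{\beta^S}$, where $\alpha^S$ and $\beta^S$ are the sorted multi-indices of the $i$- and $j$-coordinates of $(\nu_1^S,\ldots,\nu_p^S)$. This leading term equals $\Cm_{\nu'_S}$, where $\nu'_S\in\SM_p$ corresponds to $(\alpha^S,\beta^S)$ via (\ref{eq:bijec}); every correction term has strictly smaller cardinality and therefore lies in $\bbc\{\Cm_\sigma : |\sigma|<p\}\subseteq \bbc\{\Cm_\sigma : \sigma\prec\mu\}$.

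It remains to check that $\nu'_\emptyset = \mu$ and that $\nu'_S \prec \mu$ whenever $S\neq\emptyset$. The first is immediate because $\mu$ is already $\ll$-sorted. For the second, note that for each $k\in S$ the swap replaces $i_k$ in the $i$-multi-set by the strictly smaller $j_k$, so $\alpha^S[l]\leq\alpha_\mu[l]$ for every $l$ with strict inequality somewhere (the total sum strictly decreases); hence $\alpha^S$ is lexicographically strictly smaller than $\alpha_\mu$. I proceed by induction on $p$: if $\alpha^S[1]<\alpha_\mu[1]=i_1$, then the first pair of $\nu'_S$ has a strictly smaller first coordinate than $\mu_1$, and so $\nu'_S\prec\mu$ directly. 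Otherwise $\alpha^S[1]=i_1$, and then $1\notin S$ (else $j_1<i_1$ would belong to $\alpha^S$, violating minimality); a short check gives $\beta^S[1]=j_1$ as well, since $j_1\leq j_k$ for every $k\notin S$ while $i_k\geq i_1>j_1$ for every $k\in S$. Hence $\nu'_S[1]=(i_1,j_1)=\mu_1$, and removing this common first pair reduces to the same problem for $[\mu_2,\ldots,\mu_p]\in\SM_{p-1}$ with the unchanged non-empty set $S$, where the inductive hypothesis applies.

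The main obstacle is precisely this combinatorial step: pair-sorting moves the $i$- and $j$-multi-sets in opposite directions (for $k\in S$ the $i$-multi-set loses $i_k$ and gains $j_k<i_k$, while the $j$-multi-set loses $j_k$ and gains $i_k>j_k$), so it is not at all obvious from the sequence-lex definition of $\preceq$ that the net effect is a strict decrease. The inductive reduction on the first entry sidesteps this subtlety cleanly, after which the unitriangularity, together with the finite-dimensionality of each $\Cw_0^{(2p)}$, finishes the proof.
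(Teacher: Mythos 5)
Your proof is correct, and it follows the same overall structure as the paper --- establish that the change of basis from $\{\Cm_\mu\}$ to $\{\Cb_\mu\}$ is unitriangular with respect to $\prec$, then invoke finite-dimensionality of each filtration piece of $\Cw_0$ --- but your handling of the key combinatorial step is genuinely different. The paper proves $\Cb_\mu=\Cm_\mu+\sum_{\nu\prec\mu}\sgn(\nu)\Cm_\nu$ by analysing one transposed pair at a time and then maximal ``chains'' of pairs in $\Pi'$, tracking how re-sorting after a transposition moves entries. You instead expand $\Cb_\mu$ over all subsets $S\subseteq T_\mu$ simultaneously, pass to top PBW degree to identify the leading term of each summand as $\Cm_{\nu'_S}$ (with the corrections landing in strictly smaller cardinality), and then prove $\nu'_S\prec\mu$ for $S\neq\emptyset$ by a short induction on the first pair of the double multi-index: either $\alpha^S[1]<i_1$ (done immediately), or $\alpha^S[1]=i_1$, which forces $1\notin S$, fixes $\nu'_S[1]=(i_1,j_1)=\mu_1$, and reduces to a strictly shorter index with the same nonempty $S$. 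This is tidier and, in my view, easier to verify than the paper's reordering argument. One minor inaccuracy: to get $\beta^S[1]=j_1$ you write ``$i_k\ge i_1>j_1$ for every $k\in S$,'' but $i_1>j_1$ fails when $1\notin T_\mu$; the inequality you actually want is $i_k>j_k\ge j_1$ for $k\in S$, which holds because $S\subseteq T_\mu$ (giving $i_k>j_k$) and $\mu$ is $\ll$-sorted (giving $j_k\ge j_1$). With that one-line fix the argument is complete.
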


\begin{proof}
Let $\mu\in\SM$. We claim that, with respect to the $\prec$ ordering, 
\begin{equation}\label{eq:BmuEmu}
\Cb_\mu = \Cm_\mu + \sum_{\nu \prec \mu} \sgn(\nu)\Cm_\nu,
\end{equation}
where the $\nu \prec\mu$ entering in the sum above are the ones obtained when we distribute the product over the factors $\Cx_{i_kj_k} = \Cm_{i_kj_k}-\Cm_{j_ki_k}$, which translates to 
transposing a pair $(i_k,j_k)$ to $(j_k,i_k)$ and reordering to get double multi-indices, if necessary. Granted the claim,
it follows that for each fixed $p$,  the linear span of $\{\Cm_\mu\mid \mu\in \cup_{q\leq p} \SM_q\}$ and $\{\Cb_\mu\mid \cup_{q \leq p} \SM_q\}$ inside $\Cw_0$ coincide. 
Moreover, the matrix of change of basis from $\{\Cm_\mu\mid \mu\in \cup_{q\leq p} \SM_q\}$ to $\{\Cb_\mu\mid \mu\in \cup_{q \leq p}\SM_q\}$ is triangular, because of (\ref{eq:BmuEmu}). 
So the claim will finish the proof.

As for the claim, suppose first that there is $k$ with $1 < k < p=|\mu|$ and $\mu_k=(i_k,j_k)$ is in 
$\Pi'$ (i.e., satisfies $i_k > j_k$). Let  $\bar\mu_k = (j_k,i_k)$ and
assume that $i_l\leq j_l$ if $l\in\{k-1,k+1\}$. Then,
\begin{equation}\label{eq:order}
i_{k-1}\leq j_{k-1}\leq j_k < i_k \leq i_{k+1}\leq j_{k+1}.
\end{equation}
Hence, $\nu = [\mu_1,\ldots,\mu_{k-1},\bar\mu_k,\mu_{k_1},\ldots,\mu_p]$ is also in $\SM_p$ (as (\ref{eq:order}) grants that $\mu_{k-1}\ll\bar\mu_k\ll\mu_{k+1}$) 
and $\nu \prec \mu$ since $\bar\mu_k < \mu_k$ in the lexicographic ordering.
Assume next that $\mu$ contains a chain
$\mu_{k-1}\ll\mu_k\ll \cdots \ll \mu_{k+q}\ll\mu_{k+q+1}$ satisfying $q>0$ and $\mu_l\in \Pi'$ in the range $k\leq l \leq k+q$. For simplicity, assume
$\mu_s\in \Pi''$ if $s<k$ or $s>k+q$. We are seeking to show that we can write
\[
\Cb_\mu = \Cm_{\mu_1}\cdots \Cm_{\mu_{k-1}}(\Cm_{\mu_k}-\Cm_{\bar\mu_k})\cdots 
(\Cm_{\mu_{k+q}}-\Cm_{\bar\mu_{k+q}})\Cm_{\mu_{k+q+1}}\cdots \Cm_{\mu_{p}} = 
\Cm_\mu + \sum_{\nu\prec \mu} \sgn(\nu)\Cm_\nu.
\]
So we need to consider the effect of the transpositions $\bar\mu_l$ in the product above. Note that for each $l$ in the range $k\leq l \leq k+q$ 
we get $ j_l < i_{l+1}$ and $j_{l-1}<i_l$, but we can not guarantee that $i_{l-1}\leq j_l$ or that $i_l\leq j_{l+1}$. Hence, the multi-set $\nu'=\{\mu_1,\ldots,\bar\mu_l,\ldots\mu_p\}$ 
may not be a double multi-index, since it may not be the case that $\mu_{l-1}\ll\bar\mu_l\ll\mu_{l+1}$. If $\nu'\in \SM$, the argument of the previous paragraph applies and we will be done. So suppose 
that $\nu'$ is not in $\SM$. We then need to reorder the chains
\begin{equation}\label{eq:chains}
i_1,\ldots,i_{l-1},j_l,i_{l+1},\ldots,i_p 
\quad\textup{ and }\quad j_1,\ldots,j_{l-1},i_l,j_{l+1}\ldots,j_p.
\end{equation}
When reordering the first chain, since $i_l>j_l$, then $j_l$ ends up in position $s$ with $s \leq l$ and $s$ the largest index such that $i_{s-1}\leq j_l<i_s$. It follows that 
$(j_l,j_s)<(i_s,j_s)$ and hence $\nu'\prec\mu$. The reordering of the second chain will yield a
double multi-index $\nu''\preceq\nu'\prec\mu$, as required. Such a reordering involves commutations in the Weyl algebra, so we may get multi-indices of same cardinality as $\mu$ and also multi-indices of smaller cardinality, which are necessarily smaller then $\mu$ in the $\prec$ ordering, as well. I

In the general case, a double multi-index may have several chains of pairs in $\Pi'$ and there may be  more than one transposition involved. In these cases, we can focus on the transposition of smallest index and apply the arguments above, since $\SM$ is lexicographically ordered. 
\end{proof}

\begin{example}
To illustrate (\ref{eq:BmuEmu}) in the argument of the previous proposition, consider the double multi-index $\mu = [(1,2),(4,2),(4,3),(4,4)]$. Then, 
$\Cb_\mu = \Cm_{12}(\Cm_{42}-\Cm_{24})(\Cm_{43}-\Cm_{34})\Cm_{44}$. Applying the
distribution yields
\[ \Cb_\mu = \Cm_\mu - \Cm_{\nu'_1} - \Cm_{\nu'_2} + \Cm_{\nu_3}. \]
Here, $\nu'_1 = \{(1,2),(2,4),(4,3),(4,4)\}, \nu'_2=\{(1,2),(4,2),(3,4),(4,4)\}$ and $\nu_3 = [(1,2),(2,4),(3,4),(4,4)]$. Note that $\nu'_1$ and $\nu'_2$ are not
double multi-indices since, respectively, $(2,4) \not\ll (4,3)$ and $(4,2) \not\ll(3,4)$. So we need to reorder these terms. But
\[\Cm_{\nu'_1} = (x_1y_2)(x_2y_4)(x_4y_3)(x_4y_4) =  (x_1y_2)(x_2y_3)(x_4y_4)^2 + (x_1y_2)(x_2y_3)(x_4y_4)\]
and
\[\Cm_{\nu'_2} = (x_1y_2)(x_4y_2)(x_3y_4)(x_4y_4) =  (x_1y_2)(x_3y_2)(x_4y_4)^2, \]
hence $\Cb_\mu = \Cm_\mu - (\Cm_{\nu_1}+\Cm_{\sigma_1}) - \Cm_{\nu_2} + \Cm_{\nu_3}$ with $\nu_1 = [(1,2),(2,3),(4,4)^{2}]$, $\sigma_1 =  [(1,2),(2,3),(4,4)]$, $\nu_2 = [(1,2),(3,2),(4,4)^{2}]$ and
$\nu_3$ as above. Moreover, $\nu_j\prec\mu$ for all $j$.
\end{example}

\begin{lemma}\label{l:sectionsl2}
The multiplication map $m: S^{q}(\fh^*\otimes\fh)\otimes S^2(\fh) \to S^{q}(\fh^*)\otimes 
S^{q+2}(\fh)$  is surjective and admits a section $\sigma$ that maps a monomial 
$x_{i_1}\cdots x_{i_q}\otimes y_{j_1}\cdots y_{j_{q+2}}$ into $\Cm_{\mu_1}\cdots\Cm_{\mu_q}\otimes y_{j_{q+1}}y_{j_{q+2}}$, with $\mu_k = (i_k,j_k)$.
\end{lemma}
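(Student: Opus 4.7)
The plan is to verify the lemma by direct computation on a suitable basis of the target. The key point is that once we fix an ordering of indices, the prescribed formula for $\sigma$ yields a well-defined linear map, and the identity $m\circ\sigma=\operatorname{id}$ becomes essentially tautological; surjectivity of $m$ is then immediate.

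First I would choose a basis of $S^q(\fh^*)\otimes S^{q+2}(\fh)$ consisting of the ordered monomials $x_{i_1}\cdots x_{i_q}\otimes y_{j_1}\cdots y_{j_{q+2}}$ with $i_1\leq\cdots\leq i_q$ and $j_1\leq\cdots\leq j_{q+2}$, and on this basis I would define $\sigma$ by the formula in the statement: pair the first $q$ of the $y$-factors with the $x$-factors to form $\Cm_{\mu_k}=x_{i_k}y_{j_k}$ for $1\le k\le q$, and keep the last two $y$-factors in the $S^2(\fh)$ slot. Extending linearly, this gives a well-defined map $\sigma:S^q(\fh^*)\otimes S^{q+2}(\fh)\to S^q(\fh^*\otimes\fh)\otimes S^2(\fh)$, since the ordering of the multi-indices removes any ambiguity in the selection of the two distinguished $y$-factors.

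To check that $m\circ\sigma=\operatorname{id}$, I would use the identification $\Cm_{ij}=x_iy_j\in\fh^*\otimes\fh\subseteq S^1(\fh^*\otimes\fh)$. Under $m$, the product $\Cm_{\mu_1}\cdots\Cm_{\mu_q}\in S^q(\fh^*\otimes\fh)$ maps to $(x_{i_1}\cdots x_{i_q})\otimes(y_{j_1}\cdots y_{j_q})\in S^q(\fh^*)\otimes S^q(\fh)$, and then multiplying the $\fh$-factor by $y_{j_{q+1}}y_{j_{q+2}}$ produces back the original basis element $x_{i_1}\cdots x_{i_q}\otimes y_{j_1}\cdots y_{j_{q+2}}$. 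Hence $m\circ\sigma$ is the identity on the basis and therefore on the whole space, which simultaneously proves that $m$ is surjective and that $\sigma$ is a genuine section.

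There is no serious obstacle; the only point requiring a small amount of care is the well-definedness of $\sigma$, which I would address simply by insisting on the ordered multi-index basis and defining $\sigma$ on basis vectors before extending linearly. After that, the verification reduces to unwinding the definitions.
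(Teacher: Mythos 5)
Your proof is correct and takes essentially the same approach as the paper's: define $\sigma$ on an ordered monomial basis by pairing the first $q$ of the $y$-factors with the $x$-factors, and verify $m\circ\sigma = \mathrm{id}$ on that basis, which gives surjectivity as a byproduct. Your version is slightly more explicit, spelling out the well-definedness point and the verification of $m\circ\sigma=\mathrm{id}$, whereas the paper describes a basis of the source $S^q(\fh^*\otimes\fh)\otimes S^2(\fh)$, computes its image under $m$, and asserts that the claims follow.
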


\begin{proof}
For each $(i,j)\in\Pi$, we let $\tilde\Cm_{ij} = x_i\otimes y_j\in\fh^*\otimes\fh$. Then,
for each $\delta=\{\delta_1,\ldots,\delta_q\}\in\tilde\SM_q$ we let 
$\tilde\Cm_\delta = \tilde\Cm_{\delta_1}\cdots\tilde\Cm_{\delta_{q}}\in S^q(\fh^*\otimes\fh)$, so that the set $\{\tilde\Cm_\delta\otimes y_iy_j\mid\delta\in\tilde\SM_q, 1\leq i \leq j \leq r\}$ is a basis $S^q(\fh^*\otimes\fh)\otimes S^2(\fh)$. The multiplication map send each basis element to
$m(\tilde\Cm_\delta\otimes y_iy_j) = (x_{i_1}\cdots x_{i_q}\otimes y_{j_1}\cdots y_{j_{q}})(y_iy_j)$, from which
the claims follow.
\end{proof}

\begin{proposition}\label{p:centsl2}
The associative subalgebra $A(\fa)$ of $\Cw$ generated by $\fa\cong\fs\fo(r,\bbc)$ is the centralizer algebra of $\fs=\fs\fl(2,\bbc)$.
\end{proposition}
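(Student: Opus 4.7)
The plan is to invoke Lemma \ref{l:ind} for the filtered algebra $\Cw$ with its PBW filtration (in which $V=\fh^*\oplus\fh$ sits in degree one), taking $\Gamma=\fs$ and $\fa$ as in the statement. Hypothesis (a), that $[\fs,\fa]=0$, is the classical ($c=0$) specialisation of Lemma \ref{l:sl2comm}. Hypothesis (b), namely that $\Cent_{\Cw^{(2)}}(\fs)=\fa\oplus\bbc$, is handled by a short explicit computation: because $\ad(H)$ has no zero weight on $V$, any element of $\Cw^{(2)}$ commuting with $H$ has no linear part and, modulo a scalar, takes the form $\sum_{ij}a_{ij}x_iy_j$; the identity $[x_iy_j,E_1^+]=-x_ix_j$ then forces the matrix $(a_{ij})$ to be skew-symmetric, placing $P$ in $\fa\oplus\bbc$.

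For the inductive step of Lemma \ref{l:ind}, I would pass to the associated graded $\overline{\Cw}\cong S(V)$. If $P\in\Cw^{(t)}\cap\Cent_\Cw(\fs)$, then its principal symbol $\bar P\in S^t(V)$ is Poisson-annihilated by the image $\bar{\fs}\subset S^2(V)$ of $\fs$, hence $\bar P\in S(V)^{\fs}$. Viewing $V$ as $r$ copies of the standard two-dimensional representation of $\fs\fl(2,\bbc)$, the classical First Fundamental Theorem asserts that $S(V)^{\fs}$ is generated by the quadratic bracket invariants $\bar{\Cx}_{ij}=\bar{x}_i\bar{y}_j-\bar{x}_j\bar{y}_i$. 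Lifting a polynomial expression for $\bar P$ in these to an element $U\in A(\fa)$ gives $\bar U=\bar P$ at top degree, so $P-U\in\Cw^{(t-1)}\cap\Cent_\Cw(\fs)$, and a descending induction on $t$ together with Lemma \ref{l:ind} finishes the argument.

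The main obstacle is really the base case (b) and the invocation of the FFT; everything else is routine bookkeeping. An alternative, entirely self-contained route, which is likely what the authors have in mind given the machinery set up in Propositions \ref{p:CanLinBasis}, \ref{p:LinearBasis} and Lemma \ref{l:sectionsl2}, would expand an arbitrary $P\in\Cw_0$ in the basis $\{\Cb_\mu\}$ and exploit the section $\sigma$ to track how $[E_1^-,\Cm_\mu]$ propagates through the $\{\Cm_\nu\}$ PBW basis; the condition $[P,E_1^-]=0$ should then force, by a $\prec$-induction on multi-indices, the vanishing of the coefficient of $\Cm_\mu$ whenever $\mu$ contains a pair $(i,j)$ with $i\leq j$, identifying $A(\fa)$ with the span of $\{\Cb_\mu:\mu_k\in\Pi'\text{ for all }k\}$. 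The technical cost of this direct route is the careful combinatorial bookkeeping afforded by $\sigma$, trading the appeal to FFT for an explicit multi-index argument.
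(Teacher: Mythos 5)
Your first route is correct and is a genuinely different argument from the one in the paper. Your base case (b) checks out: weight-zero elements of $\Cw^{(2)}$ modulo scalars are spanned by $\{x_iy_j\}$, and $[\sum a_{ij}x_iy_j,E_1^+]=-\sum a_{ij}x_ix_j$ vanishes iff $(a_{ij})$ is skew. For the inductive step, since the Weyl-algebra commutator drops filtration degree by two, if $[X,P]=0$ for all $X\in\fs$ then the degree-$t$ symbol $\bar P$ satisfies $\{\bar X,\bar P\}=0$, i.e.\ $\bar P\in S(V)^\fs$; invoking the FFT for $Sp(2)\cong SL(2)$ acting on $V\cong(\bbc^2)^{\oplus r}$ gives $\bar P$ as a polynomial in the $\bar\Cx_{ij}$, which lifts into $A(\fa)$, and Lemma~\ref{l:ind} closes the loop.

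The paper's own proof is essentially your second sketch, carried out in full: it works in the basis $\{\Cb_\mu\}$ from Proposition~\ref{p:LinearBasis}, separates the sum into $\mu\in\SM_s'$ (which already lie in $A(\fa)$ because every factor is an $\Cx_{ij}$) and $\mu\in\SM_s''$, computes $[E_1^-,\Cb_\mu]$ modulo lower order, and then uses the section $\sigma$ of Lemma~\ref{l:sectionsl2} together with Lemma~\ref{l:dirtytrick} to see that the resulting vectors are linearly independent, hence all coefficients on $\SM_s''$ vanish. One small inaccuracy in your description: the $\prec$-induction is not what kills the coefficients in the centralizer argument --- that induction only appears in Proposition~\ref{p:LinearBasis} to establish that $\{\Cb_\mu\}$ is a basis; the kill step is a one-shot linear-independence argument via the section. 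The trade-off between the two routes is clear: the FFT argument is shorter and more conceptual, but the authors explicitly state they are giving a self-contained proof for convenience, and more to the point the combinatorial scaffolding (the multi-index bases, the section $\sigma$, Lemma~\ref{l:dirtytrick}) is reused verbatim in Proposition~\ref{p:centspo} for $\fs\fp\fo(2|2,\bbc)$, where a direct appeal to an FFT-type statement is less standard, so the elementary route pays for itself.
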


\begin{proof}
Let $t\geq 2$ and $P\in\Cw^{(t)}$. Suppose that $P$ commutes with  $\fs\cong\fs\fl(2,\bbc)$. Since it commutes with $H$, we have $P\in \Cw_0$, 
so in particular $t=2s$. Hence, we can write
\begin{equation}\label{e:P}
P = \sum_{\mu\in\SM_s} b_{\mu} \Cb_\mu + \xi
\end{equation}
with $\xi \in \Cw^{(t-1)}$. In order to check (\ref{eq:topdeg}), we need to show that 
$[E_1^-,P]=0$ implies $P$ is congruent to an element of $A(\fa)$, modulo $\Cw^{(t-1)}$. 
To that end, suppose that in the expression of $P$ in (\ref{e:P}) we only had double multi-indices
$\mu\in\SM_s''$. We shall show that, in this case, if $[E_1^-,P]=0$ then $P = 0$. This will imply the
result. Now, since 
$[E_1^-,\Cb_\mu] = \sum_k \Cb_{\mu_1}\cdots [E_1^-,\Cb_{\mu_k}]\cdots \Cb_{\mu_s}$, we obtain,
modulo $\Cw^{(t-1)}$, that
\[
[E_1^-,P] \equiv \sum_{\mu} \sum_{k\mid\mu_k\in\Pi''}
b_{\mu}\Cb_{\mu_1}\cdots \widehat{\Cb}_{\mu_k} \cdots \Cb_{\mu_s}y_{i_k}y_{j_k}.
\]
Recall that the associated graded object $\overline{\Cw}$ is isomorphic to $S(\fh^*\oplus\fh) = \oplus_{p,q \geq 0} S^p(\fh^*)\otimes S^q(\fh)$.  Further, we have that the top-degree piece of 
$[E_1^-,P]$ lives in the part of $\overline\Cw$ that is isomorphic to 
$S^{s-1}(\fh^*)\otimes S^{s+1}(\fh)$.
Note that the set $\{\Cb_\nu y_iy_j\mid\nu\in\SM_{s-1},1\leq i\leq j\leq r\}$ is not linearly independent in 
$S^{s-1}(\fh^*)\otimes S^{s+1}(\fh)$, due to commutation relations between $\Cb_\nu$ and $y_i,y_j$. However, $\{\tilde\Cb_\delta \otimes y_iy_j\mid\delta\in \tilde\SM_{s-1},1\leq i\leq j\leq r\}$
is a basis of $S^{s-1}(\fh^*\otimes\fh)\otimes S^2(\fh)$; the proof that 
$\{\tilde\Cb_\delta\mid \delta\in\tilde\SM_{s-1}\}$ is a basis of $S^{s-1}(\fh^*\otimes\fh)$ is similar
to the one discussed in Proposition \ref{p:CanLinBasis}. Hence, using Lemmas \ref{l:sectionsl2}
and \ref{l:dirtytrick}, we conclude that if $[E_1^-,P]=0$, then $b_\mu = 0$ for all $\mu$ since, if
we let $\sigma$ denote the section of the multiplication map as in Lemma \ref{l:sectionsl2}, then the
set $\{\sigma(\Cb_{\mu_1}\cdots \widehat{\Cb}_{\mu_k} \cdots \Cb_{\mu_s}y_{i_k}y_{j_k})\mid 
\mu\in\SM_s, k \textup{ with }\mu_k\in\Pi''\}$ is a subset of a linear basis of $S^{s-1}(\fh^*\otimes\fh)\otimes S^2(\fh)$
and thus linearly independent.
\end{proof}

\subsection{The $\fs\fp\fo(2|2)$-case}
We now let $H,E_1^\pm,E_2^\pm,E_3^\pm$  and $\Cx^\Delta_{ij}
=\Cx_{ij}\otimes 1 + 1\otimes\Cx_{ij},$ 
for $1\leq i < j \leq r$, be the elements 
analogous to the ones defined in (\ref{e:spodefs}) and (\ref{e:sodeformed1}) for $\Cw\otimes\Cc$.
The Lie super algebra $\fg$ spanned by $\{H,E_1^\pm,E_2^\pm,E_3^\pm\}$ 
is isomorphic to $\fs\fp\fo(2|2,\bbc)$ and the Lie algebra $\fa^\Delta$ spanned by
 $\{\Cx^\Delta_{ij} \mid 1\leq i<j\leq r\}$ is also isomorphic to $\fs\fo(r,\bbc)$. 
 We note that $\Cx^\Delta_{ij}$ is a diagonal copy of $\fs\fo(r,\bbc)$ in $\Cw\otimes\Cc$.
In this section we will prove that the associative subalgebra $A(\fa^\Delta)$ of $\Cw\otimes \Cc$ 
generated by $\fa^\Delta$ is the centralizer algebra of $\fg\cong \fs\fp\fo(2|2,\bbc)$.

First recall that with respect to the natural filtration in which elements of 
$V=\fh^*\oplus\fh$ have degree one, the associated graded object $\overline\Cc$ of the 
Clifford algebra is isomorphic to $\wedge(\fh^*)\otimes \wedge(\fh)$. Similarly to
the Weyl algebra, $\ad(Z)$ acts on $\wedge^m(\fh^*)\otimes \wedge^n(\fh)$
as $(m-n)$ times the identity, so we denote by $\Cc_n$ the $n$-eigenspace of $\ad(Z)$. 
We thus have  $\Cc_0 \cong \oplus _{p=0}^r \wedge^p(\fh^*)\otimes \wedge^p(\fh)$. Further,
let $A = \Cw\otimes\Cc$ and we endow it with the filtration 
$A^{(t)} = \sum_{p+q = t}\Cw^{(p)}\otimes\Cc^{(q)}$, from which the associated graded object satisfies
$\overline{A} = \overline{\Cw}\otimes\overline{\Cc}$.

For each $(i,j)\in \Pi$, let $\Cm_{ij} = x_iy_j\in\Cc_0$ and we define $\Cm_\mu$ with $\mu\in\SM$ just as before. However, since $x_j^2 = 0 = y_j^2$, the elements $\Cm_\mu$ with $\mu\in\SM$ 
may be zero in $\Cc$. 

\begin{definition}
Given $(i,j),(k,l)\in \Pi$, let us denote $(i,j)\gls{orderR}(k,l)$ if $i<k$ and $j<l$. Let $\gls{MultiindicesClp}\subseteq\SM_q$ be the subset of all double multi-indices of cardinality $q$
such that $\rho\in\SR_q$ if and only if $\rho_1\lll\rho_2\lll\cdots\lll\rho_q$. Let $\gls{MultiindicesCl} = \cup_{q=0}^{r}\SR_q$. 
\end{definition}

\begin{proposition}
The set $\{\Cm_\rho\mid\rho\in \SR\}$ is a linear basis of $\Cc_0$. 
\end{proposition}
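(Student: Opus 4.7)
The plan is to pass to the associated graded algebra of $\Cc$ with respect to the total-degree filtration in which each element of $V=\fh^*\oplus\fh$ is assigned degree one. Under this, $\overline{\Cc}\cong \wedge V\cong\wedge\fh^*\otimes\wedge\fh$ as a superalgebra, and the $\bbz$-grading induced by $\ad(Z_0)$ descends to the natural $\bbz$-grading on the exterior algebra, so that
\[\overline{\Cc}_0=\bigoplus_{q=0}^{r}\wedge^q\fh^*\otimes\wedge^q\fh.\]

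First I would compute the principal symbol of $\Cm_\rho$ for a double multi-index $\rho=[(i_1,j_1),\ldots,(i_q,j_q)]\in\SR_q$. The element $\Cm_\rho=x_{i_1}y_{j_1}\cdots x_{i_q}y_{j_q}$ has total degree $2q$, and in $\overline{\Cc}$ the Clifford relation $\{u,v\}=B(u,v)$ degenerates to pure anticommutation, since the right-hand side lies in lower filtration degree. Therefore, moving the $y_{j_k}$'s past the $x_{i_l}$'s incurs only lower-order corrections, and the symbol equals
\[(-1)^{\binom{q}{2}}(x_{i_1}\wedge\cdots\wedge x_{i_q})\otimes (y_{j_1}\wedge\cdots\wedge y_{j_q})\in \wedge^q\fh^*\otimes\wedge^q\fh.\]
The hypothesis $\rho\in\SR_q$, namely $i_1<\cdots<i_q$ and $j_1<\cdots<j_q$, ensures that both wedge products are standard basis monomials, and in particular nonzero.

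The remainder is bookkeeping: the map $\rho\mapsto (\{i_1,\ldots,i_q\},\{j_1,\ldots,j_q\})$ is a bijection from $\SR_q$ onto pairs of $q$-element subsets of $\{1,\ldots,r\}$, because an element of $\SR_q$ is uniquely recovered from its two underlying $q$-subsets by listing each factor in increasing order. Consequently $\{\overline{\Cm_\rho}\mid\rho\in\SR_q\}$ is, up to explicit signs, the standard monomial basis of $\wedge^q\fh^*\otimes\wedge^q\fh$. Taking the union over $q=0,\ldots,r$, the symbols $\{\overline{\Cm_\rho}\mid\rho\in\SR\}$ form a basis of $\overline{\Cc}_0$, and since $\Cc_0$ is finite-dimensional with $\dim\Cc_0=\dim\overline{\Cc}_0$, the standard fact that lifts of a basis of the associated graded form a basis of the filtered space yields that $\{\Cm_\rho\mid\rho\in\SR\}$ is a basis of $\Cc_0$. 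I anticipate no serious obstacle: once the symbol calculation is in hand, the statement reduces to the familiar monomial description of the exterior algebra.
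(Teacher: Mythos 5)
Your argument is correct and takes essentially the same route as the paper: pass to the associated graded $\overline{\Cc}\cong\wedge\fh^*\otimes\wedge\fh$ with respect to the filtration by total degree, note that the $\ad(Z_0)$-grading descends to the exterior-algebra grading so that $\overline{\Cc}_0=\bigoplus_q\wedge^q\fh^*\otimes\wedge^q\fh$, compute that the symbol of $\Cm_\rho$ for $\rho\in\SR_q$ is $\pm\,x_{i_1}\wedge\cdots\wedge x_{i_q}\otimes y_{j_1}\wedge\cdots\wedge y_{j_q}$, and then use the bijection between $\SR_q$ and pairs of $q$-element subsets of $\{1,\ldots,r\}$ together with the standard lift-a-basis-of-the-associated-graded argument. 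The paper merely states that the proof is similar to the analogous Weyl-algebra statement, and your write-up supplies exactly the symbol computation and bijection that are implicit in that remark.
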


\begin{proof}
Similar to Proposition \ref{p:LinearBasis}.
\end{proof}

We note that because the elements $\Cm_{ij}\in\Cc_0$ are quadratic, they actually commute in 
$\overline{\Cc}$. Thus, the linear span of $\Cm_\rho$, with $\rho\in\SR_q$ is isomorphic to
$\wedge^q(\fh^*)\otimes\wedge^q(\fh)$ which is a quotient of $S^q(\fh^*\otimes\fh)$, 
with linear basis $\{\tilde{\Cm_\mu}\mid\mu\in\tilde{\SM}_q$\}. Similar to the Weyl algebra case, 
we have natural sections in this setting as well. 

\begin{lemma}\label{l:sectionspo}
The multiplication map 
\[
m: S^{p}(\fh^*\otimes\fh)\otimes(\fh\otimes\fh^*)\otimes S^{q}(\fh^*\otimes\fh)\to 
(S^{p}(\fh^*)\otimes S^{p+1}(\fh)) \otimes \left(\wedge^{q+1}(\fh^*)\otimes \wedge^{q}(\fh)\right)
\]  
is surjective and admits a section $\sigma$ that maps 
\[
\sigma:x_\alpha \otimes y_{\beta}\otimes x_\xi\otimes y_\zeta\mapsto \tilde\Cm_{\mu_1}\cdots \tilde\Cm_{\mu_p}\otimes (y_{j_{p+1}}\otimes x_{m_{0}}) \otimes \tilde\Cm_{\nu_1}\cdots
\tilde\Cm_{\nu_q}
\]
with $\alpha=[i_1,\ldots i_p],\beta =[j_1,\ldots j_{p+1}]$, $\xi = [m_0,m_1,\ldots,m_q]$, 
$\zeta=[n_1,\ldots,n_q]$ and $\mu_k = (i_k,j_k)$, $\nu_k = (m_k,n_k)$.
\end{lemma}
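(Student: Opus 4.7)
The plan is to adapt the argument of Lemma~\ref{l:sectionsl2}: fix natural monomial bases for both sides and verify directly that the assignment $\sigma$ is well-defined and that $m\circ\sigma$ is the identity. On the source, the vectors
$\tilde\Cm_{\delta_1}\cdots \tilde\Cm_{\delta_p}\otimes (y_j\otimes x_m)\otimes \tilde\Cm_{\epsilon_1}\cdots\tilde\Cm_{\epsilon_q}$
indexed by $\delta\in\tilde\SM_p$, $\epsilon\in\tilde\SM_q$ and $1\le j,m\le r$ form a basis. On the target, the monomials $x_\alpha\otimes y_\beta\otimes x_\xi\otimes y_\zeta$ with $\alpha,\beta$ ordered multi-indices of sizes $p$ and $p+1$ and with $\xi,\zeta$ strictly increasing multi-indices of sizes $q+1$ and $q$ form a basis. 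The map $m$ itself is the tensor product of the natural algebra surjections $S^p(\fh^*\otimes\fh)\to S^p(\fh^*)\otimes S^p(\fh)$, the identity on $\fh\otimes \fh^*$ (whose two factors are absorbed into the symmetric and the exterior sides respectively), and the antisymmetrisation $S^q(\fh^*\otimes\fh)\to \wedge^q\fh^*\otimes \wedge^q\fh$.

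Given a target basis element as above with $\alpha=[i_1,\ldots,i_p]$, $\beta=[j_1,\ldots,j_{p+1}]$, $\xi=[m_0,\ldots,m_q]$ and $\zeta=[n_1,\ldots,n_q]$, I set $\mu_k=(i_k,j_k)$ and $\nu_k=(m_k,n_k)$. Then $\mu\in\tilde\SM_p$ is automatic, while the strict increase of $\xi$ and $\zeta$ yields $\nu_1\lll\cdots\lll\nu_q$, so in fact $\nu\in\SR_q\subseteq \tilde\SM_q$; hence $\sigma$ lands in the prescribed source. A short computation then confirms $m\circ \sigma = \mathrm{id}$: the leftmost factor maps to $x_\alpha\otimes y_{[j_1,\ldots,j_p]}$ in $S^p(\fh^*)\otimes S^p(\fh)$, inserting the middle factor $y_{j_{p+1}}\otimes x_{m_0}$ completes the symmetric side to $x_\alpha\otimes y_\beta$ and seeds the exterior side with $x_{m_0}$, and the rightmost factor produces $x_{m_1}\wedge\cdots\wedge x_{m_q}\otimes y_{n_1}\wedge\cdots\wedge y_{n_q}$, assembling with $x_{m_0}$ into $x_\xi\otimes y_\zeta$.

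Linear extension then gives the required section, and surjectivity of $m$ is an immediate consequence of $m\circ\sigma=\mathrm{id}$. No substantive obstacle arises; the argument is purely combinatorial bookkeeping of the ordering conventions on double multi-indices, in complete analogy with Lemma~\ref{l:sectionsl2}.
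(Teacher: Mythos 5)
Your argument is correct and is exactly the argument the paper has in mind: the paper's own proof is the single line ``This proof is analogous to Lemma~\ref{l:sectionsl2},'' and you have simply spelled out that analogy faithfully, with the right bookkeeping for the splitting of the middle factor $\fh\otimes\fh^*$ between the symmetric and exterior sides. No gaps.
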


\begin{proof}
This proof is analogous to Lemma \ref{l:sectionsl2}.
\end{proof}

\begin{proposition}\label{p:centspo}
The associative subalgebra $A(\fa^\Delta)$ of $A=\Cw\otimes\Cc$ generated by $\fa^\Delta$ is the centralizer algebra of $\fg\cong\fs\fp\fo(2|2,\bbc)$.
\end{proposition}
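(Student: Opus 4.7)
The strategy is to apply Lemma \ref{l:ind} with $\Gamma = \fg$, with $\fa$ replaced by $\fa^\Delta$, and with $A = \Cw \otimes \Cc$. First I would verify its hypotheses. For (a), since $\fg$ is generated by $\fs\fl(2,\bbc) = \langle H, E_1^\pm\rangle$ together with the Dirac element $E_2^-$ (the remaining generators being obtained as brackets), it suffices to check that each $\Cx^\Delta_{ij}$ commutes with $\fs\fl(2,\bbc)$ and with $E_2^-$. The first is immediate from Proposition \ref{p:centsl2} applied to the Weyl factor, together with the observation that $1\otimes \Cx_{ij}$ commutes with $\fs\fl(2,\bbc) \subseteq \Cw\otimes 1$. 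The second is a direct computation in which the contributions $[\Cx_{ij}\otimes 1, E_2^-] = x_i\otimes y_j - x_j\otimes y_i$ and $[1\otimes\Cx_{ij}, E_2^-] = x_j\otimes y_i - x_i\otimes y_j$ cancel (the relevant Clifford identity being $[\Cx_{ij}, y_k] = \delta_{jk}y_i - \delta_{ik}y_j$). Hypothesis (b) is settled by inspecting the $(\ad H, \ad Z)$-bidegree $(0,0)$ piece of $A^{(2)}$: it is spanned by $1$ and the elements $\{\Cm_{ij}\otimes 1, 1\otimes\Cm_{ij}\}$, and imposing commutativity with $E_1^\pm$ and $E_2^\pm$ reduces the space to $\fa^\Delta\oplus \bbc$.

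For the inductive step of Lemma \ref{l:ind}, I would take $P \in A^{(t)}$ with $[\fg, P]=0$ and examine its top symbol $P_t \in A^{(t)}/A^{(t-1)}\subseteq \overline A = S(\fh^*\oplus\fh)\otimes \wedge(\fh^*\oplus\fh)$. Commutativity of $P$ with $H\in\Cw\otimes 1$ and with $Z\in 1\otimes\Cc$ confines $P_t$ to the bidegree $(0,0)$ piece under the two $\bbz$-gradings, forcing $t=2(s+q)$ and
\[
P_t = \sum_{s+q=t/2}\sum_{\mu\in\SM_s,\,\rho\in\SR_q} c_{\mu,\rho}\, \Cb_\mu\otimes \Cm_\rho,
\]
by Proposition \ref{p:LinearBasis} and its Clifford analogue. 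The task is to show that $[\fg,P_t]=0$ in $\overline{A}$ forces this expansion to coincide with the symbol of an element of $A(\fa^\Delta)$. I would attack this in two stages: first use commutativity with $E_1^-$ to apply (the proof of) Proposition \ref{p:centsl2} fibrewise over the $\rho$-index, reducing to an expansion in which only $\mu$ in the "antisymmetric" subset $\SM_s'$ (i.e., products of genuine $\Cx_{ij}$'s in the Weyl factor) appear; then impose commutativity with the symbol of $E_2^- = \sum_i x_i\otimes y_i$, which acts as a bi-derivation on $\overline{A}$, and use a multi-index analysis via the section $\sigma$ of Lemma \ref{l:sectionspo} and linear independence in $S^{s-1}(\fh^*\otimes\fh)\otimes(\fh\otimes\fh^*)\otimes S^{q-1}(\fh^*\otimes\fh)$ to identify its kernel with the span of products of the symbols of $\Cx^\Delta_{ij}$.

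The main obstacle will be the second stage: pinning down the kernel of the Dirac-induced bi-derivation on the bidegree $(0,0)$ part of $\overline A$ requires careful combinatorics because the action genuinely entangles the symmetric and exterior factors. I expect the argument to parallel the end of the proof of Proposition \ref{p:centsl2}, but with a doubled set of multi-indices $(\mu,\rho)$ and an extra matching constraint coming from the Dirac element. If a direct combinatorial path becomes unwieldy, an alternative is to invoke the classical Howe duality for the pair $(O(r,\bbc),\fs\fp\fo(2|2,\bbc))$ acting on polynomial differential forms (cf. \cite[Theorem 5.39]{CW}), which identifies the $\fg$-centraliser with the subalgebra generated by the diagonal copy of $\fs\fo(r,\bbc)$; lifting this identification through the PBW-filtration on $A$ then yields exactly $A(\fa^\Delta)$.
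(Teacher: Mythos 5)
Your overall framework --- Lemma~\ref{l:ind}, passage to the associated graded, isolating the Weyl factor via Proposition~\ref{p:centsl2}, then constraining the Clifford factor via $E_2^\pm$ --- is exactly the paper's, and your hypothesis-(a) check (the cancellation of the two contributions to $[\Cx^\Delta_{ij},E_2^-]$) is correct. The gap is precisely in what you call the second stage, and you are right to be worried about it: a direct attempt to characterize the kernel of the bi-derivation does entangle the Weyl and Clifford degrees. The paper resolves this by a subtract-and-induct device that your sketch does not find. Once one knows $P\equiv P_0+P_1+\cdots+P_{q_0}$ with $P_q\in\overline\Cw^{2(s-q)}\otimes\overline\Cc^{2q}$, commutativity with $\fg_{\bar 0}$ forces $P_0=\sum_{\mu\in\SM'_s}c_\mu\,\Cx_\mu\otimes 1$, so one can form the explicit diagonal lift $X=\sum_\mu c_\mu\,\Cx^\Delta_\mu\in A(\fa^\Delta)$, which still commutes with $\fg$ and has the same $\overline\Cc^0$-component as $P$. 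Passing to $P-X$ kills $P_0$. The decisive point is that, with $P_0$ gone, the component of $[E_2^+,P-X]$ in $\overline\Cw^{2s-1}\otimes\overline\Cc^{1}$ is isolated: it is only $R_1$, the Clifford-degree-lowering contribution from $P'_1$, with no $L_0$ from the (now absent) degree-zero Clifford piece. Hence $R_1=0$, which via Lemmas~\ref{l:sectionspo} and~\ref{l:dirtytrick} forces $P'_1=0$; one then inducts up the Clifford degree, at each step isolating $R_q$ and peeling off $P'_q$. Without the subtraction, $L_{q-1}$ and $R_q$ would sit in the same graded piece and couple --- exactly the entanglement you foresaw but did not untangle.

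Your suggested alternative via~\cite[Theorem~5.39]{CW} is also less automatic than you indicate. That Howe duality is a statement about the joint $(O(r,\bbc),\fg)$-action on the module $\Omega(V)$; to extract the identity of abstract subalgebras $\Cw\otimes\Cc$ from it one needs, besides faithfulness of the action, the fact that the commutant of $\fg$ inside $\End(\Omega(V))$ is already reached by the image of $U(\fs\fo(r))$, i.e., by $A(\fa^\Delta)$, rather than merely by $O(r,\bbc)$; since $O(r,\bbc)$ is disconnected and the multiplicity spaces in the duality are a priori only $O(r,\bbc)$-irreducible, this last step requires a separate argument. The paper's direct computation sidesteps this issue, which is part of why the authors supply it even though they describe the result as well-known.
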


\begin{proof}
Let $t\geq 2$ and suppose that $P\in A^{(t)}$. Suppose that $P$ commutes with $\fg\cong \fs\fp\fo(2|2,\bbc)$. Since $P$ commutes with the even subalgebra of $\fg$, it follows that
$P\in A(\fa)\otimes \Cc_0$, from which $t = 2s$ is even. Put $q_0 = s$ if $s\leq r$ or $q_0=r$ if $s>r$. Thus, we can write $P \equiv  P_0 + P_1 + \cdots + P_{q_0}$ modulo $A^{(2s-1)}$ with 
$P_q \in \overline{\Cw}^{2p}\otimes\overline{\Cc}^{2q}$ and $p+q = s$. Further, we note that $P_0 = \sum_{\mu\in\SM_s'} c_\mu \Cx_\mu\otimes 1$. For each $\mu\in\SM_s'$, let
$\Cx^\Delta_\mu = \Cx^\Delta_{\mu_1}\cdots\Cx^\Delta_{\mu_s}$ 
so that the element $X=\sum_\mu c_\mu\Cx^\Delta_\mu$ has the same $\overline{\Cw}^{2s}\otimes\overline{\Cc}^{0}$ component as $P$. Thus, 
$P-X \equiv P'_1 + \cdots + P'_{q_0}$ commutes with $\fg$ and only have components in $\overline{\Cw}^{2p}\otimes\overline{\Cc}^{2q}$ with $q>0$. We claim that $P'_q = 0$, for all $q>0$. 
This implies that $P$ is congruent, modulo $A^{(t-1)}$, to an element of $A(\fa^\Delta)$, so the claim together with Lemma \ref{l:ind} yields the result.

For each $q>0$, let $p = s-q$ and write $P_q' = \sum_{\nu,\rho} c_{\nu,\rho}\Cx_\nu \otimes \Cm_\rho$, with the sum running over $(\nu,\rho)\in\SM_p'\times\SR_q$. Then, modulo $A^{(t-1)}$, we 
have $[E_2^+,P'_q] \equiv L_q + R_q$ with  
$L_q\in \overline{\Cw}^{2p-1}\otimes \overline{\Cc}^{2q+1}$ and $R_q\in \overline{\Cw}^{2p+1}\otimes \overline{\Cc}^{2q-1}$. Here, $L_q$ is obtained from 
$\sum_{\nu,\rho} c_{\nu,\rho}[E_2^+,\Cx_\mu](1\otimes\Cm_\rho)$ and $R_q$ from
$\sum_{\nu,\rho} c_{\nu,\rho}(\Cx_\mu\otimes 1)[E_2^+,\Ce_\mu]$. Thus,
\begin{equation}\label{e:R}
R_q = - \sum_{\nu,\rho} \sum_{k=1}^{q} c_{\nu,\rho} \Cx_\nu y_{j_k}\otimes x_{i_k}\Cm_{\rho_1}\cdots\widehat{\Cm}_{\rho_k}\cdots\Cm_{\rho_q}.
\end{equation}
Starting with $q=1$, we have $P'_1 = \sum_{\nu,(ij)} c_{\nu,(ij)}\Cx_\nu \otimes \Cm_{ij}$. The equation $[E_2^+,P-X]=0$ implies $R_1 = -\sum_{\nu,(ij)} c_{\nu,(ij)}\Cx_\nu y_j\otimes x_i = 0$, 
since $R_1$ is the only component of $[E_2^+,P-X]$ that lives in $\overline{\Cw}^{2s-1}\otimes \overline{\Cc}^{1}$.
In light of Lemma \ref{l:dirtytrick} we take
$E= S^{s-1}(\fh^*)\otimes S^{s}(\fh) \otimes \fh^*$ and $U = S^{s-1}(\fh^*\otimes\fh)\otimes(\fh\otimes\fh^*)$. Note that $R_1$ lies in the portion of 
$\overline\Cw^{2s-1}\otimes\overline{\Cc}^1$ that is isomorphic to $E$. Using the section 
of the surjective multiplication map $U\to E$ described in Lemma \ref{l:sectionspo} (with $p = s-1$ 
and $q=0$), we conclude that $R_1 = 0$ implies that all $c_{\nu,(ij)}=0$, since the set
$\{\sigma(\Cx_\nu y_j\otimes x_i)\mid \nu\in\SM_{s-1},1\leq i, j\leq r\}$ is part of a basis of $E$. Hence, $P'_1 = 0$. 

By induction, assuming $P'_u = 0$ for all $u<q$, the equation $[E_2^+,P-X]=0$ implies $R_q=0$, since from the inductive hypothesis, $R_q$ will be the only component of $[E_2^+,P-X]$ that lives in
$\overline{\Cw}^{2p+1}\otimes \overline{\Cc}^{2q-1}$, with $p+q = s$. Now let 
$E = S^{p}(\fh^*)\otimes S^{p+1}(\fh)\otimes\wedge^{q}(\fh^*)\otimes\wedge^{q-1}(\fh)$
and $U = S^{p}(\fh^*\otimes \fh)\otimes(\fh\otimes\fh^*)\otimes S^{q-1}(\fh^*\otimes \fh)$. 
Note that $R_q$ lies in the piece of $\overline{\Cw}^{2p+1}\otimes \overline{\Cc}^{2q-1}$
that is linearly isomorphic to $E$. Further the set
$\{\tilde{\Cb}_\nu\otimes (y_j \otimes x_i) \otimes \tilde{\Cm}_\tau\mid \nu\in\tilde{\SM}_p,\tau\in\tilde{\SM}_{q-1},1\leq i,j,\leq r\}$ is a basis of $U$, so from Lemmas \ref{l:dirtytrick}, \ref{l:sectionspo} and (\ref{e:R}), we conclude 
that $0=[E_2^+,P-X]$ forces all $c_{\nu,\rho}$ to be zero and hence $P'_q = 0$. Thus, $P-X = 0$, and we are done.
\end{proof}

\newpage
\pagestyle{myheadings}
\glssetwidest{xxxxxxxx}
\printnoidxglossary[type=main, style=TwoColListSymb, title=List of Symbols]

\end{document}